\newtheorem{thm}{Theorem}[section]
\newtheorem*{thm1}{Theorem~\ref{thm1}}
\newtheorem{lem}[thm]{Lemma}
\newtheorem{cor}[thm]{Corollary}
\newtheorem{prop}[thm]{Proposition}
\newtheorem{claim}{Claim}[thm]
\newtheorem{assumption}[claim]{Assumption}
\newtheorem{sclaim}{Claim}[claim]
\theoremstyle{definition}
\newtheorem{dfn}[thm]{Definition}
\newenvironment{subproof}{\begin{proof}}{\end{proof}}
\newcommand{\mcal}[1]{\ensuremath{\mathcal{#1}}}
\newcommand{\ag}[1]{\ensuremath{\mathrm{AG}{(#1)}}}
\newcommand{\pg}[1]{\ensuremath{\mathrm{PG}{(#1)}}}
\newcommand{\rbox}[1]{\makebox[0.75em][r]{#1}}
\newcommand{\tup}[1]{\textup{#1}}
\newcommand{\del}{\hspace{-0.5pt}\backslash}
\newcommand{\con}{\ensuremath{\hspace{-0.5pt}/}}
\newcommand{\dy}{\ensuremath{\Delta \textrm{-}Y}}
\newcommand{\yd}{\ensuremath{Y\textrm{-}\Delta}}
\newcommand{\nfd}{\ensuremath{(F_{7}^{-})^{*}}}
\newcommand{\agde}{\ensuremath{\mathrm{AG}(2, 3)\hspace{-0.5pt}\backslash e}}
\newcommand{\qalpha}{\ensuremath{\mathbb{Q}(\alpha)}}
\newcommand{\dash}{\nobreakdash-\hspace{0pt}}
\newcommand{\Z}{ \mathbb{Z} }
\newcommand{\ring}{\mathbb{O}}
\newcommand{\field}{\ensuremath{\mathbb{F}}}
\newcommand{\group}{\mathbf{G}}
\newcommand{\parf}{\ensuremath{\mathbb{P}}}
\newcommand{\nreg}{\mathbb{U}_1}
\newcommand{\bip}{G}
\newcommand{\symdiff}{\triangle}
\newcommand{\rank}{\ensuremath{r}}
\newcommand{\corank}{\ensuremath{\rank^{*}}}
\newcommand{\matrank}{\ensuremath{\mathrm{rank}}}
\newcommand{\si}{\operatorname{si}}
\newcommand{\co}{\operatorname{co}}
\DeclareMathOperator{\GF}{GF}
\DeclareMathOperator{\nigh}{nigh}
\begin{document}

\title[The excluded minors for near-regular matroids]
{The excluded minors for near-regular matroids}

\author[Hall]{Rhiannon Hall}
\address{School of Information Systems, Computing and Mathematics,
Brunel University, Uxbridge UB8 3PH, United Kingdom}
\email{rhiannon.hall@brunel.ac.uk}
\thanks{The first author was supported by a Nuffield
Foundation Award for Newly Appointed Lecturers in Science,
Engineering and Mathematics.}

\author[Mayhew]{Dillon Mayhew}
\address{School of Mathematics, Statistics, and Operations Research,
Victoria University of Wellington,
New Zealand}
\email{dillon.mayhew@msor.vuw.ac.nz}
\thanks{The second author was supported by a Foundation for
Research Science \& Technology post-doctoral fellowship.}

\author[Van Zwam]{Stefan H. M. van Zwam}
\address{Faculteit der Wiskunde en Informatica,
Technische Universiteit Eindhoven,
The Netherlands}
\email{svzwam@win.tue.nl}
\thanks{The third author was supported by NWO, grant 613.000.561.}

\subjclass{05B35}
\date{\today}

\begin{abstract}
In unpublished work, Geelen proved that a matroid is
near-regular if and only if it has no minor isomorphic
to $U_{2,5}$, $U_{3,5}$, $F_{7}$, $F_{7}^{*}$,
$F_{7}^{-}$, \nfd, \agde, $(\agde)^{*}$, $\Delta_{T}(\agde)$,
or $P_{8}$.
We provide a proof of this characterization.
\end{abstract}

\maketitle

\section{Introduction}\label{sct1}

Suppose that \mcal{F} is a set of fields, and that
$\mcal{M}(\mcal{F})$ is the class of matroids that are
representable over every field in \mcal{F}.
It is well-known that the family of binary matroids
contains exactly two classes that arise in this way:
the binary matroids themselves, and the regular matroids.
A striking result due to Whittle~\cite{Whi97} shows that
the family of ternary matroids contains exactly six such classes
of matroids: the classes of ternary matroids, regular matroids,
near-regular matroids, dyadic matroids, sixth-roots-of-unity matroids,
and those matroids obtained from dyadic and sixth-roots-of-unity
matroids using direct sums and $2$\dash sums.

It is natural to ask for excluded-minor characterizations
of the families mentioned above.
The excluded minors for binary, ternary, and regular matroids
have been known for some time~\cite{Bix79, Sey79, Tuthom}.
Geelen, Gerards, and Kapoor~\cite{GGK} characterized
the excluded minors for $\GF(4)$\dash representable matroids.

\begin{thm}
\label{thm4}
The excluded minors for representability over $\GF(4)$ are
$U_{2,6}$, $U_{4,6}$, $P_{6}$, $F_{7}^{-}$, \nfd, $P_{8}$, and
$P_{8}''$.
\end{thm}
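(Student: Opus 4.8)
The plan is the usual one for an excluded-minor theorem: first check that each of the seven matroids listed is genuinely an excluded minor, and then show that there are no others. The first part is a bounded verification. For each $N$ in the list I would exhibit explicit $\GF(4)$\dash matrices for every single-element deletion and contraction of $N$ — this suffices, since $\GF(4)$\dash representability is minor-closed — and then argue that $N$ itself is not $\GF(4)$\dash representable: for $U_{2,6}$ because $\mathrm{PG}(1,4)$ has only five points, and for $P_6$, $F_7^-$, \nfd, $P_8$, $P_8''$ by a short determinant computation. Duality halves the labour, since $U_{4,6}=U_{2,6}^{*}$, \nfd\ is the dual of $F_7^-$, and $P_8$ and $P_8''$ are self-dual.

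For the converse, let $M$ be an excluded minor. Since $\GF(4)$\dash representable matroids are closed under direct sums and $2$\dash sums, $M$ is $3$\dash connected; since binary matroids are $\GF(4)$\dash representable, $M$ is non-binary and hence, by Tutte's characterization, has a $U_{2,4}$\dash minor; and a rank-$2$ matroid is $\GF(4)$\dash representable exactly when its simple part has at most five points, so $r(M)\le 2$ forces $M=U_{2,6}$ and dually $r^{*}(M)\le 2$ forces $M=U_{4,6}$. Thus I may assume that $M$ is a $3$\dash connected non-binary matroid with $3\le r(M)\le|E(M)|-3$, and the task reduces to bounding $|E(M)|$ by $8$; the residual range $|E(M)|\le 8$ is then handled by a direct classification of $3$\dash connected non-binary matroids with $3\le r\le|E|-3$, which turns up precisely $P_6$, $F_7^-$, \nfd, $P_8$, and $P_8''$.

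The inductive step would rest on Kahn's theorem that every $3$\dash connected $\GF(4)$\dash representable matroid has a unique $\GF(4)$\dash representation (up to row operations, column scalings, and the field automorphism), together with the Splitter Theorem. Starting from a small $3$\dash connected $\GF(4)$\dash representable minor of $M$ containing $U_{2,4}$ (for instance $U_{2,4}$ itself, adjusted if needed to meet the hypotheses of the Splitter Theorem), I would build a chain $N_0,N_1,\dots,N_k=M$ of $3$\dash connected minors with $|E(N_{i+1})|=|E(N_i)|+1$. Each $N_i$ with $i<k$ is a proper minor of the excluded minor $M$, hence $\GF(4)$\dash representable, hence — by Kahn — uniquely so; in particular $N_{k-1}$, which is $M\del e$ or $M\con e$ for some $e$, has a unique $\GF(4)$\dash representation, say by a matrix $A$. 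Now $M$ is obtained from $N_{k-1}$ by a single-element extension (or coextension), described by a modular cut, and the associated elementary quotient in the opposite direction is $M\con e$ (respectively $M\del e$) — again a proper minor of $M$, hence $\GF(4)$\dash representable. The crux is to show that uniqueness of $A$, together with $\GF(4)$\dash representability of this companion minor, forces the existence of an admissible $\GF(4)$ column for $e$, so that $M$ itself is $\GF(4)$\dash representable; that contradiction establishes $|E(M)|\le 8$.

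I expect this crux, and the bookkeeping it generates, to be the main obstacle. Several things can go wrong and have to be controlled: a Splitter step may land on an $N_i$ that is $3$\dash connected but has lost its $U_{2,4}$\dash minor, or on one for which neither $M\del e$ nor $M\con e$ is $3$\dash connected (so the companion minor need not be uniquely representable); and even when $N_{k-1}$ is uniquely represented by $A$, the modular cut defining $M$ may simply fail to be realizable over $\GF(4)$ relative to $A$. In each such case the work is to prove that the obstruction \emph{localizes} — that it is already witnessed by a minor of bounded size — and then to verify, by hand or by a short computer search over small $\GF(4)$\dash matrices, that any such minor is one of the five matroids named above. The wheels and whirls, for which the Splitter Theorem provides no usable single-element move, are disposed of separately and trivially: they are $\GF(4)$\dash representable and hence never excluded minors. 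It is this layered case analysis, and not any single conceptual leap, that makes the proof long.
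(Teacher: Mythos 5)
A preliminary point: the paper does not prove this statement. Theorem~\ref{thm4} is quoted from Geelen, Gerards, and Kapoor~\cite{GGK}, and the paper's own contribution (the proof of Theorem~\ref{thm1}) is an adaptation of the \cite{GGK} argument to near-regularity. So your proposal has to be measured against the argument of~\cite{GGK}, whose structure Section~\ref{sec:reduction} of this paper mirrors closely.

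Your first two paragraphs are sound and match the standard reduction. The genuine gap is in your third paragraph, at exactly the step you call the ``crux,'' and it cannot be closed along the lines you sketch. You propose to take a single element $e$ with $M\del e$ (say) $3$\dash connected and uniquely represented by $A$, and to argue that $\GF(4)$\dash representability of the companion minor $M\con e$ forces an admissible column for $e$. But nothing in that argument uses $|E(M)|>8$: if it worked, it would prove that every $3$\dash connected nonbinary matroid all of whose proper minors are $\GF(4)$\dash representable is itself $\GF(4)$\dash representable, which $P_8$ itself refutes ($P_8$ is $3$\dash connected and nonbinary, and each of its single-element deletions and contractions is isomorphic to $P_7$ or $P_7^*$, hence $3$\dash connected and uniquely $\GF(4)$\dash representable). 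The underlying obstruction is that the pair $(M\del e, M\con e)$ does not determine $M$, and the modular cut defining the extension has no bounded-size certificate, so there is nothing to ``localize.''

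The mechanism that \cite{GGK} actually uses, and that you are missing, is a \emph{deletion pair} $u,v$ (their Theorem~3.1, quoted here as Lemma~\ref{lem:delpair}). Because $M\del u$ and $M\del v$ overlap in the stable matroid $M\del\{u,v\}$, Kahn-type uniqueness applied to the \emph{overlap} lets one glue representations of the two single-element deletions into one matrix $A$; its matroid $N$ satisfies $N\del u=M\del u$ and $N\del v=M\del v$ but $N\neq M$. The difference is then witnessed by a concrete $4$\dash element distinguishing set $\{a,b,u,v\}$ (the symmetric difference of the chosen basis with a set that is a basis of exactly one of $M$ and $N$), and the long blocking-sequence analysis confines this set, together with a twirl certifying nonbinarity, to a minor of at most eight elements for which $u,v$ is still a deletion pair; uniqueness of the glued matrix then forces that small minor to equal $M$. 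Without the two-element gluing there is no companion matroid $N$ and no finite object whose smallness can be played off against $|E(M)|>8$, so your induction has no way to terminate in a contradiction.
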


(Here $P_{6}$ is the rank\dash $3$ matroid with six elements,
and a triangle as its only non-spanning circuit.
Other matroids mentioned in the article are
defined in Section~\ref{excludedminors}.)
Since the class of sixth-roots-of-unity matroids is
exactly $\mcal{M}(\{\GF(3),\GF(4)\})$,
Theorem~\ref{thm4} leads to an excluded minor characterization
of the sixth-roots-of-unity matroids~\cite[Corollary~1.4]{GGK}.

In this article we consider the class of near-regular matroids,
which is exactly $\mcal{M}(\{\GF(3),\GF(4),\GF(5)\})$.
By adapting the proof of Theorem~\ref{thm4},
Geelen was able to characterize the excluded minors for
near-regularity.
However, this result remained unpublished until now.
We present a proof of Geelen's theorem.

\begin{thm}[Geelen]
\label{thm1}
The excluded minors for the class of near-regular matroids are
$U_{2,5}$, $U_{3,5}$, $F_{7}$, $F_{7}^{*}$, $F_{7}^{-}$, \nfd, \agde,
$(\agde)^{*}$, $\Delta_{T}(\agde)$, and $P_{8}$.
\end{thm}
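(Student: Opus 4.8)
We outline the plan of proof, which follows the structure of the proof of Theorem~\ref{thm4} in~\cite{GGK}, with the field $\GF(4)$ replaced throughout by the near\dash regular partial field $\nreg$ and with representability arguments recast in the language of partial fields. There are two halves: first, that each of the ten listed matroids is an excluded minor for near\dash regularity, and second, that there are no others.

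For the first half, we use that near\dash regularity is preserved by duality and by the segment--cosegment exchange $\Delta_{T}$ at a segment of the appropriate size, together with the evident symmetry between $U_{2,5}$ and $U_{3,5}$. This reduces the verification that none of the ten matroids is near\dash regular to the cases $U_{2,5}$, $F_{7}$, $F_{7}^{-}$, $P_{8}$ and $\agde$: the first two are not $\GF(3)$\dash representable and the last three are not $\GF(4)$\dash representable (the case of $P_{8}$ being part of Theorem~\ref{thm4}), so none of them, and hence none of the ten, is near\dash regular. It then remains to check that every proper minor of each of the ten is near\dash regular; since each listed matroid has at most eight elements this reduces to a finite computation, in which one exhibits near\dash regular representations of the simple, cosimple, $3$\dash connected proper minors and deduces the remaining cases from duality and closure of near\dash regularity under direct sums and $2$\dash sums.

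For the second half, suppose for a contradiction that $M$ is an excluded minor for near\dash regularity not isomorphic to any of the ten; we reduce $M$ to a very restricted form. First, $M$ is ternary: otherwise $M$ has a minor in $\{U_{2,5},U_{3,5},F_{7},F_{7}^{*}\}$ (the excluded minors for ternary matroids~\cite{Bix79,Sey79}), and as each of these is among the ten and is not near\dash regular, minor\dash minimality forces $M$ to equal one of them, a contradiction. Second, $M$ is $\GF(4)$\dash representable: near\dash regular matroids are $\GF(4)$\dash representable, so otherwise $M$ has a minor among the excluded minors for $\GF(4)$\dash representability of Theorem~\ref{thm4}; but $F_{7}^{-}$, $\nfd$ and $P_{8}$ are themselves among the ten, while each of $U_{2,6}$, $U_{4,6}$, $P_{6}$, $P_{8}''$ has a minor among the ten (for instance $U_{2,6}$ and $P_{6}$ have $U_{2,5}$\dash minors, $U_{4,6}$ has a $U_{3,5}$\dash minor, and $P_{8}''$ has an $F_{7}^{-}$\dash minor), so again minor\dash minimality forces $M$ into the list. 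Third, $M$ is $3$\dash connected: near\dash regularity is closed under direct sums and $2$\dash sums, so a matroid that is not $3$\dash connected is a direct sum or $2$\dash sum of two smaller matroids, each a minor of it and hence near\dash regular. Thus $M$ is a $3$\dash connected, ternary, $\GF(4)$\dash representable matroid---that is, a sixth\dash roots\dash of\dash unity matroid---that is not near\dash regular, although every proper minor of $M$ is near\dash regular.

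Since $M$ is ternary and not regular (regular matroids being near\dash regular), $M$ is not binary, and hence has a $U_{2,4}$\dash minor~\cite{Tuthom}. The plan is now to mimic the inductive structure of the proof of Theorem~\ref{thm4}. The key ingredient is that $U_{2,4}$ is a stabilizer for $\nreg$\dash representability within the ternary matroids, so that any $3$\dash connected near\dash regular matroid with a $U_{2,4}$\dash minor has an essentially unique $\nreg$\dash representation. Using a chain theorem---the only exceptional case, that $M$ is a whirl, being excluded because whirls are near\dash regular---we choose an element $e$ of $M$ with $M\del e$ or $M\con e$ both $3$\dash connected and having a $U_{2,4}$\dash minor; being a proper minor of $M$ it is near\dash regular and so carries its canonical $\nreg$\dash representation, and we then analyse how that representation can fail to extend across $e$ to an $\nreg$\dash representation of $M$ itself, using that $M$ is both $\GF(3)$\dash and $\GF(4)$\dash representable to constrain the possibilities. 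The main obstacle, which is the technical heart of the proof and requires an extensive case analysis (as well as care to keep the splitter induction finite, in particular to bound $|E(M)|$), is to show that this extension either exists---impossible, since $M$ is not near\dash regular---or else is obstructed by a minor of $M$ isomorphic to $\agde$, $(\agde)^{*}$ or $\Delta_{T}(\agde)$, these being the obstructions to near\dash regularity inside the sixth\dash roots\dash of\dash unity matroids. Granting this, each of these three matroids is among the ten and is not near\dash regular, so $M$ either equals one of them, contradicting the choice of $M$, or has one of them as a proper minor, contradicting that every proper minor of $M$ is near\dash regular. This contradiction completes the proof.
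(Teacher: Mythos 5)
The first half of your outline contains a factual error: $\agde$ \emph{is} $\GF(4)$\dash representable. Indeed $\ag{2,3}$ is representable over any field containing a root of $x^{2}-x+1$, and $\GF(4)$ contains such a root; moreover $\agde$ has none of the excluded minors of Theorem~\ref{thm4}. The correct certificate that $\agde$ is not near-regular is that it is not representable over $\GF(5)$ (equivalently, not over \qalpha), which is what Proposition~\ref{AG23} verifies; the same correction is needed for $(\agde)^{*}$ and $\Delta_{T}(\agde)$.

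The second half has a genuine gap that amounts to the whole theorem. You propose to remove a single element $e$ with $M\del e$ or $M\con e$ $3$\dash connected with a $U_{2,4}$\dash minor and to ``analyse how the representation fails to extend across $e$,'' asserting that the obstruction must be $\agde$, $(\agde)^{*}$ or $\Delta_{T}(\agde)$, ``these being the obstructions to near-regularity inside the sixth-roots-of-unity matroids.'' That last clause is a restatement of the theorem you are proving (restricted to $\mcal{M}(\{\GF(3),\GF(4)\})$), not something you may assume; and the single-element strategy gives no foothold, since $M\del e$ has a unique $\nreg$\dash representation but there is nothing to compare it against and hence no contradiction to extract. The paper instead removes a \emph{pair} $u,v$ (a deletion pair, Lemma~\ref{lem:delpair}), glues the unique representations of $M\del u$ and $M\del v$ into one \qalpha\dash matrix $A$ (Lemma~\ref{GGK2.2}), and obtains a companion matroid $N=M[I|A]$ with $N\neq M$ (via Lemma~\ref{lem:notQarep}); the resulting ``distinguishing set'' is what drives the blocking-sequence analysis of Section~\ref{sec:reduction} and yields the bound $\rank(M),\corank(M)\leq 4$ (Theorem~\ref{thm:finitesize}). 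Your outline contains neither this bound nor the ensuing finite case check of Section~\ref{sec:conclusion}, which is where $P_{8}$ and $\Delta_{T}(\agde)$ actually arise (via the \dy\ reduction to the triangle-free eight-element case and the five-case matrix analysis); notably $P_{8}$ never appears in your second half at all. As written, the proposal correctly establishes the preliminary reductions (ternary, $\GF(4)$\dash representable, $3$\dash connected, nonbinary, $U_{2,4}$ as stabilizer) but then asserts the conclusion where the proof should be.
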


We now give an informal outline of the proof.
The classes of regular, near-regular, sixth-roots-of-unity,
and dyadic matroids can all be characterized as the
matroids representable over a particular partial field.
Partial fields were introduced by Semple and Whittle~\cite{SW96}.
They are much like fields, except that addition is not always
defined.
If the subdeterminants of a matrix over a partial field
are all defined, then there is a corresponding matroid,
whose ground set consists of the rows and columns of the matrix.
Two matrices representing the same matroid are equivalent
if they are equal up to pivots, scaling, and applications
of partial field automorphisms.
Kahn~\cite{Ka88} showed that a stable matroid is uniquely representable
over $\GF(4)$, up to equivalence, and this fact plays a
crucial role in the proof of Theorem~\ref{thm4}.
(A stable matroid is one that cannot be expressed as a direct sum
or a $2$\dash sum of two nonbinary matroids.)

In order to proceed with our proof, we must establish a
similar uniqueness of representations for near-regular matroids.
For this purpose we use Whittle's tool of stabilizers~\cite{Whi96b}.
In Section~\ref{sct:unique} we prove an analogue of
Kahn's theorem by showing that a stable near-regular
matroid is uniquely representable over the near-regular partial
field.

We reduce the proof of
Theorem~\ref{thm1} to a finite case check by
proving that any excluded minor for near-regularity
has at most eight elements.
We suppose that $M$ is a counterexample to this
proposition.
Theorem~3.1 in~\cite{GGK} shows that
there are elements $u$ and $v$, such that $M \del u$,
$M \del v$, and $M \del \{u,v\}$ are all stable, and
$M \del \{u,v\}$ is connected and nonbinary.
At this point, Geelen et al.~construct the
unique $\GF(4)$\dash representable matroid $N$ such that
$M \del u = N \del u$ and $M \del v = N \del v$.
Our proof is slightly different, in that our
matroid $N$ need not be near-regular.
However, $N$ is representable over the field \qalpha,
as is every near-regular matroid.
Whittle's characterization reveals that the
counterexample $M$ cannot be \qalpha\dash representable,
so $M$ and $N$ are genuinely different.

The core of the proof is contained in
Section~\ref{sec:reduction}.
This part of the proof follows the proof of
Theorem~\ref{thm4} very closely, only deviating when that
proof calls upon the structure of $\GF(4)$.
We are advantaged here by the fact that our counterexample
must be ternary.
In the proof of Theorem~\ref{thm4}, there is no
a priori reason why the counterexample need be
representable over any field.
Our fundamental tool is the uniqueness of the
matroid $N$.
Suppose that $M'$ is some small proper (and hence
near-regular) minor of $M$, such that
$M'\del u$, $M'\del v$, and $M'\del \{u,v\}$ are all
stable, and $M'\del \{u,v\}$ is connected and nonbinary.
By using the same technique as before, we can construct a
\qalpha\dash representable matroid $N'$ such that
$M'\del u = N'\del u$ and $M'\del u = N'\del v$.
The uniqueness of $N$ guarantees that $N'$ is the minor of
$N$ that corresponds to $M'$, and that $M'=N'$.
If we can find some certificate that $M'$ and $N'$ are not equal,
then we have arrived at a contradiction.
This contradiction forces us to conclude that $M'$ is
not near-regular, and that therefore $M'=M$.
Because we have a bound on the size of $M'$, this
induces a bound on the size of $M$.

In order to invoke the uniqueness of $N$,
certain connectivity conditions have to be satisfied.
To obtain these conditions we use blocking
sequences, which we review in Section~\ref{sec:machinery}.

Once we have completed the work of
Section~\ref{sec:reduction}, finishing the proof
is relatively straightforward.
In Section~\ref{sec:conclusion} we first introduce
the matroids listed in Theorem~\ref{thm1}, and
we show that they are in fact excluded minors for the
class of near-regular matroids.
Then it remains only to perform the finite case-check.
All undefined matroid terms are as in
Oxley~\cite{oxley}.

\section{Preliminaries}\label{prelim}

\subsection{Partial fields}\label{sec:partialfields}
The classes of regular, near-regular, dyadic, and
sixth-roots-of-unity matroids have a common characteristic:
for every such class, there is a field $\field$, and a
subgroup $\group$ of $\field^{*}$, such that a matroid
belongs to the class if and only if it can be represented
by a matrix $A$ over $\field$, where all the
nonzero subdeterminants of $A$ belong to $\group$.
Partial fields provide a unified framework for studying this
phenomenon.
They were introduced by Semple and Whittle~\cite{SW96},
and studied further by Pendavingh and Van Zwam~\cite{PZ08conf,PZ08lift}.

Semple and Whittle developed partial fields axiomatically.
We treat them somewhat differently: Vertigan showed that every
partial field can be thought of as a ring along with a subgroup
of units (see~\cite[ Theorem~2.16]{PZ08lift}),
and we use this description as our definition.

\begin{dfn}
A \emph{partial field} is a pair $(\ring, \group)$, where $\ring$
is a commutative ring with identity, and $\group$ is a subgroup of the
group of units $\ring^*$ of $\ring$, such that $-1 \in \group$.
\end{dfn}

Note that every field $\field$ is also a partial field
$(\field, \field^{*})$.
Suppose that $\parf= (\ring,\group)$ is a partial field.
We also use \parf\ to denote the set $\group \cup 0$, so
we say that $p \in \ring$ is an element of $\parf$ (and we
write $p \in \parf$), if $p \in \group$ or $p = 0$.
Thus, $p + q$ may not be an element of $\parf$, even though
both $p$ and $q$ are contained in $\parf$.
If $p+q$ is in \parf, then we say that $p+q$ is \emph{defined}.
We use $\parf^{*}$ to denote the set of nonzero elements of
$\parf$; thus $\parf^{*} = \group$.

\begin{dfn}
\label{dfn6}
Suppose that $\parf$ is a partial field.
We say that $p \in \parf$ is a \emph{fundamental element}
if $1 - p \in \parf$.
\end{dfn}

Note that $p+q$ is defined if and only if $-q/p$ is a
fundamental element, since $p+q = p ( 1 - (-q/p))$.

\begin{dfn}
Suppose that $\parf_{1}$ and $\parf_{2}$
are partial fields.
A function $\psi: \parf_1 \rightarrow \parf_2$ is a \emph{partial-field homomorphism} if
  \begin{enumerate}
    \item $\psi(1) = 1$;
    \item for all $p, q \in \parf_1$, $\psi(pq) = \psi(p)\psi(q)$; and
    \item for all $p, q \in \parf_1$ such that $p+q$ is defined, $\psi(p) + \psi(q) = \psi(p+q)$.
  \end{enumerate}
\end{dfn}
In particular, if $\parf_1 = (\ring_1,\group_1)$, $\parf_2 = (\ring_2,\group_2)$, and $\psi:\ring_1\rightarrow\ring_2$ is a ring homomorphism such that $\psi(\group_1)\subseteq\group_2$, then the restriction of $\psi$ to $\parf_{1}$ is a partial-field homomorphism. It is easy to verify that if $\psi$ is a partial-field homomorphism then $\psi(0) = 0$ and $\psi(-1) = -1$.

A partial field \emph{isomorphism} from $\parf_{1}$ to
$\parf_{2}$ is a bijective homomorphism $\psi$ with the
additional property that $\psi(p) + \psi(q)$ is defined
if and only if $p + q$ is defined.
We use $\parf_{1} \cong \parf_{2}$ to denote the fact that
$\parf_{1}$ and $\parf_{2}$ are isomorphic.
An \emph{automorphism} of a partial field $\parf$ is an
isomorphism from $\parf$ to itself.

\subsection{Representation matrices}
Suppose that $A$ is a matrix with entries from a partial field
$\parf$, and that the rows and columns of $A$ are labeled by
the (ordered) sets $X$ and $Y$ respectively, where $X \cap Y = \emptyset$.
If the determinant of every square submatrix of $A$
is contained in $\parf$, then we say that $A$ is
an \emph{$X \times Y$ $\parf$\dash matrix}.
If $A$ is a $\parf$\dash matrix, then the \emph{rank} of
$A$, written $\matrank(A)$, is the largest value $k$ such that
$A$ contains a nonzero $k \times k$ subdeterminant.

Since we will frequently work with submatrices, it is useful to introduce some notation. If $X' \subseteq X$ and $Y' \subseteq Y$, then $A[X', Y']$ is the submatrix of $A$ induced by $X'$ and $Y'$. In particular, we define $A_{xy}$
to be $A[\{x\},\{y\}]$.
If $Z\subseteq X\cup Y$, then $A[Z] := A[Z\cap X, Z\cap Y]$, and $A-Z := A[X\setminus Z, Y\setminus Z]$. If $A$ is a matrix over the partial field $\parf$, and
$\psi$ is a function on $\parf$, then $\psi(A)$ is obtained
by operating on each entry in $A$ with $\psi$.
The following theorem follows from~\cite[Theorem~3.6]{SW96}
(see also~\cite[Theorem~2.8]{PZ08conf}).

\begin{lem}
\label{thm3}
Let $\parf$ be a partial field, and let $A$ be an
$X \times Y$ $\parf$\dash matrix.
Let
\begin{displaymath}
\mcal{B} := \{X\} \cup \big\{ X \symdiff Z \,\,\big|\,\,
|X\cap Z| = |Y\cap Z|,\, \det(A[Z]) \neq 0\big\}.
\end{displaymath}
Then $\mcal{B}$ is the set of bases of a matroid on $X \cup Y$.
\end{lem}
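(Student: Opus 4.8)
The plan is to verify the basis axioms for $\mcal B$ directly, working with the extended matrix $[I_X \mid A]$ whose columns are indexed by $X \cup Y$: the column indexed by $x \in X$ is the coordinate vector $\mathbf e_x$ (supported on row $x$), and the column indexed by $y \in Y$ is the $y$\dash column of $A$. For $B \subseteq X \cup Y$ with $|B| = |X|$, let $M_B$ denote the square submatrix of $[I_X \mid A]$ formed by the columns indexed by $B$. First I would settle cardinality and reformulate the definition of $\mcal B$. If $|X \cap Z| = |Y \cap Z|$ then $|X \symdiff Z| = |X|$, so every member of $\mcal B$ has size $|X|$; conversely every $B$ with $|B| = |X|$ satisfies $|X \cap (X \symdiff B)| = |Y \cap (X \symdiff B)|$. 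Expanding $\det(M_B)$ by Laplace along the columns indexed by $B \cap X$ (each a coordinate vector) leaves the complementary minor $\det(A[X \setminus B,\, B \cap Y]) = \det(A[X \symdiff B])$, so $\det(M_B) = \pm\det(A[X \symdiff B])$; as the cardinalities balance, this is a genuine square subdeterminant of $A$, and hence lies in $\parf$. Thus $\mcal B = \{\, B \subseteq X \cup Y : |B| = |X|,\ \det(M_B) \neq 0 \,\}$, and $X \in \mcal B$, so $\mcal B \neq \emptyset$.

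The substance is the exchange axiom: given $B_1, B_2 \in \mcal B$ and $x \in B_1 \setminus B_2$, I must find $y \in B_2 \setminus B_1$ with $(B_1 \setminus x) \cup y \in \mcal B$. Since $\det(M_{B_1}) \in \group \subseteq \ring^{*}$ is a unit, $M_{B_1}$ is invertible over $\ring$, so each $w \in B_2$ determines a unique vector $\mathbf c^{(w)}$ indexed by $B_1$ with $M_{B_1}\,\mathbf c^{(w)}$ equal to the $w$\dash column of $[I_X \mid A]$. For $w \in B_1 \cap B_2$ this is just the coordinate vector $\mathbf e_w$; for $w \in B_2 \setminus B_1$, Cramer's rule over $\ring$ gives $c^{(w)}_z = \pm\det(M_{(B_1 \setminus z)\cup w})/\det(M_{B_1})$, a quotient of an element of $\parf$ by an element of $\group$, hence an element of $\parf$. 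Rearranging, $\det(M_{(B_1 \setminus z)\cup w}) = \pm\det(M_{B_1})\,c^{(w)}_z$, so (when $w \notin B_1$) one has $(B_1 \setminus z)\cup w \in \mcal B$ if and only if $c^{(w)}_z \neq 0$.

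Now stack the vectors $\mathbf c^{(w)}$ $(w \in B_2)$ as the columns of a square matrix $C$ over $\ring$, so that $M_{B_2} = M_{B_1} C$, whence $\det(M_{B_1})\det(C) = \det(M_{B_2}) \neq 0$ and therefore $\det(C) \neq 0$. The Leibniz expansion of $\det(C)$ then contains a nonzero term, i.e.\ there is a bijection $\pi \colon B_2 \to B_1$ with $c^{(w)}_{\pi(w)} \neq 0$ for every $w$. Put $y := \pi^{-1}(x) \in B_2$. If $y$ lay in $B_1 \cap B_2$ then $\mathbf c^{(y)} = \mathbf e_y$, and $c^{(y)}_x \neq 0$ would force $x = y \in B_2$, contradicting $x \in B_1 \setminus B_2$; hence $y \in B_2 \setminus B_1$. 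Since $c^{(y)}_x \neq 0$ and $y \notin B_1$, we obtain $(B_1 \setminus x) \cup y \in \mcal B$, which is exactly the exchange we needed, so $\mcal B$ is the set of bases of a matroid.

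I expect the only real fiddliness to be the Laplace/Cramer bookkeeping: keeping track of signs, and, above all, checking that each index set $X \symdiff B$ arising in the argument does satisfy the cardinality balance that makes $A[X \symdiff B]$ a square submatrix of $A$. Conceptually, nothing beyond the partial-field axioms is used: every scalar that matters is obtained from a subdeterminant of $A$ (which is defined, by hypothesis) by dividing by an element of $\group$, so no potentially undefined sum is ever formed, and $-1 \in \group$ absorbs the signs. (Alternatively, one can route the whole argument through pivoting: a single pivot on a nonzero entry $A_{xy}$ yields a $\parf$\dash matrix $A^{xy}$ with the same associated family $\mcal B$, because each subdeterminant of $A^{xy}$ is $\pm A_{xy}^{-1}$ times a subdeterminant of $A$; a sequence of pivots then reduces the exchange axiom to the trivial case $B_1 = X$.)
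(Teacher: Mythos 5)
Your argument is correct. Note, however, that the paper does not prove this lemma at all: it is quoted as a consequence of Theorem~3.6 of Semple and Whittle's paper on partial fields (and of Theorem~2.8 of Pendavingh--Van Zwam), so what you have written is a self-contained replacement for an external citation rather than an alternative to an in-paper argument. Your route --- reformulating $\mcal{B}$ as $\{B : |B|=|X|,\ \det(M_B)\neq 0\}$ via Laplace expansion of $[I_X\mid A]$, then verifying basis exchange by inverting $M_{B_1}$ over $\ring$ (legitimate because $\det(M_{B_1})\in\group\subseteq\ring^{*}$), writing $M_{B_2}=M_{B_1}C$, and extracting a nonzero Leibniz term of $\det(C)$ --- is the standard linear-algebra proof of the exchange axiom, transplanted to a commutative ring. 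The two points that make the transplant work are exactly the ones you flag: every determinant appearing is $\pm$ a square subdeterminant of $A$ (the cardinality bookkeeping $|X\setminus B|=|B\cap Y|$), and the only division performed is by the unit $\det(M_{B_1})$, so no undefined partial-field sum is ever formed. Indeed, as you implicitly observe, the conclusion that $c^{(w)}_z\in\parf$ is not even needed; nonvanishing in $\ring$ suffices, which is why the argument goes through with Vertigan's ring-theoretic definition of a partial field without invoking the original axiomatic one. The parenthetical pivoting alternative is also sound and is essentially how Semple and Whittle organise their proof, but your direct verification is complete as it stands.
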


Let $M$ be the matroid of Lemma~\ref{thm3}.
We say that $M$ is \emph{representable} over $\parf$,
or is \emph{$\parf$\dash representable}, and we say that
$M$ is \emph{represented} by $A$.
We remark that this terminology is not standard:
the usual convention is that a matroid represented
by a matrix $A$ has the set of columns of $A$
as its ground set.
Throughout this article, when we say that $M$ is
represented by $A$, we mean that $M$ is the matroid of
Lemma~\ref{thm3}, so the ground set of $M$ is
the set of rows and columns of $A$, and the set of rows of $A$
is a basis of $M$.
If $M$ is represented by $A$ (in our sense), then it is
represented (in the standard sense) by the matrix
obtained from $A$ by appending an $|X|\times |X|$
identity matrix.
For this reason, we write $M = M[I|A]$ if
$A$ is a $\parf$\dash matrix, and $M$ is the
matroid in Lemma~\ref{thm3}.
If $A$ is an $X\times Y$ $\parf$\dash matrix, and
$M$ is the matroid represented by $A$, then $M^{*}$ is
represented by $A^{T}$, the transpose of $A$, where the rows
and columns of $A^{T}$ are labeled with $Y$ and $X$ respectively.

\begin{prop}
\label{prop9}
\tup{\cite[Proposition~4.2]{SW96}}.
Let $\parf$ be a partial field.
The class of $\parf$\dash representable matroids is closed
under duality, taking minors, direct sums, and $2$\dash sums.
\end{prop}

The next result follows from~\cite[Proposition~5.1]{SW96}
or~\cite[Proposition~2.10]{PZ08lift}.

\begin{prop}
\label{prop:hom}Let $\parf_1, \parf_2$ be partial fields and let $\psi: \parf_1 \rightarrow \parf_2$ be a homomorphism.
Let $A$ be a $\parf_1$\dash matrix. Then
\begin{enumerate}
\item $\psi(A)$ is a $\parf_2$\dash matrix;
\item If $A$ is square then $\det(A) = 0$ if and only if
$\det(\psi(A)) = 0$; and
\item $M[I|A] = M[I|\psi(A)]$.
\end{enumerate}
\end{prop}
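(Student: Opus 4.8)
The plan is to establish parts (i) and (ii) simultaneously by induction on the size of a square submatrix, and then deduce part (iii) as a formal consequence. First I would observe that the heart of the matter is the following claim: for every square submatrix $B$ of $A$, $\det(B)$ is an element of $\parf_1$ (this we already know, since $A$ is a $\parf_1$\dash matrix), and moreover $\psi(\det(B)) = \det(\psi(B))$. Granting this claim, part (i) is immediate: every subdeterminant of $\psi(A)$ has the form $\det(\psi(B)) = \psi(\det(B))$, which lies in $\psi(\parf_1) \subseteq \parf_2$ because $\det(B) \in \parf_1$ and $\psi$ maps $\parf_1$ into $\parf_2$. Part (ii) is equally immediate once we recall that a partial\dash field homomorphism satisfies $\psi(p) = 0$ if and only if $p = 0$ (the forward direction because $\psi$ is multiplicative and $\psi(1)=1$, so no nonzero element of the group $\group_1$ can map to a non\dash unit, in particular to $0$; the reverse direction because $\psi(0)=0$). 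Hence $\det(\psi(B)) = \psi(\det(B)) = 0$ exactly when $\det(B) = 0$.

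So the work is in the claim $\psi(\det(B)) = \det(\psi(B))$. I would prove this by induction on $k$, the order of $B$. For $k = 1$ it is trivial since $\psi$ acts entrywise. For the inductive step, expand $\det(B)$ along a fixed row using cofactor expansion: $\det(B) = \sum_{j} (-1)^{i+j} B_{ij} \det(B_{\hat{\imath}\hat{\jmath}})$, where $B_{\hat{\imath}\hat{\jmath}}$ is the minor obtained by deleting row $i$ and column $j$. The subtlety, and the step I expect to be the main obstacle, is that $\psi$ is only a \emph{partial}\dash field homomorphism: it respects a sum $p + q$ only when $p+q$ is \emph{defined} in $\parf_1$. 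I must therefore argue that every partial sum appearing in this cofactor expansion is defined. This is where we use that $A$ (hence $B$) is a $\parf_1$\dash matrix: each partial sum $\sum_{j \le j_0} (-1)^{i+j} B_{ij}\det(B_{\hat{\imath}\hat{\jmath}})$ is, up to sign, itself the determinant of a submatrix of $A$ — namely the submatrix obtained by replacing the entries of row $i$ in columns $j > j_0$ by zeros — and that determinant lies in $\parf_1$ by hypothesis, so each successive partial sum is defined. (Here I use that each $B_{ij}\det(B_{\hat{\imath}\hat{\jmath}})$ is a product of elements of $\parf_1$, hence in $\parf_1$; that $-p \in \parf_1$ whenever $p \in \parf_1$, since $-1 \in \group_1$; and the telescoping observation that once $p$ and $p + q$ lie in $\parf_1$, the sum $p + q$ is defined by definition of ``defined''.) With all the partial sums defined, properties (ii) and (iii) of a partial\dash field homomorphism let me push $\psi$ through the expansion term by term, and the inductive hypothesis applied to each $\det(B_{\hat{\imath}\hat{\jmath}})$ (a smaller minor of $A$) finishes the step: $\psi(\det(B)) = \sum_j (-1)^{i+j}\psi(B_{ij})\psi(\det(B_{\hat{\imath}\hat{\jmath}})) = \sum_j (-1)^{i+j}\psi(B)_{ij}\det(\psi(B)_{\hat{\imath}\hat{\jmath}}) = \det(\psi(B))$.

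Finally, for part (iii) I would appeal to Lemma~\ref{thm3}: the bases of $M[I|A]$ are determined by which subdeterminants $\det(A[Z])$ are nonzero, and by part (ii) applied to each square submatrix $A[Z]$ (with $|X \cap Z| = |Y \cap Z|$), we have $\det(A[Z]) \ne 0$ if and only if $\det(\psi(A)[Z]) = \det(\psi(A[Z])) \ne 0$. Hence $M[I|A]$ and $M[I|\psi(A)]$ have the same collection $\mcal{B}$ of bases, so they are the same matroid. Since the underlying result is cited from~\cite{SW96,PZ08lift}, I would keep this last paragraph brief, emphasizing only that the translation from ``bases'' to ``nonzero subdeterminants'' in Lemma~\ref{thm3} is exactly what makes part (ii) suffice.
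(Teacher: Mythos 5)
Your reductions are fine: (i) and (ii) do follow from the claim $\psi(\det(B))=\det(\psi(B))$ for every square submatrix $B$ (together with the observation that $\psi$ sends nonzero elements to nonzero elements), and (iii) then follows from Lemma~\ref{thm3}. The genuine gap is in your inductive step. You justify the definedness of the partial sums of the cofactor expansion by saying that each partial sum is ``up to sign, itself the determinant of a submatrix of $A$ --- namely the submatrix obtained by replacing the entries of row $i$ in columns $j>j_0$ by zeros.'' A matrix obtained by overwriting entries of $A$ with zeros is \emph{not} a submatrix of $A$, so the hypothesis that $A$ is a $\parf_1$\dash matrix gives no control over its determinant; and in fact the partial sums need not be defined at all. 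Take $\parf_1$ to be the regular partial field $(\Z,\{1,-1\})$, whose $\parf_1$\dash matrices are exactly the totally unimodular matrices, and
\[
B=\begin{bmatrix} 1 & 0 & 1\\ 1 & 1 & 1\\ 0 & 1 & 1 \end{bmatrix},
\]
which is totally unimodular with $\det(B)=1$. Expanding along the second row, the three terms are $1$, $1$, $-1$, so the second partial sum is $2$: equivalently, the matrix obtained from $B$ by zeroing the last entry of row $2$ has determinant $2$, which is not an element of the partial field. Hence $1+1$ is not defined in $\parf_1$, and nothing licenses pushing a homomorphism (say, to $\GF(3)$) through the expansion term by term in this order.

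The proposition is of course true, and the paper does not prove it but cites \cite[Proposition~5.1]{SW96} and \cite[Proposition~2.10]{PZ08lift}; the standard argument replaces your row expansion by an induction on the order of $B$ via pivoting (Definition~\ref{def:pivot}). If $B_{xy}\neq 0$, then every entry of $B^{xy}$ is, up to a factor from $\group_1$, either an entry of $B$ or the $2\times 2$ subdeterminant $\det(B[\{x,u\},\{y,v\}])$ (indeed $B_{uv}-B_{xy}^{-1}B_{uy}B_{xv}=B_{xy}^{-1}\det(B[\{x,u\},\{y,v\}])$), so the only sums ever encountered are unit multiples of honest subdeterminants of $B$, which are defined; consequently $\psi(B^{xy})=(\psi(B))^{xy}$. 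Combining this with the Schur identity $\det(B)=\pm B_{xy}\det(B^{xy}-\{x,y\})$ and induction on the order yields $\psi(\det(B))=\det(\psi(B))$ without invoking any undefined sum. If you prefer to keep a cofactor-style argument, you must order and group the terms so that every intermediate sum is a unit multiple of a genuine subdeterminant --- which is essentially what the pivoting argument does for you.
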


\begin{dfn}\label{def:pivot}Let $A$ be an $X\times Y$ $\parf$\dash matrix, and let $x \in X$, $y \in Y$ be such that $A_{xy} \neq 0$. Then we define $A^{xy}$ to be the $(X \symdiff \{x,y\}) \times (Y\symdiff \{x,y\})$ matrix given by
\begin{displaymath}
  (A^{xy})_{uv} =
\begin{cases}
    A_{xy}^{-1} \quad & \textrm{if } uv = yx\\
    A_{xy}^{-1} A_{xv} & \textrm{if } u = y, v\neq x\\
    -A_{xy}^{-1} A_{uy} & \textrm{if } v = x, u \neq y\\
    A_{uv} - A_{xy}^{-1} A_{uy} A_{xv} & \textrm{otherwise.}
\end{cases}
\end{displaymath}
\end{dfn}
We say that $A^{xy}$ is obtained from $A$ by \emph{pivoting} over $xy$.
Note that after pivoting, $x$ labels a column, and $y$ labels
a row.
Suppose that $\parf$ is a partial field and that $A$
is an $X \times Y$ $\parf$\dash matrix.
\emph{Scaling} means multiplying the rows or columns
of $A$ by nonzero members of $\parf$.
The next result is
Proposition~3.3 in~\cite{SW96}, or
Proposition~2.5 in~\cite{PZ08lift}.

\begin{prop}
\label{prop11}
If $A$ is a $\parf$\dash matrix, and $A'$ is obtained from $A$ by
scaling and pivoting, then $A'$ is a $\parf$\dash matrix.
\end{prop}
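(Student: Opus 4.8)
The plan is to handle a single scaling operation and a single pivot separately, since an arbitrary sequence of scalings and pivots is a composition of these. Scaling is immediate: if $A'$ is obtained from the $\parf$\dash matrix $A$ by multiplying some row or column by $\alpha \in \parf^{*} = \group$, then every square submatrix of $A'$ is a square submatrix of $A$ with at most one line scaled by $\alpha$, so its determinant is either a subdeterminant of $A$ or $\alpha$ times one. As $\group$ is a group, $\parf = \group \cup \{0\}$ is closed under multiplication, so every subdeterminant of $A'$ lies in $\parf$.

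The substance is the pivot. Suppose $A$ is an $X \times Y$ $\parf$\dash matrix with $A_{xy} \neq 0$, and set $A' := A^{xy}$; note that $A'$ has entries in $\ring$, since the formula in Definition~\ref{def:pivot} uses only ring operations together with $A_{xy}^{-1}$, which exists because $A_{xy} \in \group \subseteq \ring^{*}$. The key step is the determinantal identity: for every $Z' \subseteq (X \symdiff \{x,y\}) \cup (Y \symdiff \{x,y\})$ with $|Z' \cap (X \symdiff \{x,y\})| = |Z' \cap (Y \symdiff \{x,y\})|$,
\begin{displaymath}
\det\big(A'[Z']\big) = \pm\, A_{xy}^{-1}\,\det\big(A[Z' \symdiff \{x,y\}]\big).
\end{displaymath}
Granting this, the proof is finished: the right-hand side is $\pm A_{xy}^{-1}$ times a subdeterminant of $A$ (note that $Z' \symdiff \{x,y\}$ is balanced between $X$ and $Y$), hence an element of $\group\cup\{0\}$ multiplied by $A_{xy}^{-1}\in\group$ and a sign $-1\in\group$, which lies in $\group \cup \{0\} = \parf$. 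Taking $Z'$ to be singletons shows in addition that every entry of $A'$ lies in $\parf$.

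To establish the identity I would pass to the augmented matrix $[I\,|\,A]$, whose columns are indexed by $X \cup Y$, and whose $|X| \times |X|$ subdeterminants, indexed by the column sets $B$ with $|B| = |X|$, equal $\pm\det(A[X \symdiff B])$ by Laplace expansion along the identity columns in $B \cap X$ (this is the encoding underlying Lemma~\ref{thm3}). Pivoting over $xy$ amounts to left-multiplying $[I\,|\,A]$ by $P^{-1}$, where $P$ is the square submatrix of $[I\,|\,A]$ on the column set $(X \setminus \{x\}) \cup \{y\}$, and then relabelling rows and columns; a cofactor expansion gives $\det(P) = \pm A_{xy}$. Left multiplication by $P^{-1}$ scales every maximal subdeterminant by $(\det P)^{-1} = \pm A_{xy}^{-1}$, and the maximal subdeterminants of $P^{-1}[I\,|\,A]$ encode the subdeterminants of $A' = A^{xy}$ exactly as those of $[I\,|\,A]$ encode those of $A$, so the displayed identity drops out once one matches the column set $B$ with $Z' \symdiff \{x,y\}$. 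The one genuinely fiddly point, and the main obstacle, is the bookkeeping of signs and the precise correspondence of index sets through the relabelling; since we need only membership in $\parf$ rather than an exact sign, these signs are harmless and can be absorbed into the ``$\pm$''.
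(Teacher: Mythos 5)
Your argument is correct. Note that the paper does not prove this proposition at all — it cites Proposition~3.3 of Semple--Whittle and Proposition~2.5 of Pendavingh--Van Zwam — and your proof is essentially the standard one from those references under the ring-theoretic definition of a partial field used here: scaling is immediate from closure of $\group\cup\{0\}$ under multiplication, and for the pivot the determinant identity $\det(A^{xy}[Z'])=\pm A_{xy}^{-1}\det(A[Z'\symdiff\{x,y\}])$ holds as an identity in the commutative ring $\ring$ (where $P$ is invertible since $\det P=\pm A_{xy}\in\ring^{*}$), so every subdeterminant of $A^{xy}$ lands in $\group\cup\{0\}$.
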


\begin{dfn}
\label{dfn3}
Let $\parf$ be a partial field, and let $A, A'$ be
$\parf$\dash matrices.
Then $A$ and $A'$ are \emph{scaling-equivalent} if $A'$ can
be obtained from $A$ by scaling.
If $A'$ can be obtained from $A$ by scaling, pivoting,
permuting columns and rows (permuting labels at the same time),
and applying automorphisms of $\parf$, then we say that $A$
and $A'$ are \emph{equivalent}.
\end{dfn}

The next result follows easily from~\cite[Proposition~3.5]{SW96}
and Proposition~\ref{prop:hom}.

\begin{prop}
\label{prop13}
Suppose that $A$ and $A'$ are equivalent $\parf$\dash matrices.
Then $M[I|A] = M[I|A']$.
\end{prop}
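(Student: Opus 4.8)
The plan is to peel off the operations in Definition~\ref{dfn3} one at a time. By definition, equivalence is generated by four elementary moves: scaling a row or column by a nonzero element of $\parf$, pivoting over an entry, permuting rows and columns (carrying the labels along), and applying an automorphism of $\parf$. So it suffices to show that each individual move leaves the matroid of Lemma~\ref{thm3} unchanged; the statement for a general equivalence then follows by induction on the length of a sequence of moves realizing it.

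For scaling and permutation the claim is immediate from the description of the bases in Lemma~\ref{thm3}. If $A'$ arises from $A$ by scaling rows and columns by units of $\parf$ and by permuting rows and columns, then $A$ and $A'$ share the same label sets $X$ and $Y$, and for every $Z \subseteq X \cup Y$ with $|X \cap Z| = |Y \cap Z|$ the matrix $A'[Z]$ is obtained from $A[Z]$ by scaling and permuting its rows and columns; hence $\det(A'[Z])$ and $\det(A[Z])$ differ by a factor in $\parf^{*}$ (a sign times a product of the scalars used), so one vanishes exactly when the other does. Therefore the family $\mcal{B}$ of Lemma~\ref{thm3} is literally the same for $A$ and $A'$, and $M[I|A] = M[I|A']$. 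For an automorphism $\psi$ of $\parf$, I would simply note that $\psi$ is in particular a partial\dash field homomorphism $\parf \to \parf$, so Proposition~\ref{prop:hom}\tup{(iii)} gives $M[I|\psi(A)] = M[I|A]$ at once. (Scaling, permuting, and pivoting are exactly the content of \cite[Proposition~3.5]{SW96}, which one could cite in place of the first two arguments, as the authors indicate.)

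The substantive case is pivoting: assuming $A_{xy} \neq 0$ and $A' = A^{xy}$, I want $M[I|A] = M[I|A']$. Conceptually this is the statement that a pivot on a representation matrix corresponds to a change of basis in the matroid: viewing $M[I|A]$ as the matroid on the columns of $[I|A]$, with the identity columns labeled by $X$, one scales row $x$ so that the $(x,y)$ entry becomes $1$, uses that row to clear the rest of column $y$, and relabels so that the columns indexed by $X \symdiff \{x,y\}$ form the new identity block; a short Schur\dash complement computation identifies the remaining block as precisely $A^{xy}$, and left multiplication by an invertible matrix together with a column permutation both preserve the matroid of linear dependences. The point that requires care — and the reason I would lean on \cite[Proposition~3.5]{SW96} rather than reproduce the argument in full — is that over a partial field these intermediate row operations need not keep every entry inside $\parf$, so the clean formulation is a direct comparison of the two basis families $\mcal{B}(A)$ and $\mcal{B}(A^{xy})$ via the cofactor/Schur identity relating the subdeterminants of $A$ to those of $A^{xy}$; Proposition~\ref{prop11} guarantees a posteriori that $A^{xy}$ is itself a $\parf$\dash matrix, so Lemma~\ref{thm3} applies to it. Combining the pivot case with the scaling, permutation, and automorphism cases completes the proof.
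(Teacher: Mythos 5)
Your proposal is correct and follows the same route as the paper, which simply derives the result from \cite[Proposition~3.5]{SW96} (covering scaling, permutation, and pivoting) together with Proposition~\ref{prop:hom} for the automorphism case. You fill in the routine determinant computations for scaling and permutation and correctly identify that the pivot case is the one requiring the subdeterminant identity from the cited proposition, so nothing is missing.
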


\begin{dfn}
  Let $M$ be a matroid and suppose that $\parf$ is a partial field. We say that $M$ is \emph{uniquely representable over $\parf$} if, whenever $A, A'$ are $\parf$\dash matrices such that $M = M[I|A] = M[I|A']$, then $A$ and $A'$ are
equivalent.
\end{dfn}

\subsection{Bipartite graphs and twirls}
Let $M$ be a rank\dash $r$ matroid with ground set $E$, and let
\mcal{B} be its set of bases.
Suppose that $B \in \mcal{B}$.
Let $G_B(M) = (V,E)$ be the bipartite graph with vertices $V := B \cup (E\setminus B)$ and edges $E := \{(x,y) \in B\times (E\setminus B) \mid B\symdiff \{x,y\} \in \mathcal{B}\}$.

Let $A$ be an $X\times Y$ matrix.
We associate with $A$ a bipartite graph $\bip(A) = (V,E)$, where
$V: = X \cup Y$ and $E := \{(x,y) \in X\times Y \mid A_{xy} \neq 0\}$.
Thus each edge, $e$, of $\bip(A)$ corresponds to a nonzero entry,
$A_{e}$, of $A$.
We note here that if $A_{xy} \neq 0$, and $y'$ and $x'$ are
neighbors of $x$ and $y$ respectively such that
$y'$ and $x'$ are not adjacent in $\bip(A)$, then
$y'$ and $x'$ are adjacent in $\bip(A^{xy})$.

\begin{lem}\label{lem:bipproperties}
Let $\parf$ be a partial field, $A$ an $X\times Y$ $\parf$\dash matrix,
and let $M=M[I|A]$.
\begin{enumerate}
\item \label{eq:bipnonzeroentries}$G_X(M) = \bip(A)$.
\item \label{eq:bipscaling}Let $T$ be a forest of $\bip(A)$ with edges $e_1, \ldots, e_k$. Suppose that $p_1, \ldots, p_k$ are elements of $\parf^*$.
There exists a $\parf$\dash matrix $A'$ such that $A'$ is
scaling-equivalent to $A$, and $A_{e_{i}}' = p_{i}$ for $1 \leq i \leq k$.
  \end{enumerate}
\end{lem}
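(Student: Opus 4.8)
For part \eqref{eq:bipnonzeroentries}, the plan is to unwind the definitions on both sides. By Lemma~\ref{thm3}, the basis $X$ of $M$ has $X \symdiff \{x,y\}$ as another basis (for $x \in X$, $y \in Y$) precisely when $\det(A[\{x,y\}]) \neq 0$. But $A[\{x,y\}]$ is the $1\times 1$ matrix $(A_{xy})$, so this determinant is nonzero exactly when $A_{xy}\neq 0$. Hence the edge set of $G_X(M)$, namely $\{(x,y)\in X\times(E\setminus X) \mid X\symdiff\{x,y\}\in\mcal{B}\}$, coincides with $\{(x,y)\in X\times Y \mid A_{xy}\neq 0\}$, which is the edge set of $\bip(A)$; the vertex sets agree by construction since $E\setminus X = Y$. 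This step is routine.

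For part \eqref{eq:bipscaling}, the main idea is to root the forest and push the prescribed values outward from the roots, one tree and one edge at a time. Recall that scaling an entry $A_{xy}$ by multiplying row $x$ by $r_x \in\parf^*$ and column $y$ by $c_y\in\parf^*$ replaces $A_{xy}$ with $r_x c_y A_{xy}$, and that scaling never creates or destroys zero entries, so $\bip(A')=\bip(A)$ throughout. I would process each connected component of $T$ separately. Fix a component, pick a root vertex, and assign it scaling factor $1$. Then traverse the edges of that tree in an order where each edge, when processed, has exactly one endpoint whose scaling factor is already determined; since a tree on $m$ vertices has $m-1$ edges and is connected and acyclic, such an order exists (e.g. a breadth-first ordering from the root). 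When we reach edge $e_i = (x_i,y_i)$ with, say, the factor $s$ at the already-determined endpoint fixed, the current entry on that edge is some nonzero $a_i\in\parf^*$ (possibly already modified by $s$), and we choose the factor at the other endpoint to be $p_i a_i^{-1} s^{-1}\cdot s = p_i a_i^{-1}$ — more carefully, we choose it so that the product of the two endpoint factors times the original entry equals $p_i$. Because $\parf^*=\group$ is a \emph{group}, this factor lies in $\parf^*$, so the resulting matrix is still a $\parf$\dash matrix by Proposition~\ref{prop11}. Crucially, once edge $e_i$ has been set to $p_i$, no later edge in the traversal touches either of its endpoints' factors again (in a tree, each non-root vertex is the "new" endpoint of exactly one edge), so the value $p_i$ is preserved. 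Any vertex of $X\cup Y$ not lying in $T$ receives scaling factor $1$.

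The one point requiring a little care — and the closest thing to an obstacle — is making the bookkeeping precise: I must ensure the traversal order is well-defined (a rooted spanning forest of $T$ gives it), that each edge's two endpoint-factors are both finalized before we declare $A'_{e_i}=p_i$, and that finalizing later edges cannot disturb $p_i$. All three follow from the defining tree property that every vertex other than a root is incident to exactly one edge leading back toward its root, together with the group structure of $\parf^*$ guaranteeing invertibility at each step. Since distinct components of $T$ involve disjoint vertex sets, processing them independently causes no conflict, and composing all the row and column scalings yields the desired $A'$ with $A'_{e_i}=p_i$ for all $i$, scaling-equivalent to $A$ by construction.
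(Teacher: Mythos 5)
Your proof is correct and matches the paper's argument: part (i) is the same unwinding of definitions via $1\times 1$ subdeterminants, and your root-outward traversal for part (ii) is just the paper's leaf-peeling induction run in the opposite direction, with the same key observation that each non-root vertex's scaling factor is set exactly once and so previously fixed edge values are never disturbed.
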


\begin{proof}
Suppose that $x \in X$ and $y \in Y$.
Then $xy$ is an edge of $G(A)$ if and only if the determinant
of $A[\{x\},\{y\}]$ is nonzero, which is true if and only
if $X \symdiff \{x,y\}$ is a basis of $M$.
This is equivalent to $xy$ being an edge of $G_{X}(M)$.

We prove the second statement by induction on $k$.
The result is trivially true if $T$ contains no edges.
By relabeling as required, we can assume that in the forest
$T$, the edge $e_{k}$ is incident with a degree-one vertex $v$.
By induction, there is a matrix $A''$ obtained from $A$ by
scaling, with the property that $A_{e_{i}}'' = p_{i}$ for
$1 \leq i \leq k-1$.
Certainly $A_{e_{k}}''$ is nonzero, let us say that
it is equal to the element $\beta \in \parf^{*}$.
Now we multiply the row or column labeled by $v$ with
$p_{k}\beta^{-1}$ to produce $A'$.
\end{proof}

Let $A$ be a matrix and suppose that $T$ is a forest of $\bip(A)$.
We say that $A$ is \emph{$T$\dash normalized} if $A_{xy} = 1$ for all $xy \in T$. By Lemma~\ref{lem:bipproperties} there is always a $T$\dash normalized
matrix $A'$ that is scaling-equivalent to $A$.

We make repeated use of the following (easy) fact.

\begin{prop}
\label{prop2}
Let $G$ be a graph, and suppose that $S$ is a set of edges that
contains a maximal forest of $G$.
If $e$ is an edge not contained in $S$, then there is an induced
cycle of $G$ that contains $e$, and such that the edges of this
cycle are contained in $S \cup e$.
\end{prop}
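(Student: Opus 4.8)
The plan is to obtain the required cycle as a shortest cycle through $e$ whose edges lie in $S\cup e$, and then to deduce chordlessness from minimality. First I would fix a maximal forest $F$ of $G$ with $F\subseteq S$, which exists by hypothesis. Since $F$ is maximal and $e\notin F$, the set $F\cup\{e\}$ contains the fundamental cycle $C_{e}$ of $e$ with respect to $F$, and every edge of $C_{e}$ lies in $F\cup\{e\}\subseteq S\cup e$. Hence the collection $\mcal{C}$ of all cycles $C$ of $G$ with $e\in E(C)$ and $E(C)\subseteq S\cup e$ is nonempty; I would then choose $C\in\mcal{C}$ with $|E(C)|$ as small as possible.

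It remains to check that $C$ has no chord lying in $S\cup e$ (equivalently, that $C$ is an induced cycle of the subgraph of $G$ with edge set $S\cup e$, which is the sense in which the statement is to be read). Suppose not, and let $f=uv$ be an edge of $G$ joining two non-consecutive vertices of $C$ with $f\in S\cup e$ and $f\notin E(C)$; note $f\neq e$, since $e\in E(C)$. The vertices $u$ and $v$ split $C$ into two internally disjoint $u$--$v$ paths $P_{1},P_{2}$ with $E(P_{1})\sqcup E(P_{2})=E(C)$. Since $u$ and $v$ are not consecutive on $C$, each of $P_{1}$ and $P_{2}$ has at least two edges. The edge $e$ lies in exactly one of them, say $e\in E(P_{1})$.

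Then $C':=P_{1}\cup\{f\}$ is a cycle of $G$ that contains $e$; its edge set is $E(P_{1})\cup\{f\}\subseteq E(C)\cup\{f\}\subseteq S\cup e$; and $|E(C')|=|E(P_{1})|+1=|E(C)|-|E(P_{2})|+1\leq|E(C)|-1<|E(C)|$. Thus $C'\in\mcal{C}$ and $C'$ is strictly shorter than $C$, contradicting the choice of $C$. So $C$ is chordless, and hence is the desired induced cycle. The only point requiring care is the rerouting in this last step: one must route the replacement cycle through the subpath $P_{1}$ that contains $e$, so that $e$ survives, and observe that the discarded subpath $P_{2}$ has at least two edges because $u$ and $v$ are non-consecutive on $C$ — this is exactly what forces $C'$ to be strictly shorter. (If the ambient graph is permitted to be a multigraph, the degenerate case in which $C$ has only two edges is vacuous, since such a $C$ is already chordless.)
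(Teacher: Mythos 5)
Your shortest-cycle argument is carried out correctly, but it only excludes chords lying in $S\cup e$, and the parenthetical ``equivalently, \dots\ which is the sense in which the statement is to be read'' is exactly where it falls short: being an induced cycle of $G$ is strictly stronger than being chordless in the spanning subgraph with edge set $S\cup e$, and the stronger property is the one the paper actually uses. (The paper offers no proof of Proposition~\ref{prop2}, labelling it an easy fact.) In Proposition~\ref{binuniq}, Proposition~\ref{autofixed} and Lemma~\ref{GGK4.3} the conclusion drawn from Proposition~\ref{prop2} is that the full submatrix $A[C]$ indexed by the vertex set of the cycle has $\bip(A[C])$ equal to a cycle, so that $A[C]$ is a twirl or so that $\det(A[C])$ can be computed as $\pm(A_{e}\pm 1)$; for this one must rule out chords of the cycle that are edges of $\bip(A)$ \emph{outside} $S\cup e$, and your minimal cycle may well have such chords.

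Moreover, this gap cannot be closed while insisting that the cycle pass through the prescribed edge $e$: take $G$ to be the $4$\dash cycle on $1,2,3,4$ together with the chord $13$, let $S=\{12,23,34\}$ (a spanning tree) and $e=41$. The only cycle through $e$ with edges in $S\cup e$ is the $4$\dash cycle, which has the chord $13\notin S\cup e$, while the only induced cycle of $G$ through $e$ uses $13$. So the literal statement is false, and the version one should prove (which suffices for every application in the paper, none of which needs the off\dash $S$ edge to be the prespecified one) is: there is an induced cycle of $G$ containing exactly one edge not in $S$. Your minimization proves this after one extra case. The family of cycles with exactly one edge outside $S$ is nonempty (fundamental cycle of any $e\notin S$ with respect to a maximal forest contained in $S$); choose a shortest member $C$, with off\dash $S$ edge $e'$. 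A chord $f\in S$ shortens $C$ through the side containing $e'$, exactly as in your argument; a chord $f\notin S$ yields the strictly shorter cycle formed by $f$ together with the side of $C$ avoiding $e'$, whose unique off\dash $S$ edge is $f$. Either way the minimality of $C$ is contradicted, so $C$ is an induced cycle of $G$, as required.
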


\begin{dfn}\label{def:twirl}
  Let $A$ be a square $\parf$\dash matrix. Then $A$ is a \emph{twirl} if $\bip(A)$ is a cycle and $\det(A) \neq 0$.
\end{dfn}

Recall that the rank\dash $r$ \emph{whirl} is denoted by $\mcal{W}^{r}$.
A whirl is representable over a field $\field$ if and only
if $|\field| \geq 3$.
Note that if $A$ is a twirl then $M[I|A]$ is a whirl.

\begin{prop}
\tup{\cite[Proposition 4.5]{GGK}}.
\label{GGK4.5}
  Let $A$ be an $X\times Y$ matrix that is a twirl, and let $x$, $y$
be such that $A_{xy} \neq 0$.
  \begin{enumerate}
    \item If $|X\cup Y| = 4$ then $A^{xy}$ is a twirl.
    \item If $|X\cup Y| > 4$ then $A^{xy}-\{x,y\}$ is a twirl.
  \end{enumerate}
\end{prop}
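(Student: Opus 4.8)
The plan is to compute directly with the pivot formula of Definition~\ref{def:pivot}, handling the determinant via the Schur complement identity and the bipartite graph by tracking which entries of the pivoted matrix vanish. First observe that since $\bip(A)$ is a cycle and $A$ is square, the two colour classes $X$ and $Y$ of the cycle have the same size $n := |X| = |Y|$, so $|X\cup Y| = 2n$; thus part~(i) is the case $n=2$ and part~(ii) is $n\ge 3$. By Proposition~\ref{prop11}, $A^{xy}$ is a $\parf$\dash matrix, and any submatrix of a $\parf$\dash matrix is again one (a square submatrix of a submatrix is a square submatrix of the original), so every nonzero subdeterminant of $A^{xy}$ automatically lies in $\parf^{*}$. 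Hence it suffices to check, for $A^{xy}$ in part~(i) and for $A^{xy}-\{x,y\}$ in part~(ii), that the matrix is square (immediate), that its associated bipartite graph is a cycle, and that its determinant is nonzero in the underlying ring \ring.

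For the determinant I would argue uniformly. Permute so that $x$ indexes the first row and $y$ the first column of $A$, and write $A$ in block form with scalar corner $A_{xy}$, off\dash diagonal blocks $\mathbf{b}$ (the rest of column $y$) and $\mathbf{c}^{T}$ (the rest of row $x$), and lower\dash right block $D = A - \{x,y\}$. Since $A_{xy}\in\parf^{*}\subseteq\ring^{*}$, the standard triangular factorization valid over any commutative ring yields $\det(A) = A_{xy}\det(D - A_{xy}^{-1}\mathbf{b}\mathbf{c}^{T})$, and inspection of Definition~\ref{def:pivot} shows that $D - A_{xy}^{-1}\mathbf{b}\mathbf{c}^{T}$ is exactly $A^{xy}-\{x,y\}$. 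Therefore $\det(A^{xy}-\{x,y\}) = A_{xy}^{-1}\det(A)\ne 0$ because $A$ is a twirl, which settles part~(ii). A symmetric Schur complement computation (or a two\dash line direct expansion in the $2\times 2$ case) gives $\det(A^{xy}) = A_{xy}^{-1}\det(A-\{x,y\})$, and in part~(i) the matrix $A-\{x,y\}$ is the single entry of $A$ outside row $x$ and column $y$, which is nonzero since $\bip(A)$ is a $4$\dash cycle; hence $\det(A^{xy})\ne 0$ there.

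It remains to analyse the bipartite graphs. In part~(i), writing $X=\{x,u\}$ and $Y=\{y,v\}$, all four entries of $A$ are nonzero, and the pivot formula gives $(A^{xy})_{yx}=A_{xy}^{-1}$, $(A^{xy})_{yv}=A_{xy}^{-1}A_{xv}$, $(A^{xy})_{ux}=-A_{xy}^{-1}A_{uy}$, and $(A^{xy})_{uv}=A_{uv}-A_{xy}^{-1}A_{uy}A_{xv}=A_{xy}^{-1}\det(A)$, all nonzero, so $\bip(A^{xy})$ is again a $4$\dash cycle. In part~(ii) set $B := A^{xy}-\{x,y\}$, with rows $X\setminus\{x\}$ and columns $Y\setminus\{y\}$, and let $v'$ be the neighbour of $x$ in $\bip(A)$ other than $y$ and $u'$ the neighbour of $y$ other than $x$. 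As $\bip(A)$ is a cycle of length $2n\ge 6$, the vertices $v',x,y,u'$ are consecutive, so $u'\ne v'$ and $u'v'$ is not an edge of $\bip(A)$; deleting the adjacent pair $x,y$ leaves a path $P$ through all $2n-2$ remaining vertices with endpoints $u'$ and $v'$. The pivot formula gives $B_{st}=A_{st}-A_{xy}^{-1}A_{sy}A_{xt}$ for $s\in X\setminus\{x\}$, $t\in Y\setminus\{y\}$; since $u'$ is the unique neighbour of $y$ in $X\setminus\{x\}$ and $v'$ the unique neighbour of $x$ in $Y\setminus\{y\}$, this equals $A_{st}$ unless $(s,t)=(u',v')$, where $B_{u'v'}=-A_{xy}^{-1}A_{u'y}A_{xv'}\ne 0$ (using $A_{u'v'}=0$). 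Thus the edge set of $\bip(B)$ is that of $P$ together with the new edge $u'v'$ joining its endpoints, so $\bip(B)$ is a cycle on $2n-2\ge 4$ vertices, and with the determinant computation $B$ is a twirl. (One may instead deduce $B_{u'v'}\ne 0$ from the remark preceding Lemma~\ref{lem:bipproperties}.)

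The genuinely delicate point is the determinant: a naive permutation expansion of $B$ leaves open the possibility of cancellation between the two terms arising from the two perfect matchings of the cycle $\bip(B)$, so I would organize the argument around the Schur complement identity, which exhibits $\det(B)$ \emph{exactly} as $A_{xy}^{-1}\det(A)$ with no sign ambiguity. The rest is pivot\dash formula bookkeeping; the only other thing to keep in mind is that all manipulations take place in the ring \ring\ underlying \parf\ (the pivot uses $A_{xy}^{-1}$) and that the resulting matrices are $\parf$\dash matrices only by appeal to Proposition~\ref{prop11}, which I would record once at the outset.
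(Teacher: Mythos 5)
The paper presents this proposition as a citation to \cite[Proposition~4.5]{GGK} and gives no proof of its own, so there is nothing internal to compare against; your argument is correct and is the natural direct verification. The Schur-complement factorization yields $\det(A^{xy}-\{x,y\})=A_{xy}^{-1}\det(A)$ with no cancellation worries (valid over $\ring$ since $A_{xy}$ is a unit of $\ring$), and your entry-by-entry reading of Definition~\ref{def:pivot} correctly shows that $\bip(A^{xy}-\{x,y\})$ is the path obtained from the cycle $\bip(A)$ by deleting the adjacent pair $x,y$, closed up by the single new edge $u'v'$, so both parts follow.
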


\subsection{Near-regular matroids}
\label{sct2.2}
Recall that \qalpha\ is the field obtained
from the rational numbers by extending with the
transcendental element $\alpha$.
Let $\Z[\alpha, 1/\alpha, 1/(1-\alpha)]$
be the subring of \qalpha\ induced by $\alpha$, $1/\alpha$,
$1/(1-\alpha)$, and the integers.

\begin{dfn}
  The \emph{near-regular} partial field is
  \begin{displaymath}
    \nreg := \left(\Z\left[\alpha, \frac{1}{\alpha}, \frac{1}{1-\alpha}\right], \left\langle -1, \alpha, 1-\alpha \right\rangle \right).
  \end{displaymath}
\end{dfn}

Here $\langle -1, \alpha, 1-\alpha \rangle$ denotes
the subgroup of units generated by $-1$, $\alpha$, and
$1-\alpha$.
Thus $\nreg$ consists of zero, and elements of
the form $\pm \alpha^{i}(1-\alpha)^{j}$, where $i$ and
$j$ are integers.
We note that $\nreg$ is a special case of a class of partial
fields studied by Semple~\cite{Sem97}.

A $\nreg$\dash matrix is said to be \emph{near-unimodular}.
A matroid is \emph{near-regular} if it is representable over
$\nreg$.
Whittle's characterization shows, amongst other things, that
a matroid is near-regular if and only if it is representable over
every field with cardinality at least three.

\begin{thm}\label{thm:nearregreps}
\tup{\cite[Theorem~1.4]{Whi97}}.
  Let $M$ be a matroid. The following are equivalent:
  \begin{enumerate}
    \item $M$ is representable over $\GF(3)$, $\GF(4)$, and $\GF(5)$;
    \item $M$ is representable over $\GF(3)$ and $\GF(8)$;
    \item $M$ is near-regular; and
    \item $M$ is representable over all fields except, possibly, $\GF(2)$.
  \end{enumerate}
\end{thm}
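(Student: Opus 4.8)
The statement to prove is Theorem~\ref{thm:nearregreps}, Whittle's characterization of near-regular matroids. Here is how I would approach it, assuming the partial-field machinery developed above.

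\medskip

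\textbf{Plan of proof.} The implications $(iii) \Rightarrow (iv) \Rightarrow (i)$ and $(iv) \Rightarrow (ii)$ are the easy directions: for $(iii)\Rightarrow(iv)$ one observes that for any field $\field$ with $|\field| \geq 3$ there is a ring homomorphism from $\Z[\alpha, 1/\alpha, 1/(1-\alpha)]$ into $\field$ sending $\alpha$ to any element of $\field \setminus \{0,1\}$ (such an element exists precisely because $|\field|\geq 3$), and this homomorphism carries $\langle -1,\alpha,1-\alpha\rangle$ into $\field^*$; hence by Proposition~\ref{prop:hom} every $\nreg$\dash representable matroid is $\field$\dash representable. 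The implications $(iv)\Rightarrow(i)$ and $(iv)\Rightarrow(ii)$ are trivial. So the whole content lies in closing the cycle, and it suffices to prove $(i)\Rightarrow(iii)$ and $(ii)\Rightarrow(iii)$; in fact, since $\GF(8)$ has cardinality at least three and $\GF(3)$ is common to both, it is enough to prove a single statement of the form: \emph{if $M$ is representable over $\GF(3)$ and over some second field large enough to pin down the fundamental elements, then $M$ is near-regular.}

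\medskip

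\textbf{Key steps for $(i),(ii)\Rightarrow(iii)$.} First I would reduce to the $3$\dash connected case: by Proposition~\ref{prop9} the class of $\nreg$\dash representable matroids is closed under direct sums and $2$\dash sums, and the hypotheses are likewise preserved, so by Seymour's decomposition of $2$\dash separations one may assume $M$ is $3$\dash connected. Next, fix a $\GF(3)$\dash representation $A$ of $M$; by pivoting and scaling along a spanning tree $T$ of $G(A)$ (Lemma~\ref{lem:bipproperties}, part~\ref{eq:bipscaling}) one may take $A$ to be $T$\dash normalized. The crucial point is that, because $M$ is $3$\dash connected, the graph $G(A)$ is $2$\dash connected, so by Proposition~\ref{prop2} every non-tree edge lies on an induced cycle using only that edge and tree edges; the corresponding $2\times 2$ or larger subdeterminant condition forces each non-tree entry to be a power-product of finitely many parameters. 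The goal is to lift $A$ to a $\nreg$\dash matrix $\tilde{A}$ with $\psi(\tilde{A}) = A$ for the canonical homomorphism $\psi\colon \nreg \to \GF(3)$, and to show the lift represents the same matroid. Using the second representability hypothesis (over $\GF(4)$ or $\GF(8)$) one shows that the parameters appearing can be chosen consistently: the subdeterminant equations that must vanish or not vanish in $M$ are, over $\GF(3)$ together with the extra field, enough to force every such equation to have the shape $\pm\alpha^i(1-\alpha)^j = \pm\alpha^k(1-\alpha)^\ell$ or a sum of two such being zero, which holds identically in $\Z[\alpha,1/\alpha,1/(1-\alpha)]$. This identification of the fundamental elements — showing that the only fundamental elements forced to exist are $\alpha$, $1-\alpha$, $1/\alpha$, $(\alpha-1)/\alpha$, $\alpha/(\alpha-1)$, $1/(1-\alpha)$ and that their products close up — is the heart of the matter.

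\medskip

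\textbf{Main obstacle.} The hard part is precisely the lifting/consistency step: showing that a choice of parameter values making one subdeterminant near-unimodular does not conflict with the choices forced by the others. This is where Whittle's original argument uses a careful case analysis on small configurations (essentially classifying the $3$\dash connected matroids on few elements representable over both fields and checking each is near-regular) and then an inductive ``one element at a time'' extension argument: given that $M \del e$ has been lifted, one extends the lift across $e$, using $3$\dash connectivity and the structure of $G(A)$ to control the finitely many new entries, and uses the two field hypotheses to rule out the finitely many obstructions. Since the excerpt does not give us the excluded-minor list for $\GF(3)$\dash matroids or a stabilizer theorem, in a self-contained treatment I would either cite Whittle~\cite{Whi97} directly for this step or reconstruct it via the partial-field homomorphism framework of Pendavingh and Van Zwam; in the present paper the cleanest route is simply to quote \cite[Theorem~1.4]{Whi97}, as the statement is attributed, and defer the combinatorial lifting argument to that reference.
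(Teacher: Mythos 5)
The paper offers no proof of this statement: it is quoted verbatim as Theorem~1.4 of Whittle~\cite{Whi97}, and your final recommendation --- to cite that reference and defer the hard combinatorial work to it --- is exactly what the paper does. Your sketch of the easy implications (a partial-field homomorphism from $\nreg$ into any field with at least three elements, via Proposition~\ref{prop:hom}) is correct; your outline of $(i),(ii)\Rightarrow(iii)$ is only a plausible plan and would require Whittle's full lifting and case analysis to become a proof, but since you explicitly defer that step to \cite{Whi97}, your proposal matches the paper's treatment.
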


Next we collect some basic facts about the near-regular partial field.
The first result follows from Lemmas~2.23 and~4.3 in~\cite{PZ08lift}.
\begin{prop}
\label{prop15}
The fundamental elements of $\nreg$ are
\begin{displaymath}
\left\{0, 1, \alpha, 1 - \alpha, \frac{1}{1 - \alpha},
\frac{\alpha}{\alpha - 1}, \frac{\alpha-1}{\alpha},
\frac{1}{\alpha}\right\}.
\end{displaymath}
\end{prop}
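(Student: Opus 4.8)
The plan is to identify the fundamental elements directly from the definition, which says $p \in \nreg$ is fundamental precisely when both $p$ and $1-p$ lie in $\nreg$. Recall that $\nreg$ consists of $0$ together with the elements $\pm\alpha^{i}(1-\alpha)^{j}$ for $i,j \in \Z$. So I must find all pairs $(p, 1-p)$ with both members of this form (or zero). The cases $p=0$ and $p=1$ are immediate, so I would then assume $p = \pm\alpha^{i}(1-\alpha)^{j}$ with $(i,j) \neq (0,0)$, or at least $p \neq 0,1$, and analyze when $1-p$ is again a unit of the required shape.

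First I would handle the trivial containments: for each of the eight claimed elements, verify that $1-p$ is also in the list (e.g. $1 - \alpha$ is in the list; $1 - \frac{1}{1-\alpha} = \frac{-\alpha}{1-\alpha} = \frac{\alpha}{\alpha-1}$, which is in the list; $1 - \frac{1}{\alpha} = \frac{\alpha-1}{\alpha}$, in the list; and so on). This shows all eight are fundamental. The substance is the converse. Here I would invoke the cited result (Lemmas~2.23 and~4.3 of~\cite{PZ08lift}), which presumably already packages the needed computation; barring that, I would argue as follows. Write $p = \varepsilon \alpha^{i}(1-\alpha)^{j}$ with $\varepsilon = \pm 1$. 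Since $\alpha$ is transcendental over $\mathbb{Q}$, the ring $\Z[\alpha, 1/\alpha, 1/(1-\alpha)]$ embeds in the field $\qalpha$ of rational functions, and two such Laurent-monomial expressions are equal only when exponents and signs match. The equation $1 - \varepsilon\alpha^{i}(1-\alpha)^{j} = \varepsilon'\alpha^{k}(1-\alpha)^{\ell}$ can be cleared of denominators to get a polynomial identity in $\alpha$; comparing it with the factorization of both sides into the irreducibles $\alpha$ and $1-\alpha$ (which are non-associate primes in $\mathbb{Q}[\alpha]$) forces strong constraints on $i,j,k,\ell$.

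The key step is this polynomial/divisibility analysis. Multiplying $1 - \varepsilon\alpha^i(1-\alpha)^j = \varepsilon'\alpha^k(1-\alpha)^\ell$ through by $\alpha^{a}(1-\alpha)^{b}$ for suitable $a,b \geq 0$ clearing all negative exponents yields an identity of the form $\alpha^{a}(1-\alpha)^{b} - \varepsilon\,\alpha^{a+i}(1-\alpha)^{b+j} = \varepsilon'\,\alpha^{a+k}(1-\alpha)^{b+\ell}$ with all exponents nonnegative. Evaluating at $\alpha = 0$ and $\alpha = 1$, and looking at the order of vanishing of each side at these two points, pins down which exponents can be zero; a short finite case check on the remaining possibilities (degrees on each side must also balance, since $\deg_\alpha$ and the degree in $(1-\alpha)$ are additive) then leaves exactly the eight listed values. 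I would organize this as: (a) at most one of $a, a+i, a+k$ is zero and similarly for the $(1-\alpha)$-exponents, giving roughly $\binom{3}{1}^2$ configurations; (b) in each configuration, the two nonzero-order sides must cancel their lowest-order terms, which determines the remaining exponent up to the degree constraint; (c) translate back to $i,j$.

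I expect the main obstacle to be the bookkeeping in step (b)--(c): ensuring the case analysis on which exponents vanish is genuinely exhaustive and that no spurious solutions survive, since one must track both the $\alpha$-adic and $(1-\alpha)$-adic valuations simultaneously and also the total degree. A clean way to cut this down is to observe that the fundamental elements are closed under the automorphisms $p \mapsto 1-p$, $p \mapsto 1/p$, and $p \mapsto p/(p-1)$ (these generate an action of $S_3$ on the set of fundamental elements, coming from the $S_3$-action permuting $\{0,1,\infty\}$ on cross-ratios), so it suffices to find one representative from each orbit and then close up. The eight elements split into the two fixed points/short orbit $\{0,1\}$ together with one orbit $\{\alpha, 1-\alpha, 1/\alpha, 1/(1-\alpha), \alpha/(\alpha-1), (\alpha-1)/\alpha\}$ of size six; so really I only need to show that, besides $0$ and $1$, any fundamental element lies in the $S_3$-orbit of $\alpha$, which reduces the divisibility argument to showing $i$ and $j$ cannot both have absolute value exceeding $1$ and cannot be simultaneously nonzero with the same sign pattern -- a much smaller check. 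Given that the paper cites~\cite{PZ08lift} for exactly this fact, I would in the final writeup simply quote that reference and relegate the self-contained argument above to a remark.
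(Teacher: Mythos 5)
Your proposal is correct, but it is worth noting that the paper gives no argument at all for this proposition: the entire justification is the citation of Lemmas~2.23 and~4.3 of~\cite{PZ08lift}, so your closing remark that you would ``simply quote that reference'' reproduces the paper's approach exactly, while the self-contained argument you sketch is genuinely additional content. That sketch is the standard and essentially correct one: the forward inclusion is the direct verification you carry out (all eight values of $1-p$ do land back in the list), and the converse is a valuation argument in the UFD $\mathbb{Q}[\alpha]$, tracking the order of vanishing at $\alpha=0$, at $\alpha=1$, and the total degree of the cleared identity $\alpha^{a}(1-\alpha)^{b}-\varepsilon\alpha^{a+i}(1-\alpha)^{b+j}=\varepsilon'\alpha^{a+k}(1-\alpha)^{b+\ell}$; since the right-hand side is a single signed monomial in the two non-associate primes $\alpha$ and $1-\alpha$, these three constraints do force $|i|,|j|\le 1$ and the remaining finite check. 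Your $S_3$ reduction (the anharmonic group acting on fundamental elements, which the paper itself exploits implicitly in Proposition~\ref{prop16}) is a legitimate way to shrink the case analysis. The one place your sketch is looser than it should be is the sign: the criterion ``$i$ and $j$ cannot be simultaneously nonzero with the same sign pattern'' does not by itself exclude elements such as $-\alpha$ (with $(i,j)=(1,0)$), for which one must still observe that $1+\alpha$ is an irreducible of $\mathbb{Q}[\alpha]$ not associate to $\alpha$ or $1-\alpha$ and hence not a unit of $\nreg$; this is exactly the same valuation computation, but it needs to be recorded as part of the finite check rather than absorbed into the exponent conditions.
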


\begin{prop}
\label{prop16}
Let $\alpha_{i}$ and $\alpha_{j}$ be fundamental elements of $\nreg$
that are equal to neither $1$ nor $0$.
There is an automorphism of $\nreg$ that takes $\alpha_{i}$ to $\alpha_{j}$.
\end{prop}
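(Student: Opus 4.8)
The plan is to exhibit explicit automorphisms of $\nreg$ realizing all the required maps, using the fact that the seven nonzero fundamental elements listed in Proposition~\ref{prop15} fall into a single orbit under the symmetric group $S_3$ acting on the ``cross-ratio-like'' parameter. Concretely, the seven elements
\begin{displaymath}
\alpha,\quad 1-\alpha,\quad \frac{1}{\alpha},\quad \frac{1}{1-\alpha},\quad \frac{\alpha-1}{\alpha},\quad \frac{\alpha}{\alpha-1},\quad 1
\end{displaymath}
are exactly the images of $\alpha$ under the six fractional-linear substitutions $t\mapsto t$, $t\mapsto 1-t$, $t\mapsto 1/t$, $t\mapsto 1/(1-t)$, $t\mapsto (t-1)/t$, $t\mapsto t/(t-1)$ (the first five give the six values other than $1$; note $1-\alpha$ and $1/(1-\alpha)$ etc.\ — one checks the list of eight in Proposition~\ref{prop15} minus $0$ consists of $1$ together with the orbit of $\alpha$ of size $6$, and $1$ is its own class). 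Wait — more carefully: the orbit of $\alpha$ under this $S_3$ has size $6$ and consists of all the listed elements except $0$ and $1$; the element $1$ is fixed; the element $0$ is excluded by hypothesis. So it suffices to handle the six non-$1$, non-$0$ fundamental elements, plus the case where one of $\alpha_i,\alpha_j$ equals $1$.

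First I would dispose of the case $\alpha_i = 1$ or $\alpha_j = 1$: since $1$ is not excluded, both could be $1$, in which case the identity automorphism works; otherwise exactly one equals $1$, contradicting the hypothesis that neither equals $1$. So in fact the hypothesis rules out $1$ entirely, and we may assume $\alpha_i,\alpha_j \in \{\alpha, 1-\alpha, 1/\alpha, 1/(1-\alpha), (\alpha-1)/\alpha, \alpha/(\alpha-1)\}$. Next, because ``there is an automorphism taking $\alpha_i$ to $\alpha_j$'' defines an equivalence relation (automorphisms compose and invert), it is enough to show each of the six elements lies in the same class as $\alpha$ itself. For each target $\beta$ in the list, let $f$ be the corresponding Möbius substitution with $f(\alpha) = \beta$; I would define $\psi\colon \nreg \to \nreg$ by sending a generator expression $\pm\alpha^{a}(1-\alpha)^{b}$ to $\pm f(\alpha)^{a}(1 - f(\alpha))^{b}$, equivalently, $\psi$ is induced by the ring homomorphism $\Z[\alpha, 1/\alpha, 1/(1-\alpha)] \to \Z[\alpha, 1/\alpha, 1/(1-\alpha)]$ determined by $\alpha \mapsto \beta$. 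I would verify (i) this is well-defined as a ring map — i.e.\ $\beta$, $1/\beta$, and $1/(1-\beta)$ all lie in $\Z[\alpha,1/\alpha,1/(1-\alpha)]$, which is immediate since $\beta$ is a fundamental element and the ring is closed under the relevant operations; (ii) it maps the unit group $\langle -1, \alpha, 1-\alpha\rangle$ into itself, because $\beta$ and $1-\beta$ are both in this group (again by the fundamental-element list together with closure of the group under inversion); and (iii) it is bijective, by producing the inverse ring map $\alpha \mapsto f^{-1}(\alpha)$, which lands in the same list. By the remark after the homomorphism definition in the excerpt, a ring homomorphism sending $\group_1$ into $\group_2$ restricts to a partial-field homomorphism, and having a two-sided inverse of the same form makes it an isomorphism of partial fields — hence an automorphism. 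Since $\psi(\alpha) = \beta$, this completes the reduction.

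The main obstacle — really the only nontrivial point — is checking that each substitution $\alpha \mapsto \beta$ genuinely extends to an \emph{automorphism} rather than merely a homomorphism, which amounts to confirming that the ring $\Z[\alpha, 1/\alpha, 1/(1-\alpha)]$ and the group $\langle -1,\alpha,1-\alpha\rangle$ are both stable under the substitution and its inverse. This is a finite, mechanical check: for $\beta = 1-\alpha$ one has $1-\beta = \alpha$, $1/\beta = 1/(1-\alpha)$, $1/(1-\beta) = 1/\alpha$; for $\beta = 1/\alpha$ one has $1-\beta = (\alpha-1)/\alpha = -\alpha^{-1}(1-\alpha) \in \group$, etc. In each row, every element one needs to land in the ring (resp.\ group) is, up to sign, a monomial $\alpha^{a}(1-\alpha)^{b}$, so membership is transparent. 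One subtlety worth a sentence in the writeup: the substitutions $\alpha \mapsto \beta$ are \emph{a priori} only well-defined on the subring, and one should note that $\alpha$ being transcendental over $\Q$ guarantees there are no hidden relations to preserve, so the ring map is unambiguously determined by the image of the generator $\alpha$. With that observed, the six cases are routine and the proposition follows.
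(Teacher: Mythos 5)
Your proposal is correct and follows essentially the same route as the paper: both realize the six non-trivial fundamental elements as the orbit of $\alpha$ under the Möbius substitutions, check that each substitution induces an automorphism of $\nreg$ (the paper via an explicit table of images of $\alpha^{i}(1-\alpha)^{j}$, you via the equivalent observation that $\beta$ and $1-\beta$ are unit monomials so the ring and unit group are stable under the substitution and its inverse), and then conclude by composing and inverting automorphisms. Your added remarks on transcendence and on two-sided inverses forcing the isomorphism condition are correct but do not change the substance of the argument.
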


\begin{proof}
Obviously an automorphism of $\nreg$ permutes the
fundamental elements.
Consider a function $\psi:\qalpha \to \qalpha$ which acts as the
identity on $0$ and $1$, takes $\alpha$ to another fundamental
element of $\nreg$, and which respects addition and
multiplication.
The following table shows how $\psi$
acts upon the element $\alpha^{i}(1 - \alpha)^{j}$ of $\nreg$.
\begin{displaymath}
\begin{array}{ccc}\hline
\psi(\alpha) & \quad & \psi(\alpha^{i}(1-\alpha)^{j})\\[0.5ex]\hline
\alpha &&\alpha^{i}(1-\alpha)^{j}\\[0.5ex]
1-\alpha&&\alpha^{j}(1-\alpha)^{i}\\[0.5ex]
1/(1-\alpha)&&(-1)^{j}\alpha^{j}(1-\alpha)^{-(i+j)}\\[0.5ex]
\alpha/(\alpha-1)&&(-1)^{i}\alpha^{i}(1-\alpha)^{-(i+j)}\\[0.5ex]
(\alpha-1)/\alpha&&(-1)^{i}\alpha^{-(i+j)}(1-\alpha)^{i}\\[0.5ex]
1/\alpha&&(-1)^{j}\alpha^{-(i+j)}(1-\alpha)^{j}\\\hline
\end{array}
\end{displaymath}
Now it is clear that the restriction of $\psi$ to $\nreg$
is indeed an automorphism.
Since the inverse of an automorphism is another automorphism,
and so is the composition of two automorphisms, 
the result follows easily.
\end{proof}

Recall that a matrix over the rationals is \emph{totally unimodular}
if every subdeterminant belongs to $\{0,1,-1\}$.
A matroid is \emph{regular} if and only if it can be represented
by a totally unimodular matrix.
It is well-known that regular matroids are representable over all fields
(\cite[Theorem~6.6.3]{oxley}).

\begin{prop}
\label{autofixed}
Suppose that $A$ is a near-unimodular matrix that is not
equivalent to a totally unimodular matrix.
If $\psi$ is an automorphism of $\nreg$ such that
$\psi(A) = A$, then $\psi$ is the trivial automorphism.
\end{prop}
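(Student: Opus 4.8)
The plan is to exploit the fact that an automorphism $\psi$ of $\nreg$ is completely determined by its action on the fundamental element $\alpha$, as the table in the proof of Proposition~\ref{prop16} makes explicit: $\psi$ is trivial precisely when $\psi(\alpha)=\alpha$, and each of the other five possibilities for $\psi(\alpha)$ sends the pair $(\alpha,1-\alpha)$ to a genuinely different pair of generators. So it suffices to rule out each of the five nontrivial automorphisms in turn, under the hypothesis that $\psi(A)=A$ and $A$ is not equivalent to a totally unimodular matrix.

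First I would observe what ``not equivalent to totally unimodular'' buys us. If $A$ were scaling-equivalent (indeed equivalent) to a matrix all of whose entries lay in $\{0,1,-1\}$, then $M[I|A]$ would be regular; conversely, since $A$ is \emph{not} equivalent to a totally unimodular matrix, after any normalization there must be an entry of $A$ that is not in $\{0,1,-1\}$, i.e.\ an entry of the form $\pm\alpha^{i}(1-\alpha)^{j}$ with $(i,j)\neq(0,0)$. I would pin this down more carefully: choose a maximal forest $T$ of $\bip(A)$ and, using Lemma~\ref{lem:bipproperties}\eqref{eq:bipscaling}, replace $A$ by a scaling-equivalent $T$\dash normalized matrix $A_0$ (this replacement is harmless, since if $\psi$ fixes $A$ it also fixes every scaling-equivalent normalization — $\psi(1)=1$ and $\psi$ commutes with the scaling operations). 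Now I claim $A_0$ has an entry $\pm\alpha^{i}(1-\alpha)^{j}$ with $i$ or $j$ nonzero. For if not, $A_0$ would be a $\{0,\pm1\}$\dash matrix; by Proposition~\ref{prop:hom} its subdeterminants would then be integers lying in $\nreg$, hence in $\{0,\pm1\}$ (the only integers in $\langle-1,\alpha,1-\alpha\rangle\cup\{0\}$ are $\pm1$ and $0$, since $\alpha$ is transcendental), making $A_0$ totally unimodular and contradicting the hypothesis.

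The core of the argument is then: examine each of the five nontrivial rows of the Proposition~\ref{prop16} table, applied to the exponent pair $(i,j)$ of our bad entry, and deduce a contradiction with $\psi(A_0)=A_0$, which forces $\psi(\alpha^{i}(1-\alpha)^{j})=\alpha^{i}(1-\alpha)^{j}$. For $\psi(\alpha)=1-\alpha$ we need $\alpha^{j}(1-\alpha)^{i}=\alpha^{i}(1-\alpha)^{j}$, i.e.\ $i=j$ (matching exponents of the transcendental $\alpha$ and of $1-\alpha$ — these are algebraically independent, so exponents are determined); but then $\alpha^{i}(1-\alpha)^{i}$ is a symmetric monomial fixed by this $\psi$, so this case does not immediately die on a single entry. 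The cleaner route, and the one I would take, is to combine the entry information with the sign: for $\psi(\alpha)\in\{1/(1-\alpha),\alpha/(\alpha-1),(\alpha-1)/\alpha,1/\alpha\}$ the image of $\alpha^{i}(1-\alpha)^{j}$ carries a sign $(-1)^{i}$ or $(-1)^{j}$ and has \emph{negative} total degree $-(i+j)$ in $1-\alpha$ (or in $\alpha$), so equating with $\alpha^{i}(1-\alpha)^{j}$ forces $i+j=-(i+j)$ hence $i=-j$, and then $i=0=j$ after also matching the other exponent — contradiction. The only genuinely resistant case is $\psi(\alpha)=1-\alpha$, where entrywise equality only gives $i=j$; here I would argue that $M[I|A_0]$ admits a second representation, namely the matrix obtained by swapping $\alpha\leftrightarrow1-\alpha$ throughout, and if this equals $A_0$ then \emph{every} entry has the symmetric form $\pm\alpha^{i}(1-\alpha)^{i}$; one then shows (e.g.\ by specializing $\alpha\mapsto -1$, i.e.\ composing with the homomorphism $\nreg\to\GF(3)$, under which $\alpha^{i}(1-\alpha)^{i}\mapsto(-1)^{i}2^{i}=(-1)^{i}(-1)^{i}=1$) that such a matrix is scaling-equivalent, after normalizing, to a $\{0,\pm1\}$\dash matrix, again contradicting non-regularity.

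I expect the $\psi(\alpha)=1-\alpha$ case to be the main obstacle: the other four are dispatched by the sign-and-degree bookkeeping above, but the $\alpha\leftrightarrow1-\alpha$ involution fixes the symmetric monomials, so one genuinely has to use that a matrix built entirely from such monomials cannot represent a non-regular matroid. I would handle this by a specialization argument (mapping $\nreg$ onto $\GF(3)$ via $\alpha\mapsto -1$, using Proposition~\ref{prop:hom} to control which subdeterminants vanish) to show the monomial $\pm\alpha^{i}(1-\alpha)^{i}$ always maps to $\pm1$, from which totally-unimodularity of a suitable normalization of $A_0$ follows; alternatively one can invoke the known fact that a matroid is regular iff it is representable over $\GF(3)$ and $\GF(2)$, and check that the symmetric form makes $A_0$ representable over $\GF(2)$ as well. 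Either way, once all five nontrivial automorphisms are excluded, $\psi$ must fix $\alpha$ and is therefore trivial.
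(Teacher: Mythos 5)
Your setup (normalize over a maximal forest $T$, note that $\psi$ fixes the normalized matrix, and extract an entry $\pm\alpha^{i}(1-\alpha)^{j}$ with $(i,j)\neq(0,0)$) is sound and matches the paper. The genuine gap is in the claim that ``sign-and-degree bookkeeping'' kills all four of $\psi(\alpha)\in\{1/(1-\alpha),\alpha/(\alpha-1),(\alpha-1)/\alpha,1/\alpha\}$. It kills only two of them. For $\psi(\alpha)=1/\alpha$ we have $\psi(\alpha^{i}(1-\alpha)^{j})=(-1)^{j}\alpha^{-(i+j)}(1-\alpha)^{j}$, so entrywise invariance forces only $j=-2i$: the $(1-\alpha)$-exponents agree automatically and the sign condition is vacuous since $j$ is then even. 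Indeed every power of $\alpha(1-\alpha)^{-2}$ is fixed by this automorphism (check: $\psi(\alpha(1-\alpha)^{-2})=\alpha^{-1}\cdot\alpha^{2}(\alpha-1)^{-2}=\alpha(1-\alpha)^{-2}$), and symmetrically every power of $\alpha^{-2}(1-\alpha)$ is fixed when $\psi(\alpha)=\alpha/(\alpha-1)$. So for these two automorphisms a non-unit entry survives entrywise comparison and your argument produces no contradiction; only $\psi(\alpha)=1/(1-\alpha)$ and $\psi(\alpha)=(\alpha-1)/\alpha$ are eliminated entrywise. The paper closes exactly these residual cases (together with $\psi(\alpha)=1-\alpha$) by a structural step your proposal never develops: via Proposition~\ref{prop2} take an induced cycle $C$ of $\bip(A)$ through the bad edge $e$ whose other edges carry entries in $\{1,-1\}$; then $\det(A[C])$ is, up to sign, $A_{e}\pm1$, and since this lies in $\nreg$, either $A_{e}$ or $-A_{e}$ is a fundamental element; but by Proposition~\ref{prop15} no fundamental element other than $1$ has exponent pair of the form $(i,i)$, $(i,-2i)$, or $(-2j,j)$.

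Your treatment of the remaining case $\psi(\alpha)=1-\alpha$ is also not a proof as written. Composing with the homomorphism $\nreg\to\GF(3)$ and noting that symmetric monomials map to $1$ shows nothing, because \emph{every} entry of \emph{any} $\nreg$\dash matrix lands in $\{0,\pm1\}$ after applying that homomorphism; and there is no partial-field homomorphism $\nreg\to\GF(2)$ (it would force $1+1=1$), so ``check that $A_0$ is representable over $\GF(2)$'' is precisely the content that is missing. The statement behind your idea is true -- a near-unimodular matrix all of whose nonzero entries are fixed monomials of a nontrivial automorphism contains no twirl, hence represents a binary and therefore regular matroid by Lemma~\ref{GGK4.3}, contradicting the hypothesis -- but proving it requires the same cycle-determinant computation as above (the ratio of the two matching products of a cycle submatrix is again a fixed monomial, and it must be a fundamental element for the determinant to lie in $\nreg$, which forces that determinant to vanish). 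In short, the missing idea is the paper's combination of Proposition~\ref{prop2} with Proposition~\ref{prop15}; without it, three of the five nontrivial automorphisms are not actually excluded by your argument.
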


\begin{proof}
Suppose that the rows and columns of $A$ are labeled
with $X$ and $Y$.
We assume that $\psi$ is not the identity function on $\nreg$, so
that $\psi(\alpha) \ne \alpha$.
Let $T$ be a maximal forest of $\bip(A)$.
By examining the proof of Lemma~\ref{lem:bipproperties}, we
see that while $T$\dash normalizing $A$, we only ever multiply a
row or column by the inverse of a nonzero entry of $A$.
If $\beta$ is a nonzero entry of $A$, then $\psi(\beta)=\beta$,
and therefore $\psi(\beta^{-1})=\beta^{-1}$.
It follows easily that normalizing $A$ does not affect the
assumption that $\psi(A)=A$.
Moreover, normalizing $A$ does not produce a totally unimodular
matrix, as $A$ is not equivalent to such a matrix.
Henceforth we assume that $A$ is $T$\dash normalized.

Let $S$ be the set of nonzero entries of $A$ that are equal to
$1$ or $-1$.
There is an edge $e$ in $\bip(A)$ not contained in $S$.
As $S$ contains the edge-set of $T$, Proposition~\ref{prop2}
asserts that there is a set $C \subseteq X \cup Y$
such that $G(A[C])$ is an induced cycle of $G(A)$
containing $e$, and the edges of $G(A[C])$ are contained
in $S \cup e$.

Suppose that $A_{e} = (-1)^{k}\alpha^{i}(1-\alpha)^{j}$ for
integers $i$, $j$, and $k$.
Then
\begin{equation}
\label{eqn2}
\psi(\alpha^{i}(1-\alpha)^{j}) = \alpha^{i}(1-\alpha)^{j}.
\end{equation}
By examining the table in the proof of
Proposition~\ref{prop16}, we see that if
$\psi(\alpha)$ is equal to $1/(1-\alpha)$ or
$(\alpha-1)/\alpha$, then the only solution to
Equation~\eqref{eqn2} is $i=j=0$.
This is a contradiction as $e \notin S$.
Therefore we suppose that $\psi(\alpha) = 1-\alpha$.
Then $\psi(\alpha^{i}(1-\alpha)^{j}) = \alpha^{j}(1-\alpha)^{i}$,
so $i=j$.

Since every nonzero entry in $A[C]$, other than
$A_{e}$, is in $\{1,-1\}$, and $\bip(A[C])$ is a cycle, it
follows that the determinant of $A[C]$ is,
up to multiplication by $-1$, equal to
$A_{e} \pm 1$.
As this determinant belongs to $\nreg$, it follows that
either $A_{e}$ or $-A_{e}$ is a fundamental element.
But no fundamental element, other than $1$, is of the form
$\pm\alpha^{i}(1-\alpha)^{i}$, and we have a contradiction.

Similarly, if $\psi(\alpha)$ is equal to
$\alpha/(\alpha-1)$ or $1/\alpha$, then $i$ and $j$ must
satisfy either $2j = -i$, or $2i = -j$.
In either case we arrive at a similar contradiction.
\end{proof}

The next result is an adaptation of Lemma~4.3 in~\cite{GGK}.

\begin{lem}\label{GGK4.3}
 Let $A$ be a near-unimodular $X\times Y$ matrix. Then there is some $C\subseteq X\cup Y$ such that $A[C]$ is a twirl if and only if $M[I|A]$ is nonbinary.
\end{lem}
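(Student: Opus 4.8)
The plan is to prove both directions by connecting the presence of a twirl-submatrix to the presence of a $\mathcal{W}^2 = U_{2,4}$ minor, which is the standard certificate of non-binariness (a matroid is nonbinary if and only if it has a $U_{2,4}$ minor). For the forward direction, suppose $A[C]$ is a twirl for some $C \subseteq X \cup Y$. By Definition~\ref{def:twirl}, $\bip(A[C])$ is a cycle and $\det(A[C]) \neq 0$, so by the remark following that definition, $M[I|A[C]]$ is a whirl $\mathcal{W}^r$ with $r \geq 2$. Since $A[C]$ is a submatrix of $A$ (with $|C \cap X| = |C \cap Y|$ forced by the cycle condition, up to the standard setup), $M[I|A[C]]$ is a minor of $M[I|A]$ --- one takes $X \setminus C$ in the basis and contracts, deleting $Y \setminus C$, which is exactly the minor operation realized on representation matrices by passing to $A[C]$. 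A whirl of rank at least $2$ has a $\mathcal{W}^2$ minor, and $\mathcal{W}^2 \cong U_{2,4}$, so $M[I|A]$ is nonbinary.

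For the converse, suppose $M := M[I|A]$ is nonbinary. The goal is to locate a set $C$ with $\bip(A[C])$ an induced cycle and $\det(A[C]) \neq 0$. Here I would follow the strategy of Lemma~4.3 in~\cite{GGK}: since $M$ is nonbinary it has a $U_{2,4}$ minor, say $M / I \setminus J \cong U_{2,4}$ for disjoint $I, J$. Pivoting (Proposition~\ref{prop11}, Definition~\ref{def:pivot}) lets us assume that the basis $X$ is chosen so that the contracted/deleted elements behave well; specifically, after pivots we may assume $M$ has a basis $B$ such that, restricting $A$ to the rows and columns surviving the minor operation, we get a $2 \times 2$ (or appropriately sized) $\parf$-submatrix $A'$ representing $U_{2,4}$. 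Such a submatrix, having all entries nonzero and all $2\times 2$ subdeterminants of the form $\det(A') \neq 0$, already makes $\bip(A')$ contain a $4$-cycle. Then I would invoke Proposition~\ref{prop2} to prune down to an \emph{induced} cycle $C$ inside the support: starting from a spanning forest $T$ of $\bip(A)$ together with one extra edge $e$ whose fundamental value is ``nontrivial'', Proposition~\ref{prop2} yields an induced cycle through $e$ with edges in $T \cup e$, and one checks this induced cycle $C$ has $\det(A[C]) \neq 0$ because --- as in the proof of Proposition~\ref{autofixed} --- an induced-cycle submatrix has determinant equal, up to sign and scaling, to $A_e \pm 1$, and by choosing $e$ to lie in the $U_{2,4}$-part this is forced to be nonzero (a zero determinant would make that submatrix represent a binary, not rank-$2$-uniform, configuration).

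The main obstacle is the converse direction's bookkeeping: one must argue that when $M$ is nonbinary, after suitable pivots and row/column scalings the matrix $A$ genuinely exhibits an edge $e \in \bip(A)$, not on any spanning forest $T$, whose presence is ``witnessed'' by the non-binariness --- i.e. that one cannot delete all such edges without destroying the $U_{2,4}$ minor. Concretely I expect to argue: if $M$ is nonbinary then $M$ is not representable over $\GF(2)$, so some near-unimodular representation $A$ of $M$ cannot be made totally unimodular by scaling and pivots, hence (by the argument in Proposition~\ref{autofixed}, or directly) after $T$-normalizing there is an entry $A_e$ not in $\{1,-1\}$; Proposition~\ref{prop2} then produces an induced cycle $C$ through $e$, and the determinant computation $\det(A[C]) = \pm(A_e \pm 1)$ shows this determinant is nonzero precisely when $A_e \notin \{0, 1, 2\}$ in the relevant sense, which we arrange. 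Making this last step airtight --- handling the finitely many ``bad'' values of $A_e$ by exploiting the automorphisms of $\nreg$ (Proposition~\ref{prop16}) to move $A_e$ to a good fundamental element --- is where the real care is needed, but it mirrors exactly the reasoning already carried out in the proof of Proposition~\ref{autofixed}.
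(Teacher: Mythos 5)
Your converse argument, once you discard the initial detour through extracting a $U_{2,4}$ minor by pivoting, is exactly the paper's proof: $T$\dash normalize, observe that nonbinary implies not regular so $A$ is not totally unimodular (whence some entry $A_e \notin \{1,-1\}$, using that an integer entry matrix that is near-unimodular has all subdeterminants in $\nreg \cap \Z = \{0,1,-1\}$), and apply Proposition~\ref{prop2} to get an induced cycle $C$ through $e$ with all other entries in $\{1,-1\}$. The "real care" you flag at the end is not actually needed: $\det(A[C]) = \pm(A_e \pm 1)$ vanishes only if $A_e \in \{1,-1\}$, which is excluded by the choice of $e$, so there are no bad values to handle and no automorphisms of $\nreg$ are required.
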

\begin{proof}
If $A$ contains a twirl, then $M[I|A]$ contains a whirl-minor,
and is therefore nonbinary.
For the converse, let $T$ be a maximal forest of $\bip(A)$, and
assume that $A$ is $T$\dash normalized.
Let $S$ be the set of nonzero entries in $A$ that are equal to
$1$ or $-1$.
As $M[I|A]$ is nonbinary, it is certainly not regular, and therefore
$A$ is not totally unimodular.
Hence there is an edge $e$ of $\bip(A)$ such that
$e \notin S$.
Proposition~\ref{prop2} provides a subset $C \subseteq X \cup Y$
such that $G(A[C])$ is a cycle containing $e$,
and the edges of $G(A[C])$ are contained in $S\cup e$.
Then $A[C]$ is a twirl.
\end{proof}

The following analogue of Lemma~4.4 in~\cite{GGK}
is proved in a similar way to Lemma~\ref{GGK4.3}.

\begin{lem}
\label{GGK4.4}
Let $A$ be an $X\times Y$ $\nreg$\dash matrix, and suppose
that $A[C]$ is a twirl for some $C\subseteq X\cup Y$.
Let $v_{0},\ldots, v_{p}$ be the vertices of $A[C]$ in cyclic
order.
Suppose that $x \in (X\cup Y) \setminus C$ and the neighbors
of $x$ in $C$ are $v_{i_{1}},\ldots, v_{i_{k}}$, where $k \geq 2$.
Then there exists a twirl of the form $A[\{x,v_{i_{j}},\ldots, v_{i_{j+1}}\}]$
(where $1 \leq j \leq k-1$) or
$A[\{x,v_{0},\ldots, v_{i_{1}},v_{i_{k}},\ldots,v_{p}\}]$.
\end{lem}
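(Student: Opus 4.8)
The plan is to imitate the argument of Lemma~\ref{GGK4.3}: find an edge incident with $x$ that is not "coordinated" with the twirl $A[C]$, and use Proposition~\ref{prop2} to locate an induced cycle through that edge whose other edges lie in a prescribed forest, so that the resulting submatrix is again a twirl. The submatrix we produce will have vertex set $\{x\}$ together with a contiguous arc $v_{i_j},\dots,v_{i_{j+1}}$ of the cycle $C$ (or the complementary arc through $v_0$), which is exactly the shape demanded by the statement. The key point is to set up the normalization so that the only obstruction to total unimodularity lives on the edges from $x$ to $C$.

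First I would choose a maximal forest $T$ of $\bip(A[C])$, which (since $\bip(A[C])$ is a cycle on $v_0,\dots,v_p$) is obtained by deleting exactly one edge, say the edge $v_p v_0$; then $T$ spans $C$. Extend $T$ by adding one edge $f$ from $x$ to some neighbour $v_{i_m}$ of $x$ in $C$, and then extend to a maximal forest $T'$ of all of $\bip(A)$; note $T \cup f \subseteq T'$. By Lemma~\ref{lem:bipproperties}\eqref{eq:bipscaling} we may assume $A$ is $T'$\dash normalized, so in particular every edge of $T$ and the edge $f$ carries a $1$. Let $S$ be the set of edges of $\bip(A)$ whose entry is $1$ or $-1$; then $S$ contains $T'$, hence contains $T \cup f$. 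Now there are two cases. If every edge from $x$ to $C$ lies in $S$ and also the edge $v_p v_0$ lies in $S$, then $A[C\cup x]$ would be a near-unimodular matrix all of whose nonzero entries are $\pm 1$; but it is easy to check (its bipartite graph being a cycle with one chord at $x$, or analysed arc-by-arc) that this forces $A[C]$ itself to have determinant $0$ after a suitable pivot — contradicting that $A[C]$ is a twirl. Hence some edge $e$, either from $x$ to $C$ or the edge $v_p v_0$, lies outside $S$.

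With such an edge $e \notin S$ in hand, I would apply Proposition~\ref{prop2} to the graph $\bip(A[C\cup x])$ and the edge set $S \cap E(\bip(A[C\cup x]))$, which contains the maximal forest $T\cup f$: this yields an induced cycle of $\bip(A[C\cup x])$ through $e$ whose remaining edges lie in $(S \cup e)$, hence in $T \cup f \cup e$. Because $T$ is the path $v_0 v_1 \cdots v_p$ and the only extra non-forest edges available are $v_pv_0$ and the edges at $x$, an induced cycle of this bipartite graph through $e$ must be either a contiguous arc $v_{i_j}, \dots, v_{i_{j+1}}$ of $C$ together with $x$ and two edges from $x$ to its endpoints, or — if $e = v_pv_0$ — the wrap-around arc $v_0,\dots, v_{i_1}, v_{i_k},\dots,v_p$ together with $x$, which is precisely the second form in the statement. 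Since on this induced cycle all entries except possibly $A_e$ are in $\{1,-1\}$ and the bipartite graph is a cycle, the determinant of the corresponding submatrix is, up to sign, $A_e \pm 1$; this lies in $\nreg$, the submatrix is square with a cycle as its bipartite graph, and its determinant is nonzero (it cannot be $0$, else the induced cycle would not be induced / we would contradict $e\notin S$ via the structure of $\nreg$ as in Proposition~\ref{autofixed}), so it is a twirl.

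The main obstacle is the case analysis in the paragraph where I claim some edge lies outside $S$: I must rule out the possibility that, after normalization, every edge from $x$ into $C$ as well as the closing edge $v_pv_0$ carries $\pm 1$. The clean way to do this is to observe that pivoting on the edge $f$ (which carries a $1$) inside $A[C\cup x]$ and then deleting $x$ returns a twirl by Proposition~\ref{GGK4.5}(ii) — wait, that requires $\bip$ to be a cycle, which it is not here — so instead I would argue directly: expanding $\det$ of the arc-submatrix $A[\{x,v_{i_j},\dots,v_{i_{j+1}}\}]$ along the two edges at $x$ shows it equals $\pm(\text{product along the arc in }C) \pm (\text{product of the two }x\text{-edges})$, and if all these are $\pm 1$ for every arc while $v_pv_0$ is also $\pm 1$, then multiplying the arc-relations around the cycle forces $\det A[C] \in \{0,\pm 2\}$, and $\pm 2 \notin \nreg$ while $0$ contradicts that $A[C]$ is a twirl. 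This pins down the existence of the desired $e$ and the rest follows the template of Lemma~\ref{GGK4.3} verbatim.
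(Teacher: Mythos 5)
Your overall plan is the paper's intended one (the paper itself only remarks that this lemma ``is proved in a similar way to Lemma~\ref{GGK4.3}''): normalize so that the path $v_{0}v_{1}\cdots v_{p}$ and one edge at $x$ carry $1$'s, let $S$ be the set of edges with entries in $\{1,-1\}$, find an edge $e\notin S$, and feed it to Proposition~\ref{prop2}. The genuine gap is in the case $e=v_{p}v_{0}$. Proposition~\ref{prop2} only guarantees \emph{some} induced cycle through $e$ with edges in $S\cup e$, and in $\bip(A[C\cup\{x\}])$ the cycle on the vertex set $C$ is itself such an induced cycle (its remaining edges are the normalized path edges). Your enumeration of the induced cycles through $v_{p}v_{0}$ omits exactly this possibility, and it is the bad one: it returns the twirl $A[C]$ you started with, not a submatrix of the required form. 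Moreover this case is not vacuous: after your normalization, $A_{v_{p}v_{0}}\notin\{0,1,-1\}$ holds automatically (otherwise $\det A[C]\in\{0,\pm 2\}$, impossible for a twirl in a $\nreg$\dash matrix), so your claim ``some edge among the $x$\dash edges and $v_{p}v_{0}$ lies outside $S$'' is always satisfied by $v_{p}v_{0}$ and gives no information about the $x$\dash edges; when every edge from $x$ into $C$ carries $\pm 1$, your argument stalls. (Your first justification of that claim, via ``determinant $0$ after a suitable pivot,'' is also not right as stated; the correct reason is the $\{0,\pm2\}$ computation you give at the end, and all it really yields is the automatic statement about $A_{v_{p}v_{0}}$.)

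The gap is easy to close, and with the fix your proof is complete. If some edge $xv_{i_{m}}$ lies outside $S$, then every induced cycle of $\bip(A[C\cup\{x\}])$ through that edge passes through $x$, hence is of one of the listed forms, and its determinant is, up to sign, $A_{xv_{i_{m}}}\pm 1\neq 0$, so it is a twirl, as you argue. If instead every edge at $x$ lies in $S$, argue directly with the wrap-around submatrix $A[\{x,v_{0},\ldots,v_{i_{1}},v_{i_{k}},\ldots,v_{p}\}]$: its bipartite graph is a cycle (the interior neighbors $v_{i_{2}},\ldots,v_{i_{k-1}}$ of $x$ are not among its vertices, and $C$ has no chords), all of its entries are $\pm 1$ except $A_{v_{p}v_{0}}$, so its determinant is, up to sign, $A_{v_{p}v_{0}}\pm 1$, which is nonzero because $A_{v_{p}v_{0}}\notin\{0,1,-1\}$; hence it is a twirl of the second listed form. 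This dichotomy (an $x$\dash edge outside $S$, versus all $x$\dash edges in $S$) is the case analysis your write-up needs in place of the incorrect claim that the induced cycle through $v_{p}v_{0}$ must use $x$.
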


\subsection{Stabilizers}
The notion of a stabilizer, introduced by Whittle~\cite{Whi96b}, is an indispensable tool for controlling inequivalent representations.
\begin{dfn}
Let $\parf$ be a partial field, and let $M$ and $N$ be
$3$\dash connected \parf\dash representable matroids such that
$N$ is a minor of $M$.
Suppose that the ground set of $N$ is $X' \cup Y'$, where
$X'$ is a basis of $N$.
We say that $N$ is a \emph{\parf\dash stabilizer} for $M$ if,
whenever $A_{1}$ and $A_{2}$ are $X \times Y$ \parf\dash matrices
(where $X' \subseteq X$ and $Y' \subseteq Y$) such that
\begin{enumerate}
\item $M = M[I|A_{1}] = M[I|A_{2}]$;
\item $A_{1}[X',Y']$ is scaling-equivalent to $A_{2}'[X',Y']$; and
\item $N = M[I|A_{1}[X',Y']] = M[I|A_{2}[X',Y']]$,
\end{enumerate}
then $A_{1}$ is scaling-equivalent to $A_{2}$.
\end{dfn}
We say that $N$ is a $\parf$\dash stabilizer for a class of matroids
if $N$ is a $\parf$\dash stabilizer for every $3$\dash connected member
of the class.

Whittle proved that verifying that a matroid is a stabilizer can
be accomplished with a finite case-check.
(See also~\cite[Theorem~3.10]{PZ08conf}.)
\begin{thm}[Stabilizer Theorem, Whittle~\cite{Whi96b}]\label{cor:stabilizer}
  Let $\parf$ be a partial field, and let $N$ be a $3$\dash connected $\parf$\dash representable matroid. Let $M$ be a $3$\dash connected $\parf$\dash representable matroid having an $N$\dash minor. Then exactly one of the following is true:
  \begin{enumerate}
    \item \label{eq:stablemat2} $N$ stabilizes $M$.
    \item \label{eq:notstablemat2} $M$ has a $3$\dash connected minor $M'$ such that
    \begin{enumerate}
      \item $N$ does not stabilize $M'$;
      \item $N$ is isomorphic to $M'\con x$, $M'\del y$, or $M'\con x \del y$, for some $x,y \in E(M')$; and
      \item If $N$ is isomorphic to $M'\con x \del y$ then at least one of $M'\con x, M'\del y$ is $3$\dash connected.
    \end{enumerate}
  \end{enumerate}
\end{thm}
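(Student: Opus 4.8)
The plan is to argue by minimal counterexample, following the pattern of the ``chain theorems'' that usually accompany stabilizer results. The mutual exclusivity in the statement is the easier half, and I would deal with it first: one shows, as a preliminary lemma, that if $N$ stabilizes $M$ then $N$ stabilizes every $3$\dash connected minor of $M$ that has an $N$\dash minor, so~(1) precludes the existence of any $M'$ as in~(2)(a). Granting this, suppose $N$ does \emph{not} stabilize $M$. Among all $3$\dash connected matroids that have an $N$\dash minor at a fixed ground set $X'\cup Y'$ realizing $N$, and that are not stabilized by $N$, let $M'$ be one that is minor-minimal; such an $M'$ exists because $M$ itself belongs to this collection, and $M'\not\cong N$ because every matroid trivially stabilizes itself. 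Condition~(2)(a) holds by construction, so the whole task is to verify~(2)(b) and~(2)(c) for this $M'$.

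The engine of the argument is a \emph{step-up lemma} of roughly the following shape: if $P$ is $3$\dash connected, $z\in E(P)\setminus(X'\cup Y')$, and $P\del z$ (respectively $P\con z$) is $3$\dash connected, has an $N$\dash minor on $X'\cup Y'$, is not isomorphic to $N$, and is stabilized by $N$, then $N$ stabilizes $P$. I would prove this by taking $\parf$\dash matrices $A_1,A_2$ for $P$ whose restrictions to $X'\cup Y'$ are scaling-equivalent and realize $N$; restricting both to $E(P)\setminus z$ --- in the contraction case, first pivoting over an entry in the line of $z$ so that $z$ labels a row, then deleting that row --- yields $\parf$\dash matrices for the smaller matroid that still agree up to scaling on the $N$\dash part, so the hypothesis that $N$ stabilizes that matroid makes them scaling-equivalent there. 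After rescaling we may assume $A_1$ and $A_2$ agree on $E(P)\setminus z$, and it then remains to show that the single reinstated row or column is forced up to a scalar, whence $A_1$ and $A_2$ are scaling-equivalent.

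With the step-up lemma available, I would apply a two-element refinement of Seymour's Splitter Theorem to the pair $(M',N)$: since $M'$ is $3$\dash connected with a proper $3$\dash connected minor $N$, either there is an element $e\in E(M')\setminus(X'\cup Y')$ such that $M'\del e$ or $M'\con e$ is $3$\dash connected with an $N$\dash minor on $X'\cup Y'$, or $M'$ is a wheel or a whirl and there is a pair $\{x,y\}$ with $M'\con x\del y$ $3$\dash connected and carrying such an $N$\dash minor. In the first case let $M''$ be the single-element reduction: if $M''\cong N$ then $N$ is isomorphic to $M'\del y$ or $M'\con x$, so~(2)(b) holds and~(2)(c) is vacuous; if $M''\not\cong N$ then $M''$ is a proper $3$\dash connected minor of $M'$ with an $N$\dash minor, hence stabilized by $N$ by minimality of $M'$, so the step-up lemma forces $N$ to stabilize $M'$ --- contradicting the choice of $M'$. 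In the second case the analogous two-step analysis gives $N\cong M'\con x\del y$, which is the third alternative of~(2)(b), and the structure of a wheel or whirl supplies the $3$\dash connectedness of one of $M'\con x$, $M'\del y$ demanded by~(2)(c) (and once again, if $M'\con x\del y\not\cong N$, minimality of $M'$ together with the step-up lemma would produce a contradiction).

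The step I expect to be the main obstacle is the lifting inside the step-up lemma: controlling the single reinstated row or column so that agreement on the $3$\dash connected reduction really does propagate to scaling-equivalence, and in particular pinning down exactly which hypotheses rule out the obvious failure mode --- $U_{2,3}$ does not stabilize $U_{2,4}$, so connectivity of the reduction alone is not sufficient and the condition ``not isomorphic to $N$'' (or something like it) is doing real work. A related piece of bookkeeping is keeping the distinguished $N$\dash minor pinned at $X'\cup Y'$ through all the pivots and deletions, and handling the wheel/whirl boundary carefully, since that is exactly where the two-element contract-delete reductions --- and hence the third option in~(2)(b) and the hypothesis~(2)(c) --- come from.
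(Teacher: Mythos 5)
First, a point of reference: the paper offers no proof of this statement at all --- it is imported verbatim from Whittle's stabilizer paper (see also Theorem~3.10 of Pendavingh--Van Zwam), so there is nothing in-paper to compare against. Measured against Whittle's actual argument, your skeleton (minimal counterexample, Splitter Theorem chain, propagating unique extendability one element at a time) is the right one, but the step you yourself flag as ``the main obstacle'' is not a repairable technicality: your step-up lemma is \emph{false} as stated. Knowing that $N$ stabilizes the $3$\dash connected reduction $P\del z$ (or $P\con z$) and that $P\del z\not\cong N$ does not force the reinstated column of $z$ up to a scalar; the hypothesis ``not isomorphic to $N$'' does no work here. The ambiguity in that column is a genuine phenomenon, and the correct argument, rather than excluding it, \emph{tracks} it: one shrinks $P\del z$ towards $N$ one element at a time while retaining $z$ and showing the ambiguity survives, terminating in a $3$\dash connected minor $M'$ with $z\in E(M')$ and $N\cong M'\del z$ or $N\cong M'\con x\del z$, the extra element $x$ being kept exactly when reinstating $z$ after a deletion or contraction destroys $3$\dash connectedness. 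That descent is the technical heart of the theorem, and it is entirely absent from your outline.

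This is also the source of your second error: the two-element alternative $N\cong M'\con x\del y$ and the accompanying condition~(\emph{ii})(c) do \emph{not} come from the wheel/whirl exception in the Splitter Theorem. They come from the failure mode just described, and they occur for matroids that are nowhere near wheels or whirls; if your step-up lemma were true, your own induction would prove the false strengthening that the only bad minors at distance two from $N$ are wheels and whirls. (Note how the theorem is actually applied in Corollary~\ref{cor:stab}: one must rule out non-stabilized $M'$ with $M'\con x\del y\cong N$ by checking that $N$ has no $3$\dash connected one-element extensions or coextensions in the class --- a check that would be pointless if the two-element case were confined to wheels and whirls.) Finally, your ``easier half'' is also not immediate: to show (\emph{i}) and (\emph{ii}) are exclusive you need that a non-stabilized minor of $M$ certifies non-stabilization of $M$, which requires both witnessing representations of the minor to extend to representations of $M$; this is part of why Whittle's definitions and the descent argument are set up the way they are, and it deserves more than the one-line lemma you propose.
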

Since $U_{2,4}$ has no $3$\dash connected, near-regular one-element
extensions or coextensions, the following result follows easily:
\begin{cor}\label{cor:stab}
  $U_{2,4}$ is a $\nreg$\dash stabilizer for the class of near-regular matroids.
\end{cor}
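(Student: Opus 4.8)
The plan is to derive the corollary directly from the Stabilizer Theorem (Theorem~\ref{cor:stabilizer}), applied with $\parf = \nreg$ and $N = U_{2,4}$. First I would verify the hypotheses: $U_{2,4}$ is $3$\dash connected, and it is $\nreg$\dash representable, since $U_{2,4} = M[I_2|A]$ where $A$ is the $2\times 2$ matrix with rows $(1,1)$ and $(1,\alpha)$; indeed the entries of $A$ and its determinant $\alpha - 1 = -(1-\alpha)$ all lie in $\langle -1, \alpha, 1-\alpha\rangle$, so $A$ is near-unimodular. By the definition of a $\nreg$\dash stabilizer for a class, it then suffices to show that $U_{2,4}$ stabilizes every $3$\dash connected near-regular matroid $M$ having a $U_{2,4}$\dash minor, so I would fix such an $M$ and apply Theorem~\ref{cor:stabilizer}.

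Next I would argue by contradiction, assuming that the second outcome of Theorem~\ref{cor:stabilizer} holds; this produces a $3$\dash connected minor $M'$ of $M$ satisfying conditions (a)--(c). Since $M'$ and its single-element minors $M' \con x$ and $M' \del y$ are minors of the near-regular matroid $M$, Proposition~\ref{prop9} shows that all three are near-regular. The crux of the argument is the structural claim that \emph{no $3$\dash connected near-regular matroid has $U_{2,4}$ as a one-element deletion or as a one-element contraction}. For the deletion statement I would observe that if $M''\del y \cong U_{2,4}$, then either $r(M'') = 3$, in which case $y$ is a coloop and $M''$ admits a $1$\dash separation, or $r(M'') = 2$, in which case $M''$ is a rank-$2$ matroid on five elements having $U_{2,4}$ as a restriction; such an $M''$ has a loop or a parallel pair --- hence a $2$\dash separation, as $|E(M'')| \geq 4$ --- unless $M'' \cong U_{2,5}$, and $U_{2,5}$ is not $\GF(3)$\dash representable, hence not near-regular by Theorem~\ref{thm:nearregreps}. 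The contraction statement then follows by duality, since $M''\con x \cong U_{2,4}$ if and only if $(M'')^{*}\del x \cong (U_{2,4})^{*} = U_{2,4}$, and both near-regularity (Proposition~\ref{prop9}) and $3$\dash connectivity are preserved under duality.

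Finally I would feed this claim back into the second outcome of Theorem~\ref{cor:stabilizer}. If $N \cong M'\del y$ or $N \cong M'\con x$, then $M'$ is itself a $3$\dash connected near-regular matroid having $U_{2,4}$ as a one-element deletion or contraction, contradicting the claim. If $N \cong M'\con x\del y$, then $(M'\con x)\del y \cong U_{2,4}$ and $(M'\del y)\con x \cong U_{2,4}$, so $M'\con x$ has $U_{2,4}$ as a one-element deletion and $M'\del y$ has $U_{2,4}$ as a one-element contraction; by condition (c) at least one of $M'\con x$, $M'\del y$ is $3$\dash connected, and it is near-regular, so again the claim is contradicted. Hence the first outcome must hold, meaning $U_{2,4}$ stabilizes $M$, and since $M$ was arbitrary the corollary follows. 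I do not expect a genuine obstacle here: the only content is the elementary enumeration of one-element extensions of $U_{2,4}$, where the points to be careful about are ruling out the rank-increasing case and passing to the contraction statement through duality.
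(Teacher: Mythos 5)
Your proposal is correct and is essentially the paper's own argument: the paper derives the corollary in one line from the Stabilizer Theorem together with the observation that $U_{2,4}$ has no $3$\dash connected near-regular one-element extensions or coextensions, which is exactly the structural claim you prove (the only candidates being $U_{2,5}$ and $U_{3,5}$, neither ternary). You simply supply the routine details, including the correct handling of the $M'\con x\del y$ case via condition (c).
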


\subsection{The $\Delta\textrm{-}Y$ operation}
\label{sct2.3}
Suppose that $M$ is a matroid and that $T$ is a
coindependent triangle of $M$.
Let $N$ be an isomorphic copy of $M(K_{4})$ such that
$E(N) \cap E(M) = T$ and $T$ is a triangle of $N$.
Then the \emph{generalized parallel connection} of $M$
and $N$, denoted $P_{T}(N, M)$, is defined.
This is the matroid on the ground set $E(M) \cup E(N)$
whose flats are exactly the sets $F$ such that
$F \cap E(N)$ and $F \cap E(M)$ are flats of $N$ and $M$
respectively.
Suppose that $T = \{a, b, c\}$.
If $x \in T$, then there is a unique element, $x'$, of $N$,
that is in no triangle with $x$.
We swap the labels on $x$ and $x'$ in $P_{T}(M,N)$, for
each $x \in T$.
Thus $P_{T}(M,N) \del T$ and $M$ have the same ground set.
We say that $P_{T}(M,N) \del T$ is produced by a
\emph{\dy\ operation} on $M$, and we denote the
resulting matroid with $\Delta_{T}(M)$.
The \dy\ operation has been studied by Akkari and Oxley~\cite{AO93}
and generalized by Oxley, Semple, and Vertigan~\cite{OSV00}.

Suppose that $T$ is an independent triad of the matroid $M$.
Then $\Delta_{T}(M^{*})$ is defined, and
$(\Delta_{T}(M^{*}))^{*}$ is said to be produced from
$M$ by a \emph{\yd\ operation}, and is denoted
by $\nabla_{T}(M)$.
The next results follow by combining Lemmas~2.6 and~2.11,
and Theorem~1.1 in~\cite{OSV00}.

\begin{lem}
\label{DYrank}
Suppose that $T$ is a coindependent triangle of $M$.
Then
\begin{displaymath}
\rank(\Delta_{T}(M)) = \rank(M)+1.
\end{displaymath}
Moreover, $T$ is an independent triad in $\Delta_{T}(M)$,
and $\nabla_{T}(\Delta_{T}(M)) = M$.
\end{lem}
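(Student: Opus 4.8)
The plan is to take the three ingredients from~\cite{OSV00} that are cited (Lemmas~2.6 and~2.11, and Theorem~1.1) and assemble them, being careful only about how the label-swapping convention built into our definition of $\Delta_{T}$ interacts with the rank and with the inverse operation. First I would recall the construction: $\Delta_{T}(M) = P_{T}(M,N)\del T$ where $N \cong M(K_{4})$, after swapping each $x \in T$ with its ``opposite'' element $x'$ in $N$. Since $N$ has rank $3$ on a ground set of six elements, and $T$ is a triangle in both $M$ and $N$, the generalized parallel connection glues along the rank-$2$ flat $T$; the standard formula for the rank of a generalized parallel connection gives $\rank(P_{T}(M,N)) = \rank(M) + \rank(N) - \rank(T) = \rank(M) + 3 - 2 = \rank(M)+1$. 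Deleting the three elements of $T$ does not drop the rank, because the other three elements of $N$ (the ``primed'' elements that, after the swap, carry the labels of $T$) already span a set of rank $3$ inside $P_{T}(M,N)$ meeting the closure of $E(M)$ only in $T$; hence $\rank(\Delta_{T}(M)) = \rank(M)+1$. This is exactly Lemma~2.6 of~\cite{OSV00}, so I would cite it rather than rederive the flat computation.

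Next I would show $T$ is an independent triad of $\Delta_{T}(M)$. In $M(K_{4})$, the complement of a triangle is a triad, and the label swap sends the triangle $T$ of $N$ to precisely that complementary triad; so the three elements now labelled by $T$ form a triad of $N$, and one checks (again this is part of Lemma~2.6, or follows from the cocircuit structure of the generalized parallel connection and the deletion) that this triad persists as a triad in $P_{T}(M,N)\del T = \Delta_{T}(M)$, and that it is independent there because it has rank $3$ minus nothing — a triad that was a triangle before the swap is coindependent in $N$, hence the three swapped elements are independent in $P_{T}(M,N)$ and remain so after deleting the disjoint set $T$.

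Finally, for $\nabla_{T}(\Delta_{T}(M)) = M$: since $T$ is an independent triad of $\Delta_{T}(M)$, the operation $\nabla_{T}$ is defined, and by definition $\nabla_{T}(\Delta_{T}(M)) = (\Delta_{T}((\Delta_{T}(M))^{*}))^{*}$. Theorem~1.1 of~\cite{OSV00} is precisely the statement that the $\Delta$-$Y$ and $Y$-$\Delta$ operations are mutually inverse (on a coindependent triangle / independent triad), so I would invoke it directly; the only thing to verify is that our label-swapping convention is consistent with theirs — that is, that after swapping labels to do $\Delta_{T}$ and swapping again to do $\nabla_{T}$, we recover the original labelling on $T$, which holds because swapping with the opposite element in $M(K_{4})$ is an involution. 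The main obstacle, and the only real content beyond citation, is this bookkeeping with the label swap: one must confirm that the $x \leftrightarrow x'$ convention makes $\Delta_{T}$ and $\nabla_{T}$ genuine inverses on the nose (same ground set, same labels), rather than merely up to isomorphism, so that the equation $\nabla_{T}(\Delta_{T}(M)) = M$ is an equality of matroids and not just an isomorphism. Everything else is a direct appeal to the cited lemmas.
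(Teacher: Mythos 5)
Your proposal is correct and takes essentially the same route as the paper: the paper gives no proof of Lemma~\ref{DYrank} at all, stating only that it follows by combining Lemmas~2.6 and~2.11 and Theorem~1.1 of~\cite{OSV00}, which is exactly the assembly you carry out. The additional care you take over the rank computation and the involutive label-swapping convention fills in details the paper leaves implicit, and is sound.
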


\begin{lem}
\label{lem4}
Suppose that $\parf$ is a partial field and that $M$
is an excluded minor for the class of $\parf$\dash representable
matroids.
If $T$ is a coindependent triangle of $M$ then
$\Delta_{T}(M)$ is also an excluded minor for the class
of $\parf$\dash representable matroids.
\end{lem}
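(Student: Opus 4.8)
The plan is to verify the two defining properties of an excluded minor for $M' := \Delta_T(M)$: that $M'$ is not $\parf$\dash representable, and that every proper minor of $M'$ is $\parf$\dash representable.

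For the first property I would use the fact from Lemma~\ref{DYrank} that $T$ is an independent triad of $M'$ and that $\nabla_T(M') = M$. It therefore suffices to prove that the class of $\parf$\dash representable matroids is closed under the \yd\ operation: if $M'$ were $\parf$\dash representable, then $M = \nabla_T(M')$ would be as well, contradicting that $M$ is an excluded minor. Since $\parf$\dash representability is closed under duality (Proposition~\ref{prop9}) and $\nabla_T(N) = (\Delta_T(N^*))^*$, it is enough to establish closure under the \dy\ operation, i.e.\ that $\Delta_S(N)$ is $\parf$\dash representable whenever $N$ is and $S$ is a coindependent triangle of $N$. For this, recall that $\Delta_S(N)$ is obtained from the generalized parallel connection $P_S(M(K_4),N)$ by deleting $E(M(K_4))\setminus S$ and relabelling. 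The matroid $M(K_4)$ is regular and hence has a $\parf$\dash representation with all entries in $\{0,1,-1\}\subseteq\parf$; the triangle $S$ is a modular flat of $M(K_4)$ (in a rank\dash $3$ matroid a line is modular as soon as it meets every other line, and every triangle of $M(K_4)$ does); and $M(K_4)|S\cong U_{2,3}$ is regular, hence uniquely $\parf$\dash representable. Given a $\parf$\dash matrix for $N$, one scales and pivots so that $N|S$ appears in the standard form shared with the chosen representation of $M(K_4)$, and then glues the two matrices along this common $U_{2,3}$; this yields a $\parf$\dash matrix for $P_S(M(K_4),N)$, and deleting the appropriate rows and columns gives one for $\Delta_S(N)$. (Alternatively one may quote a partial\dash field analogue of the classical generalized\dash parallel\dash connection representability theorem.) Closure under minors (Proposition~\ref{prop9}) then completes the first property.

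For the second property I would show that for every $e\in E(M')=E(M)$ both $M'\del e$ and $M'\con e$ are $\parf$\dash representable; since every proper minor of $M'$ is a minor of one of these, Proposition~\ref{prop9} then finishes the argument. The idea is to re\dash express a single\dash element deletion or contraction of $M'$ in terms of the corresponding operation on $M$, using the description $M' = P_T(M(K_4),M)\del(E(M(K_4))\setminus T)$. When $e$ is not in $T$ and is not on the line of $M$ spanned by $T$, deleting or contracting $e$ on the $M$\dash side of the generalized parallel connection commutes with forming $M'$, so $M'\del e = \Delta_T(M\del e)$ and $M'\con e = \Delta_T(M\con e)$, and these are $\parf$\dash representable by the closure\dash under\dash\dy\ claim above applied to the proper minors $M\del e$, $M\con e$ of $M$. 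For the remaining cases — $e\in T$, or $e$ on the line spanned by $T$ — the $\Delta$\dash $Y$ exchange partially unravels, and I would appeal to the minor\dash structure results for the $\Delta$\dash $Y$ operation in~\cite{OSV00,AO93}, which express each of $M'\del e$, $M'\con e$ (after series/parallel simplification) as a minor of $M$, of $\Delta_{T'}(M\del f)$, or of $\Delta_{T'}(M\con f)$ for a suitable triangle $T'$ and element $f$; in each case Proposition~\ref{prop9} together with the closure\dash under\dash\dy\ claim give $\parf$\dash representability.

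The step I expect to be the main obstacle is precisely this last case analysis: pinning down what $M'\del e$ and $M'\con e$ are when $e$ lies in $T$ or on the line spanned by $T$, and checking in each case that the result reduces to a minor of, or a \dy/\yd\dash image of, a proper (and hence $\parf$\dash representable) minor of $M$. By contrast, closure of $\parf$\dash representability under the \dy\ operation is routine once the modularity of the relevant triangle in $M(K_4)$ is observed, so the combinatorial bookkeeping around the triangle $T$ is where the real work lies.
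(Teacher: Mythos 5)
The paper offers no proof of this lemma beyond the citation of Lemmas~2.6 and~2.11 and Theorem~1.1 of Oxley--Semple--Vertigan~\cite{OSV00}, and your outline is a correct reconstruction of exactly that argument: non-representability of $\Delta_T(M)$ via $\nabla_T(\Delta_T(M))=M$ together with closure of $\parf$\dash representability under the \dy\ operation (generalized parallel connection with $M(K_4)$ across its modular triangle), and representability of all proper minors via the commutation and case-analysis lemmas of that same reference. You have also correctly located the only delicate point --- the behaviour of $\Delta_T(M)\del e$ and $\Delta_T(M)\con e$ when $e$ lies in $T$ or in its closure, or when $T$ fails to remain coindependent after removing $e$ --- which is precisely the content of the results in~\cite{OSV00} that both you and the paper defer to.
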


\section{Unique representations}
\label{sct:unique}

In this section we prove an analogue of Kahn's theorem
by showing that stable near-regular matroids are
uniquely representable over $\nreg$.
Brylawski and Lucas~\cite{BL76} prove that binary matroids are
uniquely representable over any field.
The proof of the following result sketches the straightforward
adaptation of their argument to partial fields.

\begin{prop}
\label{binuniq}
Suppose that $\parf$ is a partial field, and that
the $X \times Y$ $\parf$\dash matrices $A_{1}$ and $A_{2}$
both represent the binary matroid $M$.
Let $T$ be a maximal forest of $\bip(A_{1}) = \bip(A_{2})$.
Suppose that both $A_{1}$ and $A_{2}$ are $T$\dash normalized.
Then $A_{1} = A_{2}$.
Hence $M$ is uniquely representable over $\parf$.
\end{prop}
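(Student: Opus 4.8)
The plan is to follow the classical Brylawski--Lucas argument, transported to the partial-field setting. Since $A_1$ and $A_2$ both represent the binary matroid $M$ on $X\cup Y$ with $X$ a basis, part \eqref{eq:bipnonzeroentries} of Lemma~\ref{lem:bipproperties} tells us that $\bip(A_1)=G_X(M)=\bip(A_2)$, so the two matrices have their nonzero entries in exactly the same positions; the only question is whether the values agree. Fix a maximal forest $T$ of this common bipartite graph. After $T$-normalizing, we have $(A_1)_{xy}=(A_2)_{xy}=1$ for every edge $xy\in T$, so the matrices already agree on $T$. It remains to show they agree on every non-forest edge.

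First I would take an arbitrary edge $e=x_0y_0$ of $\bip(A_1)$ with $e\notin T$. By Proposition~\ref{prop2}, applied with $S$ the edge set of $T$, there is an induced cycle $C$ of the bipartite graph containing $e$ whose other edges all lie in $T$. Because the graph is bipartite, this induced cycle alternates between $X$- and $Y$-vertices and has even length, and the induced submatrix $A_i[C]$ (for $i=1,2$) has the property that $\bip(A_i[C])$ is precisely this cycle. A square matrix whose associated bipartite graph is a single even cycle has, up to sign, exactly two nonzero terms in its determinant expansion: one is the product of the entries on the ``$T$-edges'' of $C$ (which equals $1$, since those entries are all $1$ after normalization), and the other is $\pm$ the product of the entries involving $e$, which, again because all $C$-edges except $e$ are normalized to $1$, is just $\pm (A_i)_e$. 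Hence $\det(A_i[C]) = \pm 1 \pm (A_i)_e$, with the signs determined purely by the combinatorics of the cycle and thus identical for $i=1$ and $i=2$.

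Next I would use the hypothesis that $A_1$ and $A_2$ represent the \emph{same} matroid $M$, and that $M$ is binary. Consider the homomorphism from $\parf$ to $\GF(2)$ — more carefully, the point is that $M$ being binary forces $\det(A_i[C])\neq 0$, since otherwise by Lemma~\ref{thm3} the basis exchange $X\symdiff C$ would fail in one representation, and in a binary matroid the value of a chain of basis exchanges around an induced cycle is forced. Concretely: $X\symdiff\{x_0,y_0\}$ is a basis of $M$ (as $e$ is an edge), so $\det(A_i[\{x_0,y_0\}])=(A_i)_e\neq 0$; and $M$ is binary, so the ``cycle relation'' for the fundamental circuit forces $X\symdiff C$ to be a \emph{non}-basis, i.e.\ $\det(A_i[C])=0$. (This is exactly where binariness enters — in a non-binary matroid one could have $\det(A_i[C])$ nonzero with different values.) Therefore $\pm 1 \pm (A_i)_e = 0$, giving $(A_i)_e=\pm 1$ with the sign forced by the cycle combinatorics; since that combinatorial data is the same for both matrices, $(A_1)_e=(A_2)_e$. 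As $e$ was an arbitrary non-forest edge, and the matrices already agreed on $T$ and on all zero entries, we conclude $A_1=A_2$. The uniqueness statement then follows: given any two $\parf$-matrices representing $M$, Lemma~\ref{lem:bipproperties}\eqref{eq:bipscaling} lets us $T$-normalize each (for a common $T$, after permuting so the bipartite graphs coincide), the normalizations are scaling-equivalent to the originals, and by what we just proved the normalized forms are equal, so the originals are equivalent.

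I expect the main obstacle to be pinning down, cleanly and with correct signs, the claim that for a square $\parf$-matrix whose bipartite graph is a single even cycle the determinant is $\pm 1 \pm (A_i)_e$, and simultaneously that binariness of $M$ forces this determinant to vanish — i.e.\ making precise the ``cycle relation'' in a binary matroid that rules out $X\symdiff C$ being a basis while $X\symdiff\{x_0,y_0\}$ is. One tidy way to handle the binariness input uniformly is to push everything through the homomorphism $\parf\to\GF(2)$ guaranteed when $M$ is binary (every entry maps to $1$), use Proposition~\ref{prop:hom}(2) to transfer vanishing of determinants, and observe that over $\GF(2)$ the normalized representation of a binary matroid is unique and forces each $(A_i)_e$ to agree with the unique $\GF(2)$-representation up to the sign ambiguity that $\parf$ allows; the induced-cycle argument above then resolves the sign. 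Everything else — the normalization, the reduction to non-forest edges, the final passage from equality of normalized forms to equivalence of the originals — is routine given the lemmas already established.
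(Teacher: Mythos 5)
Your outline coincides with the paper's own argument: $T$\dash normalize, take an edge outside the ``known'' set, use Proposition~\ref{prop2} to get a cycle whose other edges are known, expand the two-term determinant of the cycle submatrix, and let binariness force that determinant to vanish so the unknown entry is pinned down identically in $A_1$ and $A_2$. (The paper runs this as a contradiction argument with $S$ the set of entries already known to be $\pm 1$ and equal in both matrices, rather than edge-by-edge with $S=T$, but that is a cosmetic difference.)

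The one genuine gap is the step you yourself flag as the main obstacle: why binariness forces $\det(A_i[C])=0$. You assert that ``the cycle relation for the fundamental circuit forces $X\symdiff C$ to be a non-basis,'' but you do not prove it, and the route you propose for making it precise does not work: there is in general no partial-field homomorphism $\parf\to\GF(2)$ (for instance $\nreg$ admits none, since $\alpha$ and $1-\alpha$ must both map to units of $\GF(2)$, i.e.\ to $1$, which is incompatible with $\psi(1-\alpha)=1-\psi(\alpha)$), so Proposition~\ref{prop:hom}(2) cannot be invoked to transfer vanishing of determinants. The correct mechanism, and the one the paper uses, does not need any homomorphism: let $A$ be the $\GF(2)$\dash matrix obtained from $A_1$ by replacing every nonzero entry by $1$. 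Because $M$ is binary and $\bip(A_1)=G_X(M)$, this $A$ is a $\GF(2)$\dash representation of $M$ with respect to the basis $X$. Since $\bip(A[C])$ is a cycle, each row of $A[C]$ has exactly two ones, so $\det(A[C])=0$ over $\GF(2)$ and hence $X\symdiff C$ is not a basis of $M$. Now the vanishing of $\det(A_i[C])$ follows not from any map between partial fields but simply from the hypothesis that $A_i$ represents the same matroid $M$ (Lemma~\ref{thm3}). With that substitution your argument closes up and is the paper's proof.
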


\begin{proof}
We claim that $A_{1} = A_{2}$ and that every nonzero
entry of $A_{1}$ and $A_{2}$ belongs to $\{1,-1\}$.
Let $S$ be the set of edges of $\bip(A_{1}) = \bip(A_{2})$
such that $xy \in S$ if and only if
$(A_{1})_{xy} \in \{1,-1\}$ and $(A_{2})_{xy} = (A_{1})_{xy}$.
If our claim is false, then there is an edge $e$ of
$\bip(A_{1})$ not in $S$.
Since $S$ contains the edge-set of $T$, Proposition~\ref{prop2}
implies that there is a set $C \subseteq X \cup Y$
such that $G(A_{1}[C])$ is a cycle containing $e$, the edges
of which are contained in $S \cup e$.

Let $A$ be the $X \times Y$ $\GF(2)$\dash matrix obtained
from $A_{1}$ by replacing every nonzero entry with $1$.
As $M$ is binary, $A$ represents $M$ over $\GF(2)$.
Since $G(A[C])$ is a cycle,
it is easy to see that $A[C]$ has zero determinant over $\GF(2)$.
Therefore the determinant of $A_{1}[C]$ is also zero.
Let $\beta = (A_{1})_{e}$.
Every nonzero entry of $A_{1}[C]$, other than $(A_{1})_{e}$,
belongs to $\{1,-1\}$.
Now it is easy to see that the determinant of
$A_{1}[C]$ is, up to multiplication by $-1$, equal to
$\beta \pm 1$.
Thus $\beta \in \{1,-1\}$.
However, the same argument shows that $(A_{2})_{e}$
is equal to $\beta$, and we have a contradiction to the
fact that $e \notin S$.
\end{proof}

The direct sum or $2$\dash sum of two uniquely representable
matroids need not be uniquely representable (for example, the
$2$\dash sum of two copies of $U_{2,4}$ is not uniquely representable
over $\GF(4)$).
But we do have the following partial result.

\begin{prop}
\label{uniqsum}
Let $\parf$ be a partial field, and suppose that
the matroid $M_{1}$ is uniquely representable over $\parf$.
Let $M_{2}$ be a $\parf$\dash representable matroid,
and suppose that, whenever $A_{1}$ and $A_{2}$ are two
$T$\dash normalized $X \times Y$ $\parf$\dash representations
of $M_{2}$, then $A_{1} = A_{2}$.
(Here $T$ is a maximal forest of $\bip(A_{1}) = \bip(A_{2})$.)
Then $M_{1} \oplus_{1} M_{2}$ and $M_{1} \oplus_{2} M_{2}$
are uniquely $\parf$\dash representable.
\end{prop}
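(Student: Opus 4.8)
The plan is to prove the two statements ($\oplus_1$ and $\oplus_2$) essentially in parallel, since a $1$-sum is just the special case of a $2$-sum along a coloop/loop, or can be handled by the same bookkeeping. Suppose $M := M_1 \oplus_2 M_2$ (the $\oplus_1$ case is easier), with basepoint $q$ identified in the two parts, and suppose $A, A'$ are two $\parf$-matrices with $M = M[I|A] = M[I|A']$. By Proposition~\ref{prop13} and Lemma~\ref{lem:bipproperties}\eqref{eq:bipscaling}, equivalence is unchanged under replacing a matrix by a scaling-equivalent one, so after choosing a maximal forest $T$ of $\bip(A)$ and $T$-normalizing, it suffices to show that any two $T$-normalized representations are scaling-equivalent (indeed, we will aim to show they are equal after permuting and applying a single automorphism on the $M_2$-side). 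First I would record that $\bip(A) = G_X(M)$ by Lemma~\ref{lem:bipproperties}\eqref{eq:bipnonzeroentries}, so $\bip(A)$ is determined by $M$; in particular, since $M$ is the $2$-sum $M_1 \oplus_2 M_2$, the graph $\bip(A)$ splits (up to the basepoint) into the bipartite graphs of $M_1$ and $M_2$, and we may choose $T$ so that it restricts to a maximal forest on each part.

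The key structural point is that a representation of a $2$-sum is block-structured: after a suitable choice of basis adapted to the $2$-separation, an $X \times Y$ $\parf$-matrix $A$ representing $M_1 \oplus_2 M_2$ has the form of a matrix whose support lies in two overlapping blocks, the first block being a representation of $M_1$ and the second (after the standard rank-one correction across the basepoint) a representation of $M_2$, with the two blocks coupled only through a single rank-one product along the basepoint row/column. This is the matrix analogue of the definition of the $2$-sum; it follows from Proposition~\ref{prop9} together with the usual description of $2$-sum representations (obtained by deleting the basepoint element from a suitable representation on $E(M_1) \cup E(M_2)$), and I would cite or reprove it in a sentence or two. The upshot is: from $A$ (resp.\ $A'$) one reads off a $\parf$-representation $A_1$ (resp.\ $A_1'$) of $M_1$ and a $\parf$-representation $A_2$ (resp.\ $A_2'$) of $M_2$, and the "gluing data" is a single scaling parameter (the value carried across the basepoint).

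Now I would apply the hypotheses. Since $M_1$ is uniquely representable over $\parf$, $A_1$ and $A_1'$ are equivalent; after applying an automorphism of $\parf$ to all of $A'$ and re-$T$-normalizing (which is harmless, as normalizing only rescales by inverses of entries and then composes with scaling), we may assume $A_1$ and $A_1'$ are scaling-equivalent, and since both are $T$-normalized on the $M_1$-part, in fact $A_1 = A_1'$ — here one uses that a $T$-normalized matrix is determined within its scaling-equivalence class by the values on edges outside $T$, and Proposition~\ref{prop2} forces those to agree once they agree on a spanning forest together with the basis structure; more directly, scaling-equivalence preserving $T$-normalization is trivial on a connected part. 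For the $M_2$-part, the hypothesis is exactly that any two $T$-normalized $X\times Y$ representations of $M_2$ are \emph{equal}, so $A_2 = A_2'$ outright (after possibly the same global automorphism, which I'd fold in). It remains to reconcile the gluing parameters across the basepoint: a single nonzero scalar on each side, which can be normalized to $1$ by scaling the basepoint row or column — an operation that does not disturb the $T$-normalization if $T$ is chosen to contain an edge at the basepoint in each part, or is absorbed into the forest scaling otherwise. This yields $A = A'$ up to permutation and one automorphism, i.e.\ they are equivalent, proving unique representability.

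The main obstacle I anticipate is not conceptual but bookkeeping: pinning down precisely how the maximal forest $T$, the normalization, the automorphism applied to $A'$, and the basepoint-scaling interact, so that "equivalent" really follows rather than merely "related by scaling, pivoting, and an automorphism" — which is of course the definition of equivalent anyway, so the danger is only that one of these moves secretly breaks the block structure. The cleanest route is probably to choose $T$ adapted to the $2$-separation from the outset (a maximal forest of $\bip(M_1)$ union a maximal forest of $\bip(M_2)$, sharing the basepoint vertex), verify that $T$-normalization then respects the block decomposition, and then the argument above goes through with the automorphism applied once globally and the basepoint scalars the only loose ends. I would also remark that the $\oplus_1$ case is the degenerate version where the basepoint coupling is absent entirely, so the same proof applies verbatim with the gluing step omitted.
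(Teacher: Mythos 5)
Your proposal is correct and follows essentially the same route as the paper's proof: extract the block structure of a $2$\dash sum representation (the paper does this via Lemma~\ref{lem:matrixconn}), apply unique representability of $M_{1}$ to the block containing the basepoint column, apply the $T$\dash normalization hypothesis to the $M_{2}$ block containing the basepoint row, and check that normalizing the second block does not disturb the first (the paper's Claim~\ref{forest}). The bookkeeping issues you flag are exactly the ones the paper resolves by choosing the forest adapted to the decomposition, so no further comment is needed.
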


\begin{proof}
We present the proof that $M_{1} \oplus_{2} M_{2}$ is
uniquely representable.
The proof for $M_{1} \oplus_{1} M_{2}$ is similar
(and easier).

Let $A_{1}$ and $A_{2}$ be two $\parf$\dash representations of
$M_{1} \oplus_{2} M_{2}$.
Let $X$ be a basis of $M_{1} \oplus_{2} M_{2}$, and let
$Y = E(M_{1} \oplus_{2} M_{2}) - X$.
By pivoting, we can assume that $A_{1}$ and $A_{2}$
are $X \times Y$ matrices.
Thus $(A_{1})_{xy}$ is nonzero if and only if
$(A_{2})_{xy}$ is nonzero.
For $i = 1,2$, let $X_{i}$ and $Y_{i}$ be equal to
$X \cap E(M_{i})$ and $Y = Y \cap E(M_{i})$ respectively.
It is straightforward to prove (see Lemma~\ref{lem:matrixconn})
that, by relabeling as necessary, we can assume that
$A_{i}[X_{2},Y_{1}]$ is the zero matrix, and $A_{i}[X_{1},Y_{2}]$ has
rank one.
Therefore the nonzero columns of $A_{i}[X_{1},Y_{2}]$ are equal,
up to scaling; the same comment applies to the rows.

Let $y \in Y_{2}$ be such that $A_{i}[X_{1},\{y\}]$ is nonzero for $i=1,2$.
(Note that such a $y$ exists, for otherwise we can reduce
to the direct-sum case.)
By considering the result of contracting $X_{2}$, it is
easy to see that $A_{1}[X_{1},Y_{1} \cup y]$ and
$A_{2}[X_{1},Y_{1} \cup y]$ are representations of $M_{1}$.
By unique representability, we can apply scalings
and automorphisms of $\parf$ to $A_{2}$, and assume that
$A_{2}[X_{1},Y_{1} \cup y]=A_{1}[X_{1},Y_{1} \cup y]$.
Now, since $A_{2}[X_{1},\{y\}]=A_{1}[X_{1},\{y\}]$, and
the nonzero columns of $A_{i}[X_{1},Y_{2}]$ are parallel to
$A_{i}[X_{1},\{y\}]$, for $i=1,2$, we can scale columns of
$A_{2}$ so that $A_{2}[X_{1},Y]=A_{1}[X_{1},Y]$.

Let $x \in X_{1}$ be such that
$A_{i}[\{x\},Y_{2}]$ is nonzero for $i=1,2$.
By considering the result of contracting $X_{1} - x$, we
see that $A_{i}[X_{2}\cup x,Y_{2}]$ represents $M_{2}$.

\begin{claim}\label{forest}
Let $T$ be a forest of
$\bip(A_{1}) = \bip(A_{2})$, and assume that $T$ contains all
the edges incident with $x$.
By performing row and column scalings,
we can $T$\dash normalize both $A_{1}$ and $A_{2}$, without
affecting the assumption $A_{2}[X_{1},Y]=A_{1}[X_{1},Y]$.
\end{claim}

\begin{subproof}
The proof of the claim is inductive on the number of
edges in $T$.
If $T$ contains only those edges incident with $x$,
then we can $T$\dash normalize by multiplying
column $y$ by $1/(A_{1})_{xy} = 1/(A_{2})_{xy}$ in both
$A_{1}$ and $A_{2}$, for every neighbor $y$ of $x$.
This proves the base case of the argument.

Suppose that $T$ contains edges that are not incident
with $x$.
Let $u$ be a degree-one vertex in $T$ that is not
adjacent to $x$, and let $v$ be the vertex of $T$
adjacent to $u$.
By the inductive hypothesis, we can assume that
$A_{1}$ and $A_{2}$ are both
$(T-uv)$\dash normalized, and the assumption
$A_{2}[X_{1},Y]=A_{1}[X_{1},Y]$ still holds.
If $u \in X_{2}$ then we can scale row $u$ in
$A_{i}$ by $1/(A_{i})_{uv}$, for $i = 1,2$.
The resulting matrices are $T$\dash normalized, and
agree on the submatrices induced by $X_{1}$ and $Y$.
If $u \in X_{1}$ then we can multiply row $u$ in both
$A_{1}$ and $A_{2}$ by $1/(A_{1})_{uv} = 1/(A_{2})_{uv}$,
and we see that the claim holds for $T$.
A similar argument holds if $u \in Y_{1}$.
Thus we suppose that $u \in Y_{2}$.
Since $u$ is not adjacent to $x$, it follows that
$(A_{i})_{xu} = 0$ for $i = 1,2$.
Therefore $A_{i}[X_{1},\{u\}]$ is the zero column, since
the nonzero rows of $A_{i}[X_{1},Y_{2}]$ are parallel.
It follows that we can multiply column $u$ by
$1/(A_{i})_{vu}$ for $i=1,2$ without changing
$A_{i}[X_{1},Y]$.
This completes the proof of the claim.
\end{subproof}

Now we let $T'$ be a maximal forest of the subgraph
of $\bip(A_{1}) = \bip(A_{2})$ induced by
$X_{2} \cup Y_{2} \cup x$.
Assume that $T'$ contains all the edges incident with $x$.
We extend $T'$ to a maximal forest $T$, of
$\bip(A_{1}) = \bip(A_{2})$, where $T$ also contains all edges
incident with $x$.
By Claim~\ref{forest}, we can $T$\dash normalize $A_{1}$ and
$A_{2}$ without affecting the assumption that
$A_{2}[X_{1},Y]=A_{1}[X_{1},Y]$.

Since $A_{1}[X_{2} \cup x,Y_{2}]$ and
$A_{2}[X_{2} \cup x,Y_{2}]$ are $T'$\dash normalized,
the hypotheses imply that
$A_{2}[X_{2}\cup x,Y_{2}] = A_{1}[X_{2}\cup x,Y_{2}]$.
Now we see that, by pivoting, scaling rows and columns, and
possibly applying an automorphism, we have converted $A_{1}$ and
$A_{2}$ into identical matrices.
The result follows.
\end{proof}

\begin{dfn}
  Let $M$ be a matroid. Then $M$ is \emph{stable} if it can not be expressed as the direct sum or $2$\dash sum of two nonbinary matroids.
\end{dfn}

\begin{lem}\label{lem:stableunique}
  Let $M$ be a stable near-regular matroid. Then $M$ is uniquely representable over $\nreg$.
\end{lem}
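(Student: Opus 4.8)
The plan is to reduce the statement to cases by exploiting connectivity. A stable matroid, by definition, admits a decomposition into connected pieces along direct sums and $2$-sums, and we want to iterate Proposition~\ref{uniqsum} over that decomposition tree. So first I would argue that if $M$ is stable, then either $M$ is $3$-connected, or $M$ can be written as $M_1 \oplus_1 M_2$ or $M_1 \oplus_2 M_2$ where (by stability) at least one of $M_1, M_2$ is binary, and both $M_1$ and $M_2$ are again stable and near-regular. This is a standard fact about $2$-sum / direct-sum decompositions of matroids together with the observation that a minor of a stable matroid obtained in this way is stable. One then inducts on $|E(M)|$: in the decomposition, relabel so $M_1$ is binary (hence, by Proposition~\ref{binuniq}, satisfies the hypothesis ``all $T$-normalized representations coincide''), and apply Proposition~\ref{uniqsum} with the roles $M_1 = $ the binary part and $M_2 = $ the (smaller, stable, near-regular, hence by induction uniquely representable) other part, to conclude $M = M_1 \oplus_i M_2$ is uniquely $\nreg$-representable. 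Actually Proposition~\ref{uniqsum} wants $M_1$ uniquely representable and $M_2$ satisfying the $T$-normalized condition; binary matroids satisfy the latter, and unique representability is exactly the inductive hypothesis, so I would take $M_1$ to be the larger piece and $M_2$ the binary one.

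So the base case is the $3$-connected case, and that is where the real content lies. For a $3$-connected stable near-regular matroid $M$, I would invoke the Stabilizer Theorem (Theorem~\ref{cor:stabilizer}) together with Corollary~\ref{cor:stab}, which tells us $U_{2,4}$ is a $\nreg$-stabilizer for the class of near-regular matroids. If $M$ is binary, Proposition~\ref{binuniq} finishes it immediately; so assume $M$ is nonbinary, whence $M$ has a $U_{2,4}$-minor. Because $U_{2,4}$ stabilizes $M$, any two $\nreg$-representations of $M$ that agree (up to scaling) on a fixed $U_{2,4}$-minor are scaling-equivalent. It therefore suffices to show that any two $\nreg$-representations of $M$ can be brought, via pivoting, scaling, and partial-field automorphisms, to agree on some common $U_{2,4}$-minor. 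Since $U_{2,4}$ is uniquely $\nreg$-representable up to equivalence — its representation matrix is a $2\times 2$ matrix all of whose entries and $2\times 2$ subdeterminant are fundamental, and Proposition~\ref{prop16} shows the automorphism group of $\nreg$ acts transitively enough on fundamental elements to make this a one-orbit situation — we can fix one representation $A_1$ of $M$, restrict the second representation $A_2$ to the same $U_{2,4}$-minor, apply an automorphism of $\nreg$ to $A_2$ so that the two $U_{2,4}$-restrictions become scaling-equivalent, and then invoke the stabilizer property to deduce $A_1$ and $A_2$ are scaling-equivalent, hence equivalent.

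The step I expect to be the main obstacle is the bookkeeping that lets the stabilizer machinery apply cleanly: the definition of $\nreg$-stabilizer requires the two matrices to be $X\times Y$ matrices for a common $X, Y$ with the $U_{2,4}$ ground set sitting inside as $X' \cup Y'$, and requires that their restrictions to $X'\times Y'$ already be scaling-equivalent and that both restrictions represent the $U_{2,4}$-minor. Arranging this requires: (i) pivoting both representations so that a fixed basis of $M$ extending a fixed basis of the chosen $U_{2,4}$-minor is the row-set, using Proposition~\ref{prop11}; (ii) checking the chosen $U_{2,4}$-minor is obtained by deleting/contracting the same elements in both, which is automatic once the bases agree; and (iii) using an automorphism of $\nreg$ (justified by Proposition~\ref{prop16} and the explicit list of fundamental elements in Proposition~\ref{prop15}) plus row/column scalings on the $U_{2,4}$-restriction to synchronize the two $2\times 2$ matrices. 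Once those are in place the conclusion is immediate from Theorem~\ref{cor:stabilizer}. A secondary subtlety is making the induction in the non-$3$-connected case respect the hypotheses of Proposition~\ref{uniqsum} — in particular confirming that being a direct-sum or $2$-sum component of a stable matroid yields a stable (and, by Proposition~\ref{prop9}, near-regular) matroid on fewer elements, so that the inductive hypothesis does apply.
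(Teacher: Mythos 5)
Your proposal is correct and follows essentially the same route as the paper: induct on $|E(M)|$, handle the $3$\dash connected case by combining Proposition~\ref{binuniq} (binary case) with pivoting, scaling, and an automorphism from Proposition~\ref{prop16} to synchronize the two representations on a $U_{2,4}$\dash minor before invoking Corollary~\ref{cor:stab}, and handle the non-$3$\dash connected case by splitting off a binary part via stability and applying Propositions~\ref{binuniq} and~\ref{uniqsum}. The bookkeeping steps you flag as potential obstacles are exactly the ones the paper carries out.
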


\begin{proof}
Let $M$ be a stable near-regular matroid, and suppose that the
lemma holds for all smaller matroids.
We start by assuming that $M$ is $3$\dash connected.
If $M$ is binary, then the result follows
immediately from Proposition~\ref{binuniq}.
Therefore we suppose that $M$ is nonbinary, and therefore
has a $U_{2,4}$\dash minor.
Let $A_{1}$ and $A_{2}$ be $X \times Y$ $\nreg$\dash matrices
that represent $M$.
By pivoting, we can assume that there are $2$\dash element
subsets $X' \subseteq X$ and $Y' \subseteq Y$, such that
$A_{i}[X',Y']$ represents $U_{2,4}$ for $i=1,2$.
By scaling, we can assume that
\begin{displaymath}
A_{i}[X',Y']=
\begin{bmatrix}
      1 & 1\\
      p_{i} & 1
\end{bmatrix}
\end{displaymath}
for some $p_{i} \in \nreg$.
Since $\det(A_{i}[X',Y']) = 1-p_{i}$ is defined, $p_{1}$ and
$p_{2}$ are fundamental elements.
By Proposition~\ref{prop16}, we can apply an automorphism
of $\nreg$ to $A_{2}$, and assume that
$A_{2}[X',Y'] = A_{1}[X',Y']$.
Now the lemma follows immediately from
Corollary~\ref{cor:stab}.

Hence we assume that $M$ is not $3$\dash connected,
and can therefore be expressed as a direct sum or
a $2$\dash sum of $M_{1}$ and $M_{2}$.
Since $M$ is stable, we can assume that
$M_{2}$ is binary.
It is easy to see that $M_{1}$ must be stable.
Therefore $M_{1}$ is uniquely representable
over $\nreg$ by the inductive hypothesis.
The result now follows from
Propositions~\ref{binuniq} and~\ref{uniqsum}.
\end{proof}

\section{The setup}\label{sec:setup}

In this section we collect the results that underlie
our proof strategy.
An excluded minor $M$ for near-regularity with more
than eight elements has a ``companion'' matroid $N$ that
is representable over \qalpha.
Our main objective here is to develop the tools for
constructing $N$.

Note that if an excluded minor for near-regularity is
not ternary, then it is an excluded minor for the
class of ternary matroids.
Now the following lemmas follow immediately from
Reid's characterization of ternary matroids~\cite{Bix79, Sey79},
and Proposition~\ref{prop9}.
\begin{lem}
  \label{lem:ternary}Let $M$ be an excluded minor for the
class of near-regular matroids, and assume $M$ is not
isomorphic to $U_{2,5}$, $U_{3,5}$, $F_{7}$, or $F_{7}^{*}$.
Then $M$ is ternary.
\end{lem}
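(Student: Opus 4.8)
The plan is to show that any non-ternary excluded minor for near-regularity is itself an excluded minor for the class of ternary matroids, and then to invoke Reid's characterization of the ternary matroids.

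First I would observe that every near-regular matroid is ternary: this is immediate from Theorem~\ref{thm:nearregreps}, or alternatively from Proposition~\ref{prop:hom} applied to the partial-field homomorphism $\nreg \to \GF(3)$ that sends $\alpha$ to $-1$. Hence the class of near-regular matroids is contained in the class of ternary (equivalently, $\GF(3)$\dash representable) matroids, and by Proposition~\ref{prop9} both of these classes are closed under taking minors. The elementary point to record is that if $\mcal{C}_1 \subseteq \mcal{C}_2$ are minor-closed classes, then an excluded minor for $\mcal{C}_1$ that does not lie in $\mcal{C}_2$ is automatically an excluded minor for $\mcal{C}_2$: all of its proper minors lie in $\mcal{C}_1 \subseteq \mcal{C}_2$, while it itself is outside $\mcal{C}_2$.

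Now suppose $M$ is an excluded minor for near-regularity, and suppose for a contradiction that $M$ is not ternary. Since $M$ is an excluded minor, every proper minor of $M$ is near-regular, and hence ternary by the first step; but $M$ itself is not ternary. By the observation above, $M$ is therefore an excluded minor for the class of ternary matroids. By Reid's characterization~\cite{Bix79, Sey79}, the excluded minors for $\GF(3)$\dash representability are exactly $U_{2,5}$, $U_{3,5}$, $F_{7}$, and $F_{7}^{*}$, so $M$ is isomorphic to one of these four matroids. This contradicts the hypothesis of the lemma, and we conclude that $M$ is ternary.

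I do not anticipate any real obstacle: the argument is a routine combination of the containment of near-regular matroids in the ternary matroids, the minor-closedness of both classes (Proposition~\ref{prop9}), and the known list of excluded minors for $\GF(3)$. The only step that benefits from being spelled out carefully is the generic lemma relating excluded minors of nested minor-closed classes, which I would state and use as above.
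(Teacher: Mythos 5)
Your proof is correct and is essentially the paper's own argument: the paper observes that a non-ternary excluded minor for near-regularity is an excluded minor for the class of ternary matroids, and then invokes Reid's characterization together with the containment of the near-regular matroids in the ternary matroids. You have simply spelled out the same reasoning in more detail, including the generic observation about nested minor-closed classes that the paper leaves implicit.
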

\begin{lem}\label{lem:3c}
  Let $M$ be an excluded minor for the class of near-regular matroids. Then $M$ is $3$\dash connected.
\end{lem}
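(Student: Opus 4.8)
\textbf{Proof proposal for Lemma~\ref{lem:3c}.}
The plan is to argue by contradiction, assuming $M$ is an excluded minor for near-regularity that is not $3$-connected, and to derive a smaller excluded minor, contradicting minimality (or, more precisely, contradicting the fact that every proper minor of $M$ is near-regular). By Lemma~\ref{lem:ternary}, if $M$ is not one of $U_{2,5}$, $U_{3,5}$, $F_{7}$, or $F_{7}^{*}$, then $M$ is ternary; and each of those four exceptional matroids is readily checked to be $3$-connected, so we may assume $M$ is ternary throughout. The key structural input is Proposition~\ref{prop9}, which tells us that the class of near-regular matroids is closed under direct sums and $2$-sums.

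First I would dispose of the trivial obstructions. An excluded minor must be connected: if $M = M_1 \oplus M_2$ with both $M_i$ nonempty proper minors of $M$, then each $M_i$ is near-regular, and Proposition~\ref{prop9} gives that $M_1 \oplus M_2$ is near-regular, a contradiction. (One should also note that the presence of a loop or coloop would make $M$ expressible as such a direct sum with a one-element summand, so $M$ has no loops or coloops.) Hence $M$ is connected but, by assumption, not $3$-connected, so $M$ has a $2$-separation, and a connected matroid with a $2$-separation can be written as a $2$-sum $M = M_1 \oplus_2 M_2$ where $M_1$ and $M_2$ are both proper minors of $M$ with $|E(M_i)| < |E(M)|$. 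By minimality, $M_1$ and $M_2$ are near-regular, and Proposition~\ref{prop9} yields that $M_1 \oplus_2 M_2 = M$ is near-regular — the desired contradiction.

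The main point requiring care — and what I expect to be the only real obstacle — is justifying that $M_1$ and $M_2$ in the $2$-sum decomposition are genuinely proper minors of $M$, i.e.\ that they are smaller than $M$. This is where one invokes the standard structure of $2$-sums: if $E(M)$ has a $2$-separation $(E_1, E_2)$ with $|E_1|, |E_2| \geq 2$, then $M = M_1 \oplus_2 M_2$ where $E(M_i) = E_i \cup \{p\}$ for a new basepoint $p$, and each $M_i$ is a minor of $M$ obtained by contracting and deleting elements of the other side (see Oxley~\cite{oxley}); since $|E_1|, |E_2| \geq 2$, we get $|E(M_i)| = |E_i| + 1 \leq |E(M)| - 1$, so both summands are strictly smaller. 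With $U_{2,4}$ (or more elementary connectivity facts about matroids without triangles/triads of the wrong type) one does not even need to worry about degeneracies, since the $2$-separation already guarantees $|E_i| \geq 2$.

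Putting this together: since $M$ is connected, loopless, and coloopless but not $3$-connected, it has a $2$-sum decomposition into two strictly smaller matroids, both of which are near-regular by the minimality of $M$ as an excluded minor, whence $M$ itself is near-regular by Proposition~\ref{prop9} — contradicting that $M$ is an excluded minor. Therefore $M$ must be $3$-connected. The argument is short and relies essentially only on Proposition~\ref{prop9} together with the basic theory of $2$-sums; no partial-field machinery beyond closure under the relevant operations is needed.
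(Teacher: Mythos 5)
Your proof is correct and is essentially the argument the paper intends: the paper gives no explicit proof, merely noting that the lemma follows immediately from Proposition~\ref{prop9}, and your direct-sum/$2$\dash sum decomposition argument (both summands are proper minors, hence near-regular, hence $M$ is near-regular by closure) is the standard fleshing-out of that citation. The detour through Lemma~\ref{lem:ternary} is harmless but unnecessary, since the argument needs only closure under direct sums and $2$\dash sums.
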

\begin{lem}
  \label{lem:duality}Let $M$ be an excluded minor for the class of near-regular matroids. Then $M^*$ is an excluded minor for the class of near-regular matroids.
\end{lem}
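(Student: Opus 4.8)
The plan is to show that the class of near-regular matroids is closed under duality, and then observe that the property of being an excluded minor for a duality-closed class is itself self-dual. The first part is immediate from Proposition~\ref{prop9} (applied to the near-regular partial field $\nreg$): if $M$ is near-regular, then $M = M[I|A]$ for some $\nreg$\dash matrix $A$, and $M^{*}$ is represented by $A^{T}$, which is again an $\nreg$\dash matrix; hence $M^{*}$ is near-regular. The same argument with $M$ and $M^{*}$ interchanged (using $(A^{T})^{T} = A$) shows the converse, so $M$ is near-regular if and only if $M^{*}$ is near-regular.

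Now suppose $M$ is an excluded minor for the class of near-regular matroids. By definition this means $M$ is not near-regular, but every proper minor of $M$ is near-regular. By the equivalence just established, $M^{*}$ is not near-regular. It remains to check that every proper minor of $M^{*}$ is near-regular. Let $N$ be a proper minor of $M^{*}$, so $N = M^{*} \del D \con C$ for some disjoint $C, D \subseteq E(M)$ with $C \cup D \neq \emptyset$. Using the standard identity $(M^{*} \del D \con C)^{*} = M \con D \del C$, we see that $N^{*} = M \con D \del C$ is a proper minor of $M$, hence near-regular; therefore $N = (N^{*})^{*}$ is near-regular as well. This shows $M^{*}$ is an excluded minor for the class.

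There is essentially no obstacle here: the argument is a routine application of Proposition~\ref{prop9} together with the elementary facts that duality commutes with deletion and contraction (in the sense $(M \del e)^{*} = M^{*} \con e$) and that $M^{**} = M$. The only point requiring any care is bookkeeping about which minors are \emph{proper}: one must note that the set $C \cup D$ of contracted/deleted elements is the same for $M^{*} \del D \con C$ and for its dual $M \con D \del C$, so a proper minor of $M^{*}$ corresponds precisely to a proper minor of $M$.
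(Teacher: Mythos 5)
Your argument is correct and is exactly the route the paper intends: it states that Lemma~\ref{lem:duality} "follows immediately from \ldots Proposition~\ref{prop9}," i.e.\ from duality-closure of the near-regular class, and you have simply written out the routine details (dual of a proper minor is a proper minor of the dual). No issues.
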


\begin{dfn}
  Suppose that $M$ is a matroid, and that $u,v \in E(M)$.
We will say that $u$, $v$ is a \emph{deletion pair} if
  \begin{enumerate}
    \item $\{u, v\}$ is coindependent;
    \item Each of $M\del u, M\del v, M\del \{u, v\}$ is stable; and
    \item $M\del \{u,v\}$ is connected and nonbinary.
  \end{enumerate}
\end{dfn}

Our definition here is slightly different from that
used in~\cite{GGK}.
The next result follows from~\cite[Theorem~3.1]{GGK}.

\begin{lem}\label{lem:delpair}
Let $M$ be a $3$\dash connected nonbinary matroid such that
$\rank(M) \geq 4$ or $\corank(M) \geq 4$.
Then, for some $M' \in \{M, M^*\}$, there is a pair of elements $u,v$
such that $M' \del \{u,v\}$ is connected, and each of $M'\del u$, $M'\del v$, $M'\del \{u, v\}$ is
a $0$\dash, $1$\dash, or $2$\dash element coextension of a
$3$\dash connected nonbinary matroid.
Hence $u,v$ is a deletion pair for $M'$.
\end{lem}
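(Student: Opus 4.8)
The statement to prove is Lemma~\ref{lem:delpair}: a $3$\dash connected nonbinary matroid $M$ with $\rank(M)\geq 4$ or $\corank(M)\geq 4$ has, for $M'\in\{M,M^*\}$, a deletion pair $u,v$ with the additional structural property that $M'\del u$, $M'\del v$, and $M'\del\{u,v\}$ are each a $0$\dash, $1$\dash, or $2$\dash element coextension of a $3$\dash connected nonbinary matroid, and $M'\del\{u,v\}$ is connected. The plan is to invoke \cite[Theorem~3.1]{GGK} essentially as a black box, and then do the (small) bookkeeping needed to translate its conclusion into the language of deletion pairs as defined in this paper. First I would recall the precise statement of \cite[Theorem~3.1]{GGK}: for a $3$\dash connected nonbinary matroid of sufficiently large rank or corank, after possibly replacing $M$ by $M^*$, one obtains elements $u,v$ such that $M'\del\{u,v\}$ is connected and each of the three deletions is a small (at most $2$\dash element) coextension of a $3$\dash connected nonbinary matroid. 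The rank/corank hypothesis is exactly what \cite{GGK} requires; since we allow the dual, the hypothesis ``$\rank(M)\geq 4$ or $\corank(M)\geq 4$'' suffices.

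The remaining work is to verify that the $u,v$ produced form a deletion pair for $M'$, i.e.\ to check the three conditions of the definition. Condition~(i), that $\{u,v\}$ is coindependent in $M'$: this follows because $M'\del\{u,v\}$ has the same rank as $M'$ (being a coextension of a matroid of rank $\rank(M')$ minus the number of coextension steps — more directly, $M'\del u$ and $M'\del v$ are coextensions of $3$\dash connected matroids, hence have full rank, and likewise $M'\del\{u,v\}$, so deleting $u$ then $v$ never drops the rank). Condition~(iii), that $M'\del\{u,v\}$ is connected and nonbinary, is immediate: connectedness is explicitly asserted by \cite{GGK}, and a coextension of a nonbinary matroid is nonbinary since the minor survives. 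Condition~(ii), that each of $M'\del u$, $M'\del v$, $M'\del\{u,v\}$ is stable: here I would argue that a $0$\dash, $1$\dash, or $2$\dash element coextension $L$ of a $3$\dash connected nonbinary matroid $L_0$ is stable. Indeed $L$ is connected (a single-element coextension of a connected matroid is connected, and one checks the two-step case too), and if $L$ were a direct sum of two nonbinary matroids it would be disconnected, a contradiction; if $L$ were a $2$\dash sum of two nonbinary matroids $L_1, L_2$, then contracting the at most two coextension elements (which lie in a cocircuit of size at most, roughly, the coextension size relative to $L_0$) would have to yield $L_0$, but a $2$\dash sum of two nonbinary matroids remains nonbinary and moreover is not $3$\dash connected, and one must rule out that contracting one or two elements from such a $2$\dash sum produces the $3$\dash connected nonbinary $L_0$ — this is where a short case analysis on where the contracted elements sit relative to the $2$\dash sum is needed.

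The main obstacle, then, is precisely the verification of stability in condition~(ii): showing that a small coextension of a $3$\dash connected nonbinary matroid cannot be written as a $2$\dash sum of two nonbinary matroids. I expect this to be genuinely the crux, because the definition of ``stable'' used here differs slightly from the one in \cite{GGK} (the paper remarks on this just before the lemma), so one cannot simply quote \cite{GGK}'s conclusion verbatim; one has to connect ``$k$\dash element coextension of a $3$\dash connected nonbinary matroid'' ($k\leq 2$) with ``not a direct sum or $2$\dash sum of two nonbinary matroids.'' The cleanest route is: a $2$\dash sum $L_1\oplus_2 L_2$ has a $2$\dash separation $(E_1,E_2)$ with $|E_i|\geq 3$; deleting $u$ or $v$ from the original $3$\dash connected $M'$ created at most a $2$\dash element coextension, so contracting those $\leq 2$ elements gives back something $3$\dash connected; but contracting $\leq 2$ elements from a matroid with a $2$\dash separation into two parts of size $\geq 3$ leaves a non-trivial $2$\dash separation (the parts shrink by at most $1$ each, staying of size $\geq 2$), contradicting $3$\dash connectivity of $L_0$ unless $L_0$ is too small — and small cases ($L_0$ has $\leq 4$ elements, so $L_0=U_{2,4}$) are handled directly since $U_{2,4}$'s small coextensions can be enumerated. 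Once stability is established, assembling the three conditions into ``deletion pair'' is routine, and the size bound $\rank(M)\geq 4$ or $\corank(M)\geq 4$ is carried over unchanged from \cite{GGK}.
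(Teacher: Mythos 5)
Your route is the same as the paper's, which offers no proof beyond the citation ``follows from \cite[Theorem~3.1]{GGK}'': you quote that theorem and then check the deletion-pair conditions, of which only stability requires thought. However, the stability check as you have written it would not close. You assert that contracting $S$ with $|S|\leq 2$ from $L=L_1\oplus_2 L_2$ shrinks the two sides of the $2$\dash separation $(E_1,E_2)$ ``by at most $1$ each, staying of size $\geq 2$''; this is false when both elements of $S$ lie on the same side, and your proposed residual case (``$L_0$ has $\leq 4$ elements, so $L_0=U_{2,4}$'') is not the case that actually arises --- the problematic configuration is $|E_1|=3$ with $S\subseteq E_1$, in which $L_0=L/S$ may be arbitrarily large.

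The repair is short. Since each $L_i$ is nonbinary it has at least four elements, so $|E_i|\geq 3$ for $i=1,2$. As $\lambda_{L_0}(E_1\setminus S,E_2\setminus S)\leq\lambda_L(E_1,E_2)=1$ and $L_0$ is $3$\dash connected, some side must satisfy $|E_i\setminus S|\leq 1$, forcing $|E_i|=3$, $|S|=2$, and $S\subseteq E_i$; then $L_i\cong U_{2,4}$, so $r_L(E_i)=2=r_L(S)$, whence the surviving element of $E_i$ lies in $\operatorname{cl}_L(S)$ and is a loop of $L_0$, again contradicting $3$\dash connectivity. (The direct-sum alternative is excluded by connectivity of the three deletions, as you note, and coindependence of $\{u,v\}$ follows simply because a $3$\dash connected matroid on at least four elements has no cocircuit of size at most two.) With that correction your verification is complete and matches what the paper leaves implicit.
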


Lemmas~\ref{GGK2.2} and~\ref{GGK2.3} are analogues of
Lemmas 2.2 and 2.3 in~\cite{GGK}.
Suppose that $A$ is a matrix (not necessarily a $\nreg$\dash matrix)
over the field \qalpha, and that all the entries of $A$ belong to $\nreg$.
If $\psi$ is a homomorphism from $\nreg$ to some
other partial field, then we use $\psi(A)$ to denote the
matrix obtained by applying $\psi$ to all the entries of $A$.

\begin{lem}\label{GGK2.2}
  Suppose $M$ is a matroid, and that $u,v$ is a deletion pair of $M$ such
that $M \del u$ and $M \del v$ are near-regular.
Let $X$ be a basis of $M\del\{u,v\}$, and define $Y := E(M)\setminus X$.
Then there exists an $X\times Y$ matrix $A$ over \qalpha\ such that
  \begin{enumerate}
    \item $M[I|A-u] = M\del u$;
    \item $M[I|A-v] = M\del v$; and
    \item $A-u$ and $A-v$ are near-unimodular.
  \end{enumerate}
  Moreover, $A$ is unique up to row and column scaling and
applying automorphisms of $\nreg$.
\end{lem}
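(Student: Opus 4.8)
The plan is to build $A$ by first recording near-unimodular representations of $M\del u$ and $M\del v$ on a common basis, then arguing that these two representations can be made to agree on the overlap $M\del\{u,v\}$. Since $\{u,v\}$ is coindependent, $X$ is a basis of both $M\del u$ and $M\del v$, so by Proposition~\ref{prop13} and pivoting we may choose near-unimodular matrices $A_u$ (an $X\times (Y\setminus v)$ matrix) and $A_v$ (an $X\times (Y\setminus u)$ matrix) with $M[I|A_u]=M\del u$ and $M[I|A_v]=M\del v$. Both restrict to representations of $M\del\{u,v\}$ on the basis $X$, namely $A_u-u$ and $A_v-v$. Because $M\del\{u,v\}$ is connected and nonbinary, it is in particular a stable near-regular matroid, so by Lemma~\ref{lem:stableunique} it is uniquely representable over $\nreg$: there are scalings and an automorphism of $\nreg$ carrying $A_v-v$ to $A_u-u$. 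Applying that automorphism to all of $A_v$ and absorbing the row scalings (and, using Lemma~\ref{lem:bipproperties}\eqref{eq:bipscaling} on a spanning forest of $\bip(A_u-u)$, the column scalings on the shared columns) we may assume $A_u-u = A_v-v$. Now define $A$ to be the $X\times Y$ matrix whose $(Y\setminus v)$\dash columns are those of $A_u$ and whose $(Y\setminus u)$\dash columns are those of $A_v$; these prescriptions agree on the common columns, so $A$ is well defined, and by construction $A-u=A_u$ and $A-v=A_v$, giving (i), (ii), and (iii). Note $A$ itself need not be a $\nreg$\dash matrix, which is exactly why the statement only asserts $A$ is a matrix over $\qalpha$.

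For uniqueness, suppose $A'$ is another matrix with properties (i)--(iii). Then $A'-u$ and $A-u$ are both near-unimodular representations of the stable matroid $M\del u$, hence equivalent by Lemma~\ref{lem:stableunique}; I want to upgrade ``equivalent'' to ``scaling\dash equivalent plus an $\nreg$\dash automorphism'' without an additional pivot. This is where I expect the main technical care: a pivot changes which sets index rows and columns, so I should first argue that, after fixing $X$ as the row set, the only freedom left is scaling and automorphisms. Concretely, $A-u$ and $A'-u$ both have row set $X$ and columns indexed by $Y\setminus v$, and both represent $M\del u$; since $X$ is a fixed basis, any equivalence between them that keeps $X$ as the row labels is a composition of row/column scalings and an automorphism of $\nreg$ (a nontrivial pivot would move an element of $X$ to a column). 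So there are scalings and an automorphism $\phi$ of $\nreg$ with $\phi(A'-u)=$ (scaled copy of) $A-u$. The same reasoning applied to the $v$\dash side gives an automorphism $\phi'$ and scalings with $\phi'(A'-v)=$ (scaled) $A-v$.

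It remains to reconcile $\phi$ and $\phi'$: both restrict to equivalences of $A-\{u,v\}$ with a scaled image of $A'-\{u,v\}$, and I will use unique representability of $M\del\{u,v\}$ together with Proposition~\ref{autofixed} to conclude $\phi=\phi'$ on the subgroup relevant to the shared columns. Here the dichotomy matters: if $A-\{u,v\}$ is equivalent to a totally unimodular matrix the automorphism is irrelevant on those entries anyway (every entry is fixed), and otherwise Proposition~\ref{autofixed} pins down the automorphism uniquely, forcing $\phi=\phi'$. With a single automorphism $\phi$ and a consistent choice of scalings on the shared columns, the scalings on the $u$\dash column and the $v$\dash column are then the only remaining ambiguity, and these are precisely the ``row and column scaling'' freedom allowed in the statement. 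Hence $A$ and $A'$ differ only by row/column scalings and an automorphism of $\nreg$, as claimed. The one genuine obstacle is the bookkeeping in this last paragraph—making sure the scalings chosen on the $u$\dash side and the $v$\dash side can be made simultaneously compatible on the common submatrix—and I would handle it by $T$\dash normalizing along a spanning forest $T$ of $\bip(A-\{u,v\})$ first, exactly as in the proof of Proposition~\ref{uniqsum}, so that the shared part is rigid and only the two ``new'' columns carry free scalars.
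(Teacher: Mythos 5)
Your proof is correct in substance and is essentially the paper's own argument: build $A$ by gluing near-unimodular representations of $M\del u$ and $M\del v$ that have been made to agree on the common part via Lemma~\ref{lem:stableunique} applied to the stable minor $M\del\{u,v\}$, and obtain uniqueness by $T$\dash normalizing, using connectivity of $M\del\{u,v\}$ to reduce the scalings on the shared columns to a single constant, and using nonbinarity of $M\del\{u,v\}$ together with Proposition~\ref{autofixed} to pin down the automorphism. Only minor slips: the size annotations on $A_u$ and $A_v$ are interchanged, stability of $M\del\{u,v\}$ is part of the deletion-pair hypothesis rather than a consequence of being connected and nonbinary, and your totally unimodular branch is vacuous (again because $M\del\{u,v\}$ is nonbinary), which is just as well since in that branch the two automorphisms need not coincide and the uniqueness claim could fail.
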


\begin{proof}
Let $A_{1}$ be a near-unimodular $X \times (Y\setminus u)$
matrix representing $M\del u$.
Likewise, let $A_{2}$ be a near-unimodular
$X \times (Y\setminus v)$ matrix representing $M\del v$.
If $u$ is a loop, then it is straightforward to confirm that the
matrix obtained from $A_{1}$ by adding a zero column satisfies
the statements of the lemma.
Therefore we assume that $u$ (and $v$, by symmetry) is not a loop.
Now $A_{1}- v$ and $A_{2}- u$ are
near-unimodular matrices representing $M \del \{u,v\}$.
Since $M \del \{u,v\}$ is stable by the definition of
a deletion pair, it follows from Lemma~\ref{lem:stableunique} that
by scaling, and applying automorphisms of $\nreg$ to $A_{2}$,
we can assume that $A_{2}- u = A_{1}- v$.
Propositions~\ref{prop:hom} and~\ref{prop11} imply that
$A_{2}$ remains near-unimodular after these operations.
Let $A$ be the matrix obtained from $A_{1}$ by adding the
column $A_{2}[X,\{u\}]$.
Since $A - u = A_{1}$ and $A - v = A_{2}$ the conditions
of the lemma clearly hold.

To prove that $A$ is unique, we first assume that $A$ is
$T$\dash normalized, for some spanning tree $T$ of $\bip(A)$
that has $u$ and $v$ as degree-one vertices.
(Such a tree exists because $M\del \{u,v\}$, and hence
$\bip(A-\{u,v\})$, is connected; neither $u$ nor $v$ is a loop;
and because $u$ and $v$ are not adjacent.)
Let $A'$ be some other $X \times Y$ matrix over $\qalpha$ that
satisfies the conditions of the lemma.
Since $A - u$ and $A' - u$ both represent $M \del u$ over
$\nreg$, and $M \del u$ is stable, we can, by scaling and
applying automorphisms of $\nreg$ to $A'$, assume that
$A' - u = A - u$.
Similarly, as $A' - v$ and $A - v$ both represent the stable
matroid $M \del v$, there are nonsingular diagonal matrices
$D_{1}$ and $D_{2}$, and an automorphism $\psi$ of $\nreg$, such
that $D_{1}\psi(A' - v)D_{2} = A - v$.

Let $xy$ be an edge in $T - \{u,v\}$.
Then
\begin{multline}
\label{eqn1}
1 = (A-v)_{xy} = (D_{1})_{xx}\psi((A'-v)_{xy})(D_{2})_{yy}\\
= (D_{1})_{xx}\psi(1)(D_{2})_{yy} = (D_{1})_{xx}(D_{2})_{yy}.
\end{multline}
Let $\gamma = (D_{1})_{xx}$, so that
$(D_{2})_{yy} = 1/\gamma$.
Let $w$ be some vertex in $T - \{u,v\}$.
It is easy to prove, using Equation~\eqref{eqn1}, and
induction on the length of the path in $T - \{u,v\}$ joining
$w$ to $x$, that if $w \in X$ then $(D_{1})_{ww} = \gamma$, and if
$w \in Y$ then $(D_{2})_{ww} = 1/\gamma$.
Thus $A - v = D_{1}\psi(A' - v)D_{2}$ is obtained
from $A' - v$ by applying $\psi$, and possibly
scaling the column $u$ by a nonzero constant.
Thus $\psi(A' - \{u,v\}) = A - \{u,v\} = A' - \{u,v\}$.
Since $A' - \{u,v\}$ represents the nonbinary
matroid $M\del \{u,v\}$, it follows that $A' - \{u,v\}$
is near-unimodular but not totally unimodular.
Proposition~\ref{autofixed} implies that $\psi$ is
the trivial automorphism.
Thus $A-v$ can be obtained from $A'-v$ by possibly
scaling the column $u$.
Now, as $A'[X,v] = A[X,v]$, it follows that
$A'$ and $A$ are equal, up to scaling and automorphisms of $\nreg$.
\end{proof}

We will need a few more properties of the matrix appearing in
Lemma~\ref{GGK2.2}.
First of all, we need to be able to modify the choice of the
basis $X$.
The straightforward proof of the next result is omitted.

\begin{lem}\label{lem:pivot}
  Suppose that $M$ is a matroid, and that $u,v$ is a deletion pair of $M$
such that $M \del u$ and $M \del v$ are near-regular.
Let $X$ be a basis of $M \del \{u,v\}$, and let $Y = E(M) \setminus X$.
Let $A$ be the $X\times Y$ \qalpha\dash matrix such that
$M[I|A-u] = M\del u$, $M[I|A-v]=M \del v$, and $A-u$ and $A-v$ are
near-unimodular.
Suppose that $x \in X$, $y \in Y\setminus\{u,v\}$ and that $A_{xy} \neq 0$.
Then $M[I|A^{xy}-u]=M\del u$,
$M[I|A^{xy}-v]=M \del v$, and $A^{xy}-u$ and $A^{xy}-v$
are near-unimodular.
\end{lem}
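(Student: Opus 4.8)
The plan is to reduce the whole statement to the observation that pivoting over $xy$ commutes with deleting the columns $u$ and $v$. Since $u,v\in Y$ and $x\in X$ (so $u,v\neq x$), and since by hypothesis $y\notin\{u,v\}$, the columns labelled $u$ and $v$ survive the pivot over $xy$. First I would record the identities
\begin{displaymath}
A^{xy}-u = (A-u)^{xy}, \qquad A^{xy}-v = (A-v)^{xy}.
\end{displaymath}
Both sides of the first identity are matrices on the index set $(X\symdiff\{x,y\})\times((Y\setminus u)\symdiff\{x,y\})$, and a term-by-term comparison using Definition~\ref{def:pivot} shows that the entries agree: the $(w,z)$ entry of a pivoted matrix is computed from $A_{xy}$, $A_{xz}$, $A_{wy}$ and $A_{wz}$ alone, and none of these lies in column $u$ (as $u\neq y$), so removing column $u$ before or after pivoting produces the same result. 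The same applies to $v$.

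Given these identities the conclusions follow formally, even though $A$ itself need not be near-unimodular — we only ever work with the $\nreg$\dash matrices $A-u$ and $A-v$. Since $y\neq u$, the entry $(A-u)_{xy}=A_{xy}$ is a nonzero (hence invertible) element of $\nreg$, so $(A-u)^{xy}$ is defined, and Proposition~\ref{prop11} shows that $(A-u)^{xy}=A^{xy}-u$ is again a $\nreg$\dash matrix, i.e.\ near-unimodular; applying the same argument to $A-v$ gives that $A^{xy}-v$ is near-unimodular. For the matroid equalities, pivoting is one of the operations permitted in the definition of equivalence (Definition~\ref{dfn3}), so $A-u$ and $(A-u)^{xy}$ are equivalent $\nreg$\dash matrices, and Proposition~\ref{prop13} yields
\begin{displaymath}
M[I|A^{xy}-u]=M[I|(A-u)^{xy}]=M[I|A-u]=M\del u,
\end{displaymath}
and symmetrically $M[I|A^{xy}-v]=M\del v$.

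I do not expect a genuine obstacle here — this is the ``straightforward'' verification the statement refers to. The only point requiring a moment's care is the pivot–deletion commutation of the first paragraph, and even the degenerate possibility that $u$ (or $v$) is a loop of $M$ causes no difficulty: in that case the corresponding column of $A$ is identically zero and remains zero after pivoting, so the identity and all three conclusions hold trivially.
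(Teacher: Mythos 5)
Your proof is correct; the paper in fact omits the proof of this lemma as ``straightforward,'' and your argument --- reducing everything to the commutation $A^{xy}-u=(A-u)^{xy}$ (valid because $y\notin\{u,v\}$ and $x\in X$, so the pivot formula never consults column $u$ or $v$), then invoking Propositions~\ref{prop11} and~\ref{prop13} applied to the $\nreg$\dash matrices $A-u$ and $A-v$ rather than to $A$ itself --- is exactly the intended verification. No gaps.
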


Consider the function from $\nreg$ to $\GF(3)$ which
takes $0$ to $0$, $1$ to $1$, and $\alpha$ to $-1$.
It is not difficult to confirm that this induces a
partial-field homomorphism from $\nreg$ to $\GF(3)$.
Indeed, if $\phi:\nreg \to \GF(3)$ is a partial-field
homomorphism, then $\phi(0)=0$ and $\phi(1) = 1$,
by elementary properties of homomorphisms, and
$\phi(\alpha)$ cannot be equal to $0$, as
$\phi(\alpha)$ must have a multiplicative inverse.
Nor, for the same reason, can $\phi(1-\alpha)$ be equal to
$0$.
Thus $\phi(\alpha) = -1$, so there is a unique
partial-field homomorphism from $\nreg$ to $\GF(3)$.

\begin{lem}
\label{lem:ternaryhom}
Suppose $M$ is a ternary matroid, and that $u,v$ is a deletion
pair of $M$ such that $M \del u$ and $M \del v$ are near-regular.
Let $X$ be a basis of $M \del \{u,v\}$, and let $Y = E(M) \setminus X$.
Let $A$ be the $X\times Y$ \qalpha\dash matrix such that
$M[I|A-u] = M\del u$, $M[I|A-v]=M \del v$, and $A-u$ and $A-v$ are
near-unimodular.
Let $\phi$ be the homomorphism from $\nreg$ to $\GF(3)$.
Then $M = M[I|\phi(A)]$.
\end{lem}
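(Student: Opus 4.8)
The plan is to leverage the uniqueness of the matrix $A$ (from Lemma~\ref{GGK2.2}) together with the existence of the unique partial-field homomorphism $\phi\colon\nreg\to\GF(3)$, and to compare the ternary matroid $M$ against the ternary matroid $M[I|\phi(A)]$ coordinatewise along the two deletions $u$ and $v$.

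First I would observe that, since $M\del u$ is near-regular, it has a $\GF(3)$-representation, and by Proposition~\ref{prop:hom}(iii) applied to $\phi$ we get $M[I|\phi(A-u)] = M[I|A-u] = M\del u$; here I use that $A-u$ is a genuine $\nreg$-matrix (condition (iii) of Lemma~\ref{GGK2.2}) so $\phi$ may be applied. Since $\phi(A-u) = \phi(A)-u$, this says $M[I|\phi(A)]\del u = M\del u$. The same argument, using condition (iii) for $A-v$, gives $M[I|\phi(A)]\del v = M\del v$. So if I set $N := M[I|\phi(A)]$, then $N$ is a ternary matroid on ground set $E(M)$ with $N\del u = M\del u$ and $N\del v = M\del v$.

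Next comes the step of concluding $N = M$ from the agreement on the two single-element deletions. Since $\{u,v\}$ is coindependent in $M$ (condition (i) of a deletion pair) and $M$ is $3$-connected (Lemma~\ref{lem:3c}), and the rows of $I|\phi(A)$ index the basis $X$ of $M\del\{u,v\}$, both $u$ and $v$ are in $Y$, hence correspond to columns; a matroid of rank $|X|$ on $E(M)$ is determined by the (labelled) matrix columns, so it suffices to check that each column of $\phi(A)$ gives the correct dependencies in $M$. The columns indexed by $Y\setminus\{u,v\}$ and by $u$ are already pinned down by $N\del v = M\del v$; the columns indexed by $Y\setminus\{u,v\}$ and by $v$ are pinned down by $N\del u = M\del u$. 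Together these fix every column of $\phi(A)$, so every circuit of $N$ contained in $E(M)\setminus u$ is a circuit of $M$ and likewise for $E(M)\setminus v$. Since $\{u,v\}$ is coindependent, $\{u,v\}$ is not contained in a cocircuit's complement being spanning forces every circuit of $M$ to meet $E(M)\setminus u$ or $E(M)\setminus v$ (indeed every circuit misses at least one element, or has size $\le |E(M)|-2$ unless it is a spanning circuit, which still omits some coloop-free element) — more cleanly: coindependence of $\{u,v\}$ means $X$ remains a spanning set of $M\del u$ and of $M\del v$, so every basis of $M$ lies in $E(M)\setminus u$ or $E(M)\setminus v$, and a matroid is determined by its bases; since $N$ and $M$ have the same bases avoiding $u$ (from $N\del u=M\del u$) and the same bases avoiding $v$, they have the same bases. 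Hence $N = M$, i.e. $M = M[I|\phi(A)]$, as required.

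The main obstacle I anticipate is the second step: being careful about why agreement of $N$ and $M$ on $M\del u$ and on $M\del v$, for a \emph{coindependent} pair $\{u,v\}$, forces $N=M$. The clean route is the basis argument above — coindependence of $\{u,v\}$ guarantees that no basis of $M$ uses both $u$ and $v$ in the sense that $X\cup\{u,v\}$ has a circuit through each, so every basis omits $u$ or omits $v$; combined with $N\del u = M\del u$, $N\del v = M\del v$, and the fact that $N$, $M$ have the same ground set and rank, this yields $\mathcal{B}(N) = \mathcal{B}(M)$. (One should double-check the edge case where $u$ or $v$ is a loop of $M$, which is handled exactly as in the proof of Lemma~\ref{GGK2.2} by the zero-column convention.) Everything else is a direct invocation of Proposition~\ref{prop:hom} and the already-established properties of $A$.
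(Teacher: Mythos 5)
Your first step is fine: applying Proposition~\ref{prop:hom} to $A-u$ and $A-v$ does give $M[I|\phi(A)]\del u = M\del u$ and $M[I|\phi(A)]\del v = M\del v$. The fatal gap is the second step. Knowing $M\del u$ and $M\del v$ does \emph{not} determine $M$: the bases of $M$ that contain both $u$ and $v$ are invisible to both deletions, and nothing forces them to agree with those of $M[I|\phi(A)]$. Your justification --- that coindependence of $\{u,v\}$ implies every basis of $M$ omits $u$ or omits $v$ --- is simply false; coindependence of $\{u,v\}$ means that \emph{some} basis avoids both $u$ and $v$ (equivalently $E(M)\setminus\{u,v\}$ is spanning), and says nothing about bases through both. (In $U_{2,4}$ any pair is coindependent and is also a basis.) Indeed, if your inference were valid, the whole paper would collapse: the matroid $N=M[I|A]$ over \qalpha\ satisfies exactly the same two identities $N\del u = M\del u$ and $N\del v = M\del v$, yet the entire argument of Section~\ref{sec:reduction} rests on $N\neq M$, the discrepancy being witnessed precisely by ``distinguishing sets'' containing $\{u,v\}$ (Claim~\ref{GGK2}).

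What actually closes the gap is matrix-level rigidity rather than matroid-level information. The paper takes a genuine $\GF(3)$-matrix $A'$ representing $M$ (this is where ternarity of $M$ is used, not merely to know $M[I|\phi(A)]$ is ternary). Since $A'-u$ and $\phi(A)-u$ both represent $M\del u$ over $\GF(3)$, unique representability over $\GF(3)$ (Brylawski--Lucas) lets one scale so that $A'-u=\phi(A)-u$; then, comparing $A'-v$ with $\phi(A)-v$ and running the spanning-tree scaling argument from the proof of Lemma~\ref{GGK2.2} (which uses connectedness of $M\del\{u,v\}$), the diagonal scaling matrices are forced to be constant, so $A'$ and $\phi(A)$ agree up to scaling of the single column $u$. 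Hence $M=M[I|A']=M[I|\phi(A)]$. You would need to reproduce something like this argument --- pinning down the matrix $\phi(A)$ against an actual representation of $M$ --- rather than arguing from bases of the two deletions.
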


\begin{proof}
We assume that $\phi(A)$ is $T$\dash normalized for
some maximal forest $T$ of $\bip(\phi(A))$, where $u$ and
$v$ are degree-one vertices of $T$.
Let $A'$ be an $X \times Y$ $\GF(3)$\dash matrix that
represents $M$.
Then both $A' - u$ and $\phi(A) - u$ represent
$M \del u$ over $\GF(3)$ (by Proposition~\ref{prop:hom}).
Since representations are unique over $\GF(3)$ (\cite{BL76}),
and $\GF(3)$ has no non-trivial automorphisms,
by scaling we can assume that $A' - u = \phi(A) - u$.
Now there are nonsingular diagonal matrices $D_{1}$ and
$D_{2}$ such that $D_{1}(A' - v)D_{2} = \phi(A) - v$.
Just as in the proof of Lemma~\ref{GGK2.2}, we can prove that
$A' - v = \phi(A) - v$, up to scaling of the
column $u$.
The result follows.
\end{proof}

\begin{lem}\label{GGK2.3}
  Let $M$ be a matroid, and let $u$, $v$ be a deletion pair for $M$
such that $M\del u$ and $M\del v$ are near-regular.
Let $X$ be a basis of $M \del \{u,v\}$, and let $Y = E(M) \setminus X$.
Let $A$ be the $X\times Y$ \qalpha\dash matrix such that
$M[I|A-u] = M\del u$, $M[I|A-v]=M \del v$, and $A-u$ and $A-v$ are
near-unimodular.
Now assume that $X'\subseteq X$ and $Y'\subseteq Y - \{u,v\}$ are
such that
  \begin{enumerate}
    \item $u$, $v$ is a deletion pair for $M \con X' \del Y'$; and
    \item $M \con X' \del Y' \neq M[I|A - (X' \cup Y')]$.
  \end{enumerate}
Then $M\con X' \del Y'$ is not near-regular.
\end{lem}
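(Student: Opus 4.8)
The plan is to combine the uniqueness half of Lemma~\ref{GGK2.2} with the minor-compatibility of the construction, exploiting that a near-regular matroid's \qalpha\dash representation is obtained from its $\nreg$\dash representation simply by inclusion of entries. Suppose, for contradiction, that $M' := M\con X'\del Y'$ \emph{is} near-regular. Set $A' := A - (X'\cup Y')$, so $A'$ is an $X\setminus X' \times Y\setminus Y'$ matrix over \qalpha. The two facts I would first record are: (i) taking the submatrix $A - (X'\cup Y')$ corresponds to the matroid operation $M\con X'\del Y'$ at the level of the companion construction, so in particular $A' - u = A_{1} - (X'\cup Y')$ represents $M\del u \con X'\del Y' = M'\del u$ over $\nreg$, and likewise $A' - v$ represents $M'\del v$ over $\nreg$; and (ii) by Proposition~\ref{prop11}, $A'-u$ and $A'-v$ are near-unimodular, since they are submatrices of the near-unimodular matrices $A-u$ and $A-v$. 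Because $u,v$ is a deletion pair for $M'$ and $M'\del u$, $M'\del v$ are near-regular, Lemma~\ref{GGK2.2} applies to $M'$, and $A'$ is precisely (up to the stated equivalence) the unique \qalpha\dash matrix it produces.

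Next I would bring in the hypothesis that $M'$ is near-regular to construct a \emph{competing} matrix. Since $M'$ is near-regular, let $B$ be a near-unimodular $X\setminus X' \times Y\setminus Y'$ matrix with $M[I|B] = M'$; by pivoting (Lemma~\ref{lem:pivot} handles the basis change, keeping $u,v$ out of the pivot) we may take its row labels to be $X\setminus X'$ and its column labels $Y\setminus Y'$, with $u,v$ labelling columns. Then $B$, viewed as a matrix over \qalpha\ (all its entries lie in $\nreg \subseteq \qalpha$), satisfies $M[I|B-u] = M'\del u$, $M[I|B-v] = M'\del v$ (by Proposition~\ref{prop:hom} applied to the inclusion $\nreg\hookrightarrow\qalpha$, or just because representing a matroid over $\nreg$ is representing it over \qalpha), and $B-u$, $B-v$ are near-unimodular by construction. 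So $B$ also satisfies conditions (i)--(iii) of Lemma~\ref{GGK2.2} for $M'$. By the uniqueness clause of that lemma, $B$ equals $A'$ up to row and column scaling and automorphisms of $\nreg$; since scalings by elements of $\nreg$ and automorphisms of $\nreg$ preserve near-unimodularity and preserve the represented matroid, we conclude $M[I|A'] = M[I|B] = M'$. But this directly contradicts hypothesis~(ii), namely $M\con X'\del Y' \neq M[I|A - (X'\cup Y')]$, i.e. $M' \neq M[I|A']$.

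The main obstacle, and the place where I would be most careful, is establishing fact (i): that the companion construction of Lemma~\ref{GGK2.2} commutes with taking the minor $M\con X'\del Y'$ --- concretely, that the matrix Lemma~\ref{GGK2.2} associates to $M'$ (with basis $X\setminus X'$, deletion pair $u,v$) is, up to the permitted equivalences, the submatrix $A - (X'\cup Y')$ rather than some genuinely different matrix. This follows from the uniqueness clause again once one checks that $A - (X'\cup Y')$ \emph{satisfies} the three defining conditions for $M'$: condition (iii), near-unimodularity of the $u$\dash and $v$\dash deleted submatrices, is immediate from Proposition~\ref{prop11}; conditions (i) and (ii) reduce to the standard fact that deleting the rows in $X'$ and columns in $Y'$ from a representing matrix performs $\con X'\del Y'$ on the represented matroid, applied to the near-regular matroids $M\del u$ and $M\del v$. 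Once fact (i) is in hand, the argument is exactly the ``uniqueness forces equality'' step sketched in the paper's introduction, and the rest is routine. A minor point to note in passing: we need $u,v$ to remain a deletion pair and to be non-loops in $M'$ (guaranteed by hypothesis (1) of the lemma) so that the $T$\dash normalization with $u,v$ as degree-one vertices used inside Lemma~\ref{GGK2.2} is available.
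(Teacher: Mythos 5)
Your proposal is correct and follows essentially the same route as the paper's proof: assume $M\con X'\del Y'$ is near-regular, observe that both $A-(X'\cup Y')$ and a near-unimodular representation of $M\con X'\del Y'$ satisfy the three conditions of Lemma~\ref{GGK2.2} for that minor (using hypothesis~(\emph{i}) and the fact that near-regularity is minor-closed), and then invoke the uniqueness clause of Lemma~\ref{GGK2.2} to contradict hypothesis~(\emph{ii}). The extra care you take over minor-compatibility and normalization is fine but not needed beyond what the uniqueness statement already delivers.
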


\begin{proof}
Suppose that $M \con X' \del Y'$ is near-regular, and that $A'$ is
an $(X \setminus X') \times (Y \setminus Y')$ $\nreg$\dash matrix
that represents $M \con X' \del Y'$.
Deleting $u$ or $v$ from $A'$ produces a near-unimodular
matrix that represents $M \del u$ or $M \del v$ respectively.
But the same statements apply to $A - (X' \cup Y')$.
The uniqueness guaranteed by Lemma~\ref{GGK2.2} means that
$M[I|A'] = M[I|A - (X' \cup Y')]$, so we have a contradiction
to the hypotheses of the lemma.
\end{proof}

\begin{lem}
\label{lem:notQarep}
Let $M$ be an excluded minor for the class of near-regular
matroids such that $M$ is representable over
$\GF(3)$ and $\GF(4)$.
Then $M$ is not representable over $\qalpha$.
\end{lem}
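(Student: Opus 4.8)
The plan is to argue by contradiction, exploiting the fact that an excluded minor for near-regularity that is representable over $\GF(3)$ and $\GF(4)$ is a sixth-roots-of-unity matroid, and hence is representable over $\qalpha$ via the natural embedding of the sixth-roots-of-unity partial field. So suppose $M$ is representable over $\qalpha$. The first step is to produce a deletion pair for $M$ (or its dual) together with the companion matrix machinery of Section~\ref{sec:setup}. Since $M$ is an excluded minor for near-regularity it is $3$\dash connected and nonbinary (Lemmas~\ref{lem:3c} and the fact that excluded minors are nonbinary, as binary matroids are near-regular), and $M$ is not near-regular, so by Theorem~\ref{thm:nearregreps} it is not $\GF(5)$\dash representable; in particular $M$ is neither $U_{2,5}$ nor $U_{3,5}$ (which are $\GF(5)$-representable) nor $F_7$ nor $F_7^*$ (which are not $\GF(3)$-representable, contradicting the hypothesis). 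Hence $M$ is ternary by Lemma~\ref{lem:ternary}, and has rank or corank at least $4$ (a $3$\dash connected matroid of rank and corank at most $3$ has at most six elements and is easily seen to be near-regular or to contain one of the small excluded minors already ruled out — I would handle this by direct inspection of the finitely many cases). Replacing $M$ by $M^*$ if necessary (legitimate by Lemma~\ref{lem:duality}, and $\qalpha$\dash representability is preserved under duality by Proposition~\ref{prop9}), Lemma~\ref{lem:delpair} gives a deletion pair $u,v$ for $M$.

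The second step is to apply the minimality of $M$: since $M$ is an excluded minor, both $M\del u$ and $M\del v$ are near-regular. Thus Lemma~\ref{GGK2.2} produces the companion matrix $A$, an $X\times Y$ matrix over $\qalpha$ with $M[I|A-u]=M\del u$, $M[I|A-v]=M\del v$, and $A-u$, $A-v$ near-unimodular. By Lemma~\ref{lem:ternaryhom} we have $M = M[I|\phi(A)]$ where $\phi:\nreg\to\GF(3)$ is the unique homomorphism.

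The third and crucial step is to derive a contradiction from the assumption that $M$ is $\qalpha$\dash representable. The key point is the uniqueness clause of Lemma~\ref{GGK2.2}: any $X\times Y$ matrix over $\qalpha$ whose columns lie in $\nreg$ and which satisfies conditions (i)--(iii) of that lemma is equal to $A$ up to scaling and automorphisms of $\nreg$. If $M$ were $\qalpha$\dash representable, I would like to produce a $\qalpha$\dash representation $A_M$ of $M$ itself such that $A_M - u$ and $A_M - v$ are (scaling-equivalent to) near-unimodular matrices — they automatically represent the near-regular matroids $M\del u$ and $M\del v$ over $\qalpha$, and by Theorem~\ref{thm:nearregreps} (near-regular matroids are representable over every field except possibly $\GF(2)$) together with Lemma~\ref{lem:stableunique} one can arrange, after scaling and applying automorphisms, that $A_M-u$ and $A_M-v$ have all entries in $\nreg$; since $M\del\{u,v\}$ is nonbinary these are near-unimodular but not totally unimodular. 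Then $A_M$ satisfies the hypotheses of the uniqueness part of Lemma~\ref{GGK2.2}, so $A_M$ equals $A$ up to scaling and automorphisms of $\nreg$; in particular $A$ itself represents $M$ over $\qalpha$. But now $M=M[I|A]$ and also $M=M[I|\phi(A)]$, where $\phi(A)$ is ternary: this forces $M$ to be simultaneously $\qalpha$\dash representable and such that the ``same'' matrix reduces mod the homomorphism to a $\GF(3)$\dash representation, which — combined with the hypothesis that $M$ is also $\GF(4)$\dash representable and hence sixth-root-of-unity — pins $A$ down enough that $M$ is actually near-regular, contradicting the fact that $M$ is an excluded minor.

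The main obstacle is the last step: making precise why $\qalpha$\dash representability of $M$, rather than merely of its single-element deletions, forces near-regularity. The honest way to close this is to invoke Whittle's structure theorem for the inequivalent representations of ternary matroids (the six-class result cited in the introduction): the companion matrix $A$ is built from near-unimodular pieces glued along a near-regular (hence regular-behaving) common minor, and the obstruction to $A$ itself being near-unimodular is the single new column indexed by $u$ (or $v$). If $M=M[I|A]$ over $\qalpha$, one shows that the entries of this column, which a priori lie in $\qalpha$, must in fact lie in $\nreg$ because every relevant subdeterminant of $A$ is forced (by unique representability of the near-regular minors and the structure of fundamental elements in $\nreg$, Propositions~\ref{prop15} and~\ref{prop16}) to be a product of the already-present near-unimodular entries; hence $A$ is near-unimodular and $M$ is near-regular. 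I expect the delicate bookkeeping to be exactly this entry-chasing argument, which is the near-regular analogue of the corresponding step in the proof of Theorem~\ref{thm4} in~\cite{GGK}, with $\GF(4)$ replaced throughout by $\qalpha$ and ``stable'' uniqueness supplied by Lemma~\ref{lem:stableunique}.
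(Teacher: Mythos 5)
There is a genuine gap here, and the approach is not the one the paper takes. First, your opening framing is wrong: a matroid representable over $\GF(3)$ and $\GF(4)$ is \emph{not} thereby representable over \qalpha, because \qalpha\ contains no root of $x^{2}-x+1$ and so the sixth-roots-of-unity partial field admits no homomorphism into \qalpha. (This is precisely the fact the paper's proof exploits, via $\ag{2,3}$.) Second, and more seriously, your third step never closes: the assertion that having both $M=M[I|A]$ over \qalpha\ and $M=M[I|\phi(A)]$ over $\GF(3)$ ``pins $A$ down enough that $M$ is actually near-regular'' is essentially the statement of the lemma itself, and the promised entry-chasing is not carried out. There is also a hidden unique-representability problem in your construction of $A_{M}$: to arrange that a \qalpha\dash representation of $M$ restricts on $Y\setminus u$ to a near-unimodular matrix, you would need $M\del u$ to be uniquely representable over the infinite field \qalpha, not merely over \nreg; Lemma~\ref{lem:stableunique} supplies only the latter, and over \qalpha\ the representations of $U_{2,4}$ form an infinite family, so the step does not follow from what you cite.

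The paper's proof is entirely different and avoids all of this machinery. It shows directly that the class $\mcal{M}$ of matroids representable over all three of $\GF(3)$, $\GF(4)$, and \qalpha\ is exactly the class of near-regular matroids: by Theorem~1.5 of Whittle~\cite{Whi97}, $\mcal{M}$ equals the class of matroids representable over $\GF(3)$ and $\GF(q)$ for some $q\in\{2,3,4,5,7,8\}$; since $U_{2,4}\in\mcal{M}$ we have $q\neq 2$, and since $\ag{2,3}$ is representable over $\GF(3)$, $\GF(4)$, and $\GF(7)$ but not over \qalpha\ (no root of $x^{2}-x+1$), we have $q\notin\{3,4,7\}$. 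Thus $q\in\{5,8\}$ and Theorem~\ref{thm:nearregreps} identifies $\mcal{M}$ with the near-regular matroids. Consequently an excluded minor representable over $\GF(3)$, $\GF(4)$, and \qalpha\ would itself be near-regular, a contradiction; no deletion pairs, companion matrices, or uniqueness arguments are required.
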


\begin{proof}
Let \mcal{M} be the set of matroids representable over
$\GF(3)$, $\GF(4)$, and \qalpha.
We claim that this is precisely the class of near-regular
matroids.
Theorem~1.5 of~\cite{Whi97} shows that \mcal{M}
is exactly the set of matroids representable over both
$\GF(3)$ and $\GF(q)$, for some $q \in \{2, 3, 4, 5, 7, 8\}$.
It cannot be the case that $q=2$, for then \mcal{M} would
be the set of regular matroids.
Since \mcal{M} contains $U_{2,4}$ this is impossible.

Consider the matroid $\ag{2,3}$.
It is representable over the field $\field$
if and only if $\field$ contains a solution to
$x^{2} - x + 1 = 0$ (\cite[p.~515]{oxley}).
Since \qalpha\ contains no such solution, it
follows that \ag{2,3} is not $\qalpha$\dash representable,
and therefore does not belong to $\mcal{M}$.
However, \ag{2,3} is representable over
$\GF(3)$, $\GF(4)$, and $\GF(7)$ (since
$x = 3$ is a solution to $x^{2}-x+1=0$).
Thus $q$ cannot be equal to $3$, $4$, or $7$.
We conclude that $q$ is equal to either $5$ or $8$.
In either case Theorem~\ref{thm:nearregreps} implies that
\mcal{M} is the class of near-regular matroids,
as desired.
The result follows immediately.
\end{proof}

\section{Connectivity}\label{sec:machinery}

Much of this paper consists of recovering connectivity in situations where
it seems to have been lost. Our tool for this is the blocking sequence. 
Suppose that $M$ is a matroid on the ground set $E$.
We introduce a similar notation to that used for induced submatrices.
Suppose $E = B\cup Y$ where $B\cap Y = \emptyset$ and $B$ is a basis of $M$.
Let $Z$ and $Z'$ be subsets of $E$. Then $M_B[Z] := M \con (B\setminus Z) \del (Y\setminus Z)$, and $M_B-Z := M_{B}[E\setminus Z]$.
Moreover, $M_{B}[Z] - Z' = M_{B}[Z\setminus Z']$.

\begin{dfn}
Let $M$ be a matroid, $B$ a basis of $M$, and suppose that
$X$ and $Y$ are subsets of $E(M)$. Then
\begin{displaymath}
\lambda_B(X,Y) := \rank_{M\con (B\setminus Y)}(X\setminus B) + \rank_{M\con (B\setminus X)}(Y\setminus B).
\end{displaymath}
\end{dfn}
It is straightforward to verify that this is the same as the
function $\lambda_{B}(X,Y)$ employed in~\cite{GGK}.
Moreover, if $X$ and $Y$ are disjoint, then
\begin{displaymath}
\lambda_{B}(X,Y) = \rank_{M_{B}[X\cup Y]}(X) + \rank_{M_{B}[X\cup Y]}(Y)
- \rank(M_{B}[X\cup Y]),
\end{displaymath}
which is the usual connectivity function of $M_{B}[X \cup Y]$.
In particular, if $X$ and $Y$ partition $E(M)$, then
$\lambda_{B}(X,Y) = \rank_{M}(X) + \rank_{M}(Y) - \rank(M)$.
If $X$ and $Y$ are disjoint, then we say that
$(X,Y)$ is a \emph{$k$\dash separation} of $M_{B}[X\cup Y]$
if $|X|,|Y| \geq k$ and $\lambda_{B}(X,Y) < k$.

When $M$ is representable the following holds:
\begin{lem}\label{lem:matrixconn}
  Suppose $A$ is an $(X_1\cup X_2)\times(Y_1\cup Y_2)$ $\parf$\dash matrix
(where $X_1$, $X_2$, $Y_1$, and $Y_2$ are pairwise disjoint).
Let $M = M[I|A]$. Then
\begin{displaymath}
\lambda_{X_1\cup X_2}(X_1\cup Y_1, X_2\cup Y_2) =
\matrank(A[X_2,Y_1]) + \matrank(A[X_1,Y_2]). 
\end{displaymath}
\end{lem}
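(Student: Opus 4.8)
The plan is to unwind the definition of $\lambda_B$ with $B := X_1\cup X_2$ and then recognise each of the two resulting ranks as the rank of a matroid presented by an explicit submatrix. Since $X_1,X_2,Y_1,Y_2$ are pairwise disjoint and $B = X_1\cup X_2$, we have $B\setminus(X_2\cup Y_2) = X_1$, $(X_1\cup Y_1)\setminus B = Y_1$, $B\setminus(X_1\cup Y_1) = X_2$, and $(X_2\cup Y_2)\setminus B = Y_2$; substituting these into the definition gives
\begin{displaymath}
\lambda_{X_1\cup X_2}(X_1\cup Y_1,\, X_2\cup Y_2) = \rank_{M\con X_1}(Y_1) + \rank_{M\con X_2}(Y_2).
\end{displaymath}

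Next I would reduce each summand to a submatrix rank. In the representation $[I|A]$ the elements of $X_1$ label unit columns, so contracting $X_1$ merely deletes the corresponding rows: $M\con X_1 = M[I|A[X_2,Y_1\cup Y_2]]$, now with $X_2$ as the distinguished basis. Because $Y_1\cap Y_2 = \emptyset$, deleting the columns in $Y_2$ leaves the rank of $Y_1$ unchanged, so $\rank_{M\con X_1}(Y_1) = \rank_{M[I|A[X_2,Y_1]]}(Y_1)$, and in $M[I|A[X_2,Y_1]]$ the set $Y_1$ is precisely the complement of the distinguished basis $X_2$. It therefore suffices to establish the elementary fact that, for any $X'\times Y'$ $\parf$\dash matrix $A'$, one has $\rank_{M[I|A']}(Y') = \matrank(A')$. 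To see this, read the bases of $M[I|A']$ off Lemma~\ref{thm3}: a basis other than $X'$ has the form $(X'\setminus Z_{X'})\cup Z_{Y'}$, where $Z_{X'} = Z\cap X'$, $Z_{Y'} = Z\cap Y'$, $|Z_{X'}| = |Z_{Y'}|$, and $\det(A'[Z_{X'},Z_{Y'}])\neq 0$, so it meets $Y'$ in exactly $|Z_{Y'}| = |Z_{X'}|$ elements, whereas $X'$ meets $Y'$ not at all. Hence $\rank_{M[I|A']}(Y')$ is the largest $k$ for which $A'$ has a nonzero $k\times k$ subdeterminant, that is, $\matrank(A')$. Applying this to $A[X_2,Y_1]$ and, symmetrically, to $A[X_1,Y_2]$, and substituting into the displayed identity, yields the lemma.

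The argument is essentially bookkeeping, so no step is a genuine obstacle; the only points needing a word of justification are the standard facts that contracting an element labelling an identity column amounts to deleting its row (so that $M\con X_1 = M[I|A[X_2,Y_1\cup Y_2]]$), and that deleting elements outside a set $S$ does not change $\rank(S)$, both of which I would cite or verify in a line. Alternatively, one could combine the identity $\lambda_B(X_1\cup Y_1,X_2\cup Y_2) = \rank_M(X_1\cup Y_1) + \rank_M(X_2\cup Y_2) - \rank(M)$ for partitions of $E(M)$ with the splitting $\rank_M(X_i\cup Y_i) = |X_i| + \rank_{M\con X_i}(Y_i)$ and $\rank(M) = |X_1|+|X_2|$; this reduces to the same elementary fact.
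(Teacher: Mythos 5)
Your proof is correct. The paper states Lemma~\ref{lem:matrixconn} without proof, treating it as routine, and your argument --- unwinding the definition of $\lambda_B$ to get $\rank_{M\con X_1}(Y_1)+\rank_{M\con X_2}(Y_2)$, observing that contracting identity-column elements deletes the corresponding rows, and reading $\rank_{M[I|A']}(Y')=\matrank(A')$ off the basis description in Lemma~\ref{thm3} --- is exactly the standard bookkeeping the authors had in mind.
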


Let $M$ be a matroid on the ground set $E$, and let $B$ be a basis of $M$.
It is well-known that $G_{B}(M)$ is connected if and only if $M$
is connected.
A partition $(X,Y)$ of $E$ is a \emph{split} with respect to $B$ if $|X|, |Y| \geq 2$ and the edges in $G_B(M)$ that join vertices in $X$ to vertices in $Y$ induce a complete bipartite graph. Note that this bipartite graph need not span all vertices in either $X$ or $Y$.

\begin{prop}
\tup{\cite[{Proposition 4.11}]{GGK}}.
\label{GGK4.11}
  Let $M$ be a matroid, and suppose $B$ is a basis of $M$.
If $(X,Y)$ is a $2$\dash separation of $M$, then $(X,Y)$ is a split
with respect to $B$.
\end{prop}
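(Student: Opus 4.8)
Let $M$ be a matroid with basis $B$, and suppose $(X,Y)$ is a $2$-separation of $M$. Then $(X,Y)$ is a split with respect to $B$: that is, $|X|,|Y|\geq 2$, and the edges of $G_B(M)$ running between $X$ and $Y$ induce a complete bipartite graph.

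The plan is to argue directly with fundamental circuits in $G_B(M)$, using the fact that for a $2$\dash separation the connectivity $\lambda_B(X,Y)$ is at most $1$ and splits as a sum of two ranks.

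First I would fix notation: put $X_1 := X\cap B$, $X_2 := Y\cap B$, $Y_1 := X\setminus B$ and $Y_2 := Y\setminus B$, so that $B = X_1\cup X_2$ and $E\setminus B = Y_1\cup Y_2$. Since $X\cup Y = E$, the definition of $\lambda_B$ unwinds to $\lambda_B(X,Y) = \rank_{M\con X_1}(Y_1) + \rank_{M\con X_2}(Y_2)$. As $(X,Y)$ is a $2$\dash separation we have $\lambda_B(X,Y)\le 1$, so one of the two summands vanishes; interchanging $X$ and $Y$ if necessary, I may assume $\rank_{M\con X_1}(Y_1) = 0$ (equivalently $Y_1\subseteq\operatorname{cl}_M(X_1)$) and hence $\rank_{M\con X_2}(Y_2)\le 1$. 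Every edge of $G_B(M)$ joining $X$ to $Y$ is either between $X_1$ and $Y_2$ or between $X_2$ and $Y_1$; I will show that edges of the second type do not occur, and that the edges of the first type form a complete bipartite graph.

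For the first point: if $y\in Y_1$ then $y\in\operatorname{cl}_M(X_1)$, so the unique circuit of $M$ contained in $B\cup y$, namely the fundamental circuit of $y$ with respect to $B$, is contained in $X_1\cup y$; hence $y$ has no neighbour in $X_2$ in $G_B(M)$, and no edge of the second type is incident with $y$. For the second point, observe that $X_1$ is a basis of $M\con X_2$ (being $B$ with the independent set $X_2$ removed), and that for $x\in X_1$, $y\in Y_2$ the pair $xy$ is an edge of $G_B(M)$ exactly when $(B\setminus x)\cup y$ is a basis of $M$, exactly when $(X_1\setminus x)\cup y$ is a basis of $M\con X_2$, i.e.\ exactly when $xy$ is an edge of $G_{X_1}(M\con X_2)$. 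So it suffices to analyse the bipartite subgraph between $X_1$ and $Y_2$ inside $G_{X_1}(M\con X_2)$ under the hypothesis $\rank_{M\con X_2}(Y_2)\le 1$. In $M\con X_2$ any two non\dash loop elements of $Y_2$ are parallel, and parallel elements belong to exactly the same flats, hence to the same fundamental circuits apart from themselves, hence have the same neighbourhood in $X_1$; and a loop of $M\con X_2$ lying in $Y_2$ has no neighbour in $X_1$ at all. Therefore the edges between $X_1$ and $Y_2$ are exactly $A\times B'$, where $B'$ is the set of non\dash loop elements of $Y_2$ in $M\con X_2$ and $A\subseteq X_1$ is their common neighbourhood (with $A = \emptyset$ precisely when $B' = \emptyset$). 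This is a complete bipartite graph, and since $|X|,|Y|\ge 2$ by the definition of a $2$\dash separation, $(X,Y)$ is a split with respect to $B$.

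Most of this is bookkeeping with the standard identities relating bases and fundamental circuits of $M$ and of $M\con X_2$. The step that needs a little care is the passage from ``one summand of $\lambda_B$ vanishes'' to ``crossing edges of only one type survive'': this uses the translation of $Y_1\subseteq\operatorname{cl}_M(X_1)$ into a statement about fundamental circuits, and then, separately, the extraction of the complete\dash bipartite structure from $\rank_{M\con X_2}(Y_2)\le 1$ via parallel classes in $M\con X_2$. Beyond that I anticipate no real obstacle.
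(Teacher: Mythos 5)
The paper offers no proof of this proposition to compare against: it is quoted verbatim from Geelen--Gerards--Kapoor and stated with only a citation. Judged on its own, your argument is correct and complete. Writing $X_1=X\cap B$, $X_2=Y\cap B$, $Y_1=X\setminus B$, $Y_2=Y\setminus B$, the identity $\lambda_B(X,Y)=\rank_{M\con X_1}(Y_1)+\rank_{M\con X_2}(Y_2)$ is exactly the paper's definition specialized to a partition of $E$, so for a $2$\dash separation one summand vanishes and the other is at most $1$. Your two steps then both rest on the standard fact that the neighbourhood of $y\in E\setminus B$ in $G_B(M)$ is $C(y,B)\setminus y$: when $Y_1\subseteq\operatorname{cl}_M(X_1)$ these circuits avoid $X_2$, killing all $X_2$--$Y_1$ edges, and when $\rank_{M\con X_2}(Y_2)\le 1$ the non\dash loop elements of $Y_2$ form a parallel class of $M\con X_2$ and hence share a common neighbourhood in the basis $X_1$ of $M\con X_2$, giving the complete bipartite structure; the degenerate case with no crossing edges is covered by the paper's convention that the bipartite graph need not span either side. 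The passage from bases of $M$ containing $X_2$ to bases of $M\con X_2$, and the equality of neighbourhoods of parallel elements, are both justified as you indicate, so there is no gap; this is presumably essentially the argument in the cited source.
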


Not every split corresponds to a $2$\dash separation:
\begin{prop}
\tup{\cite[{Proposition 4.12}]{GGK}}.
\label{GGK4.12}
  Let $B$ be a basis of the matroid $M$, and let $(X,Y)$ be a split with
respect to $B$.
Suppose $x_1y_1$ is an edge of $G_B(M)$ with $x_1 \in X$ and $y_1 \in Y$.
Then $(X,Y)$ is not a $2$\dash separation of $M$ if and only if there exist
$x_2 \in X$ and $y_2 \in Y$ such that $M_B[\{x_1,y_1,x_2,y_2\}] \cong U_{2,4}$.
\end{prop}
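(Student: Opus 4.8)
The plan is this. Since $(X,Y)$ is a split we have $|X|,|Y|\ge 2$, so ``$(X,Y)$ is not a $2$-separation of $M$'' means exactly $\lambda_B(X,Y)\ge 2$, and I will prove the equivalence in that form. For the easy implication, suppose $M_B[S]\cong U_{2,4}$ with $S=\{x_1,y_1,x_2,y_2\}$. Then $(S\cap X,S\cap Y)=(\{x_1,x_2\},\{y_1,y_2\})$ partitions $E(M_B[S])$ into two $2$-element bases of $U_{2,4}$, so the connectivity function of $M_B[S]$ on this partition equals $2+2-2=2$. Unwinding the definition of $M_B[\cdot]$, and using that the rank of a set does not increase when one contracts a larger set or passes to a subset, one checks that for every $Z\subseteq E(M)$ the connectivity of $M_B[Z]$ on $(Z\cap X,Z\cap Y)$ is at most $\lambda_B(X,Y)$; with $Z=S$ this gives $\lambda_B(X,Y)\ge 2$.

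For the other implication, assume $\lambda_B(X,Y)\ge 2$. The graph $G_B(M)$ is bipartite with colour classes $B$ and $E\setminus B$, and by the split the edges of $G_B(M)$ meeting both $X$ and $Y$ span a complete bipartite graph; hence two such edges cannot have their endpoints in $B$ on opposite sides of the partition, for those two $B$-vertices would have to be adjacent. Therefore all edges of $G_B(M)$ between $X$ and $Y$ join $(E\setminus B)\cap X$ to $B\cap Y$, or all of them join $B\cap X$ to $(E\setminus B)\cap Y$. Passing to $M^{*}$ with the basis $E\setminus B$, and/or interchanging $X$ and $Y$ — operations that leave the statement unchanged, because $(M_B[S])^{*}=M^{*}_{E\setminus B}[S]$, $U_{2,4}$ is self-dual, and the connectivity function is dual-invariant — we may assume the first alternative, so that $x_1\in(E\setminus B)\cap X$ and $y_1\in B\cap Y$. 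Every element of $(E\setminus B)\cap Y$ then has all its $G_B(M)$-neighbours in $B\cap Y$, hence is a loop of $M\con(B\cap Y)$; as $\lambda_B(X,Y)=\rank_{M\con(B\cap X)}((E\setminus B)\cap X)+\rank_{M\con(B\cap Y)}((E\setminus B)\cap Y)$, the second summand vanishes and $\lambda_B(X,Y)=\rank_{M\con(B\cap X)}((E\setminus B)\cap X)=:\lambda$.

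Now set $M':=M\con(B\cap X)\del((E\setminus B)\cap Y)$. Then $B\cap Y$ is a basis of $M'$, we have $\rank_{M'}((E\setminus B)\cap X)=\lambda$, and $M'_{B\cap Y}[Z]=M_B[Z]$ for every $Z\subseteq(B\cap Y)\cup((E\setminus B)\cap X)$; moreover $G_{B\cap Y}(M')$ is exactly the subgraph of $G_B(M)$ induced by $(B\cap Y)\cup((E\setminus B)\cap X)$, so it is complete bipartite between its two sets of non-isolated vertices, with $x_1y_1$ one of its edges. From $\rank_{(M')^{*}}(B\cap Y)=|B\cap Y|-\rank(M')+\rank_{M'}((E\setminus B)\cap X)=\lambda\ge 2$ and the fact that $y_1$ is not a loop of $(M')^{*}$ (since $x_1y_1$ is an edge, $y_1$ is not a coloop of $M'$), one finds $y_2\in(B\cap Y)\setminus\{y_1\}$ with $\{y_1,y_2\}$ independent in $(M')^{*}$; then $(E\setminus B)\cap X$ has rank $2$ in the rank-$2$ matroid $M'\con((B\cap Y)\setminus\{y_1,y_2\})$, and since $x_1$ is not a loop of that matroid (again because $x_1y_1$ is an edge) one finds $x_2\in(E\setminus B)\cap X$ with $\{x_1,x_2\}$ a basis of it. These choices force $x_2$ and $y_2$ to be non-isolated in $G_{B\cap Y}(M')$, so by the complete bipartite structure each of $x_1y_1,x_1y_2,x_2y_1,x_2y_2$ is an edge of $G_{B\cap Y}(M')$; together with the two basis conditions just arranged, this makes all six $2$-subsets of $\{x_1,y_1,x_2,y_2\}$ bases of $M'_{B\cap Y}[\{x_1,y_1,x_2,y_2\}]=M_B[\{x_1,y_1,x_2,y_2\}]$, which is therefore isomorphic to $U_{2,4}$, with $x_2\in X$ and $y_2\in Y$.

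The technical heart is this last part: one must check that the complete bipartite structure of the split descends correctly to $G_{B\cap Y}(M')$, and that the selected $y_2$ and $x_2$ really are non-isolated there — equivalently, that $y_2$ lies in the common neighbourhood of the active $X$-vertices — since that is what converts the rank inequality $\lambda\ge 2$ into an honest $U_{2,4}$-minor through the edge $x_1y_1$. Everything else is routine rank arithmetic together with the standard rank/corank identity.
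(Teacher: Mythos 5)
Your proof is correct, and there is nothing in the paper to compare it with: the paper states this proposition verbatim from \cite[Proposition~4.12]{GGK} and, like the neighbouring connectivity facts, gives no proof, so your argument is a self-contained substitute rather than a variant of an in-paper one. The structure holds up. The forward implication is exactly the monotonicity $\lambda_B(X',Y')\le\lambda_B(X,Y)$ for $X'\subseteq X$, $Y'\subseteq Y$, and your one-line justification (contracting a larger set and passing to a subset can only lower rank, by submodularity) is the standard one. For the converse, your key observation---that $G_B(M)$ has no edges inside $B$, so a split forces all crossing edges to join $(E\setminus B)\cap X$ to $B\cap Y$ or all to join $B\cap X$ to $(E\setminus B)\cap Y$, and duality together with swapping $X$ and $Y$ reduces to the first case---correctly annihilates the second summand of $\lambda_B(X,Y)$ and concentrates the connectivity in $M'=M\con(B\cap X)\del((E\setminus B)\cap Y)$. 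The bookkeeping that follows checks out: $B\cap Y$ is a basis of $M'$, the rank/corank identity yields a coindependent pair $\{y_1,y_2\}\subseteq B\cap Y$, the rank-$2$ contraction then supplies $x_2$, and the two facts you lean on twice---that $M'_{B\cap Y}[Z]=M_B[Z]$ for $Z$ inside the ground set of $M'$, and that $G_{B\cap Z}(M_B[Z])$ is the subgraph of $G_B(M)$ induced by $Z$---are routine (an exchange $B\symdiff\{b,f\}$ is a basis of $M$ precisely when $\{f\}$ spans $M_B[\{b,f\}]$, and the operation $M_B[\cdot]$ composes). One small point worth making explicit in the final step: the edge $x_iy_j$ of $G_{B\cap Y}(M')$ certifies that $\{x_i,y_{j'}\}$ with $j'\neq j$ is a basis of $M_B[\{x_1,y_1,x_2,y_2\}]$, not $\{x_i,y_j\}$ itself; since all four crossing pairs are edges this index swap is harmless, but it is the reason the four edges plus your two arranged bases really do give all six two-element bases and hence $U_{2,4}$.
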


The following definitions and lemmas are directly from~\cite[Section 4]{GGK}, and will be presented here without proof.
There is some overlap with results due to Truemper~\cite{TruIII}, who also
gives a very detailed analysis of the structure of the resulting matrices when $M$ is representable.
\begin{dfn}
  Let $M$ be a matroid, and let $B$ be a basis of $M$.
Suppose that $(X,Y)$ is an exact $k$\dash separation of $M_B[X\cup Y]$. We say that
$(X,Y)$ is \emph{induced} if there exists a $k$\dash separation $(X',Y')$ of $M$ with $X \subseteq X'$ and $Y \subseteq Y'$.
\end{dfn}

\begin{dfn}\label{def:blockseq}
Suppose that $M$ is a matroid, and that $B$ is a basis of $M$.
Let $(X,Y)$ be a $k$\dash separation of $M_B[X\cup Y]$. A \emph{blocking sequence} for $(X,Y)$ is a sequence of elements $v_1, \ldots, v_p$ of $E(M)\setminus (X\cup Y)$ such that
\begin{enumerate}
  \item\label{bls:first} $\lambda_B(X, Y \cup v_1) = k$;
  \item\label{bls:middle} $\lambda_{B}(X\cup v_i, Y\cup v_{i+1}) = k$ for $i = 1, \ldots, p-1$;
  \item\label{bls:last} $\lambda_{B}(X\cup v_p,Y) = k$; and
  \item No proper subsequence of $v_1, \ldots, v_p$ satisfies the first three properties.
\end{enumerate}
\end{dfn}
The following proposition shows how useful blocking sequences are:
\begin{prop}
\tup{\cite[Theorem 4.14]{GGK}}.
\label{lem:blockseq}
  Let $M$ be a matroid, and suppose that $B$ is a basis of $M$.
Let $(X,Y)$ be an exact $k$\dash separation of $M_B[X\cup Y]$. Exactly one of the following holds:
  \begin{enumerate}
    \item There exists a blocking sequence for $(X,Y)$.
    \item $(X,Y)$ is induced.
  \end{enumerate}
\end{prop}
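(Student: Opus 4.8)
The plan is to establish the two halves of the dichotomy separately: that the two alternatives are mutually exclusive, and that at least one of them always holds. Throughout I use two elementary features of $\lambda_B$. First, $\lambda_B$ is \emph{monotone}: $\lambda_B(X_1,Y_1)\le\lambda_B(X_1',Y_1')$ whenever $X_1\subseteq X_1'$ and $Y_1\subseteq Y_1'$, with the relevant $X$\dash parts kept disjoint from the $Y$\dash parts. Second, enlarging one side of a pair by a single element changes $\lambda_B$ by at most one. Both follow from the fact that $\lambda_B(X_1,Y_1)$ is the ordinary connectivity function of the minor $M_B[X_1\cup Y_1]$, and that this minor undergoes a single deletion or contraction when a new element is absorbed.

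For the first half, suppose $(X,Y)$ is induced by a $k$\dash separation $(X',Y')$ of $M$, where $X\subseteq X'$ and $Y\subseteq Y'$, and suppose for contradiction that $v_1,\dots,v_p$ is a blocking sequence. I claim $v_i\in Y'$ for all $i$, which I prove by downward induction on $i$. Note that $X'$ and $Y'$ partition $E(M)$, so each $v_i$ lies in exactly one of them, and that $\lambda_B(X',Y')\le k-1$ since $(X',Y')$ is a $k$\dash separation of $M$. Property~(\ref{bls:last}) gives $\lambda_B(X\cup v_p,Y)=k$; were $v_p\in X'$, monotonicity would give $\lambda_B(X\cup v_p,Y)\le\lambda_B(X',Y')\le k-1$, a contradiction, so $v_p\in Y'$. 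If $v_{i+1},\dots,v_p\in Y'$, then property~(\ref{bls:middle}) gives $\lambda_B(X\cup v_i,Y\cup v_{i+1})=k$; since $Y\cup v_{i+1}\subseteq Y'$, the same argument forces $v_i\in Y'$. In particular $v_1\in Y'$, so $\lambda_B(X,Y\cup v_1)\le\lambda_B(X',Y')\le k-1$, contradicting property~(\ref{bls:first}). Hence $(X,Y)$ cannot simultaneously be induced and admit a blocking sequence.

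For the second half it suffices to show that a non\dash induced exact $k$\dash separation admits a blocking sequence, and I would argue by induction on the number of free elements $|E(M)\setminus(X\cup Y)|$. If there are no free elements, then $X\cup Y=E(M)$ and $(X,Y)$ is a $k$\dash separation of $M$ inducing itself, so there is nothing to prove. Otherwise choose a free element $z$; then $\lambda_B(X\cup z,Y)$ and $\lambda_B(X,Y\cup z)$ each lie in $\{k-1,k\}$. If both equal $k$, then the one\dash term sequence $(z)$ is a blocking sequence for $(X,Y)$: properties~(\ref{bls:first}) and~(\ref{bls:last}) are precisely the two hypotheses, property~(\ref{bls:middle}) is vacuous, and the empty subsequence fails property~(\ref{bls:first}) because $\lambda_B(X,Y)=k-1\neq k$. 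Otherwise, after possibly exchanging the roles of $X$ and $Y$ and reversing sequences (which is legitimate since $\lambda_B$ is symmetric), we may assume $\lambda_B(X\cup z,Y)=k-1$. Then $(X\cup z,Y)$ is an exact $k$\dash separation of $M_B[X\cup Y\cup z]$ with one fewer free element, and it is not induced: an inducing $k$\dash separation of $M$ with $X\cup z$ on one side and $Y$ on the other would also induce $(X,Y)$. By the inductive hypothesis $(X\cup z,Y)$ has a blocking sequence $w_1,\dots,w_q$, and the remaining task is to convert it into one for $(X,Y)$.

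This conversion is where I expect the main difficulty to lie. A blocking sequence $w_1,\dots,w_q$ for $(X\cup z,Y)$ certifies equalities of the form $\lambda_B(X\cup z\cup A,Y\cup B)=k$ for certain one\dash and two\dash element sets $A,B$, whereas a blocking sequence for $(X,Y)$ calls for equalities of the form $\lambda_B(X\cup A,Y\cup B)=k$; by monotonicity, deleting $z$ only guarantees that such a value stays in $\{k-1,k\}$. One resolves this by following the candidate sequence (for instance $z,w_1,\dots,w_q$, whose leading junction $\lambda_B(X\cup z,Y\cup w_1)=k$ coincides exactly with the first condition of the sequence for $(X\cup z,Y)$) up to the first position at which deleting $z$ drops a value to $k-1$; that position determines a strictly smaller non\dash induced exact $k$\dash separation, to which the inductive hypothesis applies, and the two pieces splice together because at the splice the two blocking\dash sequence conditions meet exactly. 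Carrying out this bookkeeping cleanly is the one place where the argument genuinely exploits submodularity of the matroid connectivity function, in the spirit of Tutte's Linking Theorem. Once the conversion is in place, replacing the resulting sequence by a shortest subsequence that still satisfies properties~(\ref{bls:first})--(\ref{bls:last}) yields a genuine blocking sequence; the induction then closes, and combining the two halves shows that exactly one alternative holds.
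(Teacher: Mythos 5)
The paper states this proposition without proof, importing it verbatim from Geelen--Gerards--Kapoor (their Theorem 4.14), so there is no internal argument to compare yours against; I am judging your attempt on its own terms. Your first half, mutual exclusivity, is correct and complete: the downward induction forcing every $v_i$ into $Y'$ and the final contradiction with $\lambda_B(X, Y\cup v_1)=k$ is exactly the standard argument, and the monotonicity of $\lambda_B$ it rests on is a genuine fact (the connectivity of a partition does not increase when passing to the minor $M_B[\,\cdot\,]$ on a smaller set).

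The second half has a genuine gap at precisely the step you flag as the main difficulty. After obtaining, by induction, a blocking sequence $w_1,\dots,w_q$ for $(X\cup z,Y)$, every condition it certifies carries $z$ on the $X$-side (indeed the middle conditions read $\lambda_B(X\cup z\cup w_i, Y\cup w_{i+1})=k$, a three-element augmentation that never occurs among the conditions of a blocking sequence for $(X,Y)$), and monotonicity only gives that the corresponding $z$-free values lie in $\{k-1,k\}$. Your proposed repair does not close this: if the first failure is, say, $\lambda_B(X\cup w_j, Y\cup w_{j+1})=k-1$, then applying the inductive hypothesis to the exact $k$-separation $(X\cup w_j, Y\cup w_{j+1})$ produces a blocking sequence whose conditions keep $w_j$ permanently on the $X$-side and $w_{j+1}$ permanently on the $Y$-side; that is not the alternating chain needed for $(X,Y)$, in which $w_j$ and $w_{j+1}$ would have to appear as consecutive internal terms, and no splicing of the two sequences yields conditions of the required form. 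You correctly remark that submodularity must enter, but it never actually does in your argument, and monotonicity plus the unit-increment property are provably insufficient. The known proof runs in the opposite direction: assuming no blocking sequence exists, one considers reachability in the digraph on $E(M)\setminus(X\cup Y)$ whose arcs record the equalities $\lambda_B(X\cup u, Y\cup v)=k$ (with artificial source and sink for the end conditions), and uses a local submodularity inequality of the shape $\lambda_B(X\cup u, Y\cup v)+\lambda_B(X,Y)\le \lambda_B(X\cup u, Y)+\lambda_B(X, Y\cup v)$ to move all unreachable elements to the $X$-side and all reachable ones to the $Y$-side while holding the connectivity at $k-1$, thereby exhibiting an inducing $k$-separation of $M$. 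That uncrossing lemma is the missing idea; without it the local equalities you track have no reason to globalize.
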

The first of the following propositions lists basic properties of blocking sequences; the next provides a means of shortening a given sequence.
\begin{prop}
\tup{\cite[Proposition~4.15~(\emph{i},~\emph{ii},~\emph{iv})]{GGK}}.
\label{GGK4.15}
Suppose that $M$ is a matroid on the ground set $E$, and that
$B$ is a basis of $M$.
Let $(X,Y)$ be an exact $k$\dash separation in $M_B[X\cup Y]$, and
suppose that $v_1, \ldots, v_p$ is a blocking sequence for $(X,Y)$.
Then the following hold:
\begin{enumerate}
  \item \label{it:4.15.i} For $1 \leq i \leq j \leq p$, $v_i, \ldots, v_j$ is a blocking sequence for the exact $k$\dash separation $(X \cup \{v_1, \ldots, v_{i-1}\}, Y \cup \{v_{j+1}, \ldots, v_p\})$ of $M_B[X \cup Y \cup \{v_1,\ldots, v_{i-1},v_{j+1}, \ldots, v_p\}]$.
  \item \label{it:4.15.ii}Let $x_1,x_2 \in X\cup Y$ be such that $x_1x_2$ is an edge of $G_B(M)$. Then $v_1, \ldots, v_p$ is a blocking sequence for the exact $k$\dash separation $(X,Y)$ of $M_{B\symdiff\{x_1,x_2\}}[X\cup Y]$.
  \item \label{it:4.15.iv}For $i = 1, \ldots, p-1$, $v_i\in B$ implies
$v_{i+1} \in E\setminus B$, and $v_i \in E \setminus B$ implies $v_{i+1} \in B$.
\end{enumerate}
\end{prop}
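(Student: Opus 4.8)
The plan is to deduce all three parts from a handful of elementary properties of the function $\lambda_B$ together with the minimality clause of Definition~\ref{def:blockseq}. The properties I would use are: monotonicity — adding an element to either argument of $\lambda_B$ does not decrease it (on enlarging the first argument, the set whose rank is taken in the first term of $\lambda_B$ grows and the set that is contracted in the second term shrinks, and both changes are monotone by submodularity of $\rank_M$); the unit-step bound $|\lambda_B(P\cup z,Q)-\lambda_B(P,Q)|\le 1$; and a submodular uncrossing inequality
\[\lambda_B(P_1\cup P_2,\,Q_1\cup Q_2)+\lambda_B(P_1\cap P_2,\,Q_1\cap Q_2)\le\lambda_B(P_1,Q_1)+\lambda_B(P_2,Q_2),\]
valid whenever the sets in sight are appropriately disjoint, which one proves by expanding every term through $\rank_M$ and applying submodularity of $\rank_M$. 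Given these, part~(\emph{ii}) is pure bookkeeping: if $x_1,x_2\in X\cup Y$ then $B$ and $B':=B\symdiff\{x_1,x_2\}$ differ only inside $X\cup Y$, so $M_B[Z]=M_{B'}[Z]$ for every $Z\supseteq X\cup Y$ (neither $x_1$ nor $x_2$ is contracted or deleted when forming either minor, and $B,B'$ agree off $\{x_1,x_2\}$), hence every value $\lambda_B(\cdot,\cdot)$ occurring in Definition~\ref{def:blockseq} for $(X,Y)$ is unchanged on passing to $B'$; the hypothesis that $x_1x_2$ is an edge of $G_B(M)$ is needed only to ensure that $B'$ is again a basis, so that ``blocking sequence with respect to $B'$'' makes sense.

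For part~(\emph{iii}), suppose $v_i$ and $v_{i+1}$ both lie in $B$; the case $v_i,v_{i+1}\in E\setminus B$ is dual. Set $N:=M_B[X\cup Y\cup\{v_i,v_{i+1}\}]$. Since $B$ is independent, $\{v_i,v_{i+1}\}$ is independent in $N$ and neither element is a loop of $N$, while $M_B[X\cup Y\cup v_i]=N\con v_{i+1}$, $M_B[X\cup Y\cup v_{i+1}]=N\con v_i$, and $M_B[X\cup Y]=N\con\{v_i,v_{i+1}\}$. Expanding $\lambda_B(X\cup v_i,Y)$, $\lambda_B(X,Y\cup v_{i+1})$, $\lambda_B(X,Y)$ and $\lambda_B(X\cup v_i,Y\cup v_{i+1})$ through $\rank_N$ gives the exact identity
\[\lambda_B(X\cup v_i,Y)+\lambda_B(X,Y\cup v_{i+1})=\lambda_B(X,Y)+\lambda_B(X\cup v_i,Y\cup v_{i+1})=(k-1)+k.\]
By monotonicity each left-hand term is at most $\lambda_B(X\cup v_i,Y\cup v_{i+1})=k$, so one of them equals $k$. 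If $\lambda_B(X\cup v_i,Y)=k$ then $v_1,\dots,v_i$ is a proper subsequence of $v_1,\dots,v_p$ satisfying the first three conditions of a blocking sequence for $(X,Y)$; if $\lambda_B(X,Y\cup v_{i+1})=k$ then $v_{i+1},\dots,v_p$ is. Either way the minimality clause of Definition~\ref{def:blockseq} is contradicted.

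For part~(\emph{i}) I would iterate to reduce to the cases $i=1$ and $j=p$, and within the case $j=p$ reduce further, by induction on $i$, to showing that $v_2,\dots,v_p$ is a blocking sequence for $(X\cup v_1,Y)$ in $M_B[X\cup Y\cup v_1]$ (the case $i=1$ is symmetric). The crux is exactness of this separation: monotonicity gives $\lambda_B(X\cup v_1,Y)\ge\lambda_B(X,Y)=k-1$ and $\lambda_B(X\cup v_1,Y)\le\lambda_B(X\cup v_1,Y\cup v_2)=k$, and if $\lambda_B(X\cup v_1,Y)$ were equal to $k$ the length-one sequence $v_1$ would satisfy the first three conditions of a blocking sequence for $(X,Y)$, contradicting minimality; hence $\lambda_B(X\cup v_1,Y)=k-1$. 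With exactness in hand, each of the three blocking conditions for $v_2,\dots,v_p$ is obtained by trapping the relevant $\lambda_B$-value between $k$ from below (monotonicity, applied to the corresponding condition for $v_1,\dots,v_p$) and $k$ from above (the uncrossing inequality, combining $\lambda_B(X\cup v_1,Y)=k-1$ with that same condition); and the minimality of $v_2,\dots,v_p$ is inherited, with a little care, from that of $v_1,\dots,v_p$.

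I expect part~(\emph{i}) to be the main obstacle. In part~(\emph{iii}) the four connectivities live over a single fixed minor $N$ and fuse into an exact identity, whereas in part~(\emph{i}) the bookkeeping must be carried out across several of the minors $M_B[Z]$ simultaneously, the uncrossing inequality must be formulated in precisely the right generality and proved from scratch, and one has to verify that the ``middle'' blocking conditions land exactly on $k$ rather than overshooting — which is exactly where minimality (through the length-one and single-prefix-element arguments) is invoked just so. Transferring the minimality clause to the subsequence, and reassembling the prefix and suffix reductions into the full statement, are the remaining places where care is required.
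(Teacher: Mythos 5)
The paper itself states this proposition without proof, importing it from \cite[Section~4]{GGK}, so there is no in-paper argument to compare against; judging your proposal on its own terms: part~(\emph{ii}) is correct (for disjoint $P,Q$ the value $\lambda_B(P,Q)$ is the connectivity function of the minor $M_B[P\cup Q]$, and these minors, hence all values in Definition~\ref{def:blockseq}, are unchanged when $B$ is replaced by $B\symdiff\{x_1,x_2\}$ with $x_1,x_2\in X\cup Y$), and your alternation argument is also correct: when $v_i,v_{i+1}$ lie on the same side of $B$, $\lambda_B$ splits as a sum of a function of the first argument and a function of the second, so your four-term identity is exact and the rest follows from monotonicity and minimality. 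The genuine gap is in part~(\emph{i}), and it sits exactly in the tool you build it on: the displayed uncrossing inequality is false. Take $M=U_{1,2}\oplus U_{1,2}$ on $\{b_1,b_2,s_1,s_2\}$ with $s_i$ parallel to $b_i$, $B=\{b_1,b_2\}$, and $P_1=\{s_1\}$, $Q_1=\{b_2\}$, $P_2=\{s_2\}$, $Q_2=\{b_1\}$ (all four sets pairwise disjoint); then $\lambda_B(P_1,Q_1)=\lambda_B(P_2,Q_2)=0$ while $\lambda_B(P_1\cup P_2,Q_1\cup Q_2)=2$, so your inequality reads $2+0\le 0+0$. What submodularity of the rank function actually yields is the \emph{crossed} version, $\lambda_B(P_1\cup P_2,Q_1\cap Q_2)+\lambda_B(P_1\cap P_2,Q_1\cup Q_2)\le\lambda_B(P_1,Q_1)+\lambda_B(P_2,Q_2)$, because the sets $B\setminus Q_i$ occurring in the expansion reverse unions and intersections.

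This matters because the two hard steps of your reduction use precisely the invalid pairing. Your end-condition bound ($\lambda_B(X\cup v_1\cup v_p,Y)\le k$, taken with $Q_1=Q_2=Y$) survives, since there the two pairings coincide; but the middle conditions $\lambda_B(X\cup v_1\cup v_i,Y\cup v_{i+1})\le k$, and the minimality transfer (recovering $\lambda_B(X\cup w_t,Y\cup w_{t+1})=k$ and $\lambda_B(X\cup w_m,Y)=k$ after deleting $v_1$ from a violating subsequence for $(X\cup v_1,Y)$), are exactly instances with $Q_1\ne Q_2$, and the crossed inequality applied to the same sets gives bounds on the wrong terms. These steps can be repaired, but not routinely: one first needs further consequences of minimality that your sketch never isolates, such as $\lambda_B(X\cup v_i,Y)=k-1$ for $i<p$, $\lambda_B(X,Y\cup v_j)=k-1$ for $j>1$, and $\lambda_B(X\cup v_1,Y\cup v_j)=k-1$ for $j\ge 3$ (each proved by the shorter-subsequence trick plus monotonicity and your unit-step bound), and only then do the crossed inequality with $Q_1=Q_2$ and the unit-step bound deliver the upper bounds and the transfer of minimality. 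So the skeleton of part~(\emph{i}) — exactness of $(X\cup v_1,Y)$, trapping at $k$, inherited minimality, then iterate and dualize — is sound, and your exactness step is fine as written, but the load-bearing inequality is wrong and the missing intermediate lemmas are where the real work of~(\emph{i}) lives.
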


\begin{prop}
\tup{\cite[Proposition 4.16]{GGK}}.
\label{GGK4.16}
Let $M$ be a matroid. Suppose that $B$ is a basis of $M$, and that $(X,Y)$ is
an exact $k$\dash separation in $M_B[X\cup Y]$.
Let $v_1, \ldots, v_p$ be a blocking sequence for $(X,Y)$.
Then the following hold:
\begin{enumerate}
  \item \label{it:4.16.i}Suppose that $Y' \subseteq Y$ contains at least $k$ elements and that $\lambda_B(X,Y') = k-1$.
If $p>1$, then $v_1,\ldots,v_{p-1}$ is a blocking sequence for the exact $k$\dash separation $(X,Y'\cup v_p)$ of $M_B[X\cup Y' \cup v_p]$.
  \item \label{it:4.16.ii}Let $y \in Y$ be such that $v_py$ is an edge of $G_B(M)$, and $\lambda_B(X\cup y, Y) = k$. If $p > 1$, then $v_1, \ldots, v_{p-1}$ is a blocking sequence for the exact $k$\dash separation $(X,(Y \cup v_p) \setminus y\})$ of $M_{B\symdiff\{v_{p},y\}}[(X \cup Y \cup v_p) \setminus y]$.
  \item \label{it:4.16.iii}If $v_i$ has no neighbors in $X\cup Y$ in $G_B(M)$, then $1 < i < p$; $v_{i-1}v_{i}$ is an edge of $G_B(M)$; and $v_1,\ldots,v_{i-2}, v_{i+1},\ldots,v_p$ is a blocking sequence for the exact $k$\dash separation $(X,Y)$ of $M_{B\symdiff \{v_{i-1},v_{i}\}}[X\cup Y]$.
\end{enumerate}
\end{prop}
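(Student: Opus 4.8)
The statement to prove is Proposition~\ref{GGK4.16}, a package of three shortening operations for blocking sequences. Since the result is quoted from~\cite[Proposition~4.16]{GGK}, the paper presents it without proof; nevertheless, here is how I would establish each part from first principles, using only the definitions of $\lambda_B$, splits, and blocking sequences together with Propositions~\ref{GGK4.11}--\ref{GGK4.12} and~\ref{GGK4.15}. Throughout, the strategy is the same: the hypotheses of a blocking sequence (Definition~\ref{def:blockseq}) are a conjunction of four conditions, three ``connectivity-at-level-$k$'' conditions plus a minimality condition; to verify that a truncated sequence is a blocking sequence for a modified separation, I must re-verify all four. The first three are computations with the submodular function $\lambda_B$, and the fourth (minimality) is typically inherited by contradiction from the minimality of the original sequence.

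For part~\eqref{it:4.16.i}, I would start from the hypothesis $\lambda_B(X,Y') = k-1$ with $Y' \subseteq Y$, $|Y'| \ge k$. The key inequality is submodularity of $\lambda_B$: combining $\lambda_B(X,Y') = k-1$ with $\lambda_B(X\cup v_p, Y) = k$ and the fact that $\lambda_B(X, Y\cup v_1) = k$, one deduces that adding $v_p$ to the $Y'$-side raises $\lambda_B$ by exactly one, so $\lambda_B(X, Y'\cup v_p) = k$ and $(X, Y'\cup v_p)$ is an exact $k$-separation of the appropriate minor. Then conditions~\eqref{bls:first}--\eqref{bls:last} of Definition~\ref{def:blockseq} for the truncated sequence $v_1,\dots,v_{p-1}$ follow because the original conditions for $v_1,\dots,v_p$ already give $\lambda_B(X, Y\cup v_1) = k$ (hence $\lambda_B(X, Y'\cup v_p \cup v_1) = k$ after intersecting appropriately), the middle equalities are unchanged for $i \le p-2$, and the new ``last'' condition $\lambda_B(X\cup v_{p-1}, Y'\cup v_p) = k$ comes from $\lambda_B(X\cup v_{p-1}, Y\cup v_p)\ge k$ (a consequence of the middle condition at $i = p-1$) sandwiched below by $\lambda_B(X\cup v_{p-1}, Y) \le k$. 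Minimality of the truncated sequence: if a proper subsequence worked for $(X, Y'\cup v_p)$, one would splice it back together with the relation $\lambda_B(X,Y') = k-1$ to produce a proper subsequence of $v_1,\dots,v_p$ valid for $(X,Y)$, contradicting condition~(4) for the original sequence.

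For part~\eqref{it:4.16.ii}, the situation is that $y\in Y$ is joined to $v_p$ by an edge of $G_B(M)$ and $\lambda_B(X\cup y, Y) = k$. Moving $y$ from the $Y$-side and pivoting $B$ to $B\symdiff\{v_p,y\}$ is the natural operation: the edge $v_py$ guarantees the pivot is legal, and Proposition~\ref{GGK4.15}\eqref{it:4.15.ii} (or the basis-exchange behavior of $\lambda_B$) shows the separation survives. The hypothesis $\lambda_B(X\cup y, Y) = k$ is exactly what is needed to see that $y$ ``behaves like $v_p$'': after the pivot, $y$ takes the structural role that $v_p$ had, so $v_p$ can be absorbed into the $Y$-side and $v_1,\dots,v_{p-1}$ blocks $(X, (Y\cup v_p)\setminus y)$. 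The three $\lambda$-conditions are checked as before, now over the pivoted basis, using that $\lambda_{B\symdiff\{v_p,y\}}$ agrees with $\lambda_B$ on the relevant sets (an exercise in the rank formulas). Minimality again transfers by a splicing-contradiction argument. Part~\eqref{it:4.16.iii} handles an element $v_i$ isolated in $X\cup Y$: such a $v_i$ cannot be $v_1$ or $v_p$ (it would fail condition~\eqref{bls:first} or~\eqref{bls:last}, since an isolated vertex contributes nothing to $\lambda_B(X, Y\cup v_1)$ etc.), so $1 < i < p$; the middle conditions then force $v_{i-1}v_i$ to be an edge (else $\lambda_B(X\cup v_{i-1}, Y\cup v_i)$ would drop below $k$); and pivoting on $B\symdiff\{v_{i-1},v_i\}$ excises both $v_{i-1}$ and $v_i$, with the remaining sequence verified to block $(X,Y)$ over the new basis.

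The main obstacle, in all three parts, is the bookkeeping for the $\lambda_B$-computations under a change of basis: $\lambda_B$ is defined via contraction-restriction ranks, so pivoting $B$ to $B\symdiff\{x_1,x_2\}$ requires re-expressing these ranks, and one must be careful that the exactness of the $k$-separation is preserved and that no spurious separations of lower order are created. This is precisely the point at which Truemper's matrix analysis~\cite{TruIII}, or the careful rank arguments of~\cite{GGK}, do the real work; for a self-contained proof I would isolate a lemma stating that for an edge $x_1x_2$ of $G_B(M)$ and any $Z,Z'$ with $\{x_1,x_2\}\subseteq Z\cup Z'$ containing one of each, $\lambda_{B\symdiff\{x_1,x_2\}}(Z,Z') = \lambda_B(Z,Z')$, and then all the pivoting steps reduce to routine applications of it together with submodularity. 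Since this is quoted verbatim from~\cite{GGK}, in the paper itself I would simply cite it, as the authors do.
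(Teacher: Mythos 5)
First, a point of reference: the paper does not prove Proposition~\ref{GGK4.16} at all. As announced at the start of Section~\ref{sec:machinery}, it is quoted from \cite[Section~4]{GGK} and ``presented here without proof,'' so there is no in-paper argument to compare yours against. Judged on its own terms, your sketch follows the right general strategy (re-verify the three $\lambda_B$\dash equalities for the truncated sequence, then transfer minimality), but it contains a concrete error at the crux of part~(\emph{i}). You assert that ``adding $v_p$ to the $Y'$\dash side raises $\lambda_B$ by exactly one, so $\lambda_B(X,Y'\cup v_p)=k$ and $(X,Y'\cup v_p)$ is an exact $k$\dash separation.'' By the paper's definition a $k$\dash separation requires $\lambda_B<k$, so if $\lambda_B(X,Y'\cup v_p)$ were equal to $k$ then $(X,Y'\cup v_p)$ would not be a $k$\dash separation at all and the assertion to be proved would be false. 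The fact you actually need is the opposite one: for $p>1$ the last element of a blocking sequence does \emph{not} block when placed on the $Y$\dash side, i.e.\ $\lambda_B(X,Y\cup v_p)=k-1$ (this is the part of \cite[Proposition~4.15]{GGK} that the present paper does not quote), and combining this with $\lambda_B(X,Y')=k-1$ via submodularity yields $\lambda_B(X,Y'\cup v_p)=k-1$, which is exactly what makes $(X,Y'\cup v_p)$ an exact $k$\dash separation.

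The remaining steps are also thinner than they appear. Your ``sandwich'' for the new last condition supplies only the upper bound: monotonicity of $\lambda_B$ under enlarging one side gives $\lambda_B(X\cup v_{p-1},Y'\cup v_p)\le\lambda_B(X\cup v_{p-1},Y\cup v_p)=k$, but the matching lower bound requires a genuine submodularity computation pairing this equality with $\lambda_B(X,Y')=\lambda_B(X,Y)=k-1$; the inequality $\lambda_B(X\cup v_{p-1},Y)\le k$ that you invoke bounds nothing from below. Likewise, transferring minimality ``by splicing'' is not automatic: one needs the explicit characterization of minimality of a blocking sequence (namely $\lambda_B(X\cup v_i,Y\cup v_j)=k-1$ whenever $j>i+1$, with the appropriate boundary conventions) before a shorter blocking sequence for the modified separation can be converted into one for the original. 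The skeleton of part~(\emph{iii}) is sound. In short, your outline identifies the right moves, but the one quantitative claim you commit to in part~(\emph{i}) is wrong as stated, and the submodularity and change-of-basis bookkeeping --- which is where all of the work in \cite{GGK} actually lives --- is left unverified.
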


For $2$\dash separations more can be said. If $(X_1,Y_1)$ and $(X_2,Y_2)$ are both partitions of a set, then these partitions \emph{cross} if $X_i\cap Y_j \neq \emptyset$ whenever $i,j \in \{1,2\}$.
\begin{dfn}
  Let $M$ be a matroid, and suppose that $(X_1,Y_1)$ is a $2$\dash separation
of $M$. We say $(X_1,Y_1)$ is \emph{crossed} if there exists a
$2$\dash separation $(X_2,Y_2)$ of $M$ such that $(X_1,Y_1)$ and $(X_2,Y_2)$
cross. Otherwise $(X_1,Y_1)$ is \emph{uncrossed}.
\end{dfn}

\begin{prop}
\tup{\cite[Proposition 4.17]{GGK}}.
\label{GGK4.17}
  Let $B$ be a basis of the matroid $M$. Suppose that $(X_1,X_2)$ is
an uncrossed $2$\dash separation of $M_B[X_1\cup X_2]$, and let
$v_1, \ldots, v_p$ be a blocking sequence for $(X_1,X_2)$.
Let $(Y_1,Y_2)$ be a $2$\dash separation of
$M_B[X_1\cup X_2 \cup \{v_1,\ldots,v_p\}]$. Then, for some
$i,j \in \{1,2\}$, $X_i \cup \{v_1, \ldots, v_p\} \subseteq Y_j$.
\end{prop}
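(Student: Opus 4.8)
The plan is to transfer the whole problem into the matroid $M' := M_B[X_1\cup X_2\cup\{v_1,\dots,v_p\}]$, with ground set $S:=X_1\cup X_2\cup\{v_1,\dots,v_p\}$ and basis $B':=B\cap S$. Recall that for disjoint $A,C\subseteq S$ the quantity $\lambda_B(A,C)$ is just the ordinary connectivity function of the minor $M_B[A\cup C]$; hence, writing $\mu(A):=\lambda_B(A,S\setminus A)$ for $A\subseteq S$, the map $\mu$ is the (symmetric, submodular) connectivity function of $M'$, and the statement to be proved becomes: every $2$\dash separation $(Y_1,Y_2)$ of $M'$ satisfies $X_i\cup\{v_1,\dots,v_p\}\subseteq Y_j$ for some $i,j$. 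We will also use that $M_B[X_1\cup X_2]$ is obtained from $M'$ by deleting or contracting the $v_j$ one at a time, and that $\lambda_B(X_1,X_2)=1$ there (exactness of the given separation).

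The first step is a structural lemma: for the nested sets $L_i:=X_1\cup\{v_1,\dots,v_i\}$ (so $L_0=X_1$, $L_p=X_1\cup\{v_1,\dots,v_p\}$) and $R_i:=S\setminus L_i=X_2\cup\{v_{i+1},\dots,v_p\}$, one has $\mu(L_i)=2$ for every $i$. For the lower bound: by Proposition~\ref{GGK4.15}\,\eqref{it:4.15.i} the tail $v_{i+1},\dots,v_p$ is a blocking sequence for the separation $(L_i,X_2)$ of $M_B[L_i\cup X_2]$, so Proposition~\ref{lem:blockseq}, applied inside $M'$, shows $(L_i,X_2)$ is not induced in $M'$; were $\mu(L_i)\le1$, the partition $(L_i,R_i)$ would be a $2$\dash separation of $M'$ with $L_i\subseteq L_i$ and $X_2\subseteq R_i$, exhibiting $(L_i,X_2)$ as induced --- a contradiction. (The endpoints $i=0,p$ are handled the same way using the full sequence $v_1,\dots,v_p$ and its reversal.) For the upper bound, $1\le i\le p$: since $L_i=L_{i-1}\cup v_i$ and $M_B[L_{i-1}\cup R_i]$ is $M'$ with $v_i$ removed, the elementary fact that adjoining one element raises the connectivity of any cut by at most $1$ gives $\mu(L_i)\le\lambda_B(L_{i-1},R_i)+1$; and Proposition~\ref{GGK4.15}\,\eqref{it:4.15.i} (applied to the length\dash one subsequence $v_i$) tells us that $(L_{i-1},R_i)$ is an \emph{exact} $2$\dash separation of $M_B[L_{i-1}\cup R_i]$, so $\lambda_B(L_{i-1},R_i)=1$ and $\mu(L_i)\le2$.

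Granted the lemma, I would finish in two parts. First, aligning $(Y_1,Y_2)$ with $(X_1,X_2)$: put $P_i:=Y_i\cap(X_1\cup X_2)$; then $(P_1,P_2)$ partitions $X_1\cup X_2$ with $\lambda_B(P_1,P_2)\le\mu(Y_1)\le1$ (remove the $v_j$ one at a time), so if $|P_1|,|P_2|\ge2$ this is a $2$\dash separation of $M_B[X_1\cup X_2]$ which, by uncrossedness of $(X_1,X_2)$, cannot cross $(X_1,X_2)$; hence, after relabelling $X_1,X_2$ and $Y_1,Y_2$, we may assume $X_1\subseteq Y_1$. Second, showing $\{v_1,\dots,v_p\}$ is not split: if $v_t\in Y_1$ and $v_{t+1}\in Y_2$ for some consecutive pair, then $Y_1\cap L_t=Y_1\cap L_{t+1}$ still contains $v_t$ while $v_{t+1}\in L_{t+1}\setminus L_t$ lies in $Y_2$, and one applies submodularity of $\mu$ to the pairs $(Y_1,L_t)$, $(Y_1,L_{t+1})$, $(Y_2,L_t)$, $(Y_2,L_{t+1})$ --- using $\mu(L_t)=\mu(L_{t+1})=2$ against $\mu(Y_1)=\mu(Y_2)\le1$ --- to produce a small $1$\dash separating subset of $S$, which is then ruled out against the minimality clause in the definition of a blocking sequence. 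Combining the two parts forces $X_i\cup\{v_1,\dots,v_p\}\subseteq Y_j$. A tidier organisation of the same computation is an induction on $p$: Proposition~\ref{GGK4.16}\,\eqref{it:4.16.i}, applied with $Y'=X_2$ (legitimate because $\lambda_B(X_1,X_2)=1$ and $|X_2|\ge2$), peels $v_p$ off to give a shorter blocking sequence $v_1,\dots,v_{p-1}$ for $(X_1,X_2\cup v_p)$, and a symmetric move peels off $v_1$; the two inductive conclusions, together with $\mu(L_1)=\mu(L_{p-1})=2$, pin down the claim.

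I expect the hard part to be precisely the cases that uncrossedness does not detect: when the restriction $(P_1,P_2)$ of $(Y_1,Y_2)$ to $X_1\cup X_2$ has a part of size at most $1$ (so it is not a $2$\dash separation there), and --- in the inductive version --- when one of the shortened separations $(X_1,X_2\cup v_p)$, $(X_1\cup v_1,X_2)$ fails to be uncrossed in its smaller matroid. In each such situation one is left to rule out a $1$\dash separating set of the shape $\{x,v_j\}$ with $x\in X_1\cup X_2$, or $\{v_j\}\cup W$ with $W\subseteq\{v_1,\dots,v_p\}$; the way in is to show that such a set would supply a proper subsequence of $v_1,\dots,v_p$ already satisfying the defining conditions~\eqref{bls:first}--\eqref{bls:last} of a blocking sequence, contradicting minimality, with Proposition~\ref{GGK4.12} (relating small separations to $U_{2,4}$\dash minors) and Proposition~\ref{GGK4.16} performing the necessary repositioning of the basis. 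I anticipate this case bookkeeping, rather than any single new idea, to be the main labour.
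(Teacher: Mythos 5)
The paper does not actually prove this proposition---it is quoted from GGK and explicitly ``presented here without proof''---so I am judging your proposal on its own terms. The backbone you chose is sound. Writing $\mu$ for the connectivity function of $M':=M_B[X_1\cup X_2\cup\{v_1,\dots,v_p\}]$ and $L_i=X_1\cup\{v_1,\dots,v_i\}$, your lemma $\mu(L_i)=2$ is correct: the upper bound comes from the exactness clause of Proposition~\ref{GGK4.15}~(\emph{i}) applied to the one-term subsequence $v_i$, and the lower bound holds because a $1$-separating $L_i$ would exhibit $(L_i,X_2)$ as induced in $M'$, contradicting Proposition~\ref{lem:blockseq} together with the fact that the tail $v_{i+1},\dots,v_p$ is a blocking sequence for $(L_i,X_2)$ (all of whose defining conditions are intrinsic to $M'$). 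The uncrossing step is also fine when both traces $P_i=Y_i\cap(X_1\cup X_2)$ have at least two elements, and the sub-case $X_1\subseteq Y_1$, $X_2\subseteq Y_2$ dies immediately because $(Y_1,Y_2)$ would then induce $(X_1,X_2)$ in $M'$.

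The genuine gap is that everything you defer to ``case bookkeeping'' is where the content of the proposition lives, and the mechanism you name for those cases does not work as stated. First, the degenerate case $|P_2|\le 1$: there $Y_2$ consists of at most one element of $X_1\cup X_2$ together with some $v_i$'s, and the proposition is then asserting, for instance, that no two of the $v_i$ form a series or parallel pair of $M'$. Your lemma plus submodularity does not exclude this (uncrossing such a $Y_2$ against the $L_i$ and their complements $R_i$ collapses to trivial inequalities), and ``contradicting the minimality clause'' is not an argument: minimality forbids a proper subsequence satisfying conditions (\emph{i})--(\emph{iii}) of Definition~\ref{def:blockseq}, and a low-order separating set $\{v_j,v_{j'}\}$ does not hand you such a subsequence. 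The only contradiction engine available is Proposition~\ref{lem:blockseq} applied to the intermediate separations $(L_i,X_2)$ and $(X_1,R_i)$, which requires the candidate separation to keep $X_1$ and $X_2$ on opposite sides; manufacturing that configuration is the real work and needs the finer tools you only wave at (Proposition~\ref{GGK4.12}, and Proposition~\ref{GGK4.16}~(\emph{iii}) for $v_i$'s with no neighbour in $X_1\cup X_2$). Second, even your non-degenerate ``second part'' stalls: uncrossing $Y_2$ against $R_j$ (with $j$ maximal such that $v_j\in Y_2$) gives $\mu(P_2)+\mu(Y_1\cap L_j)\le 3$; the branch $\mu(P_2)\ge 2$ yields the desired contradiction via non-inducedness, but the branch $\mu(P_2)\le 1$ produces a $2$-separation of $M'$ that is entirely compatible with everything established so far, and the outline offers no way to continue from it. So the skeleton is right, but the proof is not yet there.
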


\begin{prop}
\tup{\cite[Proposition 4.18]{GGK}}.
\label{GGK4.18}
    Let $M$ be a matroid, and let $B$ be a basis of $M$.
Suppose that $(X_1,X_2)$ is an uncrossed $2$\dash separation in
$M_B[X_1\cup X_2]$, and let $v \in E(M)\setminus(X_1\cup X_2)$ be such
that $\lambda_B(X_1\cup v, X_2) = 2$.
If $(Y_1,Y_2)$ is a $2$\dash separation of $M_B[X_1\cup X_2 \cup v]$
such that $X_2\subseteq Y_2$, then $v \in Y_2$.
\end{prop}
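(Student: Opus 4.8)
The plan is to argue by contradiction, so suppose $v\in Y_1$. Write $N:=M_B[X_1\cup X_2\cup v]$ and $N_0:=M_B[X_1\cup X_2]$, and let $\lambda$ denote the connectivity function of $N$; recall from Section~\ref{sec:machinery} that, restricted to subsets of $X_1\cup X_2\cup v$, the function $\lambda_B(\cdot,\cdot)$ is exactly $\lambda$, which is submodular and self\dash dual. Since $N_0$ is obtained from $N$ by deleting or contracting the single element $v$, we have $\lambda_{N_0}(P)\le\lambda(P)\le\lambda_{N_0}(P)+1$ for every $P\subseteq X_1\cup X_2$. As $(X_1,X_2)$ is a $2$\dash separation of $N_0$ we have $\lambda_{N_0}(X_1)\le1$, and from $\lambda(X_2)=\lambda_B(X_1\cup v,X_2)=2$ it then follows that $\lambda_{N_0}(X_1)=\lambda_{N_0}(X_2)=1$, so $(X_1,X_2)$ is an \emph{exact} $2$\dash separation of $N_0$, and moreover $\lambda(X_1)\in\{1,2\}$. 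The argument splits on the value of $\lambda(X_1)$.

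Suppose first that $\lambda(X_1)=2$, i.e. $\lambda_B(X_1,X_2\cup v)=2$. Together with $\lambda_B(X_1\cup v,X_2)=2$ and $\lambda_B(X_1,X_2)=1$, this says precisely that $\langle v\rangle$ satisfies Definition~\ref{def:blockseq}: it is a blocking sequence for the exact $2$\dash separation $(X_1,X_2)$ of $N_0$. I would then apply Proposition~\ref{GGK4.17} with $p=1$ to the given $2$\dash separation $(Y_1,Y_2)$ of $N=M_B[X_1\cup X_2\cup\{v\}]$: it produces $i,j\in\{1,2\}$ with $X_i\cup\{v\}\subseteq Y_j$. Each of the cases $X_1\cup\{v\}\subseteq Y_1$ (then $Y_2=X_2$, so $\lambda(Y_2)=2$, contradicting that $(Y_1,Y_2)$ is a $2$\dash separation), $X_1\cup\{v\}\subseteq Y_2$ (then $Y_1=\emptyset$, contradicting $|Y_1|\ge2$), and $X_2\cup\{v\}\subseteq Y_1$ (then $X_2\subseteq Y_1\cap Y_2=\emptyset$) is impossible, so $X_2\cup\{v\}\subseteq Y_2$; but then $v\in Y_2$, contrary to assumption.

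Now suppose $\lambda(X_1)=1$, so that $(X_1,X_2\cup v)$ is a $2$\dash separation of $N$. Because $v\in Y_1$ and $X_2\subseteq Y_2$, all four of $X_1\cap Y_1$, $X_1\cap Y_2$, $(X_2\cup v)\cap Y_1=\{v\}$ and $(X_2\cup v)\cap Y_2\supseteq X_2$ are nonempty (for the first, $Y_1=(X_1\cap Y_1)\cup\{v\}$ has at least two elements; for the second, $X_1\cap Y_2=\emptyset$ would force $Y_2=X_2$ and $\lambda(Y_2)=2$), so $(Y_1,Y_2)$ crosses the $2$\dash separation $(X_1,X_2\cup v)$ of $N$. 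The plan is to push this crossing down to $N_0$: using $\lambda_{N_0}(P)\le\lambda(P)$ and the crossing $2$\dash separation of $N$ just exhibited, one extracts a partition $(P_1,P_2)$ of $X_1\cup X_2$ with $\lambda_{N_0}(P_1)\le1$ and with $P_1\cap X_1$, $P_1\cap X_2$, $P_2\cap X_1$, $P_2\cap X_2$ all nonempty; such a $(P_1,P_2)$ is a $2$\dash separation of $N_0=M_B[X_1\cup X_2]$ crossing $(X_1,X_2)$, contradicting the hypothesis that $(X_1,X_2)$ is uncrossed. This contradiction rules out $\lambda(X_1)=1$ when $v\in Y_1$ and finishes the proof.

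I expect the last step — the crossing\dash transfer in the case $\lambda(X_1)=1$ — to be the main obstacle. The naive move of simply deleting $v$ from the $v$\dash containing side of the crossing separation of $N$ need not leave two parts of size at least $2$, and, worse, because $X_2$ lies entirely inside $Y_2$ the desired crossing separation of $N_0$ must genuinely split $X_2$; obtaining that split seems to require the complete\dash bipartite split structure of $2$\dash separations (Propositions~\ref{GGK4.11} and~\ref{GGK4.12}), applied to the split of $N$ induced by $(Y_1,Y_2)$, rather than submodularity alone, and the associated size bookkeeping is where the real work lies.
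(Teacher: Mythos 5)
Your Case~1 (the subcase $\lambda_B(X_1,X_2\cup v)=2$) is sound: there $v$ is indeed a length-one blocking sequence in the sense of Definition~\ref{def:blockseq}, and your application of Proposition~\ref{GGK4.17}, followed by the elimination of the three impossible placements of $X_i\cup\{v\}$, is correct. The genuine gap is Case~2 ($\lambda_B(X_1,X_2\cup v)=1$): the entire contradiction there rests on ``one extracts a partition $(P_1,P_2)$ of $X_1\cup X_2$ \dots'', which you never actually extract, and which you yourself flag as the main obstacle and ``the real work''. As written, the argument in that case is a plan, not a proof; nothing in the submitted text shows that the crossing of $(Y_1,Y_2)$ with $(X_1,X_2\cup v)$ in $M_B[X_1\cup X_2\cup v]$ forces any $2$\dash separation of $M_B[X_1\cup X_2]$ to split $X_2$, and your suggestion that one should route this through the split structure of Propositions~\ref{GGK4.11} and~\ref{GGK4.12} is left entirely unexecuted.

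The step can in fact be closed by plain submodularity, with no appeal to splits. In $N:=M_B[X_1\cup X_2\cup v]$ you have $Y_1\subseteq X_1\cup\{v\}$, so $X_1\cup Y_1=X_1\cup\{v\}$ and hence $\lambda(X_1\cup Y_1)=\lambda_B(X_1\cup v,X_2)=2$; submodularity then gives $\lambda(X_1\cap Y_1)\le\lambda(X_1)+\lambda(Y_1)-\lambda(X_1\cup Y_1)\le 1+1-2=0$, so the nonempty set $S:=X_1\cap Y_1$ satisfies $\lambda(S)=0$, and a fortiori $\lambda_{N_0}(S)=0$ in $N_0:=M_B[X_1\cup X_2]$. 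Now pick any $q\in X_2$ and set $P_1:=S\cup\{q\}$, $P_2:=(X_1\cup X_2)\setminus P_1$. Then $\lambda_{N_0}(P_1)\le\lambda_{N_0}(S)+\lambda_{N_0}(\{q\})\le 1$, while $P_1\cap X_1=S$, $P_1\cap X_2=\{q\}$, $P_2\cap X_1=X_1\cap Y_2$ and $P_2\cap X_2=X_2\setminus\{q\}$ are all nonempty (the last because $|X_2|\ge 2$, the third by the observation you already made). So $(P_1,P_2)$ is a $2$\dash separation of $M_B[X_1\cup X_2]$ crossing $(X_1,X_2)$, contradicting uncrossedness. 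Until something of this kind is written out, your proof of the proposition is incomplete; note also that the paper itself gives no proof to compare against, since it imports the statement directly from~\cite{GGK}.
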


\begin{prop}
\tup{\cite[Corollary 4.19]{GGK}}.
\label{GGK4.19}
Suppose $B$ is a basis of the matroid $M$.
If $(X_1,X_2)$ is the unique $2$\dash separation in $M_B[X_1\cup X_2]$, and $v_1,\ldots, v_p$ is a blocking sequence for $(X_1,X_2)$, then $M_B[X_1\cup X_2 \cup \{v_1,\ldots,v_p\}]$ is $3$\dash connected.
\end{prop}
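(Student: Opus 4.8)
The plan is to argue by contradiction. Write $N := M_B[X_1\cup X_2\cup\{v_1,\ldots,v_p\}]$ and suppose $N$ is not $3$\dash connected. Since $|E(N)| \geq |X_1| + |X_2| \geq 4$, the matroid $N$ then admits a $2$\dash separation $(Y_1,Y_2)$. Because $(X_1,X_2)$ is the \emph{unique} $2$\dash separation of $M_B[X_1\cup X_2]$, nothing crosses it, so it is an uncrossed $2$\dash separation of $M_B[X_1\cup X_2]$ and Proposition~\ref{GGK4.17} applies: $X_i\cup\{v_1,\ldots,v_p\}\subseteq Y_j$ for some $i,j\in\{1,2\}$. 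By interchanging the two parts of $(Y_1,Y_2)$, and if necessary also interchanging the two parts of $(X_1,X_2)$ and reversing the blocking sequence (the reverse of a blocking sequence for $(X_1,X_2)$ is a blocking sequence for $(X_2,X_1)$, since $\lambda_B$ is symmetric), I may assume that $X_1\cup\{v_1,\ldots,v_p\}\subseteq Y_1$. Consequently $Y_2\subseteq X_2$, the set $\{v_1,\ldots,v_p\}$ is disjoint from $Y_2$, and $X_1\subseteq Y_1$.

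The next step is to transport $(Y_1,Y_2)$ down to a $2$\dash separation of $M_B[X_1\cup X_2]$. Since $Z\subseteq Z'$ implies that $M_B[Z]$ is a minor of $M_B[Z']$ obtained by deleting and contracting the elements of $Z'\setminus Z$, the matroid $M_B[X_1\cup X_2]$ is a minor of $N$ obtained by deleting and contracting the elements $v_1,\ldots,v_p$, all of which lie on the $Y_1$\dash side of the partition $(Y_1,Y_2)$. Deleting or contracting elements from a single side of a partition does not increase its connectivity (a routine consequence of submodularity of the rank function), so, writing $W := Y_1\cap(X_1\cup X_2) = (X_1\cup X_2)\setminus Y_2$, the connectivity $\lambda_B(W,Y_2)$ of the partition $(W,Y_2)$ of $X_1\cup X_2$ is at most the connectivity of $(Y_1,Y_2)$ in $N$, which is less than $2$. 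As $|Y_2|\geq 2$ and $|W|\geq|X_1|\geq 2$, the pair $(W,Y_2)$ is therefore a $2$\dash separation of $M_B[X_1\cup X_2]$. By the uniqueness hypothesis, $(W,Y_2)=(X_1,X_2)$, and since $X_1\subseteq W$ this forces $W=X_1$ and $Y_2=X_2$; hence $Y_1=X_1\cup\{v_1,\ldots,v_p\}$.

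It then remains to contradict the fact that $(Y_1,Y_2)=(X_1\cup\{v_1,\ldots,v_p\},X_2)$ has connectivity less than $2$ in $N$. By condition~(\ref{bls:last}) of Definition~\ref{def:blockseq} (applied with $k=2$), the final term $v_p$ of the blocking sequence satisfies $\lambda_B(X_1\cup v_p,X_2)=2$; here $\lambda_B(X_1\cup v_p,X_2)$ is the connectivity of $(X_1\cup v_p,X_2)$ in $M_B[X_1\cup X_2\cup v_p]$. But $M_B[X_1\cup X_2\cup v_p]$ is a minor of $N$ obtained by deleting and contracting $v_1,\ldots,v_{p-1}$, all of which lie in $Y_1=X_1\cup\{v_1,\ldots,v_p\}$, so by the same monotonicity the connectivity of $(Y_1,Y_2)$ in $N$ is at least $\lambda_B(X_1\cup v_p,X_2)=2$. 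This contradicts $(Y_1,Y_2)$ being a $2$\dash separation of $N$, so $N$ is $3$\dash connected.

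I expect the only real work to be bookkeeping: confirming that the paper's $\lambda_B$ on a disjoint pair coincides with the connectivity function of the associated $M_B[\,\cdot\,]$ (as stated just after the definition of $\lambda_B$), verifying that deleting or contracting elements from one side of a partition cannot raise its connectivity, and checking that reversing a blocking sequence after swapping $X_1$ and $X_2$ is legitimate. One could alternatively mimic~\cite{GGK} and induct on $p$, peeling off $v_p$ via Proposition~\ref{GGK4.18}, but routing everything through Proposition~\ref{GGK4.17} as above avoids the induction.
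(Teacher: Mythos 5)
Your proof is correct. The paper gives no proof of this proposition (it is quoted from~\cite{GGK}), and your argument---apply Proposition~\ref{GGK4.17}, use the uniqueness hypothesis together with minor-monotonicity of the connectivity function to force $(Y_1,Y_2)=(X_1\cup\{v_1,\ldots,v_p\},X_2)$, and then contradict condition~(\ref{bls:last}) of Definition~\ref{def:blockseq}---is precisely the intended derivation; the routine facts you defer (symmetry of $\lambda_B$, reversibility of a blocking sequence after swapping $X_1$ and $X_2$, and non-increase of $\lambda$ when elements are deleted or contracted from one side of a partition) all check out.
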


\section{The reduction}\label{sec:reduction}

This section contains the core of the proof of Theorem~\ref{thm1}.
We reduce the proof to a finite case-analysis by showing that any
excluded minor for the class of near-regular matroids
has at most eight elements. This part of the proof follows the arguments
in~\cite{GGK} very closely. Deviations necessarily occur when the nature of $\GF(4)$ comes into play. This happens in the case $k = 0$ of Claim~\ref{GGK15} (which is~($15$) in~\cite{GGK}) and from Claim~\ref{GGK20} (which is~($20$) in~\cite{GGK}) to the end. All other differences are largely cosmetic: for example, rather than work with the bipartite graphs associated with matrices, we choose to work with the matrices themselves.

We denote the simplification or cosimplification of a
matroid $M$ by $\si(M)$ or $\co(M)$.
Suppose that the matroid $M$ has $E$ as its ground set
and \mcal{B} as its set of bases.
Let $B$ be a basis of $M$, and suppose that $x \in E$.
Then $\nigh_{B}(x)$ denotes the set of vertices of $G_{B}(M)$
that are adjacent to $x$.
Thus
\begin{displaymath}
\nigh_{B}(x) = \big\{y \in E\,\,\big|\,\,B \symdiff \{x,y\} \in \mcal{B}\big\}.
\end{displaymath}

\begin{thm}\label{thm:finitesize}Let $M$ be an excluded minor for the class of near-regular matroids other than $\agde$ or  $(\agde)^*$.
Then $\rank(M) \leq 4$ and $\corank(M) \leq 4$.
\end{thm}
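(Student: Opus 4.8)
The plan is to assume, for contradiction, that $M$ is an excluded minor for near-regularity, distinct from $\agde$ and $(\agde)^*$, with $\rank(M) \geq 5$ or $\corank(M) \geq 5$. By Lemma~\ref{lem:duality} we may replace $M$ by its dual if necessary; together with Lemma~\ref{lem:delpair}, this lets us assume $M$ has a deletion pair $u, v$, so that $M \del u$, $M \del v$, and $M \del \{u,v\}$ are all stable, and $M \del \{u,v\}$ is connected and nonbinary. Since $M$ is an excluded minor, $M \del u$ and $M \del v$ are near-regular, and by Lemma~\ref{lem:ternary} (after dispatching the sporadic small excluded minors, which have rank and corank at most $3$) we may assume $M$ is ternary. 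Lemma~\ref{GGK2.2} then produces the companion matrix $A$ over $\qalpha$, unique up to scaling and $\nreg$-automorphisms, with $M[I\,|\,A-u] = M \del u$, $M[I\,|\,A-v] = M \del v$, and $A-u$, $A-v$ near-unimodular; Lemma~\ref{lem:ternaryhom} gives $M = M[I\,|\,\phi(A)]$ for the unique homomorphism $\phi\colon\nreg\to\GF(3)$.

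\textbf{The core mechanism.} Let $N := M[I\,|\,A]$ be the $\qalpha$-represented companion matroid. By Lemma~\ref{lem:notQarep}, since $M$ is an excluded minor for near-regularity but representable over $\GF(3)$ and $\GF(4)$ (it has no $\GF(4)$-forbidden minor — this must be checked, invoking Theorem~\ref{thm4} and the hypothesis that $M$ is not one of the $\GF(4)$-excluded minors, which are all small), $M$ is \emph{not} $\qalpha$-representable, so $M \neq N$. The strategy, following~\cite{GGK}, is to locate a small minor $M' = M \con X' \del Y'$ (with $X' \subseteq X$, $Y' \subseteq Y \setminus \{u,v\}$) for which $u, v$ is still a deletion pair, $M' \del \{u,v\}$ is connected and nonbinary, and for which we can exhibit that $M' \neq M[I\,|\,A - (X' \cup Y')]$. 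Lemma~\ref{GGK2.3} then forces $M'$ to be non-near-regular, hence (being a minor of the excluded minor $M$) equal to $M$ itself, bounding $|E(M)|$. The bulk of the argument — the sequence of Claims mirroring ($15$)--($20$) in~\cite{GGK} and beyond — is the combinatorial construction of such an $M'$: one tracks the bipartite graph $\bip(A)$, uses blocking sequences (Propositions~\ref{lem:blockseq}, \ref{GGK4.15}--\ref{GGK4.19}) to restore $3$-connectivity after contractions and deletions, and uses twirls (Lemma~\ref{GGK4.3}, Lemma~\ref{GGK4.4}, Proposition~\ref{GGK4.5}) to witness nonbinariness of $M' \del \{u,v\}$. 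The place where ternariness is genuinely exploited, rather than just the $\GF(4)$-structure of the original argument, is the case $k=0$ of the analogue of ($15$) and everything from ($20$) onward; here one uses that $\phi(A)$ is a $\GF(3)$-matrix and that the fundamental elements of $\nreg$ (Proposition~\ref{prop15}) and their automorphism orbit (Proposition~\ref{prop16}) are tightly constrained, so that a discrepancy between $M'$ and $N'$ in $\GF(3)$ can be read off from a small submatrix.

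\textbf{Carrying it out.} First I would handle the degenerate cases: if $M$ or $M^*$ is isomorphic to one of $U_{2,5}, U_{3,5}, F_7, F_7^*, F_7^-, \nfd, P_8$, or $\Delta_T(\agde)$, then $\rank(M), \corank(M) \leq 4$ is immediate by inspection, so I may assume $M$ is ternary and $3$-connected by Lemmas~\ref{lem:ternary} and~\ref{lem:3c}, with a deletion pair in hand for $M$ or $M^*$. Next, set up $A$, $N$, and $\phi$, and establish $M \neq N$ via Lemma~\ref{lem:notQarep}. Then comes the long descent: choose $X', Y'$ minimizing $|X' \cup Y'|$ subject to $u,v$ remaining a deletion pair of $M' = M_{X}[\,\cdot\,]$ and $M' \neq M[I\,|\,A-(X'\cup Y')]$ — such a pair exists because $X' = Y' = \emptyset$ already works once we know $M \neq N$, modulo verifying $u,v$ is still a deletion pair for $M$ itself, which it is. The content is showing $|X' \cup Y'|$ is bounded: if it were large, some element of $X'$ or $Y'$ could be removed from the contraction/deletion set while preserving both conditions, contradicting minimality — this is where connectivity is repaired via blocking sequences and where the twirl lemmas guarantee the nonbinary hypothesis survives. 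Each removal argument splits on whether the element lies in $X$ or $Y$ and on the structure of $\bip(A-(X'\cup Y'))$ near it. I expect the main obstacle to be exactly this minimality/removal analysis — verifying that in each configuration one can excise an element without destroying connectivity of $M' \del \{u,v\}$, stability of $M' \del u$ and $M' \del v$, or the inequality $M' \neq N'$ — and in particular the ternary-specific cases (the $k=0$ subcase of the ($15$)-analogue and the ($20$)-onward arguments), where the $\GF(4)$ reasoning of~\cite{GGK} has no direct counterpart and one must instead argue directly with $\GF(3)$-matrices and the fundamental-element structure of $\nreg$. Once $|X' \cup Y'|$ is bounded, Lemma~\ref{GGK2.3} gives $M' = M$, and the bound on $|E(M')| = |E(M)|$ translates into $\rank(M) \leq 4$ and $\corank(M) \leq 4$ after one final appeal to duality.
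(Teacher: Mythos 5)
Your outline reproduces the paper's scaffolding faithfully — the deletion pair from Lemma~\ref{lem:delpair}, the companion matrix $A$ of Lemma~\ref{GGK2.2}, the matroid $N=M[I|A]$ with $M\neq N$ via Lemma~\ref{lem:notQarep}, $\GF(4)$\dash representability via Theorem~\ref{thm4}, and Lemma~\ref{GGK2.3} as the device that turns a certified discrepancy $M'\neq N'$ into $M'=M$ — but the proof itself is almost entirely deferred, and the mechanism you propose in its place is not the one that works. You suggest choosing $X',Y'$ minimal such that $u,v$ is a deletion pair for $M'$ and $M'\neq N'$, and then arguing that a large $X'\cup Y'$ would allow one element to be excised while preserving all conditions. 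The paper does not run any such generic removal induction, and it is far from clear that one can: preserving simultaneously the stability of $M'\del u$, $M'\del v$, $M'\del\{u,v\}$, the connectivity and nonbinariness of $M'\del\{u,v\}$, \emph{and} the inequality $M'\neq N'$ under deletion of a single element is exactly the hard part, and it is controlled in the paper by a quite different bookkeeping: a distinguishing set $\{a,b,u,v\}$ (symmetric difference of a basis $B$ of $M\del\{u,v\}$ with a set that is a basis of exactly one of $M$ and $N$), the restriction to \emph{allowable} pivots that keep this set distinguishing, a $4$\dash element twirl $C$, and three nested extremal choices (maximize $|\co(M\del\{u,v\})|$, lexicographically minimize $(|C|,d(a,C),d(b,C))$, minimize the blocking-sequence length $p$). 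The contradictions then come from Claim~\ref{GGK7} (a proper subset $Z$ with the required properties forces $Z=E$), from violations of those extremal choices, or, in the endgame, from pinning $\phi(A)$ down to explicit $\GF(3)$ matrices that exhibit an $F_7^-$ minor (contradicting $\GF(4)$\dash representability) or an \agde\ minor (forcing $M\cong\agde$). None of this case analysis — roughly twenty claims, and essentially all of the content of the theorem — appears in your proposal.

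Two further points are symptomatic of the gap. First, you invoke the hypothesis that $M$ is not \agde\ or $(\agde)^*$ in the setup but never indicate where it is needed; it is needed precisely at the very end, where one of the residual configurations yields $M\cong\agde$, so a proof plan that never reaches explicit matrices cannot use it. Second, the places you correctly flag as requiring new, ternary-specific arguments (the $k=0$ case of the analogue of (15) and everything from (20) on) are described only by naming the tools (fundamental elements of $\nreg$, Proposition~\ref{prop16}, the $\GF(3)$ image $\phi(A)$); in the paper these passages involve concrete normalizations, Proposition~\ref{autofixed}, and determinant computations in small submatrices, and they cannot be recovered from the sketch. So the proposal is a correct statement of the strategy but not a proof, and its proposed minimality-and-removal framework would need to be replaced by the paper's distinguishing-set/allowable-pivot/extremal-choice machinery to be carried out.
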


\begin{proof}
  Suppose the theorem is false. Let $M$ be an excluded minor for the class of near-regular matroids on the ground set $E$, such that $\rank(M) > 4$ or $\corank(M)> 4$, and suppose that
$M$ is isomorphic to neither $\agde$ nor $(\agde)^*$.
Lemmas~\ref{lem:ternary} and~\ref{lem:3c} imply that $M$ is ternary and
$3$\dash connected.
If $M$ is not $\GF(4)$\dash representable, then it is an excluded minor
for $\GF(4)$\dash representability.
But none of the matroids in Theorem~\ref{thm4} is a counterexample
to Theorem~\ref{thm:finitesize}, so this is a contradiction.
Thus $M$ is also $\GF(4)$\dash representable.

Lemma~\ref{lem:delpair} says that for some $M' \in \{M,M^*\}$,
there is a deletion pair $u$, $v$ of $M'$, and that
$M'\del \{u,v\}$ contains a $3$\dash connected nonbinary minor
of size at least $|E|-4$.
  \begin{assumption}\label{GGK1}
    $M'$, $u$, and $v$ have been chosen so that
$|\co(M'\del \{u,v\})|$ is as large as possible.
  \end{assumption}
Lemma~\ref{lem:duality} implies that $M$ is a counterexample
to the theorem if and only if $M^{*}$ is, so henceforth
we relabel $M'$ with $M$.

Since $\{u,v\}$ is coindependent, there is a basis $B$ of
$M$ that contains neither $u$ nor $v$.
Define $Y := E\setminus B$. Lemma~\ref{GGK2.2} supplies
a $B\times Y$ \qalpha\dash matrix $A$ with entries in $\nreg$
such that
$M[I\mid A-u] = M \del u$, $M[I \mid A-v] = M \del v$, and
both $A-u$ and $A-v$ are near-unimodular.
Let $N$ be the matroid represented over \qalpha\ by $A$.
Thus $N \del u = M \del u$ and $N \del v = M \del v$.
Since $M$ is representable over both $\GF(3)$ and $\GF(4)$,
Lemma~\ref{lem:notQarep} implies that $M$ is not
\qalpha\dash representable.
Hence $M \neq N$.
There is a set $B'$ that is a basis in exactly one of $M$ and $N$,
and such a basis must contain $\{u,v\}$.
By extending $B' \setminus \{u,v\}$ to a basis of $M\del \{u,v\}$
we see that the following claim holds.
\begin{claim}
\label{GGK2}
    Let $B'$ be a set that is a basis in exactly one of $M$ and $N$.
There is a basis $B''$ of $M\del \{u,v\} = N\del \{u,v\}$ such that
$B'\setminus B'' = \{u,v\}$.
\end{claim}
Let $B'$ and $B''$ be as in Claim~\ref{GGK2}.
By Lemma~\ref{lem:pivot}, we can pivot, and assume that
$A$ is a $B'' \times (E \setminus B'')$ matrix.
Henceforth we relabel $B''$ with $B$ and $E \setminus B'$
with $Y$.
Note that, although $M \neq M[I|A]$, the fact that
$M\del u$ and $M \del v$ are represented by
$A - u$ and $A - v$ respectively means that $G_{B}(M) = \bip(A)$.

If $B_{1}$ is a basis of $M\del \{u,v\} = N\del\{u,v\}$,
and $B_{2}$ is a basis of exactly one of $M$ and $N$, then
we say that $B_{1}\symdiff B_{2}$ is a \emph{distinguishing set}
with respect to $B_{1}$.
Define $\{a,b\} := B\setminus B'$.
Then $\{a,b,u,v\}$ is a distinguishing set with respect to $B$.
  \begin{claim}\label{cl:distsetindependent}
    $B'$ is a basis of $N$.
  \end{claim}
  \begin{subproof}
Suppose that the claim is false.
Then the determinant of $A[\{a,b,u,v\}]$, evaluated over
$\qalpha$, is equal to zero.
Let $\phi$ be the unique homomorphism from $\nreg$ to $\GF(3)$.
Proposition~\ref{prop:hom} implies that the determinant of
$\phi(A)[\{a,b,u,v\}]$, evaluated over $\GF(3)$, is also zero.
Thus $B'$ is not a basis of $M[I|\phi(A)]$.
But Lemma~\ref{lem:ternaryhom} says that $M[I|\phi(A)]$ is
$M$, so we have a contradiction.
  \end{subproof}

  \begin{claim}\label{cl:distsetcycle}
    $\bip(A[\{a,b,u,v\}])$ is a cycle.
  \end{claim}
  \begin{subproof}
Suppose that the claim fails.
Then there is some zero entry in $A[\{a,b,u,v\}]$, and
hence in $\phi(A)[\{a,b,u,v\}]$.
Since $B'$ is not a basis of $M$, the determinant of
$\phi(A)[\{a,b,u,v\}]$ evaluated over $\GF(3)$ must be zero.
This implies that $\phi(A)[\{a,b,u,v\}]$ must contain a zero
row or column.
However, as $\phi$ takes no nonzero element to zero, this implies that
$A[\{a,b,u,v\}]$ has a zero row or column, which is a contradiction
as $B'$ is a basis of $N$.
  \end{subproof}

The remainder of the proof consists of refining the choices of $u$, $v$, $B$, $a$, and $b$, always relabeling as necessary so that $\{a,b,u,v\}$ remains a distinguishing set. For that, we need to restrict our pivots. A pivot over $xy$, where $x \in B$ and $y \in Y\setminus \{u,v\}$, is \emph{allowable} if
  \begin{enumerate}
    \item $x \in \{a,b\}$;
    \item $A_{ay} = A_{by} = 0$; or
    \item $A_{xu} = A_{xv} = 0$.
  \end{enumerate}
  In the first case, $\{a,b,u,v\}\symdiff\{x,y\}$ is a distinguishing set with respect to $B\symdiff \{x,y\}$. This is obvious, since $(B\symdiff \{x,y\}) \symdiff (\{a,b,u,v\}\symdiff\{x,y\}) = B \symdiff \{a,b,u,v\} = B'$.
Suppose that $A_{ay} = A_{by}=0$.
Then the determinant of $A[\{a,b,u,v,x,y,\}]$ evaluated over \qalpha,
is equal to $A_{xy}$ times the determinant of $A[\{a,b,u,v\}]$, which is
nonzero as $B'$ is a basis of $N$.
It follows that $B' \symdiff \{x,y\}$ is a basis of $N$.
On the other hand, the determinant of
$\phi(A)[\{a,b,u,v,x,y\}]$ evaluated over $\GF(3)$, is
equal to $\phi(A)_{xy}$ times the determinant of
$\phi(A)[\{a,b,u,v\}]$, which is zero.
Thus $B' \symdiff \{x,y\}$ is not a basis of
$M[I|\phi(A)] = M$.
Therefore
\begin{displaymath}
(B \symdiff \{x,y\}) \symdiff (B' \symdiff \{x,y\}) = \{a,b,u,v\}
\end{displaymath}
is a distinguishing set with respect to the basis
$B \symdiff \{x,y\}$.
A similar argument shows that if $A_{xu} = A_{xv}=0$, then
$\{a,b,u,v\}$ is a distinguishing set with respect to
$B \symdiff \{x,y\}$.

Since $M\del\{u,v\} = N\del\{u,v\}$ is nonbinary, there is some
$C \subseteq B \cup (Y\setminus\{u,v\})$ such that
$A[C]$ is a twirl, by Lemma~\ref{GGK4.3}.
If $x$ is a vertex of $\bip(A-\{u,v\})$, then
$d(x,C)$ denotes the length of a shortest (possibly empty)
path in $\bip(A-\{u,v\})$ that joins $x$ to a vertex in $C$.
  \begin{assumption}\label{GGK3}
    Subject to~{\rm\ref{GGK1}}, we choose $u$, $v$, $B$, $a$, $b$, and
$C$ so that $(|C|, d(a,C), d(b,C))$ is lexicographically minimal.
  \end{assumption}

  \begin{claim}\label{GGK4}
  If $x \in E \setminus C$, then
$|\nigh_{B}(x) \cap C| \leq 2$.
If $a \notin C$, then $|\nigh_{B}(a) \cap C| \leq 1$.
If $b \notin C$ and $|\nigh_{B}(b) \cap C| = 2$, then
$a \in C$.
  \end{claim}
  \begin{subproof}
  Suppose that $x \in E \setminus C$ and that
$|\nigh_{B}(x) \cap C| \geq 2$.
Lemma~\ref{GGK4.4} implies that we can find a
twirl $C'$ in $C \cup x$.
If $|\nigh_{B}(x) \cap C| \geq 3$, then
$|C'| < |C|$, and this contradicts~\ref{GGK3}, so
$|\nigh_{B}(x) \cap C| =2$ and $|C'| = |C|$.
Now we suppose that $x = a$.
Then $0= d(a,C') < d(a,C)$, and we have
a contradiction to~\ref{GGK3} that proves the second statement.
Finally, if $x = b$, then
$d(a,C') \leq d(b,C') = 0$, and this proves the third statement.
  \end{subproof}

  \begin{claim}\label{GGK5}
    $|C| = 4$.
  \end{claim}
  \begin{subproof}
    Suppose $|C| \geq 6$, and let $x,y \in C$ be such that $A_{xy} \neq 0$. A pivot over $xy$ is not allowable, because otherwise, by Proposition~\ref{GGK4.5}, a shorter twirl can be found, contradicting~\ref{GGK3}. It follows that $\{a,b\}\cap C = \emptyset$. Therefore Claim~\ref{GGK4} implies that
    \begin{displaymath}
|\nigh(a) \cap C|,\, |\nigh(b)\cap C| \leq 1.
    \end{displaymath}
Hence there is an edge $xy$ in $A[C]$ such that neither
$x$ nor $y$ is adjacent to either $a$ or $b$.
Thus the pivot on $xy$ is allowable, and we have a contradiction
that proves the claim.
  \end{subproof}

Now we split the proof into three different cases:
  \begin{enumerate}
\item $a,b \in C$;
\item $a \in C$ and $b \notin C$; and
\item $a,b \notin C$.
  \end{enumerate}
By using Claim~\ref{GGK4}, and by scaling $A$, we can assume that
in cases~(\emph{i}),~(\emph{ii}), and~(\emph{iii}) (respectively),
$A[C\cup \{a,b,u,v\}]$ is equal to $A_{1}$, $A_{2}$, or
$A_{3}$ (respectively), where these matrices are shown in Table~\ref{tab1}.
Here elements in $C\setminus\{a,b,u,v\}$ are labeled
with elements from $\{1,2,3,4\}$.
A star marks an unknown entry (possibly equal to zero);
entries labelled by $g$, $q$, and $r$ are not equal to $0$ or $1$.
\begin{table}[htb]
  \begin{align*}
    A_1 = \kbordermatrix{ & 1 & 2 & u & v\\
                          a & 1 & 1 & 1 & 1\\
                          b & q & r & 1 & g
                          }\\
    A_2 = \kbordermatrix{ & 1 & 2 & u & v\\
                          3 & q & 1 & * & *\\
                          a & 1 & 1 & 1 & 1\\
                          b & * & * & 1 & g
                          }\\
    A_3 = \kbordermatrix{ & 3 & 1 & u & v\\
                          4 & q & 1 & * & *\\
                          2 & 1 & 1 & * & *\\
                          a & 0 & * & 1 & 1\\
                          b & * & * & 1 & g
                          }
  \end{align*}
  \caption{$A[C \cup \{a,b,u,v\}]$ is one of these matrices.}
  \label{tab1}
\end{table}
  In the remainder of the proof we deal with these cases one by one. Most of the work will be in the second case, which we will save for last.
  \begin{claim}\label{GGK9}
    If $A_{ay} \neq 0$ and $A_{by} \neq 0$ for some $y \in Y\setminus\{u,v\}$ then $A_{by}/A_{ay} \not\in\{1,g\}$.
  \end{claim}
  \begin{subproof}
Suppose that the claim fails.
Then, after pivoting on $ay$, and relabeling $y$ with $a$,
we see that $A[\{a,b,u,v\}]$ contains a zero entry.
But pivoting on $ay$ is allowable, so $\{a,b,u,v\}$ remains a
distinguishing set.
Now we can deduce a contradiction to
Claim~\ref{cl:distsetcycle}.
  \end{subproof}

We dispose of the first case very easily.

  \begin{claim}
    $b \not \in C$.
  \end{claim}
  \begin{subproof}
Suppose otherwise, so that $a,b \in C$, and $A[\{a,b,1,2,u,v\}] =A_{1}$.
Claim~\ref{GGK9} implies that $r \notin \{1,g\}$, and
$r\notin \{0,q\}$ as $A[\{a,b,1,2\}]$ is a twirl.
It follows that
$M[I| A[\{a,b,1,2,u\}]] \cong U_{2,5}$, which contradicts
the fact that $M\del v$ is ternary.
  \end{subproof}

Note that if $\{u,v\}\subseteq Z \subseteq E$, then $\{u,v\}$ is necessarily
coindependent in $M_{B}[Z]$, since neither $u$ nor $v$ is in
$B$.
Now the following result is an easy consequence of Lemma~\ref{GGK2.3}:
  \begin{claim}\label{GGK7}
    Let $Z \subseteq E$ be such that $\{u,v\} \subseteq Z$,
$M_B[Z]-u$, $M_B[Z]-v$, $M_B[Z]-\{u,v\}$ are stable, $M_B[Z]-\{u,v\}$ is connected and nonbinary, and $M_B[Z] \neq N_{B}[Z]$. Then $Z = E$.
  \end{claim}
  Now we dispense with the third case:
  \begin{claim}\label{GGK10}
    $a \in C$.
  \end{claim}
  \begin{subproof}
    Suppose this is false.  Let $Z := \{a,b,u,v,1,2,3,4\}$,
so $A[Z] = A_{3}$.
Our first step is to recover some connectivity.
    \begin{sclaim}\label{GGK6}
      $A_{a1} \neq 0$.
    \end{sclaim}
    \begin{subproof}
      Suppose otherwise. Then $d(a,C) > 1$. Since $M \del \{u,v\}$,
and hence $\bip(A-\{u,v\})$, is connected, there is a path from $a$ to
$C$ in $\bip(A-\{u,v\})$.
Let $x_1, \ldots, x_k$ be the internal vertices of a shortest path
from $a$ to $C$. Then $x_k$ has exactly one neighbor in $C$, because otherwise Lemma~\ref{GGK4.4} implies the existence of a twirl $A[C]'$, where $x_{k} \in C'$, and $C' \subseteq C \cup \{x_{k}\}$.
Then $|C'| = 4$, and $d(a,C')<d(a,C)$, contradicting~\ref{GGK3}. Let $x$ be the unique neighbor of $x_k$ in $C$. Let $y \in C$ be a neighbor of $x$ and let $z \in C$ be the other neighbor of $y$. Since $d(b,C) \geq d(a,C) > 1$, pivoting on $xy$ is allowable. But after this pivot, $x_k$ is adjacent to both $y$ and $z$, so we have reduced to a previous case and we can again derive a contradiction.
    \end{subproof}

    \begin{sclaim}\label{GGK10.1}
      $A_{a3} = A_{b3} = 0$.
    \end{sclaim}
    \begin{subproof}
We have already assumed that $A_{a3} = 0$, by virtue of Claim~\ref{GGK4}.
The same claim implies that $|\nigh(b)\cap C| \leq 1$.
Suppose $A_{b3} \neq 0$. Then $A_{b1} = 0$.
In this case $M_B[Z]-\{u,v\}$ is connected (since
$\bip(A[Z]-\{u,v\})$ is connected), nonbinary (because
it has a whirl-minor), and stable (since it is a $2$\dash element
coextension of a whirl).
By examining $\bip(A[Z]-\{u,2\})$ and using
Proposition~\ref{GGK4.11}, it is easy to see that $M_B[Z]- \{u,2\}$ is
$3$\dash connected. Thus $M_B[Z]-u$ is stable.
A similar argument shows that $M_B[Z]-v$ is stable.
Since $\{a,b,u,v\}$ is a distinguishing set, Claim~\ref{GGK7}
implies that $E = Z$, contradicting the assumption that
$\rank(M) \geq 5$ or $\corank(M) \geq 5$.
    \end{subproof}

    \begin{sclaim}\label{GGK10.2}
      $(\{a,b,1\}, \{2,3,4\})$ is an induced $2$\dash separation of $M_{B}-\{u,v\}$.
    \end{sclaim}
    \begin{subproof}
Proposition~\ref{GGK4.11} and Claim~\ref{GGK10.1} imply that $(\{a,b,1\},\{2,3,4\})$ is a $2$\dash separation of $M_{B}[Z]-\{u,v\}$. Suppose that it is not induced. Then there is a blocking sequence $v_1, \ldots, v_p$.
We will assume that, subject to~\ref{GGK1} and~\ref{GGK3},
$u$, $v$, $B$, $a$, $b$, and $C$ have been chosen so that $p$ is
as small as possible.

First suppose that $v_p$ labels a column of $A$.
By Definition~\ref{def:blockseq}, $(\{a,b,1,v_p\},\{2,3,4\})$ is not a $2$\dash separation in $M_{B}[(Z \setminus \{u,v\}) \cup v_p]$.
In the graph $\bip(A[Z]-\{u,v\})$, $1$ is the
only vertex in $\{a,b,1\}$ that is adjacent to a vertex in
$\{2,3,4\}$.
Thus it follows without
difficulty from Proposition~\ref{GGK4.12} that $v_p$ is adjacent to
either $2$ or $4$ in $\bip(A[(Z \setminus \{u,v\}) \cup v_p])$.
Now, by pivoting on either $A_{32}$ or $A_{34}$ (and relabeling),
we can assume $v_{p}$ is adjacent to both $2$ and $4$.
(Note that this pivot is allowable.)
Thus $(\{a,b,1,v_p\},\{2,3,4\})$ is a split, but not a
$2$\dash separation.
It follows from Proposition~\ref{GGK4.12} that $A[\{1,2,v_p,4\}]$ is a twirl. We can now replace $3$ with $v_p$. If $p = 1$ then $v_p$ is adjacent to $a$ or $b$, contradicting Claim~\ref{GGK10.1}. If $p > 1$, then by taking $Y'=\{2,4\}$, we see that Proposition~\ref{GGK4.16}~\eqref{it:4.16.i} implies $v_1, \ldots, v_{p-1}$ is a blocking sequence for $(\{a,b,1\},\{2,v_p,4\})$.
This contradicts our assumption of minimality, so we are done.

Now suppose $v_p$ labels a row. Again, $(\{a,b,1,v_p\},\{2,3,4\})$ is not a $2$\dash separation in $M_{B}[(Z \setminus \{u,v\})\cup v_p]$. Hence $A_{v_p3} \neq 0$. Using an allowable pivot if necessary we also have $A_{v_p1} \neq 0$. By Lemma~\ref{GGK4.4} either $A[\{v_p,1,2,3\}]$ or $A[\{v_p,1,3,4\}]$ is a twirl. By relabeling we may assume the latter holds. We now replace $2$ by $v_p$. Since $(\{a,b,1\},\{v_p,2,3,4\})$ is a $2$\dash separation, $p>1$. But Proposition~\ref{GGK4.16}~\eqref{it:4.16.i} implies $v_1, \ldots, v_{p-1}$ is a blocking sequence for $(\{a,b,1\},\{v_p,3,4\})$, and we again have a contradiction to minimality.
    \end{subproof}

    \begin{sclaim}\label{GGK10.3}
      $M_{B}-\{a,b,u,v\}$ is $3$\dash connected, and $1$ is the only neighbor of
$a$ and $b$ in $\bip(A-\{u,v\})$.
    \end{sclaim}
    \begin{subproof}
      By the previous claim, $M_{B}-\{u,v\}$ has a $2$\dash separation $(Z_1, Z_2)$ with $\{a,b,1\}\subseteq Z_1$. By our choice of $u$ and $v$,
$M\del\{u,v\}$ contains a $3$\dash connected minor on at least
$|E|-4$ elements.
This means that $Z_1$ is equal to $\{a,b,1\}$. Since $A[\{1\},\{2,4\}]$ is nonzero, it follows from Lemma~\ref{lem:matrixconn} that $A[\{a,b\},Y-\{1,u,v\}]$ must be the zero matrix.
Thus $a$ and $b$ can have no neighbor in $\bip(A-\{u,v\})$ other
than $1$.
However, $M_{B}-\{u,v\}$ is connected, so both $a$ and $b$ are adjacent
to $1$.
Now we see that $\co(M\del\{u,v\}) = M_{B}-\{u,v,a,b\}$, so we are done.
    \end{subproof}

    \begin{sclaim}\label{cl:inbetween103and104}
      $A_{2u}$ and $A_{2v}$ are not both equal to zero. Likewise, $A_{4u}$ and $A_{4v}$ are not both equal to zero.
    \end{sclaim}
    \begin{subproof}
      If $A_{2u} = A_{2v} = 0$, then a pivot over $12$ is allowable. But after performing this pivot, we see that $|\nigh(a)\cap C| = 2$, and this contradicts
Claim~\ref{GGK4}.
The same argument shows that either $A_{4u}$ or $A_{4v}$ is nonzero.
    \end{subproof}

    \begin{sclaim}\label{GGK10.4}
      If $b' \in \{a,b\}$ and $v' \in \{u,v\}$ then $M_{B}-\{b',v'\}$ is $3$\dash connected.
    \end{sclaim}
    \begin{subproof}
 Without loss of generality, we can assume that $b' = b$ and $v' = v$.
It follows from Claim~\ref{GGK10.3} that $(\{a,1\},E\setminus\{a,b,u,v,1\})$ is the unique $2$\dash separation of $M_{B}-\{b,u,v\}$. Since $A_{au} \neq 0$, it follows that $(\{a,1\},E\setminus\{a,b,v,1\})$ is not a $2$\dash separation in $M_{B}-\{b,v\}$. Suppose now, that $(\{a,u,1\},E\setminus \{a,b,u,v,1\})$ is a $2$\dash separation in $M_{B}-\{b,v\}$. Since the only neighbors of $b$ in $\bip(A)$ are
$u$, $v$, and $1$, we deduce that $(\{a,b,u,1\},E\setminus\{a,b,u,v,1\})$ is a $2$\dash separation in $M_{B}-v$. But Claim~\ref{GGK9} implies that $A[\{a,b,u,1\}]$ is a twirl. Since $A[\{1,2,3,4\}]$ is a twirl, this contradicts the fact that $M \del v$ is stable. Thus $(\{a,u,1\},E\setminus \{a,b,u,v,1\})$ is not
a $2$\dash separation of $M_{B}-\{b,v\}$, and it follows that $u$ is a blocking sequence for the $2$\dash separation $(\{a,1\},E\setminus\{a,b,u,v,1\})$.
Proposition~\ref{GGK4.19} implies that $M_{B}-\{b,v\}$ is $3$\dash connected,
as desired.
    \end{subproof}

    \begin{sclaim}\label{newclaim1}
    $M\con a = N \con a$, and $M \con b = N \con b$.
    \end{sclaim}
    \begin{subproof}
Claim~\ref{GGK10.4} says that $M_{B}-\{a,u\}$, and $M_{B}-\{a,v\}$
are $3$\dash connected, and therefore stable.
Since $M_{B} - \{a,b,u,v\}$ is $3$\dash connected by
Claim~\ref{GGK10.3}, and $b$ is adjacent to $1$ in $G(A-\{a,u,v\})$,
it follows that $M_{B} - \{a,u,v\}$ is connected and stable.
It is nonbinary since it contains a whirl-minor.
Now, if $M_{B}[E\setminus a] \neq N_{B}[E \setminus a]$,
then Claim~\ref{GGK7} implies that $E \setminus a = E$.
This contradiction shows that $M / a = N / a$.
The same argument shows that $M / b = N / b$.
    \end{subproof}

    \begin{sclaim}\label{newclaim2}
    $a$, $b$ is a deletion pair of $M^{*}$, and
$M^{*}\del \{a,b\}$ contains a $3$\dash connected
nonbinary minor on at least $|E|-4$ elements.
    \end{sclaim}
    \begin{subproof}
Certainly $\{a,b\}$ is independent in $M$.
Claim~\ref{GGK10.3} implies that $M \con \{a,b\} \del \{u,v\}$
is $3$\dash connected.
It follows that $M \con \{a,b\}$ is stable.
Similarly, Claim~\ref{GGK10.4} shows that $M \con a$ and $M \con b$
are stable.
Moreover, Claim~\ref{GGK10.3} asserts that both
$M\con a \del u$ and $M \con a \del v$ are $3$\dash connected.
Thus both $M\con \{a,b\}\del u$ and $M \con \{a,b\} \del v$,
and hence both $\bip(A-\{a,b,u\})$ and $\bip(A-\{a,b,v\})$,
are connected.
This means that $\bip(A-\{a,b\})$ is connected, and therefore
so is $M \con \{a,b\}$.
Clearly $M/\{a,b\}$ is nonbinary, for $A[\{1,2,3,4\}]$
is a twirl.
The second part of the claim follows because $M \con \{a,b\} \del \{u,v\}$
is $3$\dash connected.
This completes the proof.    
    \end{subproof}

Claim~\ref{GGK10.3} implies that $\{a,b,1\}$ is a series class
in $M \del \{u,v\}$, and that $M \con \{a,b\} \del \{u,v\}$
is $3$\dash connected.
Therefore $\co(M\del \{u,v\}) \cong M \con \{a,b\} \del\{u,v\}$,
so $|E(\co(M \del \{u,v\}))| = |E|-4$.
Now~\ref{GGK1} implies that
$|E(\si(M\con \{a,b\}))| \leq |E|-4$.
The fact that $M \con \{a,b\} \del\{u,v\}$ is
$3$\dash connected implies that $u$ and $v$ are
either loops or in parallel pairs in $M \con \{a,b\}$,
and that $|E(\si(M\con \{a,b\}))| = |E|-4$.
Now we choose $M^{*}$ instead of $M$, and
$a$ and $b$ instead of $u$ and $v$.
The arguments of this paragraph show that~\ref{GGK1} is
still satisfied.

If we let $B_{0} = E \setminus B$, then $B_{0}$ is a basis of
$M^{*}$ that avoids $\{a,b\}$, and
$(B_{0}\setminus \{u,v\}) \cup \{a,b\} = E \setminus B'$ is a basis
of $N^{*}$, but not of $M^{*}$.
Now $A^{T}$, the transpose of $A$, is a $B_{0} \times (E \setminus B_{0})$
\qalpha\dash matrix that represents $N^{*}$.
Claim~\ref{newclaim1} shows that $A^{T} - a$ and
$A^{T} - b$ represent $M^{*} \del a = N^{*} \del a$
and $M^{*} \del b = N^{*} \del b$ respectively.
Moreover, $\bip(A^{T})$ is equal to $\bip(A)$, so
$A^{T}[C]$ is a twirl.
We substitute $u$ and $v$ for $a$ and
$b$, and $B_{0}$ for $B$.
The arguments above show that~\ref{GGK3} is still satisfied.
Thus we can repeat the arguments of Claim~\ref{GGK10.1}
and show that either $A^{T}_{u2} = A^{T}_{v2} = 0$,
or $A^{T}_{u4} = A^{T}_{v4} = 0$.
But this contradicts Claim~\ref{cl:inbetween103and104},
and completes the proof of Claim~\ref{GGK10}.
  \end{subproof}

  The remainder of the proof deals with the second case, in which
$A[C\cup \{a,b,u,v\}]= A_2$. Let $x_0, \ldots, x_{k+1}$ be the vertices of a shortest path from $b$ to $C$ in $\bip(A-\{u,v\})$, with $x_0 = b$ and $x_{k+1} \in C$.
  \begin{claim}\label{GGK11}
    $d(b,C) = k+1$ is odd.
  \end{claim}
  \begin{subproof}
    Suppose not. Then $x_k$ labels a column of $A$. Assume first that $A_{ax_k} \neq 0$. By pivoting over $a1$, if necessary, we may assume that $A_{3x_k} \neq 0$ as well. But then Lemma~\ref{GGK4.4} implies that $A[\{1,2,3,a,x_k\}]$ contains a twirl using $x_k$. This contradicts the minimality of $d(b,C)$ in~\ref{GGK3}. Therefore $A_{ax_k} = 0$, and hence $A_{3x_k} \neq 0$. Let $Z := \{u,v,a,1,2,3,x_0,\ldots,x_k\}$.

Note that $\bip(A[Z]-\{u,v,1\})$ is a path with $x_{0} = b$
and $a$ as its end vertices.
Since $A[\{a,b\},\{u,v\}]$ contains no zero entries, it follows that
$\bip(A[Z]- \{u,1\})$ and $\bip(A[Z]- \{v,1\})$
both contain a spanning cycle.
Moreover, it is not difficult to see that
neither of these graphs contains a split.
Proposition~\ref{GGK4.11} implies that
$M_{B}[Z]-\{u,1\}$ and $M_{B}[Z]-\{v,1\}$ are $3$\dash connected.
Hence $M_{B}[Z]-u$ and $M_{B}[Z]-v$ are both stable.
Furthermore, $M_{B}[Z]-\{u,v\}$ is clearly connected,
and nonbinary, as it contains $M_{B}[C]$ as a minor.
Since $\bip(A[Z]- \{u,v\})$ contains
a single cycle, namely $\bip(A[C])$, and $\bip(A[Z]- \{u,v,1\})$
is a path, it follows that by repeatedly
simplifying and cosimplifying $M_{B}[Z]-\{u,v\}$, we
eventually reduce to a whirl.
This implies that $M_{B}[Z]-\{u,v\}$ is stable.
As $Z$ contains a distinguishing set, Claim~\ref{GGK7} now
implies that $Z = E$.

We wish to prove that $u$, $1$ is a deletion pair of
$M$.
Certainly $\{u,1\}$ is coindependent in $M$.
We have already proved that $M\del \{u,1\}$ is $3$\dash connected.
Therefore $M \del \{u,1\}$, $M\del u$, and $M \del 1$ are all
stable.
It remains to show that $M\del \{u,1\}$ is nonbinary.
We noted that $\bip(A[Z]-\{u,1\}) = \bip(A-\{u,1\})$ contains a
spanning cycle.
Thus there is an induced cycle $C'$ in
$\bip(A-\{u,1\})$ that contains the edge $bv$.
We can assume that $A$ has been scaled in such a way that
$A_{e} = 1$ for every edge $e \in C'$ other than $bv$.
(Note that this is compatible with our assumption that
$A[C\cup \{a,b,u,v\}]=A_{2}$.)
Now $A_{bv} = g$ is not equal to one, for
$A[\{a,b,u,v\}]$ has nonzero determinant over \qalpha.
Suppose that $g = -1$.
Then $\phi(A)[\{a,b,u,v\}]$ has nonzero determinant
over $\GF(3)$.
But this implies that $B'$
is a basis of $M = M[I|\phi(A)]$, and this is a contradiction.
Therefore $g$ is neither $1$ nor $-1$, so
$A[C']$ has nonzero determinant.
It follows that $M_{B}[C'] = N_{B}[C']$ is a whirl.
Thus $M \del \{u,1\}$ is nonbinary, as desired.

We have shown that $u$, $1$ is a deletion pair.
Moreover, $M \del \{u,1\}$ is $3$\dash connected, so
$M \del \{u,1\}$ certainly contains a $3$\dash connected
nonbinary minor on at least $|E|-4$ elements.
But $d(b,C) > 1$, so $b$ is a degree-one vertex of $\bip(A-\{u,v\})$, and
hence $M \del \{u,v\}$ is not $3$\dash connected.
Thus
\begin{displaymath}
|E(\co(M\del \{u,1\}))| > |E(\co(M\del \{u,v\}))|,
\end{displaymath}
and we have a contradiction to~\ref{GGK1}.
This completes the proof of Claim~\ref{GGK11}.
  \end{subproof}

  It follows from Claim~\ref{GGK11} that $x_k$ labels a row, and hence either $A_{x_k1}\neq 0$ or $A_{x_k2} \neq 0$. By pivoting over $a1$ or $a2$ as needed, we assume that both are nonzero. If $k > 2$, then the pivot over $x_2x_3$ is
allowable, and such a pivot reduces $d(b,C)$, contradicting~\ref{GGK3}. Thus $k\in \{0,2\}$. Likewise, $A[\{a,1,2,x_k\}]$ is not a twirl, because otherwise replacing $3$ by $x_k$ would reduce $d(b,C)$. It follows that, by scaling, we can assume that
$A[\{a,1,2,3,x_0,\ldots,x_k,u,v\}]$ is one of the following matrices:
\begin{displaymath}
    \kbordermatrix{ & 1 & 2 & u & v\\
                  3 & q & 1 & * & *\\
                  a & 1 & 1 & 1 & 1\\
            x_0 = b & r & r & 1 & g}, \qquad
    \kbordermatrix{ & 1 & 2 & x_1 & u & v\\
                  3 & q & 1 & 0   & * & *\\
                  a & 1 & 1 & 0   & 1 & 1\\
                x_2 & 1 & 1 & 1   & * & *\\
            x_0 = b & 0 & 0 & r   & 1 & g}.
\end{displaymath}
  \begin{claim}\label{GGK12}
    For $w \in \{u,v\}$, $M_{B}[\{w,a,1,2,3,x_0, \ldots, x_k\}]$ is $3$\dash connected if and only if $A_{3w} \neq 0$. Furthermore, if $A_{3w} = 0$, then $(\{w,a,x_0,\ldots,x_k\},\{1,2,3\})$ is the unique $2$\dash separation of $M_{B}[\{w,a,1,2,3,x_0,\ldots,x_k\}]$.
  \end{claim}
  \begin{subproof}
    Let $Z := \{w,a,1,2,3,x_0,\ldots,x_k\}$. Clearly $r \neq 0$.
It is easy to see that if $\bip(A[Z\setminus\{2,3\}])$ contains a
split, then $k = 0$.
Claim~\ref{GGK9} implies that $r \not \in \{1,g\}$.
Now it follows from Propositions~\ref{GGK4.11} and~\ref{GGK4.12}
that $M_{B}[Z\setminus\{2,3\}]$ is $3$\dash connected. Hence $(\{w,a,x_0,\ldots,x_k\},\{1,2\})$ is the unique $2$\dash separation in $M_{B}[Z]- 3$. Now $A[\{a,1,2,3\}]$ is a twirl, so Proposition~\ref{GGK4.12} implies that $(\{3,w,a,x_0,\ldots,x_k\},\{1,2\})$ is not a $2$\dash separation in $M_{B}[Z]$.
Moreover $(\{w,a,x_0,\ldots,x_k\},\{1,2,3\})$ is a $2$\dash separation of $M_{B}[Z]$ if and only if $A_{3w} = 0$. If $A_{3w} \neq 0$ then $3$ is a blocking sequence for $(\{w,a,x_0,\ldots,x_k\},\{1,2\})$, and Proposition~\ref{GGK4.19} implies that $M_{B}[Z]$ is $3$\dash connected. If $A_{3w} = 0$ then it follows without difficulty from Proposition~\ref{GGK4.18} that $(\{w,a,x_0,\ldots,x_k\},\{1,2,3\})$ is the unique $2$\dash separation of $M_{B}[Z]$.
  \end{subproof}

  \begin{claim}\label{GGK13}
    We may assume $A_{3v} \neq 0$.
  \end{claim}
  \begin{subproof}
    Suppose $A_{3v} = A_{3u} = 0$ (if $A_{3u} \neq 0$ then we may swap $u$ and $v$). Then a pivot over $3x$ is allowable for all $x$ such that $A_{3x} \neq 0$. Claim~\ref{GGK12} implies that
    \begin{equation}\label{eqn3}
(\{v,a,x_0,\ldots,x_k\},\{1,2,3\})      
    \end{equation}
is the unique $2$\dash separation in $M_{B}[\{v,a,1,2,3,x_0,\ldots,x_k\}]$. If $k=0$ then the $2$\dash separation in~\eqref{eqn3} is not an induced separation of $M_{B}-u$, because $M \del u$ is stable, and $A[\{a,1,2,3\}]$ and $A[\{v,a,b,1\}]$ are twirls (since $r \notin \{0,1,g\}$). Now suppose that $k = 2$, and that the $2$\dash separation in~\eqref{eqn3} is induced in $M_{B}-u$.
Our choice of $u$ and $v$ implies that
$M_{B}-\{u,v\}$ contains a $3$\dash connected nonbinary minor of size at
least $|E|-4$.
It follows that $(E-\{u,1,2,3\},\{1,2,3\})$ must be a $2$\dash separation
of $M_{B}-u$, and that $M_{B}-\{u,v,1,3\}$ is $3$\dash connected and
nonbinary.
But since $A[\{a,1,2,3\}]$ is a twirl, we now have a contradiction to the
fact that $M\del\{u,v\}$ is stable.
Thus, in either case, the $2$\dash separation in~\eqref{eqn3} is not induced in $M_{B}-u$.
We let $v_1, \ldots, v_p$ be a blocking sequence, and we suppose that, subject
to~\ref{GGK1} and~\ref{GGK3}, we have chosen $u$, $v$, $B$, $a$, $b$, and
$C$ such that $p$ is as small as possible.

    First suppose $v_p$ labels a row. Then $(\{v,a,x_0,\ldots,x_k,v_p\},\{1,2,3\})$ is not a $2$\dash separation in $M_{B}[\{v,a,1,2,3,x_0,\ldots,x_k,v_p\}]$, so $A_{v_pw} \neq 0$ for some $w \in \{1,2\}$. By pivoting over $31$ or $32$ as needed, we may assume $A_{v_pw} \neq 0$ for all $w \in \{1,2\}$.
Then $(\{v,a,x_{0},\ldots, x_{k},v_{p}\},\{1,2,3\})$ is a split in
$\bip(A[\{v,a,1,2,3,x_0,\ldots,x_k,v_p\}])$.
Since it is not a $2$\dash separation, it follows without
difficulty from Proposition~\ref{GGK4.12}
that $A[\{a,v_{p},1,2\}]$ is a twirl.
If $p = 1$ then either $A_{v_pv} \neq 0$ or $A_{v_px_1} \neq 0$ (in the
case that $k=2$).
If $A_{v_{p}v}\neq 0$, then we can replace $3$ with $v_{p}$, and we are done.
Therefore we assume that $A_{v_{p}v}=0$ and that $A_{v_{p}x_{1}} \neq 0$.
But then $d(b,\{a,1,2,v_p\}) < d(b,\{a,1,2,3\})$, contradicting~\ref{GGK3}. Therefore $p>1$.
Now it follows from Proposition~\ref{GGK4.16}~\eqref{it:4.16.i} that $v_1,\ldots,v_{p-1}$ is a blocking sequence for the $2$\dash separation $(\{v,a,x_0,\ldots,x_k\},\{1,2,v_p\})$ of $M_{B}[\{v,a,1,2,v_p,x_0,\ldots,x_k\}]$.
This contradicts our assumption of minimality.

    Therefore we assume that $v_p$ labels a column. It follows that $A_{3v_p} \neq 0$ and, by pivoting on $A_{32}$ as necessary, $A_{av_p} \neq 0$. Lemma~\ref{GGK4.4} implies that $A[\{a,1,3,v_p\}]$ is a twirl
(we swap the labels of columns $1$ and $2$ as necessary). By pivoting over $13$ as necessary, we can assume that $A_{x_kv_p}\neq 0$. Now consider replacing $2$ by $v_p$. If $p>1$ then Proposition~\ref{GGK4.16}~\eqref{it:4.16.i} again implies that $v_1, \ldots, v_{p-1}$ is a blocking sequence for the $2$\dash separation $(\{v,a,x_0,\ldots,x_k\},\{1,3,v_p\})$ of $M_{B}[\{v,a,1,3,v_p,x_0,\ldots,x_k\}]$, contradicting our assumption of minimality. Therefore $p = 1$. Then $(\{v,a,x_0,\ldots,x_k\},\{1,2,3,v_p\})$ is a split in
$\bip(A[\{v,a,1,2,3,v_p,x_0,\ldots,x_k\}])$, but is not a $2$\dash separation of $M_{B}[\{v,a,1,2,3,v_p,x_0,\ldots,x_k\}]$.
Therefore Proposition~\ref{GGK4.12} implies that $A[\{a,1,x_k,v_p\}]$ is a twirl. But $d(b,\{a,x_k,1,v_p\})<d(b,\{a,1,2,3\})$, and again we have a contradiction
to~\ref{GGK3}.
This completes the proof of the claim.
  \end{subproof}

  \begin{claim}\label{GGK14}
    $A_{3u} = 0$.
  \end{claim}
  \begin{subproof}
    Suppose $A_{3u} \neq 0$. Let $Z := \{u,v,a,x_0,\ldots,x_k,1,2,3\}$. By Claim~\ref{GGK12}, $M_{B}[Z]-u$ and $M_{B}[Z]-v$ are both $3$\dash connected, and therefore stable. Furthermore, $M_{B}[Z]-\{u,v\}$ is certainly nonbinary.
By examining $\bip(A[Z]-\{u,v\})$, we see that $M_{B}[Z]-\{u,v\}$ is connected and stable, so Claim~\ref{GGK7} implies that $Z = E$. Since $\rank(M) > 4$ or $\corank(M) > 4$, this means that $k = 2$.
Now $b$ is in a series pair in $M_{B}-\{u,v\}$, and
$x_{1}$ is in a parallel pair in $M_{B}-\{u,v,b\}$.
We have chosen $u$ and $v$ so that $M_{B}-\{u,v\}$ has
a $3$\dash connected nonbinary minor of size at least
$|E|-4$, and this minor must be
$M_{B}-\{u,v,b,x_{1}\}$.
But $\{a,x_{2}\}$ is a series pair in this matroid, so we have a
contradiction.
  \end{subproof}

  Although the page count suggests otherwise, we are now entering the endgame of the proof: from now on we will deal only with the $2$\dash separation
$(\{u,a,x_0,\ldots,x_k\},\{1,2,3\})$ of
$M_{B}[\{u,a,1,2,3,x_0,\ldots,x_k\}]$.
That this is a $2$\dash separation follows from
Claims~\ref{GGK12} and~\ref{GGK14}.
Assume that it is induced in $M_{B}-v$.
If $k=0$ then this immediately leads to a contradiction, as
$M \del v$ is stable, and $A[\{a,1,2,3\}]$ and $A[\{u,a,b,1\}]$ are twirls.
Now suppose that $k = 2$.
There is a $3$\dash connected nonbinary minor of size at
least $|E|-4$ in $M_{B}-\{u,v\}$.
Therefore $(E-\{v,1,2,3\},\{1,2,3\})$ is a $2$\dash separation
of $M_{B}-v$, and $M_{B}-\{u,v,1,3\}$ is $3$\dash connected and
nonbinary.
Since $A[\{a,1,2,3\}]$ is a twirl, this contradicts the
fact that $M\del\{u,v\}$ is stable.
Thus, in either case, $(\{u,a,x_0,\ldots,x_k\},\{1,2,3\})$ is
not induced in $M_{B}-u$.
Therefore there exists a blocking sequence $v_1, \ldots, v_p$ in $M_{B}-v$.
Assume that, subject to~\ref{GGK1}, \ref{GGK3}, \ref{GGK13}, and
\ref{GGK14}, $B$, $a$, $b$, $C$, $x_1,\ldots, x_k$, and $v_1,\ldots, v_p$
have been chosen so that $p$ is as small as possible.

  \begin{claim}\label{GGK15}
    $p \neq 1$.
  \end{claim}
  \begin{subproof}
    Suppose $p = 1$, and let $Z := \{u,v,a,x_0,\ldots,x_k,1,2,3,v_1\}$. Claims~\ref{GGK12} and~\ref{GGK13} imply that $M_{B}[Z]-\{u,v_1\}$ is $3$\dash connected, so $M_{B}[Z]-u$ is stable. Claims~\ref{GGK12} and~\ref{GGK14} imply that
$(\{u,a,x_0,\ldots,x_k\},\{1,2,3\})$ is the unique $2$\dash separation in $M_{B}[Z]-\{v,v_1\}$. Since $v_1$ is a blocking sequence for this $2$\dash separation, Proposition~\ref{GGK4.19} states that $M_{B}[Z]-v$ is $3$\dash connected, and therefore stable.
It is easy to see that $M_{B}[Z]-\{u,v\}$ is connected and nonbinary.
Suppose that $M_{B}[Z]-\{u,v\}$ is not stable.
If $k=0$ then the only $2$\dash separation in $M_{B}[Z]-\{u,v,v_{1}\}$
is $(\{a,x_{0}\},\{1,2,3\})$.
If $k=2$, then $(\{1,2,3\},\{a,x_{0},x_{1},x_{2}\})$
is a $2$\dash separation.
Since $M_{B}[Z]-\{u,v\}$ is not stable, and $M_{B}[\{a,1,2,3\}]$
is nonbinary, it follows that
we can create a $2$\dash separation of $M_{B}[Z]-\{u,v\}$
by adding $v_{1}$ to the side of one of these separations
that does not contain $\{1,2,3\}$.
However, $u$ is spanned by $\{a,x_{0}\}$ (in the case that
$k=0$) or $\{a,x_{0},x_{2}\}$ (in the case that $k=2$) in $M_{B}[Z]-v$.
It follows that $M_{B}[Z]-v$ contains a $2$\dash separation,
contradicting our earlier conclusion that it is $3$\dash connected.
Therefore $M_{B}[Z]-\{u,v\}$ is stable.
Now Claim~\ref{GGK7} implies that $Z = E$.

    Suppose $k = 0$. Since $M$ has either rank or corank at least $5$, it follows that $v_1$ labels a column. Since $v_1$ is a blocking sequence, neither $(\{u,a,b,v_{1}\},\{1,2,3\})$ nor $(\{u,a,b\},\{v_{1},1,2,3\})$ is a $2$\dash separation of $M_{B}-v$. Now Lemma~\ref{lem:matrixconn} implies that $\matrank(A[\{u,v_1,3\}]) > 0$ and $\matrank(A[\{a,b,v_{1},1,2\}]) > 1$. Hence $A_{3v_1} \neq 0$ and one of $A_{av_1}$ and $A_{bv_1}$ is nonzero. Note that exactly one of these is nonzero, because otherwise $A[\{a,b,1,v_1\}]$ forms a twirl, and we have reduced to the case that $a$ and $b$ are contained in $C$. By swapping $a$ and $b$ if necessary, we assume that $A_{av_1} \neq 0$. Now we consider the ternary matrix $\phi(A)$. Recall that $M = M[I|\phi(A)]$. Up to scaling we may assume
    \begin{displaymath}
      \phi(A) = \kbordermatrix{ & 1 & 2 & v_1 & u & v\\
                          3  & q & 1 & t   & 0 & s\\
                          a  & 1 & 1 & 1   & 1 & 1\\
                          b  & r & r & 0   & 1 & g},
    \end{displaymath}
where $g$, $q$, $r$, $s$, and $t$ are all nonzero.
    Since $M_{B}[\{a,1,2,3\}] = N_{B}[\{a,1,2,3\}] \cong U_{2,4}$,
it follows that $q = -1$. Claim~\ref{cl:distsetindependent} implies that
$B' = (B\setminus\{a,b\})\cup \{u,v\}$ is dependent in $M$, so $g = 1$. Now Claim~\ref{GGK9} implies that $r = -1$. By scaling row $3$ and swapping columns $1$, $2$ as necessary, we may assume $t = 1$. This leaves us to consider two choices for $s$. If $s = 1$ then $M\del 2 \cong F_{7}^{-}$.
But this contradicts our conclusion that $M$ is
$\GF(4)$\dash representable.
Therefore we assume that $s=-1$.
In this case $M \cong \agde$, which we assumed was not so.

    Therefore $k = 2$. Here we have to distinguish two cases. First, suppose $v_1$ labels a column. Since $v_1$ is a blocking sequence, we can argue as before, and deduce that $A_{3v_1} \neq 0$ while $A_{wv_1} \neq 0$ for at least one $w \in \{a,b,x_2\}$. Since $A_{3v_{1}}\neq 0$ and $d(b,C)=3$, it follows that $A_{bv_1} = 0$. As both $A[\{a,1,2,3\}]$ and $A[\{x_{2},1,2,3\}]$ are twirls, Proposition~\ref{GGK4.4} implies that one of $A[\{a,x_2,1,3,v_1\}]$ or $A[\{a,x_2,2,3,v_1\}]$ contains a twirl. By swapping $1$ and $2$ if necessary, we can assume that $A[\{a,x_2,2,3,v_1\}]$ contains a twirl. Claim~\ref{GGK12} implies that $M_{B}-\{v,v_{1},1\}$ has a unique $2$\dash separation, namely $(\{u,a,b,x_1,x_2\},\{2,3\})$.
It is easy to see that $(\{u,a,b,x_{1},x_{2},v_{1}\},\{2,3\})$ is not a $2$\dash separation in $M_{B}-\{v,1\}$. If $(\{u,a,b,x_{1},x_{2}\},\{v_{1},2,3\})$ is a $2$\dash separation of $M_{B}-\{v,1\}$, then $A_{av_{1}}$ and $A_{x_{2}v_{1}}$ must be nonzero, and $A[\{a,x_{2},v_{1},2]$ must have determinant zero. But this implies that $(\{u,a,b,x_{1},x_{2}\},\{v_{1},1,2,3\})$ is a $2$\dash separation of $M_{B}-v$, a contradiction. Therefore $v_{1}$ is a blocking sequence in $M_{B}-\{v,1\}$, so Proposition~\ref{GGK4.19} implies that $M_{B}-\{v,1\}$ is $3$\dash connected. Hence $M_{B}-v$, $M_{B}-1$, and $M_{B}-\{v,1\}$ are all stable, and $M_{B}-\{v,1\}$ is $3$\dash connected and nonbinary. Therefore $v$, $1$ is a deletion pair, and furthermore, $M_{B}-\{v,1\}$ certainly contains a $3$\dash connected nonbinary minor on at least $|E|-4$ elements.
Since $M\del \{v,1\}$ is $3$\dash connected, and $b$ is a degree-one vertex of
$\bip(A-\{u,v\})$, we now have a contradiction to~\ref{GGK1}.

    Next we suppose that $v_1$ labels a row. Suppose that $(\{a,x_{0},x_{1},x_{2}\},\{v_{1},1,2,3\})$ is a $2$\dash separation of $M_{B}-\{u,v\}$. Then $M_{B}-\{u,v\}$ cannot contain a $3$\dash connected minor of size at least $|E|-4$, which contradicts our choice of $u$ and $v$.
Therefore $(\{a,x_{0},x_{1},x_{2}\},\{v_{1},1,2,3\})$ is not a $2$\dash separation, so Lemma~\ref{lem:matrixconn} implies that $A_{v_1x_1} \neq 0$.
Similarly, $(\{u,a,x_{0},x_{2},x_{3},v_{1}\},\{1,2,3\})$ is not a $2$\dash separation in $M_{B}-v$, so $\matrank(A[\{a,v_1,1,2\}]) > 1$. It cannot be the case that $A[\{a,v_1,1,2\}]$ is a twirl, since $2=d(b,\{a,v_1,1,2\}) <d(b,C)=3$. Hence exactly one of $A_{v_11}$ and $A_{v_12}$ is nonzero; by relabeling as necessary we assume $A_{v_12} = 0$.
    \begin{sclaim}
      $M_{B}-\{u,v,3\}$ is binary.
    \end{sclaim}
    \begin{subproof}
By examining $\bip(A[\{a,b,1,2,x_{1},x_{2}])$, we see that $(\{a,1,2,x_2,v_1\},\{b,x_1\})$ is the unique $2$\dash separation in $M_{B}-\{u,v,3\}$. Therefore $M_{B}-\{u,v,3\}$ is stable. By inspection, $(\{a,1,2,x_{2},v_{1}\},\{b,x_{1}\})$ is uncrossed, and $u$ and $v$ are blocking sequences. Now by using Proposition~\ref{GGK4.17}, we can see that $M_{B}-\{u,3\}$ and $M_{B}-\{v,3\}$ must be stable.
Certainly $M_{B}-\{u,v,3\}$ is connected.
If it were nonbinary, then Claim~\ref{GGK7} would imply that $E\setminus 3=E$.
Therefore $M_{B}-\{u,v,3\}$ is binary, as desired.
    \end{subproof}

    \begin{sclaim}
      $A[\{v,a,2,3\}]$ is a twirl.
    \end{sclaim}
    \begin{subproof}
Note that Proposition~\ref{GGK4.4} implies either
$A[\{a,v,1,3\}]$ or $A[\{a,v,2,3\}]$ is a twirl.
Let us assume that the claim fails, so that $A[\{v,a,1,3\}]$
is a twirl.
Consider $\bip(A-\{u,x_{2}\})$.
There are two splits in this graph:
$(\{b,v,v_{1},x_{1},2\},\{a,3\})$ and
$(\{b,v,v_{1},x_{1}\},\{a,2,3\})$.
Proposition~\ref{GGK4.12} implies that
neither of these is a $2$\dash separation, so
$M_{B}-\{u,x_{2}\}$ is $3$\dash connected.

By repeatedly cosimplifying and simplifying, we reduce
$M_{B}-\{u,v,x_2\}$ to a whirl.
Therefore $M_{B}-\{u,v,x_2\}$ is nonbinary and stable.
It is easy to see that it is connected.
There are no splits in $\bip(A-\{v,x_{2}\})$, so
$M_{B}-\{v,x_2\}$ is $3$\dash connected.
Now Claim~\ref{GGK7} implies $E\setminus x_{2}=E$, and
we have a contradiction.
    \end{subproof}
Since $M_{B}-\{u,v,3\}$ is binary, $A[\{x_{1},x_{2},v_{1},1\}]$ is
not a twirl.
Therefore $A_{v_{1}1} = A_{v_{1}x_{1}}$.
By scaling row $v_{1}$, we can assume that $A_{v_{1}1}=A_{v_{1}x_{1}}=1$.
Now
    \begin{displaymath}
      A = \kbordermatrix{    & 1 & 2 & x_1 & u & v\\
                           3 & q & 1 & 0   & 0 & s\\
                           a & 1 & 1 & 0   & 1 & 1\\
                         x_2 & 1 & 1 & 1   & * & *\\
                           b & 0 & 0 & r   & 1 & g\\
                         v_1 & 1 & 0 & 1   & * & *},
\quad 
      A^{a2} = \kbordermatrix{    & 1 & a & x_1 & u & v\\
                                3 & q-1 & \rbox{-1} & 0   & \rbox{-1} & s-1\\
                                2 & 1  & \rbox{1} & 0   & \rbox{1} & 1\\
                              x_2 & 0 & \rbox{-1} & 1   & * & *\\
                                b & 0  & \rbox{0} & r   & \rbox{1} & g\\
                              v_1 & 1  & \rbox{0} & 1   & * & *}
    \end{displaymath}
The fact that $A[\{v,a,2,3\}]$ is a twirl means that $s\neq 1$.
Since $A-\{u,v\}$ is a near-unimodular matrix,
we see that $q$ is a fundamental element of $\nreg$.
We write $B'$ for $B\symdiff \{a,2\}$ and $A'$ for $A^{a2}$.
    \begin{sclaim}
      We may assume that one of $A'[\{u,a,x_{2},3\}]$ and $A'[\{u,v_{1},1,3\}]$ is a twirl.
    \end{sclaim}
    \begin{subproof}
Assume that neither $A'[\{u,a,x_{2},3\}]$ nor $A'[\{u,v_{1},1,3\}]$
is a twirl.
The fact that $A'[\{u,a,x_{2},3\}]$ is not a twirl means that
$A'_{x_{2}u} \in \{0,-1\}$.
Similarly, since $A'[\{u,v_{1},1,3\}]$ is not a twirl we deduce that
$A'_{v_{1}u} \in \{0,1/(1-q)\}$.
Now we pivot on $bx_{1}$ and swap the labels on $b$ and $x_{1}$.
If $A'_{x_{2}u}$ is no longer $0$ or $-1$, then
$A'[\{u,a,x_{2},3\}]$ is a twirl, and we are done.
Therefore we assume that after this pivot, $A'_{x_{2}u}$ is still
either $0$ or $-1$, so $r \in \{1,-1\}$.
Similarly, we assume that after the pivot, $A'_{v_{1}u}$ is still
either $0$ or $1/(1-q)$.
This means that $r$ is either $q-1$ or $1-q$.
We deduce that $q-1$ is equal to either $1$ or $-1$.
But $q$ is an element of $\nreg$, and is
therefore not equal to $2$.
Thus $q=0$, which contradicts the fact that $A[\{a,1,2,3\}]$ is a twirl.
This completes the proof of the claim.
    \end{subproof}

Now we let $C'$ be either $\{u,a,x_{2},3\}$ or $\{u,v_{1},1,3\}$,
so that $A'[C']$ is a twirl.

    \begin{sclaim}\label{cl:contractpair}
      $M \con b = N \con b$ and $M \con 2 = N \con 2$.
Moreover $b$, $2$ is a deletion pair of $M^*$, $M_{B'}-\{b,2\}$ contains a $3$\dash connected nonbinary
minor of size at least $|E|-4$, 
and $|\co(M^*\del \{b,2\})| \geq |\co(M\del \{u,v\})|$.
    \end{sclaim}
    \begin{subproof}
Note that $A'[\{a,1,2,3\}]$ is a twirl by Proposition~\ref{GGK4.5}.
Therefore $M_{B'}-\{b,u,v\}$ is nonbinary.
By examining $\bip(A'-\{b,u,v\})$ and applying Propositions~\ref{GGK4.11}
and~\ref{GGK4.12} we see that $M_{B'}-\{b,u,v\}$ is $3$\dash connected.
Therefore $M_{B'}-\{b,u\}$ and $M_{B'}-\{b,v\}$ are both stable.
If $M\con b \neq N \con b$, then Claim~\ref{GGK7} implies that
$E\setminus b = E$.
Thus $M \con b = N \con b$.

By examining $\bip(A-\{u,v,2\})$, we see that $(\{v_1,2,3,a,x_2\},\{b,x_1\})$
is the only $2$\dash separation of $M_{B'}-\{u,v,2\}$.
Moreover, since $s-1 \neq 0$, both $u$ and $v$ are length-one blocking
sequences for this $2$\dash separation.
It now follows from Proposition~\ref{GGK4.19} that $M_{B'}-\{u,2\}$ and
$M_{B'}-\{v,2\}$ are both $3$\dash connected.
Therefore $M_{B'}-\{u,v,2,\}$, $M_{B'}-\{u,2\}$, and $M_{B'}-\{v,2\}$
are all stable.
Moreover $M_{B'}-\{u,v,2\}$ is connected and nonbinary.
Now it follows from Claim~\ref{GGK7} that $M\con 2 = N \con 2$.

As $A'[C']$ is a twirl, it follows without difficulty from
Propositions~\ref{GGK4.11} and~\ref{GGK4.12} that
$M_{B'}-\{b,v,2\}$ is $3$\dash connected.
Hence $M\con\{b,2\}$ is stable.
It is certainly nonbinary and connected.
We have noted that $M_{B'}-\{u,2\}$ is $3$\dash connected, so
$M\con 2$ is stable.
Note that $A'[\{u,1,2,3\}]$ is a twirl, as $q\neq 0$.
Now it follows from Proposition~\ref{GGK4.12} that
$M_{B'}-\{b,v\}$ is $3$\dash connected.
Thus $M\con b$ is stable.
Certainly $\{b,2\}$ is independent, so $b$, $2$ is
a deletion pair of $M^{*}$.

Since $M_{B'}-\{b,v,2\}$ is $3$\dash connected, it
follows that $M_{B'}-\{b,2\}$ contains a $3$\dash connected
nonbinary minor on at least $|E|-3$ elements.
Either $M_{B'}-\{b,2\}$ is $3$\dash connected, or
it contains a single parallel pair, and this
pair contains $v$.
In either case $|\co(M^{*}\del\{b,2\})| \geq |E|-3$.
As $b$ is in a series pair in $M \del \{u,v\}$, we see that
$|\co(M\del\{u,v\})| \leq |E|-3$, so we are done.
    \end{subproof}

By Lemma~\ref{GGK2.2} and Claim~\ref{cl:contractpair}, $A'$ is the
unique matrix over \qalpha\ such that $M\con b = M[I| A'-b]$ and
$M\con 2 = M[I| A'-2]$.
Now $\{2,b,u,v\}$ distinguishes $M$ from $N = M[I| A']$, so if we
replace $M$ by $M^*$, $u$ and $v$ with $2$ and $b$, replace
$a$ and $b$ with $u$ and $v$, $B$ with $B'$, and
$C$ with $C'$, then we have not violated~\ref{GGK1}.
However in $\bip(A'-\{b,2\})$, the distance between
$v$ and $C'$ is $1$, which is less than $d(b,C)$.
Thus we have a contradiction to~\ref{GGK3}, and this completes the
proof of Claim~\ref{GGK15}.
  \end{subproof}

  \begin{claim}\label{GGK16}
    $v_p$ labels a row.
  \end{claim}
  \begin{subproof}
    Suppose $v_p$ labels a column. Since $(\{u,a,x_0,\ldots,x_k,v_p\},\{1,2,3\})$ is not a $2$\dash separation in $M_{B}[\{u,a,x_0,\ldots,x_k,1,2,3,v_p\}]$, it follows that $A_{3v_p} \neq 0$. Claim~\ref{GGK15} says that $p > 1$, so the definition of blocking sequences implies that $(\{u,a,x_0,\ldots,x_k\},\{1,2,3,v_p\})$ is a $2$\dash separation. Then $\matrank(A[\{a,x_0,1,2,v_p\}]) = 1$ (in the case that $k=0$), or $\matrank(A[\{a,x_0,x_2,1,2,v_p\}]) = 1$ (if $k=2$).
It follows from this that either $A_{av_p} = A_{x_kv_p} = 0$, or both
$A_{av_{p}}$ and $A_{x_{k}v_{p}}$ are nonzero.
Moreover, if $k=2$, then $A_{bv_{p}}=0$.
Suppose that $A_{av_{p}}$ and $A_{x_{k}v_{p}}$ are nonzero.
Lemma~\ref{GGK4.4} and Claim~\ref{GGK13} imply that one of
$A[\{v_p,a,1,3\}]$ and $A[\{v_p,a,2,3\}]$ is a twirl. By swapping the labels of columns $1$ and $2$ as needed, assume $A[\{v_p,a,1,3\}]$ is a twirl.
By taking $Y'=\{1,3\}$ and applying Proposition~\ref{GGK4.16}~\eqref{it:4.16.i} we see that $v_1, \ldots, v_{p-1}$ is a blocking sequence for $(\{u,a,x_0,\ldots,x_k\},\{v_p,1,3\})$ in $M_{B}[\{u,a,x_{0},\ldots, x_{k},1,3,v_{p}]$.
Now we can replace $2$ with $v_{p}$, and we obtain a contradiction to
the minimality of $p$.

It follows that $A_{av_p} = A_{x_kv_p} = 0$. Since $A_{bv_{p}}=0$ if $k=2$,
this means that $3$ is the only neighbor of $v_p$ in $\bip(A[\{u,v,a,x_0,\ldots,x_k,1,2,3,v_p\}])$, so $\{3,v_{p}\}$ is a parallel pair in $M_{B}[\{u,v,a,x_0,\ldots,x_k,1,2,3,v_p\}]$.
Therefore $M_{B\symdiff \{3,v_p\}}[\{u,v,a,x_0,\ldots,x_k,1,2,v_p\}]$
is isomorphic to $M_B[\{u,v,a,x_0,\ldots,x_k,1,2,3\}]$.
It is very easy to verify that
\begin{displaymath}
\lambda_{B}(\{u,a,x_0,\ldots,x_k,3\},\{1,2,3\}) = \lambda_{B}(\{u,a,x_0,\ldots,x_k,3\},\{1,2\})=2.
\end{displaymath}
Proposition~\ref{GGK4.16}~\eqref{it:4.16.ii} implies that $v_1,\ldots,v_{p-1}$ is a blocking sequence for $(\{u,a,x_0,\ldots,x_k\},\{1,2,v_p\})$ of $M_{B\symdiff \{3,v_p\}}[\{u,a,x_0,\ldots,x_k,1,2,v_p\}]$.
By replacing $3$ with $v_{p}$ we obtain a contradiction to the
minimality of $p$.
  \end{subproof}

  \begin{claim}\label{GGK17}
    $p \neq 2$.
  \end{claim}
  \begin{subproof}
    Suppose $p = 2$. Then $v_1$ labels a column and $v_2$ labels a row by Claim~\ref{GGK16}. As $(\{u,a,x_{0},\ldots, x_{2},v_{1}\},\{1,2,3\})$ is
a $2$\dash separation of $M_{B}[\{u,a,x_{0},\ldots, x_{2},v_{1}\}]$
it follows that $A_{3v_1} = 0$.
On the other hand, $A[\{a,x_{0},x_{k}\},\{v_{1}\}]$ is not the zero matrix.
Suppose $A_{zv_1} \neq 0$ for exactly one $z \in \{x_0,x_k,a\}$.
Then a pivot over $zv_1$ is allowable, and $v_{1}$ is parallel to
$z$ in $M_{B}[\{u,v,a,x_0,\ldots,x_k,1,2,3,v_{1}\}]$.
Therefore
\begin{multline*}
M_{B\symdiff \{z,v_1\}}[\{u,v,a,x_0,\ldots,x_k,1,2,3\}\symdiff\{z,v_1\}]\\
\cong M_{B}[\{u,v,a,x_0,\ldots,x_k,1,2,3\}].
\end{multline*}
Moreover $\lambda_{B}(\{u,a,x_0,\ldots,x_k\},\{1,2,3,z\}) = 2$,
so Proposition~\ref{GGK4.16}~\eqref{it:4.16.ii}, and symmetry,
implies that $v_2$ is a blocking sequence for
$(\{u,a,x_0,\ldots,x_k\}\symdiff \{z,v_1\},\{1,2,3\})$ in
$M_B[\{u,a,x_0,\ldots,x_k,1,2,3\}\symdiff\{z,v_1\}]$.
Now we can replace $z$ with $v_{1}$, and derive a
contradiction to the minimality of $p$.

    Hence $A_{zv_1} \neq 0$ for at least two elements $z \in \{a,x_0,x_2\}$. Suppose $k = 2$ and $A_{x_0v_1}\neq 0$. Since $d(b,C) = 3$ we have that $A_{av_1} = 0$ and hence $A_{x_2v_1} \neq 0$.
We consider replacing $x_1$ by $v_1$.
By using symmetry and Proposition~\ref{GGK4.16}~\eqref{it:4.16.i},
with $Y' = \{u,a,x_{0},x_{2}\}$, we see that $v_2$ is a blocking sequence for $(\{u,a,x_0,v_1,x_2\},\{1,2,3\})$ in $M_B[\{u,a,x_0,v_1,x_2,1,2,3\}]$.
But this leads to a contradiction, as the minimality of $p$
is violated. It follows that, for $k = 0$ and for $k = 2$,
both $A_{av_1}$ and $A_{x_kv_1}$ are nonzero. Since
$(\{u,a,x_{0},\ldots, x_{2}\},\{v_{1},1,2,3\})$ is not a $2$\dash separation,
it follows that $\matrank(A[\{a,x_k\},\{v_{1},1,2\}]) > 1$, and therefore
$A[\{a,x_k,1,v_1\}]$ is a twirl.
But $d(b,\{a,x_k,1,v_1\}) < d(b,C)$, contradicting~\ref{GGK3}.
  \end{subproof}

  Define $Z := \{u,v,a,x_0,\ldots,x_k,1,2,3,v_{p-1},v_p\}$. By Claim~\ref{GGK16}, $v_p$ labels a row, and hence $v_{p-1}$ labels a column. From the definition of blocking sequence we find that both $(\{u,a,x_0,\ldots,x_k,v_{p-1}\},\{1,2,3\})$ and $(\{u,a,x_0,\ldots,x_k\},\{1,2,3,v_{p-1}\})$ are $2$\dash separations in $M_B[Z\setminus \{v,v_p\}]$. It follows from Lemma~\ref{lem:matrixconn} that $A_{3v_{p-1}} = 0$.
As $(\{u,a,x_0,\ldots,x_k,\},\{v_{p},1,2,3\})$ is
a $2$\dash separation, but
$(\{u,a,x_0,\ldots,x_k,v_{p-1}\},\{v_{p},1,2,3\})$ is not,
it follows that $A_{v_pv_{p-1}} \neq 0$.
Furthermore, either $A_{av_{p-1}} = A_{x_kv_{p-1}} = 0$ or both $A_{av_{p-1}}$
and $A_{x_{k}v_{p-1}}$ are nonzero.
Suppose $A_{av_{p-1}} = A_{x_kv_{p-1}} = 0$.
As $(\{u,a,x_0,\ldots,x_k,\},\{v_{p-1},1,2,3\})$ is a
$2$\dash separation, it follows that $v_{p}$ is the only
neighbor of $v_{p-1}$ in $\bip(A[Z \setminus v_{p}])$.
Thus $M_{B\symdiff \{v_p,v_{p-1}\}}[Z]-\{v_{p-1},v_p\}$ is isomorphic to $M_B[Z]-\{v_{p-1},v_p\}$. Proposition~\ref{GGK4.16}~\eqref{it:4.16.iii} implies that
$v_1,\ldots,v_{p-2}$ is a blocking sequence for the $2$\dash separation $(\{u,a,x_0,\ldots,x_k\},\{1,2,3\})$ in $M_{B\symdiff\{v_{p-1},v_p\}}$, and we have
contradicted the minimality of $p$.
Therefore $A_{av_{p-1}}$ and $A_{x_kv_{p-1}}$ are both nonzero.

  At least one of $A_{v_p1}$ and $A_{v_p2}$ needs to be nonzero. Assume, exchanging $1$ and $2$ if necessary, that $A_{v_p1} \neq 0$. We deduce that, up to scaling,
$A[Z]$ is equal to one of the following matrices:
  \begin{displaymath}
    \kbordermatrix{ & 1 & 2 & v_{p-1} & u & v\\
                  3 & q & 1 &    0    & 0 & s\\
                v_{p} & 1 & * &    t    & 0 & *\\
                  a & 1 & 1 &    1    & 1 & 1\\
            x_0 = b & r & r &    r    & 1 & g}, \quad
    \kbordermatrix{ & 1 & 2 & v_{p-1} & x_1 & u & v\\
                  3 & q & 1 &    0    & 0   & 0 & s\\
                v_{p} & 1 & * &    t    & 0   & 0 & *\\
                  a & 1 & 1 &    1    & 0   & 1 & 1\\
                x_2 & 1 & 1 &    1    & 1   & * & *\\
            x_0 = b & 0 & 0 &    0    & r   & 1 & g}.
  \end{displaymath}
  \begin{claim}\label{GGK18}
    $A[\{a,v_{p-1},v_p,1\}]$ is not a twirl.
  \end{claim}
  \begin{subproof}
    Suppose  $A[\{a,v_{p-1},v_p,1\}]$ is a twirl.
By applying Proposition~\ref{GGK4.16}~\eqref{it:4.16.i} twice,
we see that $v_1,\ldots,v_{p-2}$ is a blocking sequence for the
$2$\dash separation $(\{u,a,x_0,\ldots,x_k\},\{1,v_{p-1},v_p\})$
of $M_{B}[Z\setminus\{v,2,3\}]$.
If $A_{v_pv} \neq 0$ then we can replace $3$ with $v_{p}$ and
$2$ with $v_{p-1}$, and derive a contradiction to the  minimality of $p$.
Therefore $A_{v_pv} = 0$.
Now a pivot over $v_p1$ is allowable, and
by scaling, we can assume that $A^{v_p1}[Z]- 2$ is one of the
following matrices:
    \begin{displaymath}
      \kbordermatrix{ & v_p & v_{p-1} & u & v\\
                    3 & q   &    q    & 0 & s'\\
                    1 & 1   &    1    & 0 & *\\
                    a & 1   & \beta & 1 & 1\\
              x_0 = b & r   & r\beta& 1 & g'}, \quad
      \kbordermatrix{ & v_p & v_{p-1} & x_1 & u & v\\
                    3 & q   &    q    & 0   & 0 & s'\\
                    1 & 1   &    1    & 0   & 0 & *\\
                    a & 1   & \beta & 0   & 1 & 1\\
                  x_2 & 1   & \beta & 1   & * & *\\
              x_0 = b & 0   &    0    & r   & 1 & g}.
    \end{displaymath}
Here, $\beta = (t-1)/t$, $s' = s-qA_{v_{p}v}$, and $g'=g-qA_{v_{p}v}$.
Our assumption that $A[\{a,v_{p-1},v_p,1\}]$ is a twirl implies
$t$ is a fundamental element of $\nreg$ other than zero or
one, so $\beta$ is defined, and is equal to neither $0$ nor $1$.
Hence $A^{v_p1}[\{a,v_{p-1},v_{p},3\}]$ is a twirl.

Proposition~\ref{GGK4.15}~\eqref{it:4.15.i} and~\eqref{it:4.15.ii}
implies that $v_1, \ldots, v_{p-1}$ is a blocking sequence for the $2$\dash separation $(\{u,a,x_0,\ldots,x_k\},\{v_p,1,2,3\})$ in $M_{B\symdiff\{1,v_p\}}[Z\setminus\{v,v_{p-1}\}]$. 
Proposition~\ref{GGK4.16}~\eqref{it:4.16.i} implies that $v_1,\ldots, v_{p-2}$ is a blocking sequence for $(\{u,a,x_0,\ldots, x_k\},\{v_{p-1},v_p,3\})$ in $M_{B\symdiff\{v_p,1\}}[Z]- \{v,1,2\}$. It follows that we can replace $B$ by $B\symdiff\{v_p,1\}$ and $C$ by $\{a,v_p,v_{p-1},3\}$, which contradicts the minimality of $p$.
  \end{subproof}

  \begin{claim}\label{GGK19}
    $A_{v_p2} = 0$.
  \end{claim}
  \begin{subproof}
    Suppose $A_{v_p2} \neq 0$. As $(\{u,a,x_0,\ldots,x_k,v_p\},\{1,2,3\})$ is
not a $2$\dash separation of $M_{B}[Z]-\{v,v_{p-1}\}$
it follows that $A[\{a,v_{p},1,2\}]$ must be a twirl.
Hence either $A[\{a,v_{p-1},v_{p},1\}]$ or $A[\{a,v_{p-1},v_{p},2\}]$
is a twirl, by Lemma~\ref{GGK4.4}.
But, possibly after exchanging $1$ and $2$, this contradicts Claim~\ref{GGK18}.
  \end{subproof}

  \begin{claim}\label{GGK20}
    $A_{v_pv} \neq 0$.
  \end{claim}
  \begin{subproof}
    Suppose $A_{v_pv} = 0$. Then $v_{p}1$ is an allowable pivot, and
$v_{p}$ is adjacent only to $1$ in
$\bip(A[Z]-v_{p-1})$.
Thus $M_{B\symdiff \{v_p,1\}}[Z]-\{v_{p-1},1\}$ is isomorphic to
$M_B[Z]-\{v_{p-1},v_p\}$.
Furthermore
\begin{displaymath}
\lambda_{B}(\{u,a,x_0,\ldots,x_k,1\},\{1,2,3\})= \lambda_{B}(\{u,a,x_0,\ldots,x_k,1\},\{2,3\}) = 2,
\end{displaymath}
so Proposition~\ref{GGK4.16}~\eqref{it:4.16.ii} then implies that $v_1,\ldots,v_{p-1}$ is a blocking sequence for $(\{u,a,x_0,\ldots,x_k\},\{v_p,2,3\})$ of $M_{B\symdiff \{1,v_p\}}[Z\setminus\{v_{p-1},1\}]$. This contradicts the minimality of $p$.
  \end{subproof}

Let $\psi$ be an automorphism of the near-regular partial field.
Then both $A-u$ and $\psi(A)-u$ represent $M \del u$ over
$\nreg$.
Similarly, $A-v$ and $\psi(A)-v$ represent $M \del v$ over
$\nreg$.
Obviously $\psi(A)-u$ and $\psi(A)-v$ are both near-unimodular.
Thus $\psi(A)$ satisfies the conditions of Lemma~\ref{GGK2.2},
so Lemma~\ref{lem:ternaryhom} implies that
$M=M[I|\phi(\psi(A))]$, where $\phi$ is the homomorphism from
$\nreg$ to $\GF(3)$.

Consider $A[\{a,3\},\{1,2\}]$.
It is a submatrix of the near-unimodular matrix $A-u$, and
its determinant is $q-1$.
We deduce that $q$ is a fundamental element of $\nreg$
other than $0$ and $1$.
By Proposition~\ref{prop16}, and the discussion in the
previous paragraph, we can assume that $q=\alpha$.

Since $B'=B\symdiff\{a,b,u,v\}$ is a dependent
set in $M=M[I|\phi(A)]$, the determinant of
$\phi(A)[\{a,b,u,v\}]$ must be zero evaluated over $\GF(3)$.
It follows that $\phi(g)=1$.
Claim~\ref{GGK9} implies that
$B \symdiff \{a,b,u,1\}$ is independent in
$N \del v = M \del v$.
Hence $\phi(A)[\{a,b,u,1\}]$ has a nonzero determinant
over $\GF(3)$.
Since $r \neq 0$, and the only element of $\nreg$ taken to
zero by $\phi$ is zero itself, it follows that
$\phi(r)=-1$.
It follows from Claim~\ref{GGK18}
that $\phi(A)[\{a,v_{p-1},v_{p},1\}]$ has zero determinant, so
$\phi(t)=1$.

Suppose $k = 0$.
By the preceding discussion we see that
    \begin{displaymath}
      \phi(A)[Z] = \kbordermatrix{ & 1 & 2 & v_{p-1} & u & v\\
                  3 & \rbox{-1} & \rbox{1} & \rbox{0} & \rbox{0} & s\\
                v_p & \rbox{1} & \rbox{0} & \rbox{1} & \rbox{0} & w\\
                  a & \rbox{1} & \rbox{1} & \rbox{1} & \rbox{1} & 1\\
            x_0 = b & \rbox{-1} & \rbox{-1} & \rbox{-1} & \rbox{1} & 1},
    \end{displaymath}
where $s$ and $w$ are nonzero.
By scaling the column labeled with $v$ and, if necessary,
scaling the column labeled by $u$, the rows labeled
by $a$ and $b$, and swapping the labels on the last two
rows, we can assume that $s=1$.
Thus there are two cases to consider, according to whether
$w$ is equal to $1$ or $-1$.

If $w=1$ then
$M_{B\symdiff\{v,3\}}[Z]-\{v,3\} \cong F_{7}^{-}$, which contradicts the
fact that $M$ is $\GF(4)$\dash representable.
Similarly, in the case that $w=-1$.
then it is easy to check that
$M_{B\symdiff\{a,u\}}[Z]-\{a,u\} \cong F_{7}^{-}$.

  Now we assume $k=2$.
Let $Z'=Z\setminus\{x_{1},x_{2}\}$, so that
  \begin{displaymath}
    \phi(A[Z']) =   \kbordermatrix{ & 1 & 2 & v_{p-1} & u & v\\
                  3 & \rbox{-1} & 1 &    0    &  0 & s\\
                v_p & \rbox{1} & 0 &    1       & 0 & w\\
                  a & \rbox{1} & 1 &    1       & 1 & 1\\
              x_0 = b & \rbox{0} & 0 &    0       & 1 & 1}.
  \end{displaymath}
We consider four cases.
If $(s,w)=(1,1)$ then
$M_{B\symdiff\{v,3\}}[Z']-\{v,1\} \cong F_{7}^{-}$.
If $(s,w)$ is equal to $(1,-1)$ or $(-1,1)$ then
$M_{B\symdiff\{a,u\}}[Z']-\{a,u\} \cong F_{7}^{-}$.
Finally, if $(s,w)=(-1,-1)$, then
$M_{B\symdiff\{3,v\}}[Z']-v \cong \agde$.
Thus $M$ has a minor isomorphic to \agde.
But we will show in Proposition~\ref{AG23} that
\agde\ is an excluded minor for the class of
near-regular matroids.
Thus $M \cong \agde$, which contradicts our assumption, and
completes the proof of Theorem~\ref{thm:finitesize}.
\end{proof}

\section{Conclusion}\label{sec:conclusion}

In this section we complete the proof of the
excluded-minor characterization.
We start by describing in detail the matroids 
listed in Theorems~\ref{thm4} and~\ref{thm1}, and proving that
they are indeed excluded minors for near-regularity.
Theorem~\ref{thm:finitesize} means that to prove this
list is complete, we need only perform a finite case-analysis.
That analysis is carried out in the second half of the section.

\subsection{The excluded minors}
\label{excludedminors}
The next result follows easily
from Proposition~6.5.2 of~\cite{oxley}.

\begin{prop}
\label{U25U35}
Both $U_{2,5}$ and $U_{3,5}$ are excluded minors for
the class of near-regular matroids.
\end{prop}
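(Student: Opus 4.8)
The plan is to show two things for each of $U_{2,5}$ and $U_{3,5}$: first, that the matroid itself is not near-regular, and second, that every proper minor is near-regular. By Theorem~\ref{thm:nearregreps}, near-regularity is equivalent to representability over $\GF(3)$, $\GF(4)$, and $\GF(5)$ (or, more conveniently, over all fields except possibly $\GF(2)$), so the first point reduces to a cardinality count: $U_{2,5}$ is representable over a field $\field$ only if $|\field|\geq 4$, but actually one needs five points on a line, so $|\field|\geq 4$ is not enough --- in fact $U_{2,5}$ requires $|\field^{*}|\geq 4$, i.e. $|\field|\geq 5$. Wait, more carefully: $U_{2,q+2}$ is $\GF(q)$-representable iff $q\geq q$, and $U_{2,5}$ needs $|\field|+1\geq 5$, so $|\field|\geq 4$; but over $\GF(4)$ there are exactly $4+1=5$ points on a line, so $U_{2,5}$ \emph{is} $\GF(4)$-representable. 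The obstruction is instead $\GF(3)$: $U_{2,5}$ has $5 > 3+1$ points on a line, so it is not $\GF(3)$-representable, hence not ternary, hence (since near-regular matroids are ternary by Lemma~\ref{lem:ternary} or directly by Theorem~\ref{thm:nearregreps}) not near-regular. By duality (Proposition~\ref{prop9}), $U_{3,5}=U_{2,5}^{*}$ is also not near-regular.

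Next I would verify minor-minimality. The proper minors of $U_{2,5}$ obtained by deletion are $U_{2,4}$, $U_{2,3}$, etc., and by contraction are $U_{1,4}$ and uniform matroids of corank one or rank at most one together with loops; every such matroid is $U_{r,n}$ with $n\leq 4$ or $r\leq 1$, all of which are regular or equal to $U_{2,4}$. Since $U_{2,4}$ is near-regular (it is representable over every field of size at least three --- e.g. take the matrix $[\,1\ 1\ ;\ 1\ \alpha\,]$ over $\nreg$), and regular matroids are near-regular (they are representable over all fields by \cite[Theorem~6.6.3]{oxley}), every proper minor of $U_{2,5}$ is near-regular. The same analysis applies to $U_{3,5}$, whose proper minors are again uniform matroids on at most four elements (up to loops and coloops), hence near-regular. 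This is exactly the content of Proposition~6.5.2 of~\cite{oxley}, which classifies the minors of $U_{r,n}$.

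Putting these together: $U_{2,5}$ and $U_{3,5}$ are not near-regular, but all of their proper minors are, so each is an excluded minor for the class. The only step requiring any care is the observation that $U_{2,5}$ \emph{is} $\GF(4)$-representable (so one cannot cite the $\GF(4)$ excluded-minor theorem), and that the genuine obstruction lies at $\GF(3)$; this is why Theorem~\ref{thm:nearregreps} (and hence the ternary reduction) is the right tool, rather than any single field. I expect no real obstacle here --- the argument is a short cardinality count plus the minor classification of uniform matroids.

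\begin{proof}
By Theorem~\ref{thm:nearregreps}, a near-regular matroid is representable over $\GF(3)$. The matroid $U_{2,5}$ has five points on a common line, and a line in a $\GF(3)$-representable geometry contains at most $|\GF(3)|+1 = 4$ points; hence $U_{2,5}$ is not $\GF(3)$-representable, and therefore not near-regular. By Proposition~\ref{prop9} the class of near-regular matroids is closed under duality, so $U_{3,5} = U_{2,5}^{*}$ is also not near-regular.

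It remains to show that every proper minor of each of $U_{2,5}$ and $U_{3,5}$ is near-regular. By Proposition~6.5.2 of~\cite{oxley}, every proper minor of $U_{2,5}$ is (isomorphic to a matroid obtained by adding loops and coloops to) a uniform matroid $U_{r,n}$ with $n \leq 4$, and likewise for $U_{3,5}$. Each such uniform matroid is either regular or isomorphic to $U_{2,4}$. Regular matroids are representable over every field, and hence near-regular, by~\cite[Theorem~6.6.3]{oxley} together with Theorem~\ref{thm:nearregreps}. Finally, $U_{2,4}$ is near-regular: it is represented over $\nreg$ by
\begin{displaymath}
\begin{bmatrix} 1 & 1\\ 1 & \alpha \end{bmatrix},
\end{displaymath}
whose nonzero subdeterminants are $1$, $\alpha$, and $1-\alpha$, all of which lie in $\nreg^{*}$. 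Adding loops and coloops preserves representability over any partial field. Therefore every proper minor of $U_{2,5}$, and of $U_{3,5}$, is near-regular, so both matroids are excluded minors for the class of near-regular matroids.
\end{proof}
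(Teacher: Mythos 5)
Your proof is correct and takes essentially the same route as the paper, which simply notes that the result ``follows easily from Proposition~6.5.2 of~\cite{oxley}''; you have just written out the details that the paper leaves implicit (the $\GF(3)$ line-count obstruction, duality, and the check that all proper minors are uniform matroids on at most four elements, hence regular or $U_{2,4}$). No gaps.
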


Recall that $F_{7}$, the Fano plane, and $F_{7}^{-}$,
the non-Fano matroid, are the rank\dash $3$ matroids shown in
Figure~\ref{figFano}.

\begin{figure}[htb]

\centering

\begin{pspicture}(-3.86,0)(3.86,2.2)

\SpecialCoor

\rput(-3,1.25){
\pscircle*(0,0){0.06cm}
\pscircle*(1;90){0.06cm}
\pscircle*(1;210){0.06cm}
\pscircle*(1;330){0.06cm}
\pscircle*(0.5;150){0.06cm}
\pscircle*(0.5;270){0.06cm}
\pscircle*(0.5;30){0.06cm}

\pspolygon(1;90)(1;210)(1;330)
\psline(1;90)(0.5;270)
\psline(1;210)(0.5;30)
\psline(1;330)(0.5;150)

\pscurve(0.5;150)(0.5;270)(0.5;30)
}

\rput[B](-3,0){$F_{7}$}

\rput(3,1.25){
\pscircle*(0,0){0.06cm}
\pscircle*(1;90){0.06cm}
\pscircle*(1;210){0.06cm}
\pscircle*(1;330){0.06cm}
\pscircle*(0.5;150){0.06cm}
\pscircle*(0.5;270){0.06cm}
\pscircle*(0.5;30){0.06cm}

\pspolygon(1;90)(1;210)(1;330)
\psline(1;90)(0.5;270)
\psline(1;210)(0.5;30)
\psline(1;330)(0.5;150)
}

\rput[B](3,0){$F_{7}^{-}$}

\end{pspicture}

\caption{The Fano plane, and the non-Fano matroid.}

\label{figFano}

\end{figure}
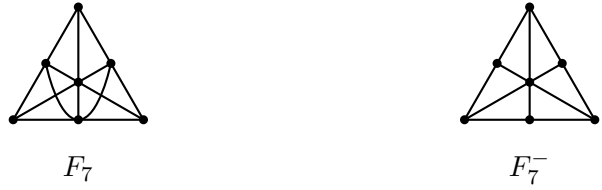

The Fano plane is representable only over fields of characteristic
two, and $F_{7}^{-}$ is representable only over fields of
characteristic other than two~\cite[p.~505]{oxley}.
Moreover, any proper minor of $F_{7}$ or $F_{7}^{-}$ is
either regular, or a whirl (up to the addition of parallel points).
The next result follows immediately.

\begin{prop}
\label{F7F7-}
The matroids $F_{7}$, $F_{7}^{-}$, and their duals,
are excluded minors for the class of near-regular matroids.
\end{prop}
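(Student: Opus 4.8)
The plan is to assemble the two facts recorded just above with the fact that the class of near-regular matroids is closed under both duality and the taking of minors (Proposition~\ref{prop9}), together with Whittle's characterization of near-regularity (Theorem~\ref{thm:nearregreps}). The whole argument is a short deduction once these ingredients are in place.

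To see that none of the four matroids is near-regular, note first that $F_{7}$ is representable only over fields of characteristic two, hence not over $\GF(3)$; by Theorem~\ref{thm:nearregreps} it is therefore not near-regular. Dually, $F_{7}^{-}$ is representable only over fields of characteristic other than two, and in particular not over $\GF(4)$, so $F_{7}^{-}$ is not near-regular. Since near-regularity is preserved under duality (Proposition~\ref{prop9}), it follows immediately that $F_{7}^{*}$ and $(F_{7}^{-})^{*}$ are also not near-regular.

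It then remains to check that every proper minor of each of these four matroids is near-regular. By the fact recorded above, any proper minor of $F_{7}$ or of $F_{7}^{-}$ is either regular, or is obtained from a whirl $\mcal{W}^{r}$ by adjoining parallel elements. Regular matroids are representable over every field, and $\mcal{W}^{r}$ is representable over every field of size at least three, so in either case Theorem~\ref{thm:nearregreps} shows such a matroid is near-regular; moreover, adjoining parallel elements preserves $\nreg$-representability, since in a $\nreg$-matrix one merely repeats a column or adjoins a unit column, and every subdeterminant of the enlarged matrix is either singular or, up to sign, a subdeterminant of the original. Hence every proper minor of $F_{7}$ or of $F_{7}^{-}$ is near-regular. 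For the duals, observe that a proper minor $N$ of $F_{7}^{*}$ has the property that $N^{*}$ is a proper minor of $F_{7}$ (duality interchanges deletion and contraction), so $N^{*}$ is near-regular and therefore so is $N$, again by Proposition~\ref{prop9}; the same reasoning applies to $(F_{7}^{-})^{*}$. This establishes that all four matroids are excluded minors for the class of near-regular matroids.

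There is no real obstacle here: the argument is essentially a bookkeeping exercise on the quoted facts, which is presumably why the paper simply writes that the result ``follows immediately''. The only step that warrants a moment of care is the assertion that adjoining parallel elements preserves near-regularity, which I would justify directly from the matrix description of $\nreg$-representability rather than via a structural operation such as a $2$-sum.
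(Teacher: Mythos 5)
Your proof is correct and follows essentially the same route as the paper: the paper likewise derives non-near-regularity from the characteristic restrictions on $F_{7}$ and $F_{7}^{-}$, notes that every proper minor is regular or a whirl up to parallel elements, and invokes closure under duality, simply leaving the bookkeeping (which you carry out explicitly, including the harmless parallel-element check) as "follows immediately."
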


The affine geometry \ag{2, 3} is produced by deleting
a hyperplane from the projective geometry \pg{2,3}.
Figure~\ref{figAG23} shows a geometric representation of \ag{2,3}.
Up to isomorphism there is a unique matroid produced by
deleting an element from \ag{2, 3}.
We denote this matroid by \agde.
It is not difficult to see that the automorphism group
of \agde\ acts transitively upon the triangles of \agde.
It follows that up to isomorphism there is a unique
matroid produced by performing a \dy\ operation on \agde.
We shall denote this matroid by $\Delta_{T}(\agde)$.
Then $\Delta_{T}(\agde)$ is represented over $\GF(3)$ by the following
matrix.
\begin{displaymath}
\begin{bmatrix}
\rbox{0} & \rbox{1} & \rbox{1} & \rbox{1}\\
\rbox{1} & \rbox{-1} & \rbox{1} & \rbox{-1}\\
\rbox{1} & \rbox{1} & \rbox{1} & \rbox{0}\\
\rbox{1} & \rbox{-1} & \rbox{0} & \rbox{0}\\
\end{bmatrix}
\end{displaymath}
Obviously $\Delta_{T}(\agde)$ is self-dual.

\begin{figure}[htb]

\centering

\begin{pspicture}(-2,0)(2,3)

\SpecialCoor

\rput(0,1.5){
\psset{unit=0.75cm}

\pscircle*(-1,-1){0.06cm}
\pscircle*(0,-1){0.06cm}
\pscircle*(1,-1){0.06cm}
\pscircle*(-1,0){0.06cm}
\pscircle*(0,0){0.06cm}
\pscircle*(1,0){0.06cm}
\pscircle*(-1,1){0.06cm}
\pscircle*(0,1){0.06cm}
\pscircle*(1,1){0.06cm}

\pspolygon(-1,-1)(1,-1)(1,1)(-1,1)
\psline(0,-1)(0,1)
\psline(-1,0)(1,0)
\psline(-1,1)(1,-1)
\psline(-1,-1)(1,1)

\pscurve(-1,0)(0,1)(2.2;35)(1,-1)
\pscurve(0,-1)(1,0)(2.2;55)(-1,1)

\pscurve(0,1)(1,0)(2.2;-55)(-1,-1)
\pscurve(-1,0)(0,-1)(2.2;-35)(1,1)
}

\end{pspicture}

\caption{\ag{2, 3}.}

\label{figAG23}

\end{figure}

\begin{prop}
\label{AG23}
The matroids \agde, $(\agde)^{*}$, and
$\Delta_{T}(\agde)$ are excluded minors for the
class of near-regular matroids.
\end{prop}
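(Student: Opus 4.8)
The plan is to reduce all three assertions to the single statement that \agde\ is an excluded minor for the class of near-regular matroids. Every $3$\dash point line of \agde\ is coindependent, since \agde\ has rank $3$ and the complement of such a line is a $5$\dash element set that is not collinear, hence spanning; so \agde\ has a coindependent triangle $T$. Once \agde\ is known to be an excluded minor, Lemma~\ref{lem4} yields that $\Delta_{T}(\agde)$ is an excluded minor as well, and by the transitivity of the automorphism group of \agde\ on its triangles (noted above) this is independent of $T$ up to isomorphism. Dually, Lemma~\ref{lem:duality} gives that $(\agde)^{*}$ is an excluded minor. So it suffices to prove the statement about \agde.

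For this I would proceed as follows. The matroid \agde\ is ternary, being a deletion of $\ag{2,3}$, and it is $\GF(4)$\dash representable, since $\ag{2,3}$ is; indeed $x^{2}-x+1$ has a root in $\GF(4)$ (see the proof of Lemma~\ref{lem:notQarep}). Hence, by Theorem~\ref{thm:nearregreps}, \agde\ fails to be near-regular as soon as it fails to be $\GF(5)$\dash representable. Next, every proper minor of \agde\ is near-regular: it is enough to verify this for a single-element deletion and a single-element contraction of \agde\ (each unique up to isomorphism). A single-element contraction of \agde\ is $U_{2,4}$ with three added parallel elements, hence near-regular; a single-element deletion of \agde\ is a rank\dash $3$ matroid on seven points with five triangles, for which I would simply exhibit a near-unimodular representation. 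Granting that \agde\ is not $\GF(5)$\dash representable --- the main point, treated below --- it then follows that \agde\ is an excluded minor for the class of near-regular matroids, and combined with the first paragraph this proves the proposition.

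The substantive step, which I expect to be the main obstacle, is to show that \agde\ has no $\GF(5)$\dash representation. The concrete approach follows the matrix computations used elsewhere in Section~\ref{sec:conclusion}: assuming $M[I|A]=\agde$ for a $\GF(5)$\dash matrix $A$, fix a suitable basis and normalize $A$ so that the entries on a maximal forest of $\bip(A)$ all equal $1$; the eight $3$\dash point lines of \agde, together with the requirement that the four remaining pairs not lie on $3$\dash point lines, then leave only finitely many possibilities for the other entries, and in each case one checks that some prescribed collinearity or non-collinearity of \agde\ is violated. A conceptually cleaner route, should it go through, is to observe that the four $2$\dash point lines of \agde\ form a perfect matching of the ground set, and to argue that the eight $3$\dash point lines force the four lines spanned by these pairs to be concurrent in any field; a representation of \agde\ would then extend to a representation of $\ag{2,3}$, which is impossible over $\GF(5)$ because $x^{2}-x+1$ has no root there. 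Either way this is a routine but essential computation, and once it is in hand the reductions above complete the proof.
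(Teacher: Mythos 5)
Your proposal is correct and follows essentially the same route as the paper: reduce to \agde\ via Lemma~\ref{lem:duality} and Lemma~\ref{lem4}, use transitivity of the automorphism group to check that the (unique) single-element contraction is $U_{2,4}$ with parallel elements and the single-element deletion is near-regular (the paper identifies it as $P_{7}$), and rule out a $\GF(5)$\dash representation by normalizing a putative representing matrix and deriving that an entry would have to satisfy $x^{2}+x+1=0$, which has no root in $\GF(5)$. The only difference is that you defer the two routine computations that the paper carries out explicitly; your described method for each is exactly the one used there.
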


\begin{proof}
Suppose that we obtain a representation of \ag{2, 3} by
deleting from \pg{2, 3} all points
$[x_{1}, x_{2}, x_{3}]^{T}$ on the hyperplane defined by
$x_{1} + x_{2} + x_{3} = 0$.
We then obtain a representation of \agde\ by deleting the
point $[1, 1, -1]^{T}$.
Thus \agde\ is represented over $\GF(3)$ by the following matrix.
\begin{displaymath}
\begin{bmatrix}
\rbox{0} & \rbox{1} & \rbox{1} & \rbox{-1} & \rbox{1}\\
\rbox{1} & \rbox{0} & \rbox{1} & \rbox{1}  & \rbox{-1}\\
\rbox{1} & \rbox{1} & \rbox{0} & \rbox{1}  & \rbox{1}
\end{bmatrix}
\end{displaymath}
If \agde\ is $\GF(5)$\dash representable, then by normalizing,
we can assume that it is represented over $\GF(5)$ by the
following matrix.
\begin{displaymath}
\begin{bmatrix}
\rbox{0} & \rbox{1} & \rbox{1} & \rbox{1} & \rbox{1}\\
\rbox{1} & \rbox{0} & \rbox{$a$} & \rbox{$b$}  & \rbox{$c$}\\
\rbox{1} & \rbox{1} & \rbox{0} & \rbox{$d$}  & \rbox{$e$}
\end{bmatrix}
\end{displaymath}
Here, $\{a, b, c, d, e\}$ are nonzero elements of $\GF(5)$.
By comparing subdeterminants, we see that $e=1$, and that
$c - a - e = 0$, so that $c = a + 1$.
Moreover, $b = d = c$, so
$b$ and $d$ are also equal to $a+1$.
Finally $ad + b - a = 0$.
This means that $a$ is a root of the polynomial
$x^{2} + x + 1$.
But there is no such root in $\GF(5)$, so we have a contradiction.
Therefore \agde\ is certainly not near-regular.

The automorphism group of \ag{2, 3} is transitive on pairs of
elements.
It follows that the automorphism group of \agde\ is transitive
on points.
Using this fact, it is not difficult to see that any
single-element deletion of \agde\ is isomorphic to $P_{7}$
(illustrated in Figure~\ref{fig2}).
Now $P_{7}$ is representable over every field of cardinality
at least three~\cite[Lemma~6.4.13]{oxley}, and is therefore
near-regular.

On the other hand, by again using the transitivity of
\agde\ we can see that contracting any element from
\agde\ produces a matroid that is obtained from $U_{2,4}$
by adding parallel elements.
Thus every proper minor of \agde\ is near-regular, so
\agde\ is indeed an excluded minor for the class of
near-regular matroids.
It follows immediately that $(\agde)^{*}$ is an excluded
minor for the same class, and Lemma~\ref{lem4} implies that
$\Delta_{T}(\agde)$ is also an excluded minor for
near-regularity.
\end{proof}

The matroid $P_{8}$ is represented
over $\GF(3)$ by the following matrix:
\begin{displaymath}
\begin{bmatrix}
\rbox{0} & \rbox{1} & \rbox{1} & \rbox{-1}\\
\rbox{1} & \rbox{0} & \rbox{1} & \rbox{1}\\
\rbox{1} & \rbox{1} & \rbox{0} & \rbox{1}\\
\rbox{-1} & \rbox{1} & \rbox{1} & \rbox{0}
\end{bmatrix}
\end{displaymath}
The matroid $P_{8}''$ is obtained from $P_{8}$ by
relaxing its two circuit-hyperplanes.
Lemma~6.4.14 in~\cite{oxley} says that $P_{8}$ is
representable over a field if and only if
its characteristic is not two.
Thus $P_{8}$ is not near-regular.
However, every single-element deletion or contraction
of $P_{8}$ is isomorphic to either $P_{7}$ or
$P_{7}^{*}$~\cite[p.~513]{oxley}, and $P_{7}$ is
representable over every field containing at least three
elements.
The next result follows.

\begin{prop}
\label{lem6}
The matroid $P_{8}$ is an excluded minor for the class
of near-regular matroids.
\end{prop}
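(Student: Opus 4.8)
The goal is to show that $P_{8}$ is an excluded minor for the class of near-regular matroids. Two things must be established: first, that $P_{8}$ is itself not near-regular; second, that every proper minor of $P_{8}$ is near-regular. Both parts are short given the facts already assembled in the excerpt and in Oxley, so the plan is really just to cite them in the right order.

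For the first part, I would appeal to Theorem~\ref{thm:nearregreps}, which says a matroid is near-regular if and only if it is representable over every field except possibly $\GF(2)$. By Lemma~6.4.14 of~\cite{oxley}, $P_{8}$ is representable over a field if and only if that field does not have characteristic two. In particular $P_{8}$ is \emph{not} representable over $\GF(4)$ (nor over $\GF(2)$ itself), so it fails the criterion of Theorem~\ref{thm:nearregreps}\emph{(iv)}; hence $P_{8}$ is not near-regular.

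For the second part, I would invoke the structural fact (\cite[p.~513]{oxley}) that every single-element deletion or contraction of $P_{8}$ is isomorphic to either $P_{7}$ or $P_{7}^{*}$. Since $P_{7}$ is representable over every field with at least three elements (\cite[Lemma~6.4.13]{oxley}), Theorem~\ref{thm:nearregreps} shows $P_{7}$ is near-regular, and by Proposition~\ref{prop9} the class of near-regular matroids is closed under duality, so $P_{7}^{*}$ is near-regular as well. Every proper minor of $P_{8}$ is a minor of one of its single-element deletions or contractions, hence a minor of $P_{7}$ or $P_{7}^{*}$; since the near-regular matroids are minor-closed (Proposition~\ref{prop9} again), every proper minor of $P_{8}$ is near-regular. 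Combining the two parts, $P_{8}$ is an excluded minor, which is exactly Proposition~\ref{lem6}.

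There is no real obstacle here: the only mild subtlety is making sure one uses a \emph{field of characteristic two} over which $P_{8}$ genuinely fails to be representable — $\GF(4)$ works and sidesteps the $\GF(2)$-exception in the definition of near-regularity — and that the "every deletion/contraction is $P_{7}$ or $P_{7}^{*}$" claim is quoted in the direction that lets one reduce an arbitrary proper minor to a minor of $P_{7}$ or its dual. Everything else is a direct citation of Theorem~\ref{thm:nearregreps}, Proposition~\ref{prop9}, and the cited lemmas of Oxley.
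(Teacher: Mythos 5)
Your proof is correct and follows essentially the same route as the paper: non-representability over fields of characteristic two (Oxley, Lemma~6.4.14) rules out near-regularity, and the fact that every single-element deletion or contraction of $P_{8}$ is $P_{7}$ or $P_{7}^{*}$, combined with minor- and duality-closure, handles all proper minors. No gaps.
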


\subsection{Case-analysis}
Next we show that the list of excluded minors in
Theorem~\ref{thm1} is complete.
The matroids $P_{7}$ and $O_{7}$ are shown in
Figure~\ref{fig2}.

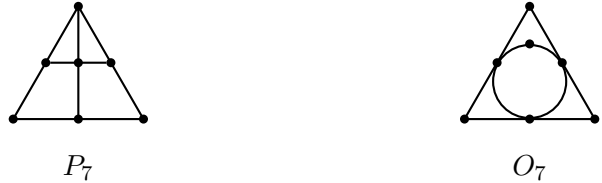
\begin{figure}[htb]

\centering

\begin{pspicture}(-3.86,0)(3.86,2.2)

\SpecialCoor

\rput(-3,1.25){
\pscircle*(1;90){0.06cm}
\pscircle*(1;210){0.06cm}
\pscircle*(1;330){0.06cm}
\pscircle*(0.5;270){0.06cm}

\pspolygon(1;90)(1;210)(1;330)
\psline(1;90)(0.5;270)

\cnode*(0.5;150){0.06cm}{a}
\cnode*(0.5;30){0.06cm}{b}

\ncline{a}{b}
\ncput{\pscircle*(0,0){0.06cm}}
}

\rput[B](-3,0){$P_{7}$}

\rput(3,1.25){
\pscircle*(1;90){0.06cm}
\pscircle*(1;210){0.06cm}
\pscircle*(1;330){0.06cm}
\pscircle*(0.5;270){0.06cm}
\pscircle*(0.5;150){0.06cm}
\pscircle*(0.5;30){0.06cm}
\pscircle*(0.5;90){0.06cm}

\pscircle(0,0){0.5cm}

\pspolygon(1;90)(1;210)(1;330)
}

\rput[B](3,0){$O_{7}$}

\end{pspicture}

\caption{$P_{7}$ and $O_{7}$.}

\label{fig2}

\end{figure}

The following matrix represents $O_{7}$ over any field
\field\ such that $|\field| \geq 3$.
Here, $\beta\in \field \setminus \{0,1\}$ if \field\ has characteristic
equal to two, and $\beta = -1$ otherwise.
\begin{displaymath}
\begin{bmatrix}
\rbox{1} & \rbox{1} & \rbox{0} & \rbox{1}\\
\rbox{1} & \rbox{0} & \rbox{1} & \rbox{-1}\\
\rbox{0} & \rbox{1} & \rbox{-1} & \rbox{$\beta$}
\end{bmatrix}
\end{displaymath}
It follows that $O_{7}$ is near-regular.
We have already noted that $P_{7}$ is near-regular.

\begin{prop}
\label{P7O7}
Let $M$ be a $3$\dash connected single-element
extension of $\mcal{W}^{3}$, the rank\dash $3$ whirl, such that
$M$ has no $U_{2,5}$\dash minor.
Then $M$ is isomorphic to one of $F_{7}^{-}$, $P_{7}$,
or $O_{7}$.
\end{prop}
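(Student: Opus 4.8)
The plan is to start from a standard representation of $\mcal{W}^{3}$ and add one element in all possible ways, then eliminate all but three isomorphism classes using the hypothesis that there is no $U_{2,5}$\dash minor. Recall that $\mcal{W}^{3}$ is represented by a $3\times 3$ twirl, so up to scaling and pivoting we may take $M \del e$ to be $M[I|A_{0}]$ where
\begin{displaymath}
A_{0} = \begin{bmatrix} 1 & 1 & 0\\ 0 & 1 & 1\\ 1 & 0 & g\end{bmatrix}
\end{displaymath}
for some $g \notin \{0,1\}$ (in fact $g$ will be forced, but we do not need that yet). Since $M$ is a single\dash element extension, we adjoin a new column; because $M$ is $3$\dash connected, the new column cannot be zero, a unit vector, or a scalar multiple of an existing column, and the whole $3\times 4$ matrix must have the ``no square submatrix with undefined/forbidden determinant'' property appropriate to whichever field we work over. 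The cleanest approach is to work over $\GF(3)$: a $3$\dash connected single\dash element extension of the ternary matroid $\mcal{W}^{3}$ is itself ternary or has a $U_{2,5}$\dash or $U_{3,5}$\dash minor, and the latter is excluded by hypothesis (a $U_{3,5}$\dash minor would give a $U_{2,5}$\dash minor by duality considerations on a rank\dash $3$ matroid, or can be ruled out directly by size), so $M$ is ternary. Hence $M = M[I|A]$ for a $3\times 4$ $\GF(3)$\dash matrix $A$ extending $A_{0}$.

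Next I would enumerate the possibilities for the new column $w \in \GF(3)^{3}$. Up to scaling $w$ by $-1$ and up to the automorphisms of $\mcal{W}^{3}$ (its automorphism group acts transitively enough to cut the case list down substantially — the three ``spokes'' and three ``rim'' elements can be permuted cyclically, and there is a rim/spoke swap), there are only a handful of genuinely different columns $w$ whose support has size $1$, $2$, or $3$. A support\dash $1$ column makes a parallel pair, contradicting $3$\dash connectivity; so $w$ has at least two nonzero entries. For each remaining choice I would identify the resulting matroid: the key computation is to check which columns, together with appropriate rows, yield a $2\times 5$ or $3\times 5$ uniform minor. Concretely, $M$ has a $U_{2,5}$\dash minor iff some line of $M$ (a rank\dash $2$ flat) contains $5$ points after contracting, equivalently iff some $2\times 5$ submatrix of the $5$\dash column matrix $[I_{3}|A]$, after a contraction, has all $2\times 2$ minors nonzero; and dually for $U_{3,5}$. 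Running through the short list of columns $w$, exactly those $M$ that survive the ``no $U_{2,5}$'' test are the three claimed matroids, and I would match each surviving matrix against the representations of $F_{7}^{-}$, $P_{7}$, and $O_{7}$ given earlier in the paper (for $O_{7}$, use the displayed matrix with $\beta = -1$ over $\GF(3)$; for $F_{7}^{-}$, the non\dash Fano matrix; for $P_{7}$, its standard ternary representation).

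The main obstacle I anticipate is managing the case analysis cleanly: without using the automorphisms of $\mcal{W}^{3}$ aggressively, the number of columns $w$ to consider is $3^{3}-1 = 26$ (up to the sign), which is tedious, and one must be careful that the ``no $U_{2,5}$\dash minor'' condition is checked after \emph{all} single\dash element contractions, not just on the visible lines of $M$ itself. The right bookkeeping is to note that contracting any one spoke of $\mcal{W}^{3}$ leaves $U_{2,4}$ plus the new point, so if the new point lies on none of the three ``long lines'' created this way but is otherwise generic, we get $U_{2,5}$; tracking exactly when $w$ falls on such a line reduces the $26$ cases to roughly $4$ or $5$ essentially different configurations, which are then easy to identify. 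A secondary (minor) point is justifying at the outset that $M$ is ternary; this follows from Lemma~\ref{lem:ternary} applied to the fact that the only non\dash ternary small matroids in play are $U_{2,5}$ and $U_{3,5}$, together with the observation that a $3$\dash connected rank\dash $3$ matroid with a $U_{3,5}$\dash minor and at most $7$ elements must actually equal or properly contain $U_{2,5}$ after a contraction.
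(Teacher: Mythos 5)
Your strategy --- fix a $\GF(3)$\dash representation of $\mcal{W}^{3}$ and enumerate the possible new columns --- is genuinely different from the paper's proof, which is representation-free. The paper labels the whirl so that each $r_{i}$ lies on exactly one triangle, notes that $M\con r_{i}$ is $U_{2,4}$ plus a parallel element plus the new point $e$, and concludes that $e$ must lie on a line through each $r_{i}$ to avoid a $U_{2,5}$\dash minor; this immediately forces $e$ onto either a four-point line (giving $O_{7}$) or three new triangles, whose configuration pins down $F_{7}^{-}$ or $P_{7}$. That argument needs no representability hypothesis and no enumeration.

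The genuine gap in your version is the reduction to $\GF(3)$: your enumeration only sees ternary extensions, so you must first prove that a $3$\dash connected single-element extension of $\mcal{W}^{3}$ with no $U_{2,5}$\dash minor is ternary, and none of your proposed justifications works. By Reid's theorem you must exclude $U_{2,5}$, $U_{3,5}$, $F_{7}$, and $F_{7}^{*}$ as minors. The last two are easy ($F_{7}^{*}$ has rank $4$, and every single-element deletion of $F_{7}$ is $M(K_{4})$, not $\mcal{W}^{3}$), and $U_{2,5}$ is excluded by hypothesis. But a $U_{3,5}$\dash minor does \emph{not} yield a $U_{2,5}$\dash minor ``by duality'' --- $U_{3,5}$ itself has no $U_{2,5}$\dash minor --- it is not ruled out ``by size'' (rank\dash $3$ matroids on seven elements with $U_{3,5}$\dash restrictions certainly exist), and Lemma~\ref{lem:ternary} does not apply because $M$ is not assumed to be an excluded minor for near-regularity. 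Ruling out a $U_{3,5}$\dash restriction in the presence of the $\mcal{W}^{3}$\dash deletion requires an honest geometric argument (such a restriction forces $e$ off every line through some $r_{i}$, whence $M\con r_{i}$ yields $U_{2,5}$), which is essentially the paper's argument; once you have it, the matrix enumeration is superfluous. Two smaller points: with your sign pattern $\det(A_{0})=g+1$, so the twirl condition is $g\neq -1$ rather than $g\notin\{0,1\}$ --- over $\GF(3)$ your stated constraint forces $g=-1$, and $A_{0}$ then represents $M(K_{4})$, not $\mcal{W}^{3}$; and the concluding case check is asserted rather than carried out.
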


\begin{proof}
Suppose that $M \del e$ is isomorphic to
$\mcal{W}^{3}$.
Let $E(M \del e) = \{r_{1},r_{2},r_{3},s_{1},s_{2},s_{3}\}$
and suppose that the triangles of
$M \del e$ are
$\{r_{1},s_{2},s_{3}\}$, $\{r_{2},s_{1},s_{3}\}$, and
$\{r_{3},s_{1},s_{2}\}$.
It is easy to see that if $e$ is contained in a four-point
line of $M$, then $M \cong O_{7}$.
Thus we assume $M$ contains no four-point lines.
But $e$ must be contained in a triangle with each of
$r_{1}$, $r_{2}$, and $r_{3}$, for otherwise
$M$ has a $U_{2,5}$\dash minor.
Now the result follows easily.
\end{proof}

\begin{lem}
\label{rank3}
Suppose that $M$ is an excluded minor for the
class of near-regular matroids, and that $\rank(M)=3$.
Then $M$ is isomorphic to one of $U_{3,5}$,
$F_{7}$, $F_{7}^{-}$, or \agde.
\end{lem}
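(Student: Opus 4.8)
The plan is to combine the ternary and duality reductions of Section~\ref{sec:setup} with the size bound of Theorem~\ref{thm:finitesize}, so that only matroids on a bounded number of elements remain, and then to finish with Proposition~\ref{P7O7}. First, if $M$ is not ternary then Lemma~\ref{lem:ternary} gives $M\in\{U_{2,5},U_{3,5},F_{7},F_{7}^{*}\}$; since $\rank(U_{2,5})=2$ and $\rank(F_{7}^{*})=4$, the hypothesis $\rank(M)=3$ leaves only $U_{3,5}$ and $F_{7}$, which appear in the conclusion. So I may assume $M$ is ternary. By Lemma~\ref{lem:3c}, $M$ is $3$\dash connected, and hence simple and cosimple. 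A matroid that is both binary and ternary is regular, hence near-regular; as $M$ is not near-regular, $M$ is nonbinary. A simple $3$\dash connected matroid of rank $3$ on fewer than six elements is $U_{3,4}$ or $U_{3,5}$, neither of which is simultaneously ternary and nonbinary, so $|E(M)|\geq 6$. Finally, $\rank(M)=3$ rules out $M\cong(\agde)^{*}$, whose rank is $5$, so Theorem~\ref{thm:finitesize} gives either $M\cong\agde$ or $\corank(M)\leq 4$; in the latter case $|E(M)|=\rank(M)+\corank(M)\leq 7$. Thus it remains to treat $|E(M)|\in\{6,7\}$ and the case $M\cong\agde$.

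The heart of the remaining argument is to locate inside $M$ a restriction isomorphic to the rank\dash $3$ whirl $\mcal{W}^{3}$. Since $M$ is nonbinary it has a $U_{2,4}$\dash minor, and since $M$ is ternary it has no $U_{2,5}$\dash minor and no $U_{3,5}$\dash minor; I would use this together with a classification of the small $3$\dash connected ternary rank\dash $3$ matroids having no $U_{2,5}$\dash minor. On six elements these are the binary matroid $M(K_{4})$, the whirl $\mcal{W}^{3}$, and the rank\dash $3$ matroid consisting of two disjoint three\dash point lines; the latter two are near-regular, being representable over every field with at least three elements (for $\mcal{W}^{3}$ this is the fact recalled before Proposition~\ref{GGK4.5}, and for the two\dash line matroid one simply takes three points on each of two distinct lines of $\mathrm{PG}(2,q)$, all away from their intersection point). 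Hence $M$ is none of these, so $|E(M)|=6$ is impossible and $|E(M)|=7$; and a short further case analysis of the single\dash element deletions of $M$ — each of which is, up to the previous classification, forced — shows that some $M\del e\cong\mcal{W}^{3}$. This classification of small matroids is the step I expect to require the most care, since it is the only place where a genuine structural input is needed rather than the bookkeeping already in place.

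Once $M$ is known to be a $3$\dash connected single\dash element extension of $\mcal{W}^{3}$ that, being ternary, has no $U_{2,5}$\dash minor, Proposition~\ref{P7O7} yields $M\cong F_{7}^{-}$, $M\cong P_{7}$, or $M\cong O_{7}$. As noted before Proposition~\ref{P7O7}, both $P_{7}$ and $O_{7}$ are near-regular, so neither is an excluded minor; hence $M\cong F_{7}^{-}$, which is in the conclusion (and is an excluded minor by Proposition~\ref{F7F7-}). Combining this with the remaining possibility $M\cong\agde$ (an excluded minor by Proposition~\ref{AG23}) from the first paragraph, we conclude that $M$ is isomorphic to one of $U_{3,5}$, $F_{7}$, $F_{7}^{-}$, or $\agde$, as required.
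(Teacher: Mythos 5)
Your overall architecture coincides with the paper's: reduce to the ternary, $3$\dash connected, nonbinary case, use Theorem~\ref{thm:finitesize} to bound $|E(M)|$ by $7$, exhibit a $\mcal{W}^{3}$\dash restriction, and finish with Proposition~\ref{P7O7}. But the one step carrying real content --- producing the $\mcal{W}^{3}$\dash restriction --- is exactly the step you leave unexecuted, and as sketched it does not go through. First, your list of the $3$\dash connected rank\dash $3$ matroids on six elements is wrong: for the matroid consisting of two disjoint three\dash point lines $L_{1},L_{2}$ we have $\rank(L_{1})+\rank(L_{2})-\rank(M)=2+2-3=1$, so $(L_{1},L_{2})$ is a $2$\dash separation and that matroid is not $3$\dash connected (it is a $2$\dash sum); meanwhile the genuinely $3$\dash connected examples $Q_{6}$, $P_{6}$, and $U_{3,6}$ are absent from your list, and are excluded here only because each has a $U_{2,5}$\dash minor (contract a point lying on no triangle), hence is non\dash ternary. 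Second, and more seriously, your plan to apply such a classification to the single\dash element deletions of a seven\dash element $M$ fails at the outset: a single\dash element deletion of a $3$\dash connected matroid need not be $3$\dash connected, so the deletions $M\del e$ are not ``forced'' by any list of $3$\dash connected six\dash element matroids. Recovering enough connectivity to force a $\mcal{W}^{3}$\dash restriction is precisely the nontrivial structural input, and you flag it as the delicate step without supplying it.

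The paper closes this gap by citing Corollary~11.2.19 of~\cite{oxley}, which, combined with the absence of a $U_{2,5}$\dash minor (guaranteed by ternarity), yields a $\mcal{W}^{3}$\dash minor of $M$; since this minor has the same rank as $M$ and $M$ is loopless, it is in fact a restriction, so $|E(M)|\leq 7$ makes $M$ a single\dash element extension of $\mcal{W}^{3}$. If you replace your second paragraph by that citation, the remainder of your argument --- the treatment of the non\dash ternary case, the lower bound $|E(M)|\geq 6$, the elimination of $(\agde)^{*}$ on rank grounds, and the final appeal to Proposition~\ref{P7O7} --- matches the paper and is correct.
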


\begin{proof}
Suppose that $M$ is a rank\dash $3$ excluded minor other than
those listed in the statement of the lemma.
Then $M$ must be ternary, for otherwise it contains
$U_{2,5}$, $U_{3,5}$, or $F_{7}$ as a minor~\cite{Bix79,Sey79}.
Since $M$ is not near-regular, and hence not regular, it
is nonbinary.
Certainly $M$ has at least six elements, and hence corank
at least three, for otherwise the fact that $M \ncong U_{3,5}$
means that $M$ is not $3$\dash connected.
Now Corollary~11.2.19 in~\cite{oxley}, and the fact that
$M$ has no $U_{2,5}$\dash minor, means that
$M$ has a $\mcal{W}^{3}$\dash minor.
Since $M$ is not isomorphic to \agde\ or
its dual, Theorem~\ref{thm:finitesize} implies that
$\corank(M) \leq 4$, and that therefore, $|E(M)| \leq 7$.
As $M$ is not isomorphic to $\mcal{W}^{3}$, it follows
that $M$ is a single-element extension of $\mcal{W}^{3}$.
Proposition~\ref{P7O7} implies that $M$ is isomorphic
to either $P_{7}$ or $O_{7}$.
As these are both near-regular we have a contradiction.
\end{proof}

Now we complete the proof of Theorem~\ref{thm1}.

\begin{thm1}
The excluded minors for the class of near-regular matroids are
$U_{2,5}$, $U_{3,5}$, $F_{7}$, $F_{7}^{*}$, $F_{7}^{-}$, \nfd, \agde,
$(\agde)^{*}$, $\Delta_{T}(\agde)$, and $P_{8}$.
\end{thm1}

\begin{proof}
The results in Section~\ref{excludedminors}
certify that the matroids listed in the theorem
are indeed excluded minors for near-regularity.
Now we suppose that $M$ is
an excluded minor and that $M$ is not listed in the statement
of the theorem.
Clearly the rank and corank of $M$ both exceed two.
Lemma~\ref{rank3} implies that they both exceed three.
It now follows from Theorem~\ref{thm:finitesize} that
both are exactly equal to four, so $M$ has precisely
eight elements.

Suppose that $M$ contains a triangle $T$.
As $M$ is $3$\dash connected, $T$ is coindependent.
Lemmas~\ref{DYrank} and~\ref{lem4} imply that
$\Delta_{T}(M)$ is an excluded minor
for near-regularity with corank three.
Now Lemma~\ref{rank3} implies that
$\Delta_{T}(M)$ is either $U_{2,5}$,
$F_{7}^{*}$, \nfd, or $(\agde)^{*}$.
As $M$ contains eight elements, we conclude that
$\Delta_{T}(M) \cong (\agde)^{*}$.
But $T$ is an independent triad in $\Delta_{T}(M)$,
by Lemma~\ref{DYrank}, and
\begin{multline*}
M=\nabla_{T}(\Delta_{T}(M)) \cong \nabla_{T}((\agde)^{*}) =\\
(\Delta_{T}(\agde))^{*} \cong \Delta_{T}(\agde).
\end{multline*}
This contradiction means that $M$ has no triangles.
The dual argument shows that $M$ has no triads.

As in the proof of Lemma~\ref{rank3}, we see that
$M$ is ternary and nonbinary, and that therefore
$M$ has a $\mcal{W}^{3}$\dash minor.
Since $M$ does not have a $\mcal{W}^{4}$\dash minor, we may
apply the Splitter Theorem.
By exploiting duality, we can assume that there are
elements $e,f \in E(M)$, such that $M \con e$ is
$3$\dash connected, and $M \con e \del f$ is isomorphic to
$\mcal{W}^{3}$.
Therefore $M \con e$ is isomorphic to $P_{7}$ or $O_{7}$,
by Lemma~\ref{P7O7}.

Assume that $M \con e \cong O_{7}$.
Since $M \con e$ contains a four-point line, and
$M$ contains no triangles, it follows that
$M$ contains a $U_{3,5}$\dash restriction.
This is a contradiction, so we assume that
$M \con e \cong P_{7}$.
By scaling, and uniqueness of representations, we can assume
that $M \con e$ is represented over $\GF(3)$ by the following
matrix.
\begin{displaymath}
\kbordermatrix{
& a & b & c & d\\
x & \rbox{0} & \rbox{1} & \rbox{1} & \rbox{-1}\\
y & \rbox{1} & \rbox{0} & \rbox{1} & \rbox{1}\\
z & \rbox{1} & \rbox{1} & \rbox{0} & \rbox{1}\\
e & \rbox{-1} & \rbox{$\alpha$} & \rbox{$\beta$} & \rbox{$\gamma$}
}
\end{displaymath}
The fact that $M$ contains no triangles means that
$\alpha$ and $\beta$ are nonzero, and that
$\gamma \ne -1$.
Moreover, $\alpha+\beta-\gamma \ne 0$.
This leaves us with five cases to check:
\begin{enumerate}
\item $\alpha = 1$, $\beta = 1$, $\gamma = 0$;
\item $\alpha = 1$, $\beta = 1$, $\gamma = 1$;
\item $\alpha=1$, $\beta=-1$, $\gamma=1$;
\item $\alpha=-1$, $\beta=1$, $\gamma=1$; and
\item $\alpha=-1$, $\beta=-1$, $\gamma=0$.
\end{enumerate}

If case~(i) holds, then we immediately see that $M$ is
isomorphic to $P_{8}$, a contradiction.
Suppose that~(ii) holds.
Then $M\con y \cong F_{7}^{-}$, a contradiction.
If~(iii) or~(iv) holds, then $M \cong P_{8}$.
Finally, if~(v) holds, then $M\con x \cong F_{7}^{-}$.
This contradiction completes the proof.
\end{proof}

\section{Acknowledgements}

Our thanks go to Jim Geelen and Geoff Whittle for
their helpful advice and discussions.


\begin{thebibliography}{GGK00}

\bibitem[AO93]{AO93}
\textsc{S.~Akkari} and \textsc{J.~Oxley}, Some local extremal connectivity
  results for matroids. \emph{Combin. Probab. Comput.}, vol.~2, no.~4, pp.
  367--384 (1993).

\bibitem[Bix79]{Bix79}
\textsc{R.~E. Bixby}, On {R}eid's characterization of the ternary matroids.
  \emph{J. Combin. Theory Ser. B}, vol.~26, no.~2, pp. 174--204 (1979).

\bibitem[BL76]{BL76}
\textsc{T.~H. Brylawski} and \textsc{D.~Lucas}, Uniquely representable
  combinatorial geometries. In \emph{Teorie Combinatorie (Proc.\ 1973
  Internat.\ Colloq.)}, pp. 83--104 (Accademia Nazionale del Lincei, Rome,
  1976).

\bibitem[GGK00]{GGK}
\textsc{J.~F. Geelen}, \textsc{A.~M.~H. Gerards}, and \textsc{A.~Kapoor}, The
  excluded minors for {${\rm GF}(4)$}-representable matroids. \emph{J. Combin.
  Theory Ser. B}, vol.~79, no.~2, pp. 247--299 (2000).

\bibitem[Kah88]{Ka88}
\textsc{J.~Kahn}, On the uniqueness of matroid representations over {${\rm
  GF}(4)$}. \emph{Bull. London Math. Soc.}, vol.~20, no.~1, pp. 5--10 (1988).

\bibitem[OSV00]{OSV00}
\textsc{J.~Oxley}, \textsc{C.~Semple}, and \textsc{D.~Vertigan}, Generalized
  {$\Delta\text{-}Y$} exchange and {$k$}-regular matroids. \emph{J. Combin.
  Theory Ser. B}, vol.~79, no.~1, pp. 1--65 (2000).

\bibitem[Oxl92]{oxley}
\textsc{J.~G. Oxley}, \emph{Matroid Theory} (Oxford University Press, 1992).

\bibitem[PZa]{PZ08conf}
\textsc{R.~A. Pendavingh} and \textsc{S.~H.~M. van Zwam}, Confinement of
  matroid representations to subfields of partial fields. Submitted. Preprint
  at arXiv:0806.4487 [math.CO].

\bibitem[PZb]{PZ08lift}
\textsc{R.~A. Pendavingh} and \textsc{S.~H.~M. van Zwam}, Lifts of matroid
  representations over partial fields. Submitted. Preprint at arXiv:0804.3263
  [math.CO].

\bibitem[Sem97]{Sem97}
\textsc{C.~Semple}, {$k$}-regular matroids. In \emph{Combinatorics, complexity,
  \& logic (Auckland, 1996)}, Springer Ser. Discrete Math. Theor. Comput. Sci.,
  pp. 376--386 (Springer, Singapore, 1997).

\bibitem[Sey79]{Sey79}
\textsc{P.~D. Seymour}, Matroid representation over {${\rm GF}(3)$}. \emph{J.
  Combin. Theory Ser. B}, vol.~26, no.~2, pp. 159--173 (1979).

\bibitem[SW96]{SW96}
\textsc{C.~Semple} and \textsc{G.~Whittle}, Partial fields and matroid
  representation. \emph{Adv. in Appl. Math.}, vol.~17, no.~2, pp. 184--208
  (1996).

\bibitem[Tru86]{TruIII}
\textsc{K.~Truemper}, A decomposition theory for matroids. {III}.
  {D}ecomposition conditions. \emph{J. Combin. Theory Ser. B}, vol.~41, no.~3,
  pp. 275--305 (1986).

\bibitem[Tut58]{Tuthom}
\textsc{W.~T. Tutte}, A homotopy theorem for matroids. {I}, {II}. \emph{Trans.
  Amer. Math. Soc.}, vol.~88, pp. 144--174 (1958).

\bibitem[Whi97]{Whi97}
\textsc{G.~Whittle}, On matroids representable over {${\rm GF}(3)$} and other
  fields. \emph{Trans. Amer. Math. Soc.}, vol. 349, no.~2, pp. 579--603 (1997).

\bibitem[Whi99]{Whi96b}
\textsc{G.~Whittle}, Stabilizers of classes of representable matroids. \emph{J.
  Combin. Theory Ser. B}, vol.~77, no.~1, pp. 39--72 (1999).

\end{thebibliography}
\end{document}